\begin{document}

\newtheorem{theorem}{Theorem}[section]
\newtheorem*{theorem*}{Theorem}
\newtheorem{corollary}[theorem]{Corollary}
\newtheorem*{corollary*}{Corollary}
\newtheorem*{remark}{Remark}
\newtheorem*{question}{Question}
\newtheorem*{example}{Example}
\newtheorem*{conjecture}{Conjecture}
\newtheorem{proposition}{Proposition}[subsection]
\newtheorem{lemma}[proposition]{Lemma}
\newtheorem*{lemma*}{Lemma}
\theoremstyle{definition}
\newtheorem{definition}[theorem]{Definition}
\newtheorem*{definition*}{Definition}

\dtitle[Stabilizers of Ergodic Actions of Lattices and Commensurators]{Stabilizers of Ergodic Actions of Lattices and Commensurators}
\dauthorone[D.~Creutz]{Darren Creutz}{darren.creutz@vanderbilt.edu}{Vanderbilt University}{}
\dauthortwo[J.~Peterson]{Jesse Peterson}{jesse.d.peterson@vanderbilt.edu}{Vanderbilt University}{%
Partially supported by NSF Grant 0901510 and a grant from the Alfred P.~Sloan Foundation
}
\datewritten{25 May 2015}
\dabstract{%
\large
We prove that any ergodic measure-preserving action of an irreducible lattice in a semisimple group, with finite center and each simple factor having rank at least two, either has finite orbits or has finite stabilizers.  The same dichotomy holds for many commensurators of such lattices.

The above are derived from more general results on groups with the Howe-Moore property and property $(T)$.  We prove similar results for commensurators in such groups and for irreducible lattices (and commensurators) in products of at least two such groups, at least one of which is totally disconnected.
}

\makepreprint

\section{Introduction}

A groundbreaking result in the theory of lattices in semisimple groups is the Margulis Normal Subgroup Theorem \cite{Ma79},\cite{Ma91}: any nontrivial normal subgroup of an irreducible lattice in a center-free higher-rank semisimple group has finite index.  In the case of real semisimple Lie groups, Stuck and Zimmer \cite{SZ94} generalized this result to ergodic measure-preserving actions of such lattices: any irreducible ergodic measure-preserving action of a semisimple real Lie group, each simple factor having rank at least two, is either essentially  free or essentially transitive; and any ergodic measure-preserving action of an irreducible lattice in such a semisimple real Lie group either has finite orbits or has finite stabilizers.

More recently, Bader and Shalom \cite{BS04} proved a Normal Subgroup Theorem for irreducible lattices in products of locally compact groups: any infinite normal subgroup of an irreducible integrable lattice in a product of nondiscrete just noncompact locally compact second countable compactly generated groups, not both isomorphic to $\mathbb{R}$, has finite index.

Our purpose is to extend this dichotomy for ergodic measure-preserving actions to irreducible lattices and commensurators of lattices in semisimple groups, each factor having higher-rank, and more generally to lattices in products of at least two groups with the Howe-Moore property and property $(T)$.

The Stuck-Zimmer result follows from an Intermediate Factor Theorem, a generalization of the Factor Theorem of Margulis, due to Zimmer \cite{zimmer2} and Nevo-Zimmer \cite{nevozimmer}.  
 A key step in the work of Bader-Shalom is a similar Intermediate Factor Theorem for product groups which they use to show: any irreducible ergodic measure-preserving action of a product of two locally compact second countable just noncompact groups with property $(T)$ is either essentially free or essentially transitive.  One of the main ingredients in our work is an Intermediate Factor Theorem for relatively contractive actions which we will discuss presently.

The methods of Bader-Shalom do not easily yield the same result for actions of irreducible lattices in products of groups.  The major issue is that inducing an action of a lattice may yield an action of the ambient group which is not irreducible but the Bader-Shalom (and Stuck-Zimmer) Intermediate Factor Theorems only apply to irreducible actions.  The same issue arises when attempting to apply the Stuck-Zimmer methods to lattices in semisimple groups with $p$-adic parts.

Our techniques are a generalization of the Normal Subgroup Theorem for Commensurators due to the first author and Shalom \cite{CS14},\cite{CrD11}: if $\Lambda$ is a dense commensurator of a lattice in a locally compact second countable group that is not a compact extension of an abelian group such that $\Lambda$ does not infinitely intersect any noncocompact normal subgroup then any infinite normal subgroup of $\Lambda$ contains the lattice up to finite index; the commensurability classes of infinite normal subgroups of such a commensurator are in a one-one onto correspondence with the commensurability classes of open normal subgroups of the relative profinite completion.

The difficulty for lattices does not arise using our techniques as we do not need to induce the action of the lattice to the ambient group, but rather analyze the action directly by treating the lattice as a commensurator in a proper subproduct.  For this, we require precisely the object that is the obstruction in the Stuck-Zimmer approach: a totally disconnected factor.  In this sense, our methods complement those of Stuck and Zimmer and combining results we are able to handle all $S$-arithmetic lattices.  Our methods also lead to results on actions of commensurators and allow us to prove the corresponding generalization of the Normal Subgroup Theorem of Bader-Shalom to actions of lattices (provided one group in the product is totally disconnected).  For this generalization, we impose the requirement of the ambient groups having the Howe-Moore property as the measurable analogue of just noncompactness.

\subsection{Main Results}

We now state the main results of the paper.  Recall that an action is weakly amenable when the corresponding equivalence relation is amenable.

\begin{theorem*}[Theorem \ref{T:main} and Corollary \ref{C:maintheorem1}]
Let $G$ be a noncompact nondiscrete locally compact second countable group with the Howe-Moore property.  Let $\Gamma < G$ be a lattice and let $\Lambda < G$ be a countable dense subgroup such that $\Lambda$ contains and commensurates $\Gamma$ and such that $\Lambda$ has finite intersection with every compact normal subgroup of $G$.

Then any ergodic measure-preserving action of $\Lambda$ either has finite stabilizers or the restriction of the action to $\Gamma$ is weakly amenable.

If, in addition, $G$ has property $(T)$ then any ergodic measure-preserving action of $\Lambda$ either has finite stabilizers or the restriction of the action to $\Gamma$ has finite orbits.
\end{theorem*}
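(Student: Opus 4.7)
The plan is to study the invariant random subgroup $s(x) = \Lambda_x$ arising from the action of $\Lambda$ on $(X,\mu)$, exploit that $\Lambda$ commensurates $\Gamma$ inside $G$, and combine this with the Howe--Moore property of $G$. Since commensuration extends to closures, the map $x \mapsto \overline{\Lambda_x \cap \Gamma}$ is an essentially $\Lambda$-equivariant measurable map into the Chabauty space of closed subgroups of $G$. By ergodicity, exactly one of the following alternatives holds almost surely: $\overline{\Lambda_x \cap \Gamma}$ is compact, or $\overline{\Lambda_x \cap \Gamma}$ is noncompact.

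In the compact case, the pushforward is a $\Lambda$-invariant probability on the Chabauty subspace of compact subgroups of $G$. Using that $\Lambda$ is dense in $G$ and that conjugation is continuous on the Chabauty space, this measure is automatically $G$-invariant, hence essentially concentrated on compact normal subgroups of $G$. The hypothesis that $\Lambda$ meets every compact normal subgroup of $G$ in a finite group then forces $\Lambda_x \cap \Gamma$ to be finite almost surely, and since $\Gamma$ has finite index in $\Lambda \cap g\Gamma g^{-1}$ for every $g \in \Lambda$, this propagates to the statement that $\Lambda_x$ is finite almost surely, giving the finite stabilizers alternative.

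The heart of the argument is the noncompact case, where $\overline{\Lambda_x \cap \Gamma}$ is a noncompact closed subgroup of $G$ on a conull set. I would form the joining $X \times B$, where $B$ is a Poisson boundary of $\Lambda$ associated with an admissible measure, and apply the Intermediate Factor Theorem for relatively contractive actions alluded to in the introduction to a suitable tower built from $X \times B$. Combining this with Howe--Moore decay of matrix coefficients along the noncompact subgroups $\overline{\Lambda_x \cap \Gamma}$ acting unitarily on the relevant $L^2$-spaces should produce an approximately invariant mean on the orbit equivalence relation of $\Gamma$ on $X$, i.e.\ weak amenability of the restriction to $\Gamma$. For the property $(T)$ strengthening, $\Gamma$ inherits property $(T)$ from $G$ as a lattice, and weakly amenable (amenable-relation) actions of property $(T)$ groups must have finite orbits by the Zimmer/Connes--Weiss characterization, upgrading the dichotomy to the second conclusion.

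The main obstacle I expect lies in the noncompact case: converting Howe--Moore rigidity on $X \times B$ into a genuine invariant mean on the $\Gamma$-relation requires the relatively contractive Intermediate Factor Theorem to be applied with care, since the $\Lambda$-action on $X$ need not be irreducible in any natural product decomposition of $G$ and the contractive factor structure must be arranged to isolate the noncompact part of $\overline{\Lambda_x \cap \Gamma}$. This is precisely the technical innovation hinted at in the introduction, and everything else in the proof is relatively standard once that step is in place.
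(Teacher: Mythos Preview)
Your outline has the right ingredients (the stabilizer IRS, a Poisson boundary, the contractive Intermediate Factor Theorem), but the dichotomy you set up is not the one that actually drives the proof, and two of your steps do not go through as stated.

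First, the ``compact case.'' Your claim that a $G$-invariant probability on compact subgroups is concentrated on compact \emph{normal} subgroups is false in general; however, since $\Lambda_x\cap\Gamma$ sits inside the discrete group $\Gamma$, compactness of its closure already forces it to be finite, so that part of the argument is salvageable for the wrong reason. The real gap is the propagation: knowing $\Lambda_x\cap\Gamma$ is finite does \emph{not} imply $\Lambda_x$ is finite, and the sentence you wrote (``$\Gamma$ has finite index in $\Lambda\cap g\Gamma g^{-1}$'') is neither correct nor relevant. In the paper this step is genuinely nontrivial: it goes through Proposition~\ref{P:upgrade} (producing an infinite normal subgroup of $\Lambda$ missing $\Gamma$ up to finite index) together with the Normal Subgroup Theorem for Commensurators (Theorem~\ref{T:CSnormalsubgroups}) to rule that out. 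This is packaged as Theorem~\ref{T:howemooreplan}, and it is where the hypothesis on $\Lambda$ meeting compact normal subgroups finitely is actually used.

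Second, the ``noncompact case.'' Merely knowing $\overline{\Lambda_x\cap\Gamma}$ is noncompact is far too weak; Howe--Moore decay of matrix coefficients does not by itself produce an invariant mean on the $\Gamma$-orbit relation. What the paper does instead is take the closure $\overline{\mathrm{stab}_\Lambda(x)}$ (not $\overline{\Lambda_x\cap\Gamma}$), push it forward to an ergodic IRS of $G$, and then \emph{realize that IRS as the stabilizer map of an honest ergodic $G$-action} (Theorem~\ref{T:gaussian} and Proposition~\ref{P:gaussian}). The Howe--Moore input (via Theorem~\ref{T:howemooreplan}) is applied to \emph{that} $G$-action, and the alternative to finite $\Lambda$-stabilizers is not ``noncompact'' but the much stronger statement that $\Gamma_x=\Gamma\cap\overline{\mathrm{stab}_\Lambda(x)}$ has \emph{finite index} in $\Gamma$, hence is itself a lattice in $G$. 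This is exactly what is needed: one takes the Poisson boundary $(B,\beta)$ of $G$ (not of $\Lambda$, since you need contractiveness to pass to lattices via Theorem~\ref{T:contractivelattice}), applies the piecewise Intermediate Contractive Factor Theorem (Theorem~\ref{T:groupoidcontractive1}) to get $\pi(gb,x)=\pi(b,x)$ for $g\in\overline{\mathrm{stab}_\Lambda(x)}$, and then uses that $(B,\beta)$ is $\Gamma_x$-contractive while $\Gamma_x$ acts trivially on the fiber $A_x$ to force $\pi_x(B)$ to be a single point, giving the invariant section. Your appeal to ``Howe--Moore decay on $X\times B$'' does not substitute for this finite-index step.
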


The result on weak amenability applies to all noncompact simple Lie groups--even those without higher-rank--and also to automorphism groups of regular trees, both of which have the Howe-Moore property \cite{HM79}, \cite{LM92}.  One consequence of this is that any ergodic measure-preserving action of the commensurator on a nonatomic probability space, which is strongly ergodic when restricted to the lattice, has finite stabilizers.

The additional assumption that the ambient group have property $(T)$ allows us to conclude that any weakly amenable action of the lattice, which will also have property $(T)$, has, in fact, finite orbits.  This also accounts for the requirement in the Stuck-Zimmer Theorem that each simple factor has higher-rank.

Treating lattices in products of groups, at least one of which is totally disconnected, as commensurators of lattices sitting in proper subproducts, we obtain the following generalization of the Bader-Shalom Theorem to actions:
\begin{theorem*}[Theorem \ref{T:BSbetter}]
Let $G$ be a product of at least two simple nondiscrete noncompact locally compact second countable groups with the Howe-Moore property, at least one of which has property $(T)$, at least one of which is totally disconnected and such that every connected simple factor has property $(T)$.  Let $\Gamma < G$ be an irreducible lattice.

Then any ergodic measure-preserving action of $\Gamma$ either has finite orbits or has finite stabilizers.
\end{theorem*}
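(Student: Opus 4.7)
The plan is to reinterpret the irreducible lattice $\Gamma$ as a dense commensurator of a lattice sitting inside a proper subproduct of $G$, and then invoke the dichotomy of Theorem \ref{T:main}. This is precisely the situation in which the Stuck--Zimmer induction strategy breaks down (the induced $G$-action fails to be irreducible), while the commensurator framework handles it directly. Concretely, pick a totally disconnected simple factor $G_0 \leq G$ (which exists by hypothesis) and let $H$ be the product of the remaining factors, arranging that $H$ contains a property-$(T)$ factor (possible since every connected simple factor has property $(T)$ and at least one factor does). Let $K \leq G_0$ be a compact open subgroup, set $\Gamma_K := \Gamma \cap (H \times K)$, and put $\Lambda := \pi_H(\Gamma)$ where $\pi_H \colon H \times G_0 \to H$ is the projection. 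Then irreducibility of $\Gamma$ gives that $\Lambda$ is dense in $H$; compactness of $K$ gives that $\pi_H(\Gamma_K)$ is a lattice in $H$; and the fact that $G_0$ commensurates $K$ together with density of $\pi_{G_0}(\Gamma)$ in $G_0$ gives that $\Lambda$ contains and commensurates $\pi_H(\Gamma_K)$. The kernel of $\pi_H$ restricted to $\Gamma$ is a discrete normal subgroup of $\Gamma$ contained in the simple group $G_0$, hence a finite central subgroup, so any ergodic $\Gamma$-action descends, up to finite ambiguity in orbits and stabilizers, to an ergodic $\Lambda$-action.

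Given an ergodic measure-preserving $\Gamma$-action on $(X,\mu)$, pass to the descended $\Lambda$-action. When $G$ has exactly two simple factors, $H$ is a single factor satisfying Howe--Moore and property $(T)$, so Theorem \ref{T:main} applies directly; when $G$ has more factors, the same reduction is iterated (or the analogous product--commensurator variant is invoked), peeling off a totally disconnected factor at each stage. The dichotomy then delivers either (a) finite $\Lambda$-stabilizers, which pull back to finite $\Gamma$-stabilizers and give one alternative, or (b) finite orbits for the restriction to the lattice $\pi_H(\Gamma_K)$, i.e.\ finite $\Gamma_K$-orbits on $X$.

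The remaining task, in case (b), is to upgrade finite $\Gamma_K$-orbits to finite $\Gamma$-orbits. The idea is that for every $\gamma \in \Gamma$ the conjugate $\gamma \Gamma_K \gamma^{-1}$ is a commensurable lattice of the same form (corresponding to the compact open subgroup $\pi_{G_0}(\gamma) K \pi_{G_0}(\gamma)^{-1}$ of $G_0$), and so the same argument applied to each conjugate shows that a.e.\ point has finite orbit under every $\gamma\Gamma_K\gamma^{-1}$ as well. Consequently $\mathrm{Stab}_\Gamma(x)$ contains a finite-index subgroup of every such conjugate, and combining ergodicity of the $\Gamma$-action with property $(T)$ of the ambient group---to rule out an infinite-index contribution from the $G_0$-direction cosets of $K$---forces $\mathrm{Stab}_\Gamma(x)$ to have finite index in $\Gamma$ almost surely. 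The main obstacle is precisely this upgrade: ruling out the configuration in which $\Gamma$ weaves together infinitely many finite $\Gamma_K$-orbits into a single infinite $\Gamma$-orbit requires genuinely using both the totally disconnected structure of $G_0$ (so that $K$ and the commensuration are available) and the spectral rigidity afforded by property $(T)$. A secondary difficulty, to be handled either by induction on the number of factors or by a product version of Theorem \ref{T:main}, is ensuring that the hypotheses of the commensurator theorem are satisfied when $H$ is itself a product.
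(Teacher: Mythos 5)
Your overall strategy is the paper's: project $\Gamma$ into the subproduct containing the property-$(T)$ factors, so that $\pi_H(\Gamma\cap(H\times K))$ becomes a lattice there and $\pi_H(\Gamma)$ its dense commensurator, then run the commensurator dichotomy. (The paper groups \emph{all} connected factors into one subproduct and \emph{all} totally disconnected factors into the other, rather than peeling off one totally disconnected factor at a time; your proposed iteration is problematic because after one stage you no longer have a lattice in a product but only a dense commensurator, so there is nothing to induct on. The paper instead handles the product structure of the totally disconnected part in one shot, via Propositions \ref{P:usefulproduct} and \ref{P:usefulproduct2}.) Two smaller omissions: Theorem \ref{T:main} carries the standing hypothesis that every ergodic $G$-action restricted to $\Lambda$ has finite stabilizers or restricted to the lattice has finite orbits, which must be verified (Theorem \ref{T:howemooreplan} or \ref{T:productGconnected}); and the kernel of $\pi_H|_\Gamma$ is in fact trivial, not merely finite central, by irreducibility and simplicity of the discarded factor.

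The genuine gap is exactly where you flag ``the main obstacle'': upgrading finite $\Gamma_K$-orbits to finite $\Gamma$-orbits. Your proposed mechanism does not work as stated. Knowing that $\mathrm{stab}_\Gamma(x)$ contains a finite-index subgroup of $\Gamma_K$ and of every conjugate $\gamma\Gamma_K\gamma^{-1}$ is entirely consistent with $\mathrm{stab}_\Gamma(x)$ being \emph{commensurable with} $\Gamma_K$, hence of infinite index in $\Gamma$ (since $\Gamma_K$ has infinite index in $\Gamma$, the totally disconnected factor being noncompact); and property $(T)$ of the ambient group plays no role in excluding this configuration --- it was already spent in converting weak amenability into finite lattice-orbits. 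What actually closes the case is the relative profinite completion: one computes $\rpf{\Lambda}{\Gamma_K}\simeq H$ (the totally disconnected subproduct), transports the stabilizer invariant random subgroup through the one--one correspondence of Theorem \ref{T:oneone} to an \emph{open} ergodic invariant random subgroup of $H$, and then invokes Theorem \ref{T:howemooreplan2}: a simple nondiscrete second countable Howe--Moore group has no nontrivial ergodic measure-preserving action with open stabilizers (open implies compact by Howe--Moore; there are only countably many compact open subgroups, so one of them stabilizes a positive-measure set, finitely many translates cover, and the intersection is a nontrivial compact open normal subgroup, contradicting simplicity). Only this forces the open invariant random subgroup to be $\delta_H$, hence $\mathrm{stab}_\Lambda(x)$ to have finite index in $\Lambda$, i.e.\ finite $\Gamma$-orbits. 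Without this ingredient --- which uses simplicity, total disconnectedness, and Howe--Moore of the discarded factors, not property $(T)$ --- the dichotomy is not established.
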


Specializing to Lie groups:
\begin{theorem*}[Corollary \ref{C:latticeLie2}]
Let $G$ be a semisimple Lie group (real or $p$-adic or both) with no compact factors, trivial center, at least one factor with rank at least two and such that each real simple factor has rank at least two.  Let $\Gamma < G$ be an irreducible lattice.  Then any ergodic measure-preserving action of $\Gamma$ on a nonatomic probability space is essentially free.
\end{theorem*}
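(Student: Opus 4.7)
The plan is to split into cases based on whether $G$ has a totally disconnected factor, apply the appropriate finite-stabilizer dichotomy (the Stuck--Zimmer theorem in the purely real case and Theorem \ref{T:BSbetter} otherwise), and then promote ``finite stabilizers'' to ``trivial stabilizers'' by combining Margulis' Normal Subgroup Theorem with Borel density.

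First, if $G$ is purely real then every simple factor has rank at least two, and the Stuck--Zimmer theorem applies directly. Otherwise $G$ contains a $p$-adic factor and decomposes as a product of at least two simple, nondiscrete, noncompact, locally compact second countable groups; each factor has the Howe--Moore property \cite{HM79}; the rank-$\geq 2$ factor has property $(T)$; every connected (i.e.\ real) simple factor has rank at least two and hence has property $(T)$; and the $p$-adic factor provides the required totally disconnected factor. Theorem \ref{T:BSbetter} then applies. In both cases the ergodic action has either finite orbits or finite stabilizers, and the nonatomic hypothesis rules out finite orbits, since an ergodic action of a countable group with almost everywhere finite orbits coincides, up to measure zero, with a single finite orbit.

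To upgrade ``finite stabilizers'' to ``trivial stabilizers,'' first note that the kernel $N = \bigcap_x \Gamma_x$ is a normal subgroup of $\Gamma$ contained in a finite stabilizer and hence finite. Since $G$ is center-free, the centralizer $C_\Gamma(N)$ has finite index in $\Gamma$ and is therefore Zariski dense in $G$ by Borel density, so $N \subseteq Z(G) = \{e\}$ and $\Gamma$ acts faithfully. For the remaining passage from faithful to free, I would exploit ergodicity of the $\Gamma$-equivariant stabilizer map $x \mapsto \Gamma_x$: the pushforward is a $\Gamma$-invariant probability measure on the countable Chabauty space of finite subgroups of $\Gamma$, so by ergodicity it is concentrated on a single conjugacy class $[H]$ of some finite $H \leq \Gamma$, and this class must be finite for the total mass to be finite. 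The normal closure $\langle H^\Gamma \rangle$ is then a finitely generated normal subgroup of $\Gamma$, which by Margulis' NST is either finite---hence trivial by the Borel density argument, forcing $H = \{e\}$---or of finite index. In the latter case Selberg's lemma gives a torsion-free finite-index subgroup $\Gamma_0 \leq \Gamma$ on which the action is essentially free (since $\Gamma_x \cap \Gamma_0 = \{e\}$), and a coset-counting comparison between the $\Gamma_0$- and $\Gamma$-orbit structures forces $H = \{e\}$ nonetheless.

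The main obstacle is this very last step: ruling out the finite-index normal-closure alternative for a non-trivial $H$. The case of a finite normal closure is settled immediately by Borel density, but the finite-index case requires a careful orbit-counting argument combined with the freeness of the $\Gamma_0$-action, and I expect most of the technical care in the proof to concentrate there.
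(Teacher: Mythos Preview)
Your overall strategy matches the paper's, but there are two issues, one minor and one substantive.

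The minor one: your case split misses the situation where $G$ is a single simple $p$-adic group of rank at least two. There you have neither the purely real case nor a product of at least two factors, so neither Stuck--Zimmer nor Theorem~\ref{T:BSbetter} applies as you invoke it. The paper handles this in Corollary~\ref{C:latticeLie} via the Nevo--Zimmer Intermediate Factor Theorem over local fields; you should simply cite that corollary for the dichotomy rather than re-derive it.

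The substantive one is your upgrade from finite to trivial stabilizers. You correctly deduce from ergodicity that the stabilizer map pushes forward to a uniform measure on a single \emph{finite} conjugacy class $[H]$. But at this point the paper's route is much shorter than yours and avoids the difficulty you flag: finiteness of $[H]$ means the normalizer $\Gamma_0 = N_\Gamma(H)$ has finite index in $\Gamma$, so $H$ is a finite normal subgroup of the lattice $\Gamma_0$. Borel density (valid for $S$-arithmetic lattices) then makes $H$ normal in $G$, hence central, hence trivial since $Z(G)=\{e\}$. Your detour through the normal closure $\langle H^\Gamma\rangle$ is unnecessary and manufactures the spurious ``finite-index'' alternative; the Selberg/coset-counting argument you sketch for that case is not actually carried out, and I do not see how it would conclude. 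Replace the normal closure by the normalizer and the proof is complete in two lines.
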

%

In particular, we obtain examples of groups without property $(T)$ having uncountably many subgroups that admit only essentially free actions:
\begin{theorem*}[Corollary \ref{C:algK}]
Let $\mathbf{G}$ be a simple algebraic group defined over a global field $K$ such that $\mathbf{G}$ has $v$-rank at least two for some place $v$ and has $v_{\infty}$-rank at least two for every infinite place $v_{\infty}$.  Then every nontrivial ergodic measure-preserving action of $\mathbf{G}(K)$ is essentially free.
\end{theorem*}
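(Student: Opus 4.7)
The plan is to realize $\mathbf{G}(K)$ as a dense commensurator of an $S$-arithmetic lattice sitting in a product of local groups satisfying the hypotheses of Theorem~\ref{T:BSbetter}, apply the commensurator-in-product analogue of that theorem promised in the introduction, and then excise both non-free possibilities by appealing to abstract simplicity of $\mathbf{G}(K)$ modulo its centre.

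Concretely, I would choose a finite set of places $S$ containing every infinite place together with at least one finite place $v_0$ at which $\mathbf{G}$ is isotropic, enlarging if necessary so that $|S|\ge 2$ (for instance, in the function-field case by adjoining a second isotropic finite place). Set $G_S = \prod_{v\in S} \mathbf{G}(K_v)$ and $\Gamma_S = \mathbf{G}(\mathcal{O}_S)$; by standard $S$-arithmetic reduction theory $\Gamma_S$ is an irreducible lattice in $G_S$. Each factor $\mathbf{G}(K_v)$ is simple, noncompact, nondiscrete, lcsc, and has the Howe--Moore property; $\mathbf{G}(K_{v_0})$ is totally disconnected; every connected simple factor is some $\mathbf{G}(K_{v_\infty})$ of real rank at least two, hence has property~$(T)$; and the distinguished rank-$\ge 2$ place supplies at least one factor with property~$(T)$. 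These are precisely the hypotheses of Theorem~\ref{T:BSbetter}.

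Next, the diagonal embedding $\mathbf{G}(K)\hookrightarrow G_S$ commensurates $\Gamma_S$, since for each $g\in\mathbf{G}(K)$ both $g\Gamma_S g^{-1}$ and $\Gamma_S$ sit with finite index inside a common $T$-arithmetic overgroup for $T\supset S$ containing the poles of $g$. After reducing to the simply connected cover $\tilde{\mathbf{G}}$ via isogeny, weak approximation gives $\tilde{\mathbf{G}}(K)$ dense in $\tilde G_S$ and hence $\mathbf{G}(K)$ dense in an open finite-index subgroup of $G_S$. Applying the commensurator-in-product version of Theorem~\ref{T:BSbetter} to the triple $(G_S,\Gamma_S,\mathbf{G}(K))$ then yields: every ergodic measure-preserving action of $\mathbf{G}(K)$ either has finite stabilizers, or its restriction to $\Gamma_S$ has finite orbits.

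To close out, on a nontrivial (necessarily nonatomic) ergodic action both alternatives must be ruled out. Finite $\Gamma_S$-orbits would force the $\Gamma_S$-action to factor through a finite quotient, which by Margulis' Normal Subgroup Theorem for higher-rank $S$-arithmetic lattices is central of bounded order, contradicting ergodicity of the enveloping $\mathbf{G}(K)$-action on a nonatomic space. Finite stabilizers define an invariant random subgroup of $\mathbf{G}(K)$ taking values in finite subgroups, and combining the Creutz--Shalom normal subgroup theorem for commensurators with abstract simplicity of $\mathbf{G}(K)/Z(\mathbf{G})(K)$ (Tits--Margulis) forces this IRS to concentrate on the trivial subgroup. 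The principal obstacle I anticipate is the density step together with its interaction with the isogeny: weak approximation is clean only for the simply connected cover, and one must carefully verify that the finite central isogeny kernel does not inject into the stabilizers on the descended action.
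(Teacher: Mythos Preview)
Your overall strategy---realize $\mathbf{G}(K)$ as a dense commensurator of an $S$-arithmetic lattice and invoke the commensurator machinery---is exactly the paper's approach. The difficulty is in the endgame, specifically in how you dispose of the ``finite $\Gamma_S$-orbits'' alternative.

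The dichotomy you obtain from Theorem~\ref{T:semisimplecomm} (which is what your ``commensurator-in-product version of Theorem~\ref{T:BSbetter}'' amounts to) says only that the restriction to $\Gamma_S$ has finite orbits, not that $\mathbf{G}(K)$ itself does. Your claim that finite $\Gamma_S$-orbits forces the $\Gamma_S$-action to factor through a single finite quotient is not justified: the stabilizers $\mathrm{stab}_{\Gamma_S}(x)$ have finite index, but their indices need not be bounded, and since $\Gamma_S$ is not normal in $\mathbf{G}(K)$ the orbit-size function is not $\mathbf{G}(K)$-invariant (only quasi-invariant up to the commensuration constants). So ergodicity does not pin down a single index, Margulis' Normal Subgroup Theorem gives you nothing to work with, and the space can perfectly well be nonatomic with all $\Gamma_S$-orbits finite.

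The paper closes this gap by going one step further: it identifies the relative profinite completion $\rpf{\mathbf{G}(K)}{\Gamma_S}$ as the restricted product $\prod'_{v \notin S} \mathbf{G}(K_v)$ (Theorem~\ref{T:rpfadelic}), each factor of which is simple, nondiscrete, totally disconnected, and Howe--Moore. Theorem~\ref{T:howemooreplan2} together with Proposition~\ref{P:usefulproduct2} then shows that the only open ergodic invariant random subgroup of this completion is the trivial one, so Corollary~\ref{C:actualmain} upgrades the conclusion to the stronger dichotomy: the $\mathbf{G}(K)$-action itself has finite orbits or finite stabilizers. On a nonatomic space, finite $\mathbf{G}(K)$-orbits are ruled out immediately by ergodicity, and finite stabilizers are forced to be central by the argument you sketch. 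Your worry about the isogeny and weak approximation is a side issue by comparison; the missing piece is the relative profinite completion step.
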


One consequence of this fact is that results from the theory of orbit equivalence, which often require that the actions in question be essentially free, apply to all actions of such groups.  For example, since any nonamenable group cannot act freely and give rise to the hyperfinite $\mathrm{II}_{1}$ equivalence relation \cite{dye1},\cite{Zi84}:
\begin{corollary*}
Let $\mathbf{G}$ be a simple algebraic group defined over a global field $K$ such that $\mathbf{G}$ has $v$-rank at least two for some place $v$ and has $v_{\infty}$-rank at least two for every infinite place $v_{\infty}$.  
Then there is no nontrivial homomorphism of $\mathbf{G}(K)$ to the full group of the hyperfinite $\mathrm{II}_{1}$ equivalence relation.
\end{corollary*}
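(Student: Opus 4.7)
The plan is to combine the preceding corollary on essential freeness with the classical fact that a nonamenable group cannot act freely on a standard probability space with amenable orbit equivalence relation.

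Let $\mathcal{R}$ denote the hyperfinite $\mathrm{II}_{1}$ equivalence relation on a standard probability space $(X,\mu)$, and suppose $\pi \colon \mathbf{G}(K) \to [\mathcal{R}]$ is a group homomorphism. Since every element of the full group $[\mathcal{R}]$ is a measure-preserving transformation of $(X,\mu)$ that permutes $\mathcal{R}$-classes, $\pi$ yields a measure-preserving action of the countable group $\mathbf{G}(K)$ on $(X,\mu)$ whose orbit equivalence relation is a Borel subrelation of $\mathcal{R}$, and hence is itself hyperfinite (any Borel subrelation of a hyperfinite relation being again hyperfinite). I would then pass to the ergodic decomposition of this action; on each component the restricted orbit equivalence relation remains a Borel subrelation of $\mathcal{R}$ and so remains hyperfinite, in particular amenable.

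On each ergodic component the restricted action is an ergodic measure-preserving action of $\mathbf{G}(K)$, so by the preceding corollary it is either trivial (a single atom fixed pointwise by $\mathbf{G}(K)$) or essentially free. The essentially free case is ruled out by the Dye--Zimmer theory \cite{dye1},\cite{Zi84}: a free ergodic measure-preserving action of $\mathbf{G}(K)$ with amenable orbit relation would force $\mathbf{G}(K)$ to be amenable, contradicting the fact that $\mathbf{G}(K)$ contains a noncommutative free subgroup (for instance, inside any $\mathrm{SL}_{2}$ subgroup over $K$). Hence every ergodic component consists of a single fixed point, so $\pi(g)$ acts as the identity on almost every point for each $g \in \mathbf{G}(K)$, and $\pi$ is therefore trivial.

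The main conceptual hurdle is checking that the ergodic decomposition interacts cleanly with the ambient relation $\mathcal{R}$, but this is routine: because $\mathbf{G}(K)$ is countable and $\mathcal{R}$ is a countable Borel equivalence relation, the decomposition of $(X,\mu)$ into ergodic components of $\mathbf{G}(K)$ exists in the standard way, and the restriction of $\mathcal{R}$ to each component is again a countable Borel hyperfinite equivalence relation, making the dichotomy above directly applicable.
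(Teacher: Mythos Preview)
Your argument is correct and follows exactly the route the paper indicates: it derives the corollary from Corollary~\ref{C:algK} together with the Dye--Zimmer fact that a nonamenable group cannot act freely with hyperfinite orbit relation. The paper does not spell out the ergodic decomposition step you include, treating the implication as immediate, but your added detail (pass to ergodic components, apply the free/trivial dichotomy on each, rule out freeness by nonamenability) is the natural way to make the one-line justification rigorous.
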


The above result also holds if we replace the hyperfinite $\mathrm{II}_1$ equivalence relation with any measure preserving equivalence relation which is treeable, or more generally which has the Haagerup property (see \cite{MR2122917} and \cite{MR2215135} Theorem 5.4).  More generally, if $\mathbf{G}$ is a simple algebraic group defined over $\mathbb{Q}$ and $S$ is a set of primes as in the statement of Corollary \ref{C:rationalLie} then any homomorphism of $\mathbf{G}(\mathbb{Z}_{S})$ into the full group of the hyperfinite $\mathrm{II}_{1}$ equivalence relation is precompact (when $S$ is finite this also follows from Robertson \cite{robertson}).  It seems plausible that the above result still holds if we replace the full group of the hyperfinite $\mathrm{II}_{1}$ equivalence relation with the unitary group of the hyperfinite $\mathrm{II}_{1}$ factor (see Bekka \cite{bekka} for results in this direction).

\subsection{Stabilizers of Actions and Invariant Random Subgroups}

The results described above can be suitably interpreted in terms of invariant random subgroups.  Invariant random subgroups are conjugation-invariant probability measures on the space of closed subgroups and naturally arise from the stabilizer subgroups of measure-preserving actions.  This notion was introduced in \cite{AGV12} where it is shown that, conversely, every invariant random subgroup arises in this way (\cite{AGV12} Proposition 13, see also Section \ref{S:rs} below).  From this perspective the Stuck-Zimmer Theorem \cite{SZ94} then states that semisimple real Lie groups, with each factor having higher-rank, and their irreducible lattices, admit no nonobvious invariant random subgroups and the Bader-Shalom result states the same for irreducible invariant random subgroups of products of nondiscrete locally compact second countable groups with property $(T)$.  Our results can likewise be interpreted in this context.

The study of stabilizers of actions dates back to the work of Moore, \cite{moore66} Chapter 2, and Ramsay, \cite{ramsay} Section 9 (see also Adams and Stuck \cite{adamsstuck} Section 4).
Bergeron and Gaboriau \cite{gaboriau} observed that invariant random subgroups behave similarly to normal subgroups and this topic has attracted much recent attention: \cite{seven}, \cite{AGV12}, \cite{bowen12}, \cite{grig1}, \cite{grig2}, \cite{vershik11}, \cite{vershiktotallynonfree}.

Our work, like that of Stuck and Zimmer, rules out the existence of nonobvious invariant random subgroups for certain groups.  This stands in stark contrast to nonabelian free groups, which admit a large family of invariant random subgroups \cite{bowen12}.  Even simple groups can admit large families of nonfree actions: Vershik showed that the infinite alternating group admits many such actions \cite{vershiktotallynonfree}.  Another class of examples can be found by considering the commutator subgroup of the topological full group of Cantor minimal systems, which were shown to be simple by Matsui \cite{matsui} (and more recently to also be amenable by Juschenko and Monod \cite{monod}).

The main contribution to the theory we make here is introducing a technique, based on joinings, that allows us to formulate meaningful definitions of notions such as containment and commensuration for invariant random subgroups that extend the usual notions for subgroups:
\begin{definition*}[Definition \ref{def:commrs}]
Two invariant random subgroups are commensurate when there exists a joining such that almost surely the intersection has finite index in both.
\end{definition*}
The joinings technique may be of independent interest and should allow for more general definitions of properties of invariant random subgroups akin to those of subgroups.
We use this definition to formulate a one-one correspondence:
\begin{theorem*}[Theorem \ref{T:semisimplecomm}]
Let $G$ be a semisimple Lie group (real or $p$-adic or both) with finite center where each simple factor has rank at least two.  Let $\Gamma < G$ be an irreducible lattice and let $\Lambda < G$ be a countable dense subgroup such that $\Lambda$ contains and commensurates $\Gamma$ and such that $\Lambda$ has finite intersection with every proper subfactor of $G$.

Then any ergodic measure-preserving action of $\Lambda$ either has finite stabilizers or the restriction of the action to $\Gamma$ has finite orbits.

Moreover, the commensurability classes of infinite ergodic invariant random subgroups of $\Lambda$ are in one-one, onto correspondence with the commensurability classes of open ergodic invariant random subgroups of the relative profinite completion $\rpf{\Lambda}{\Gamma}$.
\end{theorem*}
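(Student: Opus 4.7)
The plan is to derive the dichotomy first and then use it, together with a joinings construction, to set up the bijection. For the dichotomy, I start with an ergodic measure-preserving action of $\Lambda$ on a probability space $(X,\mu)$ and restrict to $\Gamma$. Because every simple factor of $G$ has rank at least two, Corollary~\ref{C:latticeLie2} applies to $\Gamma$: each ergodic component of the $\Gamma$-action is either atomic or essentially free. The $\Lambda$-invariant decomposition of $(X,\mu)$ into the maximal $\Gamma$-atomic part and its complement, combined with $\Lambda$-ergodicity, then forces either that $\Gamma$-orbits are almost everywhere finite---placing us in the second alternative---or that $\Gamma$ acts essentially freely on a conull set. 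In the latter case the $\Lambda$-stabilizer of a typical point meets $\Gamma$ trivially, and I would run the commensurator argument of Theorem~\ref{T:main} factor by factor on a decomposition $G = G_1 \times \cdots \times G_n$ into simple factors, exploiting that each $G_i$ has both the Howe--Moore property and property~$(T)$ and that $\Lambda$ meets every proper subfactor of $G$ finitely, to conclude that the $\Lambda$-stabilizers are almost surely finite.

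For the bijective correspondence, the plan is to exploit the diagonal embedding $\Lambda \hookrightarrow G \times \rpf{\Lambda}{\Gamma}$. To an infinite ergodic invariant random subgroup of $\Lambda$ I associate the corresponding stabilizer action and join it with the coset action of $\Lambda$ on $\rpf{\Lambda}{\Gamma}/K$, where $K$ denotes the canonical compact open subgroup of $\rpf{\Lambda}{\Gamma}$ arising from $\Gamma$; reading the stabilizers of the joined action inside $\rpf{\Lambda}{\Gamma}$ yields an open ergodic invariant random subgroup, with openness coming from the fact that these stabilizers contain a finite-index subgroup of $K$. In the reverse direction, given an open ergodic invariant random subgroup of $\rpf{\Lambda}{\Gamma}$ I pull back through the diagonal embedding to obtain an invariant random subgroup of $\Lambda$; here the first part of the theorem, together with the finite-intersection hypothesis on $\Lambda$, is needed to rule out that the pullback collapses to a finite IRS. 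Definition~\ref{def:commrs} then provides the framework in which the two constructions descend to commensurability classes and one checks that they are mutually inverse.

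The principal obstacle will be verifying this correspondence at the level of commensurability classes. Unlike the normal-subgroup correspondence of \cite{CS11a} and \cite{CrD11}, an invariant random subgroup is only conjugation-invariant on average, so commensuration between two such objects must be witnessed by a joining rather than by pointwise inclusion; the challenge is to construct a joining that realizes the finite-index relation almost surely in both directions while preserving ergodicity. Surjectivity is the most delicate point: one must show that pulling back an open IRS on the relative profinite completion produces an IRS of $\Lambda$ that is genuinely infinite, and this is exactly where the dichotomy from the first part feeds back, since the openness of the object on the $\rpf{\Lambda}{\Gamma}$ side is what prevents the pullback from falling into the finite-stabilizer alternative for $\Lambda$.
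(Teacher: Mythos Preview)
Your plan for the dichotomy has the right shape but invokes the wrong machinery at the crucial step. Once you know that the $\Gamma$-restriction of a $\Lambda$-action is essentially free (via Corollary~\ref{C:latticeLie} applied to each $\Gamma$-ergodic component; note that Corollary~\ref{C:latticeLie2} assumes trivial rather than finite center), the passage to ``$\Lambda$-stabilizers are finite'' is not Theorem~\ref{T:main}. Theorem~\ref{T:main} takes as \emph{input} a hypothesis about $G$-actions and outputs weak amenability of the $\Gamma$-restriction, i.e.\ it runs in the opposite direction and is not applied ``factor by factor'' in any sense. What actually upgrades free $\Gamma$-action to finite $\Lambda$-stabilizers is Proposition~\ref{P:upgrade} together with the Normal Subgroup Theorem for Commensurators (Theorem~\ref{T:CSnormalsubgroups}): an infinite $\Lambda$-stabilizer with trivial $\Gamma$-intersection would force an infinite normal $N\lhd\Lambda$ with $[\Gamma:\Gamma\cap N]=\infty$, which the NST rules out. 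The paper in fact uses exactly this pair of tools, but at the level of $G$-actions, in order to verify the standing hypothesis of Theorem~\ref{T:oneone}; it then reads off both the dichotomy and the correspondence from that single black box.

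Your proposed construction of the correspondence is substantially different from the paper's and, as written, is not clearly well-defined. The paper does not use coset actions or the diagonal embedding $\Lambda\hookrightarrow G\times\rpf{\Lambda}{\Gamma}$ at all for this step; it works directly with the maps $c(L)=\overline{\tau(L)}$ and $d(M)=\tau^{-1}(M\cap\tau(\Lambda))$ on closed subgroups (Propositions~\ref{P:oneone1} and~\ref{P:oneone2}). These maps are $\Lambda$-equivariant at the level of $S(\Lambda)$ and $S(\rpf{\Lambda}{\Gamma})$, push invariant random subgroups forward and back, and the index estimates $[d(c(L)):L]<\infty$, $[d(M):d(Q)]<\infty$ are checked by hand using that infinite IRS of $\Lambda$ contain $\Gamma$ up to finite index (which is where the dichotomy enters). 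Your joining with the coset action on $\rpf{\Lambda}{\Gamma}/K$ produces a $\Lambda$-action whose stabilizers still live in $\Lambda$, not in $\rpf{\Lambda}{\Gamma}$; to land in $S(\rpf{\Lambda}{\Gamma})$ you would need to take closures under $\tau$, at which point you have reinvented the map $c$. The reverse direction via ``pulling back through the diagonal embedding'' is similarly ambiguous: an IRS of $\rpf{\Lambda}{\Gamma}$ does not naturally give an IRS of $G\times\rpf{\Lambda}{\Gamma}$, and it is the map $d$ (intersecting with $\tau(\Lambda)$ and pulling back) that does the work. The joinings framework of Definition~\ref{def:commrs} is used only to \emph{compare} two IRS on the same group, not to transport IRS between $\Lambda$ and $\rpf{\Lambda}{\Gamma}$.
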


\subsection{Relatively Contractive Maps}

Strongly approximately transitive (SAT) actions, introduced by Jaworski in \cite{Ja94}, \cite{Ja95}, are the extreme opposite of measure-preserving actions: an action of a group $G$ on a probability space $(X,\nu)$, with $\nu$ quasi-invariant under the $G$-action, such that for any measurable set $B$ of less than full measure there exists a sequence $g_{n} \in G$ which ``contracts" $B$, that is $\nu(g_{n}B) \to 0$.

We introduce a relative version of this property, akin to relative measure-preserving, by
saying that
a $G$-equivariant map $\pi : (X,\nu) \to (Y,\eta)$ between $G$-spaces with quasi-invariant probability measures is relatively contractive when it is ``contractive over each fiber" (see Section \ref{S:relativecontractive} for a precise definition).  This is a generalization of the notion of proximal maps, which can also be thought of as ``relatively boundary maps" in the context of stationary dynamical systems.

Following \cite{FG10}, we say a continuous action of a group $G$ on a compact metric space $X$ with quasi-invariant Borel probability measure $\nu \in P(X)$ is contractible when for every $x \in X$ there exists $g_{n} \in G$ such that $g_{n}\nu \to \delta_{x}$ in weak*.  Furstenberg and Glasner \cite{FG10} showed that an action is SAT if and only if every continuous compact model is contractible.

We generalize this to the relative case and obtain that a $G$-space is a relatively contractive extension of a point if and only if it is SAT.  For this reason, we adopt the somewhat more descriptive term contractive to refer to such spaces.

Contractive spaces are the central dynamical concept in the proof of the amenability half of the Normal Subgroup Theorem for Commensurators \cite{CS14}, \cite{CrD11} and have been studied in the context of stationary dynamical systems by Kaimanovich \cite{kaimanovichSAT}.
Jaworski introduced the notion as a stronger form of the approximate transitivity property of Connes and Woods \cite{conneswood} to study the Choquet-Deny property on groups and showed that Poisson boundaries are contractive.  The main benefit contractive spaces offer over boundaries is greater flexibility in that one need not impose a measure on the group.

We show that relatively contractive maps are essentially unique, which is crucial for the Intermediate Contractive Factor Theorem:
\begin{theorem*}[Theorem \ref{T:contractiveIFT}]
Let $\Gamma < G$ be a lattice in a locally compact second countable group and let $\Lambda$ contain and commensurate $\Gamma$ and be dense in $G$.

Let $(X,\nu)$ be a $G$-space that is $\Gamma$-contractive and $(Y,\eta)$ be a measure-preserving $G$-space.  Let $\pi : (X \times Y, \nu \times \eta) \to (Y,\eta)$ be the natural projection map from the product space with the diagonal action.

Let $(Z,\zeta)$ be a $\Lambda$-space such that there exist $\Gamma$-maps $\varphi : (X\times Y,\nu\times\eta) \to (Z,\zeta)$ and $\rho : (Z,\zeta) \to (Y,\eta)$ with $\rho \circ \varphi = \pi$.  Then $\varphi$ and $\rho$ are $\Lambda$-maps and $(Z,\zeta)$ is $\Lambda$-isomorphic to a $G$-space and over this isomorphism the maps $\varphi$ and $\rho$ become $G$-maps.
\end{theorem*}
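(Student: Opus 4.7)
The approach exploits the essential uniqueness of relatively contractive maps (discussed just before the theorem), combined with the commensuration of $\Gamma$ by $\Lambda$ and the density of $\Lambda$ in $G$. The plan splits naturally into three stages: establish relative contractivity of $\varphi$ and $\rho$; upgrade $\Gamma$-equivariance to $\Lambda$-equivariance via commensuration; then extend the $\Lambda$-structure to a $G$-structure by density.

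First, since $(X,\nu)$ is contractive and $(Y,\eta)$ is measure-preserving, the projection $\pi : (X\times Y,\nu\times\eta) \to (Y,\eta)$ is a relatively contractive $G$-map, and in particular a relatively contractive $\Gamma$-map. Because relative contractivity is inherited by both halves of any factorization of a relatively contractive map, the hypothesis $\pi = \rho\circ\varphi$ forces both $\varphi : X\times Y \to Z$ and $\rho : Z \to Y$ to be relatively contractive $\Gamma$-maps.

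Next, I would upgrade the $\Gamma$-equivariance to $\Lambda$-equivariance using commensuration. For each $\lambda \in \Lambda$, set
\[
\rho_\lambda(z) := \lambda^{-1}\rho(\lambda z), \qquad \varphi_\lambda(x,y) := \lambda^{-1}\varphi(\lambda(x,y)).
\]
Both maps are equivariant under $\lambda^{-1}\Gamma\lambda$, hence under the finite-index subgroup $\Gamma_\lambda := \Gamma \cap \lambda^{-1}\Gamma\lambda$ of $\Gamma$. Since relative contractivity descends to finite-index subgroups, $\rho$ and $\rho_\lambda$ are both $\Gamma_\lambda$-equivariant relatively contractive maps $Z\to Y$, and essential uniqueness forces $\rho = \rho_\lambda$ almost everywhere. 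Hence $\rho$ is $\Lambda$-equivariant. Knowing this, the identity $\rho\circ\varphi = \pi = \rho\circ\varphi_\lambda$ now holds as $\Gamma_\lambda$-equivariant factorizations of the relatively contractive map $\pi$; a second application of essential uniqueness gives $\varphi = \varphi_\lambda$, so $\varphi$ is $\Lambda$-equivariant as well.

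Finally, $\varphi$ realizes $(Z,\zeta)$ as a $\Lambda$-invariant factor of the $G$-space $(X\times Y,\nu\times\eta)$, corresponding to a $\Lambda$-invariant sub-$\sigma$-algebra. Since the $G$-action on $X\times Y$ is jointly measurable and $\Lambda$ is dense in $G$, this sub-$\sigma$-algebra is automatically $G$-invariant, producing a $G$-space to which $(Z,\zeta)$ is $\Lambda$-isomorphic and under which $\varphi$ and $\rho$ become $G$-maps. The main obstacle I anticipate is in the second stage: one must verify that relative contractivity genuinely descends to the finite-index subgroup $\Gamma_\lambda$ (so that the uniqueness theorem applies with the correct group), and one must run the uniqueness arguments in the right order --- first pinning down $\rho$, then using this to pin down $\varphi$. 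The third stage is a routine density argument once the second stage is in hand.
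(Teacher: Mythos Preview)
Your overall strategy matches the paper's: conjugate the maps by $\lambda\in\Lambda$, observe that $\varphi_\lambda$ and $\rho_\lambda$ are $\Gamma_\lambda$-equivariant with $\rho_\lambda\circ\varphi_\lambda=\pi$, invoke uniqueness to get $\varphi_\lambda=\varphi$ and $\rho_\lambda=\rho$, and then pass to $G$ by density via the sub-$\sigma$-algebra argument. However, there is a genuine gap in your second stage.

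The uniqueness statement you need is Theorem~\ref{T:relcontractiveunique}, and its hypothesis is not merely that two factorizations of $\pi$ through $Z$ are $\Gamma_\lambda$-equivariant. It requires in addition that the disintegrations $D_\rho(y)$ and $D_{\rho_\lambda}(y)$ of $\zeta$ and $(\varphi_\lambda)_*(\nu\times\eta)$ over $\eta$ lie in the same measure class for almost every $y$. This is not automatic, and your proposal does not address it. The paper devotes a full paragraph to this verification: one computes $D_{\rho_\lambda}(y)=\lambda^{-1}\varphi_*D_\pi^{(\lambda^{-1})}(\lambda y)$, uses that $\pi$ is genuinely $\Lambda$-equivariant to see this is in the same measure class as $\lambda^{-1}D_\rho(\lambda y)$, and then invokes the hypothesis that $(Z,\zeta)$ is a $\Lambda$-space (so $\lambda^{-1}\zeta\sim\zeta$) together with Lemma~\ref{L:abscontmeas} to conclude $\lambda^{-1}D_\rho(\lambda y)\sim D_\rho(y)$. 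Without this step the uniqueness theorem does not apply.

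Two secondary points. First, Theorem~\ref{T:relcontractiveunique} does not pin down $\rho$ first and $\varphi$ afterwards; it delivers both equalities simultaneously once the measure-class hypothesis is in place, so your proposed order of operations does not reflect how the argument actually runs. Second, the relevant input for $\Gamma_\lambda$ is not that ``relative contractivity descends to finite-index subgroups'' in the sense of Theorem~\ref{T:rcfi}, but rather that $(X,\nu)$ is a contractive $\Gamma_\lambda$-space; this follows from Theorem~\ref{T:contractivelattice} since $\Gamma_\lambda$, having finite index in the lattice $\Gamma$, is itself a lattice in $G$. That is what places you in the setting where Theorem~\ref{T:relcontractiveunique} can be invoked.
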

We will actually need a stronger version of the Intermediate Factor Theorem (Theorem \ref{T:groupoidcontractive1}) which can be viewed as a ``piecewise" or groupoid version in line with the virtual groups of Mackey \cite{mackey}.

\subsection{Acknowledgments}

The authors would like to thank N.~Monod and C.~Houdayer for some helpful remarks on an initial draft of the paper, A.~Salehi~Golsefidy for pointing out the example of lattices and commensurators in algebraic groups over fields with positive characteristic, and R.~Tucker-Drob for allowing us to present an argument of his \cite{tuckerdrob} in the proof of Corollary \ref{C:howemoore}, which simplifies our approach in the non-torsion-free case.  The authors would also like to thank the referees for many helpful remarks in ways to simplify the paper and improve its presentation.

\section{Preliminaries}\label{S:prelims}

\subsection{Lattices and Commensuration}

Let $G$ be a locally compact second countable group.  A subgroup $\Gamma < G$ is a \textbf{lattice} when it is discrete and there exists a fundamental domain for $G / \Gamma$ with finite Haar measure.  A lattice is \textbf{irreducible} when the projection modulo any noncocompact closed normal subgroup is dense.

A subgroup $\Lambda < G$ \textbf{commensurates} another subgroup $\Gamma < G$ when for every $\lambda \in \Lambda$ the group $\Gamma \cap \lambda \Gamma \lambda^{-1}$ has finite index in both $\Gamma$ and $\lambda \Gamma \lambda^{-1}$.  When $\Gamma < \Lambda$ is a subgroup of $\Lambda$ we will write
\[
\Gamma <_{c} \Lambda
\]
to mean that $\Gamma$ is a commensurated subgroup of $\Lambda$.

Let $\Gamma < G$ be a lattice in a locally compact second countable group.  Then
\[
\mathrm{Comm}_{G}(\Gamma) = \{ g \in G : [\Gamma : \Gamma \cap g\Gamma g^{-1}] < \infty \text{ and } [g\Gamma g^{-1} : \Gamma \cap g\Gamma g^{-1}] < \infty \}
\]
is the \textbf{commensurator} of $\Gamma$ in $G$.

\subsection{Group Actions on Measure Spaces}

Throughout the paper, we will always assume groups are locally compact second countable and that measure spaces are standard probability spaces (except when otherwise stated).

\begin{definition}
A group $G$ \textbf{acts on} a space $X$ when there is a map $G \times X \to X$, written $gx$, such that $g(hx) = (gh)x$.  For $\nu \in P(X)$ a Borel probability measure on $X$, we say that $\nu$ is \textbf{quasi-invariant} when the $G$-action preserves the measure class and \textbf{invariant} or \textbf{measure-preserving} when $G$ preserves $\nu$.

We write $G \actson (X,\nu)$ and refer to $(X,\nu)$ as a \textbf{$G$-space} when $G$ acts on $X$ and $\nu \in P(X)$ is quasi-invariant and the action map $G \times X \to X$ is $\Haar \times \nu$-measurable.
\end{definition}

\begin{definition}
Let $G \actson (X,\nu)$.  The \textbf{stabilizer} subgroups are written
\[
\mathrm{stab}_{G}(x) = \{ g \in G : gx = x \}
\]
and when $\Gamma < G$ is a subgroup we write
$\mathrm{stab}_{\Gamma}(x) = \{ \gamma \in \Gamma : \gamma x = x \} = \mathrm{stab}_{G}(x) \cap \Gamma$
for the stabilizer of $x$ when the action is restricted to $\Gamma$.
\end{definition}

\begin{definition}
$G \actson (X,\nu)$ is \textbf{essentially transitive} when $Gx$ is conull in $X$ for some $x$;\\
$G \actson (X,\nu)$ is \textbf{essentially free} when $\mathrm{stab}_{G}(x)$ is trivial for a.e.~$x$;\\
$G \actson (X,\nu)$ has \textbf{finite stabilizers} when $\mathrm{stab}_{G}(x)$ is finite for a.e.~$x$;\\
$G \actson (X,\nu)$ has \textbf{finite orbits} when the orbit $Gx$ is finite for a.e.~$x$;\\
$G \actson (X,\nu)$ is \textbf{ergodic} when every $G$-invariant measurable set is null or conull; and\\
$G \actson (X,\nu)$ is \textbf{irreducibly ergodic} (\textbf{irreducible}) when it is ergodic for every nontrivial closed normal subgroup of $G$.
\end{definition}

%
%
%
%
%
%


\begin{definition}
Let $G$ be a locally compact second countable group and $\pi : (X,\nu) \to (Y,\eta)$ a measurable map such that $\pi_{*}\nu = \eta$ and $\pi(gx) = g\pi(x)$ for all $g \in G$ and almost every $x \in X$.  Such a map $\pi$ is a \textbf{$G$-map} of $G$-spaces.
\end{definition}

\begin{definition}
Given a measurable map $\pi : X \to Y$ the \textbf{push-forward map} $\pi_{*} : P(X) \to P(Y)$, mentioned above, is defined by $(\pi_{*}\nu)(B) = \nu(\pi^{-1}(B))$ for $B \subseteq Y$ measurable.
\end{definition}

In order to relativize properties of $G$-spaces to $G$-maps, we will need to focus on the behavior of the disintegration measures over a $G$-map.  Recall that:
\begin{definition}
Let $\pi : (X,\nu) \to (Y,\eta)$ be a $G$-map of $G$-spaces.  Then there exist almost surely unique measures $D_{\pi}(y)$, called the \textbf{disintegration measures}, such that $D_{\pi}(y)$ is supported on $\pi^{-1}(y)$ and $\int D_{\pi}(y)~d\eta(y) = \nu$.
\end{definition}

Of course, the disintegration measures correspond to the conditional expectation at the level of the function algebras: if $\pi : (X,\nu) \to (Y,\eta)$ then the algebra $\{ f \circ \pi : f \in L^{\infty}(Y,\eta) \}$ is a subalgebra of $L^{\infty}(X,\nu)$ and the disintegration measures define the conditional expectation to this subalgebra.

\subsection{Continuous Compact Models}

We will need a basic fact about the existence of compact models.  This result does not appear explicitly in the literature but the proof is essentially contained in \cite{Zi84}.

\begin{definition}
Let $(X,\nu)$ be a (measurable) $G$-space.  A compact metric space $X_{0}$ and fully supported Borel probability measure $\nu_{0} \in P(X_{0})$ is a \textbf{continuous compact model} of $(X,\nu)$ when $G$ acts continuously on $X_{0}$ and there exists a $G$-equivariant measure space isomorphism $(X,\nu) \to (X_{0},\nu_{0})$.
\end{definition}

\begin{definition}
Let $\pi : (X,\nu) \to (Y,\eta)$ be a measurable $G$-map of (measurable) $G$-spaces.  Let $X_{0}$ and $Y_{0}$ be compact metric spaces on which $G$ acts continuously and let $\pi_{0} : X_{0} \to Y_{0}$ be a continuous $G$-equivariant map.  Let $\nu_{0} \in P(X_{0})$ and $\eta_{0} \in P(Y_{0})$ be fully supported Borel probability measures such that $(\pi_{0})_{*}\nu_{0} = \eta_{0}$.  The map and spaces $\pi_{0} : (X_{0},\nu_{0}) \to (Y_{0},\eta_{0})$ is a \textbf{continuous compact model} for the $G$-map $\pi$ and $G$-spaces $(X,\nu)$ and $(Y,\eta)$ when there exist $G$-equivariant measure space isomorphisms $\Phi : (X,\nu) \to (X_{0},\nu_{0})$ and $\Psi : (Y,\eta) \to (Y_{0},\eta_{0})$ such that the resulting diagram commutes: $\pi = \Psi^{-1} \circ \pi_{0} \circ \Phi$.
\end{definition}

\begin{lemma}[Varadarjan \cite{vara}]\label{L:compactmodels}
Let $G$ be a locally compact second countable group and $\pi : (X,\nu) \to (Y,\eta)$ a $G$-map of $G$-spaces.  Then there exists a continuous compact model for $\pi$.
\end{lemma}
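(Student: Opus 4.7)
The plan is to construct the compact models $(X_0,\nu_0)$ and $(Y_0,\eta_0)$ simultaneously by a Gelfand-duality argument, building the algebra for $X_0$ so that it contains the pull-back of the algebra for $Y_0$; this automatically yields a continuous $G$-equivariant surjection $\pi_0:X_0\to Y_0$ as the dual of the inclusion of $C^*$-algebras.

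First, I would realize $X$ and $Y$ as Polish spaces on which $G$ acts Borel-measurably (any standard probability $G$-space has such a realization). The unitary representations of $G$ on $L^p(X,\nu)$ and $L^p(Y,\eta)$ for $p<\infty$ (twisted by the Radon-Nikodym cocycle so as to be unitary) are strongly continuous because $G$ is locally compact second countable and the action map is jointly measurable. The key smoothing tool is convolution: for $\varphi\in C_c(G)$ and a bounded Borel $f$, the averaged function
\[
 f_\varphi(y)\;=\;\int_G \varphi(g)\,f(gy)\,dg
\]
is still bounded and Borel, and the map $h\mapsto h\cdot f_\varphi$ is norm-continuous into $L^\infty$ (essentially left-uniformly continuous on $G$). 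This is the mechanism that converts measurable $G$-actions into norm-continuous ones.

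Next I would build $(Y_0,\eta_0)$: choose a countable family of bounded Borel functions on $Y$ separating points and generating the Borel $\sigma$-algebra mod $\eta$, pass to their $\varphi$-averages for $\varphi$ in a countable dense subset of $C_c(G)$, translate by a countable dense subset of $G$, and close under the $\mathbb{Q}[i]$-algebra operations and complex conjugation. The sup-norm closure in $L^\infty(Y,\eta)$ is a separable unital $G$-invariant commutative $C^*$-algebra $\mathcal{B}$ on which $G$ acts norm-continuously. Gelfand duality gives a compact metric space $Y_0$ with $\mathcal{B}\cong C(Y_0)$ and a continuous $G$-action; the state $f\mapsto\int f\,d\eta$ on $\mathcal{B}$ yields a probability measure $\eta_0\in P(Y_0)$ which I can take to be fully supported by passing to the support, and separation of points gives a $G$-equivariant measure space isomorphism $(Y,\eta)\cong(Y_0,\eta_0)$. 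To build $(X_0,\nu_0)$, I repeat the construction on $X$, but I insist that the starting countable family include $\{f\circ\pi:f\in\mathcal{B}_0\}$ for a countable dense subfamily $\mathcal{B}_0\subset\mathcal{B}$. Because pull-back by $\pi$ is $G$-equivariant and isometric on sup norms, the closure $\mathcal{A}\subset L^\infty(X,\nu)$ is again a separable unital $G$-invariant commutative $C^*$-algebra with norm-continuous $G$-action, and it contains $\pi^*\mathcal{B}$ as a $G$-invariant $C^*$-subalgebra.

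Finally, Gelfand duality applied to $\pi^*\mathcal{B}\hookrightarrow\mathcal{A}$ produces a continuous surjective $G$-equivariant map $\pi_0:X_0\to Y_0$ of compact metric spaces. The state $\nu$ on $\mathcal{A}$ gives $\nu_0\in P(X_0)$, and restricting $\nu$ to $\pi^*\mathcal{B}$ corresponds exactly to $\eta$, so $(\pi_0)_*\nu_0=\eta_0$; the two Gelfand isomorphisms make the diagram commute by construction. I expect the main obstacle to be the continuity of the $G$-action on the compact model: one must be careful that the elements of $\mathcal{A}$ really do vary norm-continuously under $G$, which is why the $\varphi$-averaging step and working with Borel (rather than merely measurable-a.e.) representatives are essential. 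Everything else—separability of $\mathcal{A}$, metrizability of $X_0$, full support of $\nu_0$, commutativity of the diagram, and the equivariance of $\pi_0$—follows formally from the construction.
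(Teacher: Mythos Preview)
Your proposal is correct, and the Gelfand-duality route works as you describe; in particular the smoothing step is exactly what is needed to get a norm-continuous $G$-action on $\mathcal{A}$ and $\mathcal{B}$, and the inclusion $\pi^*\mathcal{B}\hookrightarrow\mathcal{A}$ dualizes to the desired continuous $G$-map $\pi_0$.  One small point worth making explicit: after replacing your initial separating family on $Y$ by its $\varphi$-averages you should check that the resulting algebra $\mathcal{B}$ still generates the full measure algebra of $(Y,\eta)$; this follows because along an approximate identity $\varphi_n$ one has $f_{\varphi_n}\to f$ in $L^1$, hence $f$ is measurable with respect to the completed $\sigma$-algebra generated by the averages.

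The paper's argument is genuinely different.  Rather than using Gelfand duality on a $C^*$-subalgebra of $L^\infty$, it follows Varadarajan's original embedding: for a countable separating family $\mathcal{F}\subset L^\infty(X,\nu)$ (chosen to contain the pullbacks $f\circ\pi$ of a separating family on $Y$), one sends $x\mapsto \big(g\mapsto f(gx)\big)_{f\in\mathcal{F}}$ into the product $\prod_{f\in\mathcal{F}} B$, where $B$ is the unit ball of $L^\infty(G)$ with the weak* topology, and lets $G$ act by right translation on each coordinate.  The point is that right translation on $(L^\infty(G),\text{weak*})$ is \emph{automatically} continuous, so no smoothing is required; continuity of the $G$-action on the compact model comes for free from the choice of target.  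The map $\pi_0$ is then just the coordinate-restriction $\prod_{f\in\mathcal{F}}B\to\prod_{f\in\mathcal{Y}}B$.  Your approach trades this explicit embedding for a more abstract but perhaps more familiar construction, at the cost of having to manufacture norm-continuity by hand via convolution; the paper's approach is shorter precisely because it sidesteps that issue by working in the weak* topology from the start.
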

\begin{proof}
Let $\mathcal{X}$ be a countable collection of functions in $L^{\infty}(X,\nu)$ that separates points and let $\mathcal{Y}$ be a countable collection in $L^{\infty}(Y,\eta)$ that separates points.  Let $\mathcal{F} = \mathcal{X} \cup \{ f \circ \pi : f \in \mathcal{Y} \}$.  Let $B$ be the unit ball in $L^{\infty}(G,\Haar)$ which is a compact metric space in the weak* topology (as the dual of $L^{1}$).

Define $X_{00} = \prod_{f \in \mathcal{F}} B$ and $Y_{00} = \prod_{f \in \mathcal{Y}} B$, both of which are compact metric spaces using the product topology.  Define $\pi_{00} : X_{00} \to Y_{00}$ to be the restriction map: for $f \in \mathcal{Y}$ take the $f^{th}$ coordinate of $\pi_{00}(x_{00})$ to be the $(f \circ \pi)^{th}$ coordinate of $x_{00}$.  Then $\pi_{00}$ is continuous.

Define the map $\Phi : X \to X_{00}$ by $\Phi(x) = (\varphi_{f}(x))_{f \in \mathcal{F}}$ where $(\varphi_{f}(x))(g) = f(gx)$.  Then $\Phi$ is an injective map (since $\mathcal{F}$ separates points).  Observe that $(\varphi_{f}(hx))(g) = f(ghx) = (\varphi_{f}(x))(gh)$.  Consider the $G$-action on $X_{00}$ given by the right action on each coordinate.  Then $G$ acts on $X_{00}$ continuously (and likewise on $Y_{00}$ continuously) and $\Phi$ is $G$-equivariant.  Similarly, define $\Psi : Y \to Y_{00}$ by $\Psi(y) = (\psi_{f}(y))_{f \in \mathcal{Y}}$ where $(\psi_{f}(y))(g) = f(gy)$.

Let $X_{0} = \overline{\Phi(X)}$, let $\nu_{0} = \Phi_{*}\nu$, let $Y_{0} = \overline{\Psi(Y)}$, let $\eta_{0} = \Psi_{*}\eta$ and let $\pi_{0}$ be the restriction of $\pi_{00}$ to $X_{0}$.  Then $\Phi : (X,\nu) \to (X_{0},\nu_{0})$ and $\Psi : (Y,\eta) \to (Y_{0},\eta_{0})$ are $G$-isomorphisms.  Since $(\psi_{f}(\pi(x)))(g) = f(g\pi(x)) = f \circ \pi (gx) = (\varphi_{f \circ \pi}(x))(g)$, $\pi_{0}(X_{0}) = Y_{0}$ and $\Psi^{-1} \circ \pi_{0} \circ \Phi = \pi$.
\end{proof}

\section{Invariant Random Subgroups}\label{S:rs}

Invariant random subgroups are the natural context for the presentation of some of our results.  We recall here the definition and a basic construction of them and introduce a definition of commensurability for invariant random subgroups.

\begin{definition}
Let $G$ be a group.  The space of closed subgroups $S(G)$ is a compact topological space (with the Chabauty topology) and $G$ acts on it by conjugation.  An \textbf{invariant random subgroup} of $G$ is a probability measure $\nu \in P(S(G))$ that is invariant under the conjugation action.
\end{definition}

\subsection{Measure-Preserving Actions}

Let $G$ be a group and $G \actson (X,\nu)$ be a measure-preserving action.  Then the mapping $x \mapsto \mathrm{stab}_{G}(x)$ sending each point to its stabilizer subgroup defines a Borel map $X \to S(G)$ (\cite{moore66} Chapter 2, Proposition 2.3).  Let $\eta$ be the pushforward of $\nu$ under this map.  Observe that $\mathrm{stab}_{G}(gx) = g\hspace{0.1em}\mathrm{stab}_{G}(x)\hspace{0.1em}g^{-1}$ so the mapping is a $G$-map and therefore $\eta$ is an invariant measure on $S(G)$.  Hence $G \actson (X,\nu)$ gives rise in a canonical way to an invariant random subgroup of $G$ defined by the stabilizer subgroups.

\subsection{Invariant Random Subgroups Always Arise From Actions}

In fact the converse of this is also true: any invariant random subgroup can be realized as the stabilizer subgroups of some measure-preserving action:
\begin{theorem}[Abert-Glasner-Vir\'{a}g \cite{AGV12}]\label{T:silly}
Let $\eta \in P(S(G))$ be an invariant random subgroup of a countable group $G$.  Then there exists a measure-preserving $G$-space $(X,\nu)$ such that $\eta$ is the invariant random subgroup that arises from the stabilizers of the action.
\end{theorem}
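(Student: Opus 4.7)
The plan is to realize $\eta$ via a ``coset-Bernoulli'' construction. For each closed subgroup $H\le G$, let $\lambda_H\in P([0,1]^G)$ be the pushforward of product Lebesgue measure on $[0,1]^{H\backslash G}$ under the map $(t_{Hg})_{Hg}\mapsto(g\mapsto t_{Hg})$; equivalently, $\lambda_H$ is supported on functions $y:G\to[0,1]$ that are constant on left $H$-cosets, with independent uniform values assigned to distinct cosets. Under the shift $G$-action $(g\cdot y)(x)=y(g^{-1}x)$ on $[0,1]^G$, one checks directly from the definition that $g_*\lambda_H=\lambda_{gHg^{-1}}$: left multiplication by $g$ carries left $H$-cosets bijectively onto left $gHg^{-1}$-cosets, and the iid-uniform structure is preserved.

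Next, set $X=S(G)\times[0,1]^G$ with the diagonal action $g\cdot(H,y)=(gHg^{-1},g\cdot y)$ and define
\[
\nu=\int_{S(G)}\delta_H\otimes\lambda_H\,d\eta(H).
\]
Since $\eta$ is conjugation-invariant and $g_*\lambda_H=\lambda_{gHg^{-1}}$, a change of variables gives $g_*\nu=\nu$, so $(X,\nu)$ is a measure-preserving $G$-space. (Measurability of $H\mapsto\lambda_H$ reduces to noting that for each finite cylinder $C\subset[0,1]^G$ the value $\lambda_H(C)$ depends only on the partition of the relevant finite subset of $G$ into left $H$-cosets, which is a Borel function of $H\in S(G)$ under the Chabauty topology since $G$ is countable.)

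It remains to identify the stabilizers. For $\lambda_H$-almost every $y$, the random values on distinct left $H$-cosets are pairwise distinct, so $g\cdot y=y$ forces, evaluating at $x=e$, that $Hg^{-1}=He$, hence $g\in H$; conversely any $g\in H$ sends each left coset to itself and thus fixes $y$. Thus $\operatorname{stab}_G(y)=H$ almost surely. Since $\operatorname{stab}_G(H)=N_G(H)\supseteq H$ in the conjugation action on $S(G)$, the joint stabilizer of $(H,y)$ is $N_G(H)\cap H=H$ almost surely. The pushforward of $\nu$ under $(H,y)\mapsto\operatorname{stab}_G(H,y)$ therefore coincides with the pushforward of $\eta$ under the identity, namely $\eta$ itself.

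The one point requiring genuine care is the measurability of $H\mapsto\lambda_H$ (and hence the well-definedness of the integral defining $\nu$ as a Borel measure on the standard space $X$). The countability of $G$ makes this routine: $S(G)$ is a Polish subspace of $2^G$, and for any finite $F\subset G$ and any Borel rectangle $R\subset[0,1]^F$, $\lambda_H(\{y:y|_F\in R\})$ is a simple polynomial expression in $|\text{Leb}|$ determined by the partition of $F$ induced by left $H$-cosets, which varies Borel-measurably in $H$. Everything else is a direct computation once the construction is in place.
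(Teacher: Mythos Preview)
Your proof is correct. One small terminological slip: you write ``left $H$-cosets'' for the elements $Hg$ of $H\backslash G$, which are conventionally called right cosets; the argument is unaffected, since what matters is that the shift $(g\cdot y)(x)=y(g^{-1}x)$ fixes exactly those $y$ constant on the sets $Hx$, and this is what your computation verifies.

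As for comparison: the paper does not itself prove this statement---it is quoted from \cite{AGV12}---but it does prove the locally compact generalization (Theorem~\ref{T:gaussian}), and that proof uses a genuinely different construction. There the fiber over $H$ is the Gaussian probability space associated to $L^{2}(G/H)$ (or $G/H$ itself with Haar measure when $H$ is cocompact), and the $G$-action is assembled via a cocycle coming from the orthogonal operators between these Hilbert spaces; essential freeness of the fiberwise action of $N_G(H)/H$ is then quoted from \cite{AEG}. Your coset-Bernoulli construction is the original Abert--Glasner--Vir\'ag argument and is the most natural one for countable $G$: the measurability of $H\mapsto\lambda_H$ and the identification of stabilizers are both transparent when $G$ is discrete. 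The Gaussian approach in the paper is chosen precisely because it extends cleanly to nondiscrete $G$, where a naive product-measure construction over $H\backslash G$ would run into topological and measurability issues.
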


In our setting, we consider invariant random subgroups of nondiscrete locally compact groups and so we need to generalize the result of Abert, Glasner and Vir\'{a}g to the locally compact case (see also \cite{seven} Theorem 2.4).
We make use of the Gaussian action construction: for a separable Hilbert space $H$ one can associate a probability space $(Y_{H},\nu_{H})$ and an embedding $\rho : H \to L^{2}(Y_{H},\nu_{H})$ such that for any orthogonal $T : H \to K$ between Hilbert spaces there is a measure-preserving map $V_{T} : (Y_{H},\nu_{H}) \to (Y_{K}, \nu_{K})$ such that $\rho(T(\xi)) = \rho(\xi) \circ V_{T}^{-1}$ and that for $T : H \to K$ and $S: K \to L$, $V_{S} \circ V_{T} = V_{S \circ T}$ almost everywhere for each fixed pair $S,T$.  The reader is referred to Schmidt \cite{schmidtcohomology} Section 3 for details.
\begin{theorem}\label{T:gaussian}
Let $G$ be a locally compact second countable group.  Given an invariant random subgroup $(S(G),\eta)$ there exists a measure-preserving $G$-space $(X,\nu)$ such that the $G$-equivariant mapping $x \mapsto \mathrm{stab}_{G}(x)$ pushes $\nu$ to $\eta$.
\end{theorem}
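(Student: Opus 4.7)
The plan is to realize the IRS $\eta$ as the stabilizer distribution of an induced $G$-action, using the Gaussian functor to produce a free probability-measure-preserving action of the quotient $N_G(H)/H$ for typical $H$ in the support of $\eta$.

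First I would reduce to the case that $\eta$ is ergodic under the $G$-conjugation action on $S(G)$: writing the ergodic decomposition $\eta = \int \eta_\omega \, d\kappa(\omega)$, it suffices to construct each $(X_\omega, \nu_\omega)$ realizing $\eta_\omega$ and then take $X = \bigsqcup_\omega X_\omega$ with $\nu = \int \nu_\omega \, d\kappa(\omega)$. For an ergodic component $\eta_\omega$, using Mackey's theory of virtual subgroups (or a direct measurable selection on $S(G)$), identify the essential support with the $G$-orbit of a closed subgroup $H = H_\omega$; this orbit is $G$-isomorphic to $G/N$ where $N = N_G(H)$, and $\eta_\omega$ corresponds to a $G$-invariant probability measure $m$ on $G/N$.

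Applying the Gaussian functor of \cite{schmidtcohomology} to the left regular representation $\lambda_K$ of $K := N/H$ on $L^2(K,\mathrm{Haar}_K)$ produces a probability-measure-preserving $K$-action $(Z,\mu_Z)$. Faithfulness of $\lambda_K$, combined with a Fubini-and-separability argument in the Gaussian space (for each $k\neq e$ the fixed set is a proper closed invariant subspace and hence null, and continuity of the $K$-action together with separability of $K$ upgrades this to triviality of the stabilizer almost surely) yields that this action is essentially free. Lift to an $N$-action with $H$ acting trivially, and form the induced $G$-space $X = G \times_N Z$ with its canonical $G$-invariant probability measure $\nu$ built from $m$ and $\mu_Z$ via a Borel section $G/N \to G$.

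A direct computation in $G \times_N Z$ shows $[g_0 g, z] = [g, z]$ holds iff $g^{-1} g_0 g \in \mathrm{stab}_N(z) = H$, the latter equality holding for $\mu_Z$-almost every $z$ by freeness of the $K$-action. Hence $\mathrm{stab}_G([g, z]) = g H g^{-1}$ almost surely, and the $G$-equivariant stabilizer map factors through $X \to G/N \to S(G)$ where the second arrow is $gN \mapsto g H g^{-1}$; pushing $\nu$ forward through this composition yields precisely $\eta$. The main technical obstacle is the measurability of the construction across ergodic components---the assignment $\omega \mapsto (X_\omega, \nu_\omega)$ must depend Borel-measurably on $\omega$---which is arranged by applying the Gaussian functor uniformly to the measurable bundle of regular representations $\{L^2(K_H)\}_H$ over $S(G)$, invoking the functoriality and measurable-section tools available in the locally compact second countable setting.
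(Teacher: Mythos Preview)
Your proposal contains a genuine gap in the treatment of ergodic components. You assert that for an ergodic IRS $\eta_\omega$, the essential support can be identified with a single $G$-orbit $G/N_G(H)$ for some fixed closed subgroup $H$. This is false: ergodicity of the conjugation action on $(S(G),\eta_\omega)$ does not imply essential transitivity. For a countable group, any nonatomic ergodic IRS assigns measure zero to every conjugacy class (since a conjugacy class is countable and an invariant measure giving it positive mass would have to be uniform on a finite set), and nonatomic ergodic IRSs exist in abundance---for instance the IRSs of free groups constructed by Bowen. Mackey's virtual subgroup formalism does not rescue this step; it is precisely the language for ergodic actions that are \emph{not} transitive, and it does not hand you an actual closed subgroup $H$ from which to induce. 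Consequently the construction $G\times_N Z$ cannot be carried out as written, and the measurability worry you flag at the end is not the main obstacle.

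The paper avoids both the ergodic decomposition and any orbit identification by building a fibred space over $S(G)$ directly. To each $H\in S(G)$ it attaches a probability space $(Y_H,\eta_H)$ on which $N_G(H)/H$ acts essentially freely: namely $G/H$ with normalized Haar measure when $H$ is cocompact, and the Gaussian space of $L^2(G/H)$ otherwise. A cocycle $\alpha:G\times S(G)\to\mathrm{Aut}(Y_H,Y_{gHg^{-1}})$ comes from translation (respectively the orthogonal operator it induces), and the total space is $X=\bigsqcup_H Y_H$ with measure $\int\eta_H\,d\eta(H)$ and the skew-product action. One then reads off $\mathrm{stab}_G(H,y)=H$ pointwise. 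Note also that the paper's choice of $L^2(G/H)$, rather than your $L^2(N_G(H)/H)$, is what makes the bundle vary measurably: the normalizer does not depend Borel-nicely on $H$, whereas $G/H$ does.
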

\begin{proof}
Fix a probability measure $\rho$ on $G$ in the class of Haar measure.
For each $H \in S(G)$ let $(Y_{H},\eta_{H})$ be the Gaussian probability space corresponding to an infinite direct sum of $L^{2}(G/H,\rho_{H})$ where $\rho_{H}$ is the pushforward of $\rho$ via the quotient map $q_{H} : G \to G/H$.  Note that $L^{2}(G/H,\rho_{H})$ gives a measurable field of Hilbert spaces where a measurable sequence of vector fields is given by $\zeta_{n}(H) = \bbone_{O_{n}H}$ where $\{ O_{n} \}$ is a countable basis for the topology of $G$ (see Folland \cite{folland} Section 7.4).
Let $Y = ((Y_{H},\eta_{H}))_{H \in S(G)}$ be the field of measure spaces just constructed.

Define the cocycle $\alpha : G \times S(G) \to Y$ such that $\alpha(g,H) \in \mathrm{Aut}(Y_{H},Y_{gHg^{-1}})$ to be the measure-preserving isomorphism from $Y_{H}$ to $Y_{gHg^{-1}}$ induced by the orthogonal operator $T_{g,H}$ from the infinite direct sum of $L^{2}(G/H,\rho_{H})$ to the direct sum of $L^{2}(G/gHg^{-1},\rho_{gHg^{-1}})$ given by $(T_{g,H}f)(kgHg^{-1}) = f(kgH) \sqrt{\frac{d(q_{H})_{*}(\rho g^{-1})}{d\rho_{H}}(kgH)}$.  For each $g,h \in G$, the cocycle identity holds almost everywhere by the nature of the Gaussian construction.  Define the measure space
\[
(X,\nu) = \big{(}\bigsqcup Y_{H}, \int \eta_{H}~d\eta(H)\big{)}
\]
equipped with the measure-preserving cocycle action of $G$ coming from $\alpha$.  By Mackey's point realization \cite{Ma62} (see Appendix B of \cite{Zi84}), as $G$ is locally compact second countable by removing a null set we may assume, the cocycle identity holds everywhere.

For each fixed $H \in S(G)$ the map $g \mapsto \alpha(g,H)$ defines an action of $N_{G}(H)/H$ on $(Y_{H},\eta_{H})$ which is essentially free (as per the proof of Proposition 1.2 in \cite{AEG}).  For $g \in G$ and $(H,x) \in X$ we see that $g(H,x) = (gHg^{-1},\alpha(g,H)x)$ and therefore $(H,x) = g(H,x)$ if and only if $g \in N_{G}(H)$ and $\alpha(g,H)x = x$ hence if and only if $g \in H$.  That is to say, $\mathrm{stab}_{G}(H,x) = H$ for almost every $(H,x)$.  Therefore the $G$-action on $(X,\nu)$ gives rise to the invariant random subgroup $\eta$ as required.
\end{proof}

\subsection{Ergodic Invariant Random Subgroups}

\begin{definition}
An invariant random subgroup $\nu \in P(S(G))$ is \textbf{ergodic} when $\nu$ is an ergodic measure.
\end{definition}

We remark that ergodic invariant random subgroups are precisely the same as the extremal invariant measures in the (weak*) compact convex set of invariant random subgroups.

\begin{proposition}\label{P:gaussian}
Let $G$ be a locally compact second countable group.  Given an ergodic invariant random subgroup $(S(G),\eta)$ there exists an ergodic measure-preserving $G$-space $(X,\nu)$ such that the $G$-equivariant mapping $x \mapsto \mathrm{stab}_{G}(x)$ pushes $\nu$ to $\eta$.
\end{proposition}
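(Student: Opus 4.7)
The plan is to reduce to Theorem \ref{T:gaussian} and then pass to an ergodic component. First, I would apply Theorem \ref{T:gaussian} to $(S(G),\eta)$ to obtain a measure-preserving $G$-space $(X,\nu)$ together with the stabilizer map $s : X \to S(G)$, $s(x) = \mathrm{stab}_{G}(x)$, satisfying $s_{*}\nu = \eta$. Nothing in that construction forces $(X,\nu)$ to be ergodic, so the real content here is to produce an ergodic component on which the stabilizer map still pushes forward to $\eta$.

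Next, I would take the ergodic decomposition $\nu = \int \nu_{\omega}\,d\kappa(\omega)$ of $(X,\nu)$, with $(X,\nu_{\omega})$ an ergodic measure-preserving $G$-space for $\kappa$-almost every $\omega$; this decomposition is available since $G$ is locally compact second countable (see Appendix B of \cite{Zi84}). Because $s$ is $G$-equivariant, each push-forward $s_{*}\nu_{\omega}$ is a $G$-invariant probability measure on $S(G)$, and in fact ergodic: any $G$-invariant Borel $B \subseteq S(G)$ with $s_{*}\nu_{\omega}(B) \in (0,1)$ would pull back to a $G$-invariant Borel set in $X$ of intermediate $\nu_{\omega}$-measure, contradicting ergodicity of $\nu_{\omega}$.

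Integrating, $\eta = s_{*}\nu = \int s_{*}\nu_{\omega}\,d\kappa(\omega)$ displays $\eta$ as an integral of ergodic $G$-invariant measures on $S(G)$. Uniqueness of the ergodic decomposition of $\eta$, together with the hypothesis that $\eta$ itself is ergodic, forces $s_{*}\nu_{\omega} = \eta$ for $\kappa$-almost every $\omega$. Picking any such $\omega$, the ergodic $G$-space $(X,\nu_{\omega})$ with the same stabilizer map $s$ satisfies the desired conclusion. The one place requiring minor care is the invocation of the uniqueness of the ergodic decomposition for measure-preserving actions of a locally compact second countable group, but this is classical; otherwise the argument is essentially a direct consequence of Theorem \ref{T:gaussian}.
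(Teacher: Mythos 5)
Your proposal is correct and follows essentially the same route as the paper: apply Theorem \ref{T:gaussian}, take the ergodic decomposition, and show the stabilizer push-forward of almost every ergodic component equals $\eta$. The only (cosmetic) difference is in the last step, where the paper deduces $\mathrm{stab}_{*}D_{\pi}(w)=\eta$ almost surely directly from the extremality of the ergodic measure $\eta$ in the convex set of invariant random subgroups, whereas you first check that each component's push-forward is ergodic and then invoke uniqueness of the ergodic decomposition of $\eta$ --- both justifications are standard and valid.
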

\begin{proof}
Let $(Z,\zeta)$ be the $G$-action constructed in Theorem \ref{T:gaussian} such that $z \mapsto \mathrm{stab}_{G}(z)$ pushes $\zeta$ to $\eta$.  Consider the ergodic decomposition $\pi : (Z,\zeta) \to (W,\rho)$.  Then $G$ acts trivially on $(W,\rho)$ and almost every fiber $(\pi^{-1}(w), D_{\pi}(w))$ is an ergodic $G$-space.  Observe that
\[
\int_{W} \mathrm{stab}_{*}D_{\pi}(w)~d\rho(w) = \mathrm{stab}_{*} \int_{W} D_{\pi}(w)~d\rho(w) = \mathrm{stab}_{*}\zeta = \eta.
\]
Since $\eta$ is ergodic, it is extremal in the set of invariant random subgroups.  The above convex combination of invariant random subgroups must then almost surely be constantly equal to $\eta$.  That is, $\mathrm{stab}_{*}D_{\pi}(w) = \eta$ for $\rho$-almost every $w \in W$.  Let $(X,\nu)$ be one such fiber.  Then $(X,\nu)$ is an ergodic $G$-space with the required properties.
\end{proof}

\subsection{Compact and Open Invariant Random Subgroups}

\begin{definition}
An invariant random subgroup $\nu \in P(S(G))$ is a \textbf{finite (compact) invariant random subgroup} when $\nu$ is supported on the finite (compact) subgroups of $G$ and is an \textbf{infinite (noncompact) invariant random subgroup} when it is supported on the infinite (noncompact) subgroups of $G$.
\end{definition}

We remark that in the case of ergodic invariant random subgroups, infinite is equivalent to not finite.

\begin{definition}
An invariant random subgroup $\nu \in P(S(G))$ is an \textbf{open invariant random subgroup} when $\nu$ is supported on the open subgroups of $G$.
\end{definition}

\subsection{Commensurate Invariant Random Subgroups}\label{S:commclassesrs}

Recall that two subgroups are commensurate when their intersection has finite index in each.  We introduce a definition of commensurability for invariant random subgroups that generalizes this notion to invariant random subgroups.  We remark that one regains the usual definition in the case when the invariant random subgroups are point masses.  Before introducing the definition, we recall the notion of joinings of $G$-spaces.

\begin{definition}
Let $(X,\nu)$ and $(Y,\eta)$ be $G$-spaces.  Let $\alpha \in P(X \times Y)$ such that $(\mathrm{pr}_{X})_{*}\alpha = \nu$, $(\mathrm{pr}_{Y})_{*}\alpha = \eta$ and $\alpha$ is quasi-invariant under the diagonal $G$-action.  The space $(X \times Y, \alpha)$ with the diagonal $G$-action is called a \textbf{joining} of $(X,\nu)$ and $(Y,\eta)$.
A joining $\alpha$ of $G$-spaces is \textbf{$G$-invariant} when $\alpha$ is $G$-measure-preserving under the diagonal action.
\end{definition}

%

\begin{definition}[\cite{glasner} Definition 6.9]
Let $(X,\nu)$, $(Y,\eta)$ and $(Z,\zeta)$ be $G$-spaces and let $\alpha$ be a joining of $(X,\nu)$ and $(Y,\eta)$ and $\beta$ be a joining of $(Y,\nu)$ and $(Z,\zeta)$.  Let $\alpha_{y} \in P(X)$ and $\beta_{y} \in P(Z)$ be the projections of the disintegrations of $\alpha$ and $\beta$ over $\eta$.  The measure $\rho \in P(X \times Z)$ by
\[
\rho = \int_{Y} \alpha_{y} \times \beta_{y}~d\eta(y)
\]
is the \textbf{composition} of $\alpha$ and $\beta$.
\end{definition}

\begin{proposition}[\cite{glasner} Proposition 6.10]\label{P:composejoining}
The composition of two joinings is a joining.  If two joinings are $G$-invariant then so is their composition.
\end{proposition}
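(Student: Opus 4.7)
The plan is to verify directly from the defining formula $\rho = \int_Y \alpha_y \times \beta_y \, d\eta(y)$ that $\rho$ has the correct marginals and is (quasi-)invariant under the diagonal $G$-action on $X \times Z$, specializing to the invariant case at the end.

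First, the marginals. By linearity of push-forward,
\[
(\mathrm{pr}_X)_*\rho = \int_Y (\mathrm{pr}_X)_*(\alpha_y \times \beta_y) \, d\eta(y) = \int_Y \alpha_y \, d\eta(y) = (\mathrm{pr}_X)_*\alpha = \nu,
\]
where the last equality uses that $\alpha_y \times \delta_y$ is the disintegration of $\alpha$ over $\eta$, so integrating its $X$-components against $\eta$ recovers the $X$-marginal of $\alpha$. Symmetrically $(\mathrm{pr}_Z)_*\rho = \zeta$. This part is routine.

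The key step for (quasi-)equivariance is to extract from the (quasi-)invariance of $\alpha$ a fiberwise relation for the measures $\alpha_y$. Applying $g$ to $\alpha = \int_Y (\alpha_y \times \delta_y) \, d\eta(y)$ and substituting $y' = gy$ yields
\[
g_*\alpha = \int_Y (g_*\alpha_{g^{-1}y'} \times \delta_{y'}) \, d(g_*\eta)(y'),
\]
which is a disintegration of $g_*\alpha$ over its $Y$-marginal $g_*\eta$. Since $g_*\alpha \sim \alpha$, almost-sure uniqueness of disintegration (after matching against the disintegration of $\alpha$ via the density $d(g_*\alpha)/d\alpha$) forces $g_*\alpha_y \sim \alpha_{gy}$ for $\eta$-a.e.\ $y$, with an explicit Radon--Nikodym density written in terms of $d(g_*\alpha)/d\alpha$ and $d(g_*\eta)/d\eta$. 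In the invariant case both densities are $1$, so the relation becomes the equality $g_*\alpha_y = \alpha_{gy}$ almost surely. The identical argument applies to $\beta_y$.

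Finally, assemble:
\[
g_*\rho = \int_Y (g_*\alpha_y \times g_*\beta_y) \, d\eta(y).
\]
In the $G$-invariant case, substituting $g_*\alpha_y = \alpha_{gy}$ and $g_*\beta_y = \beta_{gy}$ and then changing variables $y \mapsto g^{-1}y$ (using invariance of $\eta$) gives $g_*\rho = \rho$ on the nose, proving the second half of the proposition. In the general quasi-invariant case, the cocycles coming from $\alpha$, $\beta$, and $\eta$ combine into a measurable density of $g_*\rho$ against $\rho$, proving the first half. The main obstacle, and essentially the only place requiring care, is the consistent bookkeeping of these Radon--Nikodym cocycles through the change of variable in the quasi-invariant case; once the fiberwise (quasi-)equivariance is in hand, the invariant half is a one-line verification.
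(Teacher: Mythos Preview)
The paper does not give its own proof of this proposition; it is simply quoted from Glasner's book with the reference. Your argument is correct and is essentially the standard one: check the marginals from the disintegration formula, extract the fiberwise relation $g_*\alpha_y \sim \alpha_{gy}$ (respectively $g_*\alpha_y = \alpha_{gy}$ in the invariant case) from uniqueness of disintegration together with the quasi-invariance of $\alpha$, do the same for $\beta$, and then push the change of variables through the integral defining $\rho$. The only place where your write-up is slightly vague is the quasi-invariant half, where you allude to ``cocycles combining into a density'' without spelling it out; a cleaner way to finish is to argue with null sets: if $\rho(E)=0$ then $(\alpha_y\times\beta_y)(E)=0$ for $\eta$-a.e.\ $y$, hence $(\alpha_{gy}\times\beta_{gy})(E)=0$ for $\eta$-a.e.\ $y$ by quasi-invariance of $\eta$, hence $(g_*\alpha_y\times g_*\beta_y)(E)=0$ a.e.\ by the fiberwise equivalence, giving $g_*\rho(E)=0$. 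This avoids the bookkeeping you flag as the ``main obstacle'' and is exactly in the spirit of Lemma~\ref{L:abscontmeas} in the paper.
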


We can now state the definition of commensurate invariant random subgroups:
\begin{definition}\label{def:commrs}
Let $G$ be a group and $\eta_{1}$ and $\eta_{2}$ be invariant random subgroups of $G$.  If there exists a $G$-invariant joining $\alpha \in P(S(G) \times S(G))$ of $\eta_{1}$ and $\eta_{2}$ such that for $\alpha$-almost every $H,L \in S(G) \times S(G)$ the subgroup $H \cap L$ has finite index in both $H$ and $L$ then $\eta_{1}$ and $\eta_{2}$ are \textbf{commensurate invariant random subgroups}.  The \textbf{commensurability classes of invariant random subgroups} of $G$ are the classes of invariant random subgroups equivalent under commensuration (see Theorem \ref{T:commequiv}).
\end{definition}

\begin{theorem}\label{T:commequiv}
The property of being commensurate is an equivalence relation on the space of invariant random subgroups.
\end{theorem}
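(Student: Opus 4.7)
The plan is to verify the three axioms of an equivalence relation directly from the definition, using the composition of joinings from Proposition \ref{P:composejoining} as the main tool for transitivity. For reflexivity of $\eta$, take the diagonal joining $\alpha = \int \delta_{(H,H)}\,d\eta(H)$; this is $G$-invariant (since conjugation acts diagonally on $S(G)\times S(G)$ and preserves the diagonal), has both marginals equal to $\eta$, and trivially satisfies $H\cap H = H$ of index one in itself. For symmetry, given a witnessing joining $\alpha$ for the commensurability of $\eta_1$ with $\eta_2$, push $\alpha$ forward by the coordinate-flip of $S(G)\times S(G)$; this is $G$-invariant, joins $\eta_2$ with $\eta_1$, and the finite-index condition is invariant under swapping $H$ and $L$.

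The substantive step is transitivity. Suppose $\alpha \in P(S(G)\times S(G))$ witnesses commensurability of $\eta_1,\eta_2$ and $\beta \in P(S(G)\times S(G))$ witnesses that of $\eta_2,\eta_3$. Let $\rho$ be the composition of $\alpha$ and $\beta$ over $\eta_2$; by Proposition \ref{P:composejoining}, $\rho$ is a $G$-invariant joining of $\eta_1$ and $\eta_3$. Writing $\alpha = \int_{S(G)} \alpha_{H_2}\times \delta_{H_2}\,d\eta_2(H_2)$ and $\beta = \int_{S(G)} \delta_{H_2}\times\beta_{H_2}\,d\eta_2(H_2)$, we have $\rho = \int_{S(G)} \alpha_{H_2}\times\beta_{H_2}\,d\eta_2(H_2)$. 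By Fubini applied to the witnessing finite-index conditions for $\alpha$ and $\beta$, there is a conull set $E \subseteq S(G)$ such that for every $H_2 \in E$, the condition ``$H_1\cap H_2$ has finite index in $H_1$ and in $H_2$'' holds for $\alpha_{H_2}$-almost every $H_1$, and the condition ``$H_2\cap H_3$ has finite index in $H_2$ and in $H_3$'' holds for $\beta_{H_2}$-almost every $H_3$.

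Fixing such a triple $(H_1,H_2,H_3)$, the group $H_1\cap H_2\cap H_3$ equals $(H_1\cap H_2)\cap(H_2\cap H_3)$, and its index in $H_1\cap H_2$ divides $[H_2:H_2\cap H_3]$, hence is finite; symmetrically, its index in $H_2\cap H_3$ is finite. Combining with $[H_1:H_1\cap H_2]<\infty$ gives $[H_1:H_1\cap H_2\cap H_3]<\infty$, so a fortiori $[H_1:H_1\cap H_3]<\infty$; by the mirrored chain through $H_3$ we also get $[H_3:H_1\cap H_3]<\infty$. Hence $\rho$-almost surely $H_1$ and $H_3$ are commensurate, completing transitivity. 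The only substantive point is the elementary group-theoretic finite-index propagation in the last paragraph; the rest is a direct application of the composition of joinings and Fubini, so no serious obstacle is anticipated.
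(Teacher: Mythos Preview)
Your proof is correct and follows essentially the same route as the paper: the key step is transitivity via the composition of joinings (Proposition~\ref{P:composejoining}) together with the elementary transitivity of commensurability for subgroups. You are in fact slightly more explicit than the paper, which omits reflexivity and symmetry entirely and phrases the Fubini step a bit loosely; your disintegration-over-$\eta_2$ argument is the cleaner way to justify the ``almost every triple'' claim.
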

\begin{proof}
Let $\eta_{1},\eta_{2},\eta_{3}$ be invariant random subgroups of $G$ such that $\eta_{1}$ and $\eta_{2}$ are commensurate and $\eta_{2}$ and $\eta_{3}$ are commensurate.  Let $\alpha$ be a joining of $\eta_{1}$ and $\eta_{2}$ and $\beta$ be a joining of $\eta_{2}$ and $\eta_{3}$ witnessing the commensuration.  Let $D$ be the disintegration of $\alpha$ over $\eta_{2}$.  Then for almost every $K \in S(G)$ we have that $D(K) = \alpha_{K} \times \delta_{K}$ for some $\alpha_{K} \in P(S(G))$ and likewise the disintegration of $\beta$ over $\eta_{2}$ is of the form $\delta_{K} \times \beta_{K}$ for some $\beta_{K} \in P(S(G))$.

Let $\rho \in P(S(G) \times S(G))$ be the composition of the joinings $\alpha$ and $\beta$ (see Glasner \cite{glasner}):
\[
\rho = \int_{S(G)} \alpha_{K} \times \beta_{K}~d\eta_{2}(K).
\]
Then $\rho$ is a joining of $\eta_{1}$ and $\eta_{3}$ (Proposition \ref{P:composejoining}) and
for $\rho$-almost every $(H,L)$ we have that for $\eta_{2}$-almost every $K$ the subgroup $H \cap K$ has finite index in $H$ and $K$ and the subgroup $K \cap L$ has finite index in both $K$ and $L$.  Then $H \cap K \cap L$ has finite index in $H$, $K$ and $L$ and so $H \cap L$ has finite index in $H$ and $L$ (that is, commensurability is an equivalence relation on subgroups).  Therefore $\rho$ makes $\eta_{1}$ and $\eta_{3}$ commensurate invariant random subgroups.
\end{proof}

\section{Relatively Contractive Maps}\label{S:relativecontractive}

We now introduce the notion of relatively contractive maps and develop the machinery needed to study actions of commensurators and lattices.  We first spend some time developing basic facts about relatively contractive maps which we then use to recover most known results about contractive actions.  We also take a short detour to study joinings of contractive spaces and show that relatively contractive is indeed the opposite of relatively measure-preserving in some very strong senses.

We will always assume the group $G$ is locally compact second countable in what follows.

\begin{definition}[Jaworski \cite{Ja94}]
A $G$-space $(X,\nu)$ is \textbf{contractive}, also called \textbf{SAT (strongly approximately transitive)}, when for all measurable sets $B \subseteq X$ of less than full measure and all $\epsilon > 0$ there exists $g \in G$ such that
\[
\nu(gB) < \epsilon.
\]
\end{definition}

\subsection{Conjugates of Disintegration Measures}

The principal notion in formulating the idea of relatively contractive maps is to ``conjugate" the disintegration measures.
For a $G$-map of $G$-spaces $\pi : (X,\nu) \to (Y,\eta)$, the disintegration of $\nu$ over $\eta$ can be summarized as saying that for almost every $y \in Y$ there is a unique measure $D_{\pi}(y) \in P(X)$ such that $D_{\pi}(y)$ is supported on the fiber over $y$ and $\int_{Y} D_{\pi}(y)~d\eta(y) = \nu$.

For $g \in G$ and $y \in Y$, we have that $D_{\pi}(gy)$ is supported on the fiber over $gy$, that is, on $\pi^{-1}(gy) = g \pi^{-1}(y)$, and that for any Borel $B \subseteq X$, we have that $gD_{\pi}(y)(B) = D_{\pi}(y)(g^{-1}B)$
meaning that $gD_{\pi}(y)$ is supported on $g\pi^{-1}(y)$.  Therefore we can formulate the following:
\begin{definition}
Let $\pi : (X,\nu) \to (Y,\rho)$ be a $G$-map of $G$-spaces.
The \textbf{conjugated disintegration measure} over $\pi$ at a point $y \in Y$ by the group element $g \in G$ is
\[
D_{\pi}^{(g)}(y) = g^{-1}D_{\pi}(gy).
\]
\end{definition}

The preceding discussion shows that $D_{\pi}^{(g)}(y)$ is supported on $g^{-1} g \pi^{-1}(y) = \pi^{-1}(y)$.  Hence:
\begin{proposition}\label{P:conjugateddisintegration}
Let $\pi : (X,\nu) \to (Y,\eta)$ be a $G$-map of $G$-spaces and fix $y \in Y$.  The conjugated disintegration measures
\[
\mathcal{D}_{y} = \{ g^{-1}D_{\pi}(gy) : g \in G \}
\]
are all supported on $\pi^{-1}(y)$.
\end{proposition}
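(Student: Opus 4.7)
The plan is to verify the support claim directly from the defining property of the disintegration combined with $G$-equivariance of $\pi$, since the proposition is essentially a bookkeeping statement. First I would recall that, by definition, $D_{\pi}(y')$ is supported on the fiber $\pi^{-1}(y')$ for $\eta$-almost every $y'$; in particular, for any fixed $y$ and any $g \in G$, after discarding a $G$-invariant null set in $Y$ we have that $D_{\pi}(gy)$ is supported on $\pi^{-1}(gy)$.

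Next I would use $G$-equivariance of $\pi$ to identify $\pi^{-1}(gy) = g\,\pi^{-1}(y)$: indeed, $x \in \pi^{-1}(gy)$ iff $\pi(x) = gy$ iff $\pi(g^{-1}x) = g^{-1}\pi(x) = y$ iff $g^{-1}x \in \pi^{-1}(y)$. Thus $D_{\pi}(gy)$ is supported on $g\,\pi^{-1}(y)$. Finally, pushing forward by $g^{-1}$, the measure $g^{-1}D_{\pi}(gy)$ assigns to any Borel set $B$ the value $D_{\pi}(gy)(gB)$, which vanishes whenever $gB$ is disjoint from $g\,\pi^{-1}(y)$, i.e.\ whenever $B$ is disjoint from $\pi^{-1}(y)$. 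Hence every element of $\mathcal{D}_{y}$ is concentrated on $\pi^{-1}(y)$.

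There is no real obstacle; the only subtlety is the standard one that disintegration measures are only defined $\eta$-almost everywhere, so the statement must be understood after removing a $G$-invariant null set from $Y$ (which exists since $G$ is locally compact second countable, so countably many $g$'s already determine a $G$-invariant null set through the cocycle relations $g_1 D_\pi(g_2 y) = D_\pi(g_1 g_2 y) \cdot (\text{Radon--Nikodym factor})$ used to define the $G$-action on measures). Once this is acknowledged, the argument is the one-line computation $D_{\pi}^{(g)}(y) = g^{-1}D_{\pi}(gy)$ lives on $g^{-1}\pi^{-1}(gy) = g^{-1}g\,\pi^{-1}(y) = \pi^{-1}(y)$ that precedes the statement.
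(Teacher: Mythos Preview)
Your argument is correct and is exactly the paper's approach: the proposition is stated immediately after the discussion showing $D_\pi(gy)$ is supported on $\pi^{-1}(gy)=g\,\pi^{-1}(y)$, and the paper simply remarks that $D_\pi^{(g)}(y)$ is then supported on $g^{-1}g\,\pi^{-1}(y)=\pi^{-1}(y)$, which is your one-line computation. One minor caution: the parenthetical in your last paragraph asserting a relation $g_1 D_\pi(g_2 y) = D_\pi(g_1 g_2 y)\cdot(\text{Radon--Nikodym factor})$ is not correct in general and is in any case unnecessary here; the support statement is a pointwise consequence of the definitions and the paper does not invoke any such identity.
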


Another approach to the conjugates of disintegration measures is to observe that:
\begin{proposition}
Let $\pi : (X,\nu) \to (Y,\eta)$ be a $G$-map of $G$-spaces.  For any $g \in G$ then $\pi : (X, g^{-1}\nu) \to (Y, g^{-1}\eta)$ is also a $G$-map of $G$-spaces.  Let $D_{\pi} : Y \to P(X)$ be the disintegration of $\nu$ over $\eta$.  Then $D_{\pi}^{(g)}$ is the disintegration of $g^{-1}\nu$ over $g^{-1}\eta$.
\end{proposition}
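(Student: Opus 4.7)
The plan is straightforward: verify the two hypotheses characterizing the disintegration (that the family is supported on the correct fibers, and that its integral reconstructs the top measure) and appeal to the almost-sure uniqueness of the disintegration.

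First I would check that $\pi$ really is a $G$-map of $G$-spaces once we replace $\nu$ and $\eta$ by $g^{-1}\nu$ and $g^{-1}\eta$. Since $\pi$ is $G$-equivariant, equivariance of the push-forward gives $\pi_{*}(g^{-1}\nu)=g^{-1}\pi_{*}\nu=g^{-1}\eta$, and quasi-invariance of $g^{-1}\nu$ and $g^{-1}\eta$ under $G$ is immediate from that of $\nu$ and $\eta$, since the two measure classes are preserved by the $G$-action. This handles the first sentence of the statement.

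Next I would verify that $D_{\pi}^{(g)}$ meets the defining properties of the disintegration of $g^{-1}\nu$ over $g^{-1}\eta$. The fiber support condition $\mathrm{supp}\,D_{\pi}^{(g)}(y)\subseteq\pi^{-1}(y)$ is exactly the content of the preceding Proposition \ref{P:conjugateddisintegration}. For the integration identity, I would compute, for any Borel $A\subseteq X$,
\[
\int_{Y} D_{\pi}^{(g)}(y)(A)\,d(g^{-1}\eta)(y)=\int_{Y} g^{-1}D_{\pi}(gy)(A)\,d(g^{-1}\eta)(y).
\]
Using the convention that $(g^{-1}\mu)(B)=\mu(gB)$, a change of variables $y\mapsto g^{-1}y$ in the outer integral converts $d(g^{-1}\eta)(y)$ to $d\eta(y)$ and replaces $D_{\pi}(gy)$ by $D_{\pi}(y)$, giving
\[
\int_{Y}g^{-1}D_{\pi}(y)(A)\,d\eta(y)=\int_{Y}D_{\pi}(y)(gA)\,d\eta(y)=\nu(gA)=(g^{-1}\nu)(A),
\]
where the middle equality is the defining property of the disintegration $D_{\pi}$ of $\nu$ over $\eta$.

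Finally, combining the fiber support with the reconstruction identity, the almost-sure uniqueness of disintegration measures forces $D_{\pi}^{(g)}$ to agree almost surely with the disintegration of $g^{-1}\nu$ over $g^{-1}\eta$. There is no real obstacle here; the only point requiring care is tracking the convention for push-forward measures so the change of variables in the outer integral is applied correctly.
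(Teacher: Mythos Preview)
Your proof is correct and follows essentially the same approach as the paper: both verify the push-forward relation $\pi_*(g^{-1}\nu)=g^{-1}\eta$ from $G$-equivariance, invoke the already-established fiber support of $D_\pi^{(g)}(y)$ on $\pi^{-1}(y)$, and then confirm the integration identity $\int_Y D_\pi^{(g)}(y)\,d(g^{-1}\eta)(y)=g^{-1}\nu$ via the same change of variables $y\mapsto gy$. The paper's computation is slightly terser (working directly with measures rather than evaluating on a test set $A$), and it omits the explicit appeal to uniqueness of disintegration that you include, but there is no substantive difference.
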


\begin{proof}
To see that $\pi$ maps $(X,g^{-1}\nu)$ to $(Y,g^{-1}\eta)$ follows from $\pi$ being $G$-equivariant.

We have already seen that $g^{-1} D_{\pi}(gy)$ is supported on $\pi^{-1}(y)$ so to prove the proposition it remains only to show that $\int g^{-1} D_{\pi}(gy)~dg^{-1}\eta(y) = g^{-1}\nu$.  This is clear as
\begin{align*}
\int_{Y} g^{-1} D_{\pi}(gy)~dg^{-1}\eta(y) &= g^{-1} \int_{Y} D_{\pi}(g g^{-1} y)~d\eta(y) \\ &= g^{-1} \int_{Y} D_{\pi}(y)~d\eta(y) = g^{-1}\nu
\end{align*}
since $D_{\pi}$ disintegrates $\nu$ over $\eta$.
\end{proof}

A basic fact we will need in what follows is that  the conjugated disintegration measures are mutually absolutely continuous to one another (over a fixed point $y$ of course, as $y$ varies they have disjoint supports):
\begin{proposition}
Let $\pi : (X,\nu) \to (Y, \eta)$ be a $G$-map of $G$-spaces.  For almost every $y$ the set
\[
\mathcal{D}_{y} = \{ g^{-1}D_{\pi}(gy) : g \in G \}
\]
is a collection of mutually absolutely continuous probability measures supported on $\pi^{-1}\{ y \}$.
\end{proposition}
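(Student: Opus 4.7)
The support claim is immediate from Proposition~\ref{P:conjugateddisintegration}. For mutual absolute continuity the strategy is to use quasi-invariance to produce an explicit Radon--Nikodym derivative between $D_{\pi}^{(g)}(y)$ and $D_{\pi}(y)$, and then to pass from per-$g$ statements to a uniform one.

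First I would fix $g \in G$ and establish $D_{\pi}^{(g)}(y) \sim D_{\pi}(y)$ for $\eta$-almost every $y$. By quasi-invariance, the Radon--Nikodym derivatives $r_{g} = \frac{dg^{-1}\nu}{d\nu}$ and $s_{g} = \frac{dg^{-1}\eta}{d\eta}$ are strictly positive and finite almost everywhere. Using the preceding proposition, which identifies $D_{\pi}^{(g)}$ as the disintegration of $g^{-1}\nu$ over $g^{-1}\eta$, we may rewrite
\[
g^{-1}\nu = \int_{Y} s_{g}(y)\, D_{\pi}^{(g)}(y)\, d\eta(y).
\]
On the other hand $g^{-1}\nu = r_{g}\cdot\nu$, which disintegrates fiber-by-fiber as
\[
g^{-1}\nu = \int_{Y} \bigl(r_{g}|_{\pi^{-1}(y)} \cdot D_{\pi}(y)\bigr)\, d\eta(y).
\]
Uniqueness of disintegration then forces, for $\eta$-almost every $y$, the equality $s_{g}(y)\, D_{\pi}^{(g)}(y) = r_{g}|_{\pi^{-1}(y)} \cdot D_{\pi}(y)$ as measures on $\pi^{-1}(y)$. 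Since $r_{g} > 0$ holds $\nu$-almost everywhere---and therefore, by the disintegration formula for $\nu$, holds $D_{\pi}(y)$-almost everywhere for $\eta$-almost every $y$---while $s_{g}(y) \in (0,\infty)$ for $\eta$-almost every $y$, the density $r_{g}|_{\pi^{-1}(y)} / s_{g}(y)$ is positive and finite $D_{\pi}(y)$-almost everywhere. Hence $D_{\pi}^{(g)}(y) \sim D_{\pi}(y)$.

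Next, choose a countable dense subgroup $\Gamma < G$, available by second countability, and intersect the $\eta$-conull good sets over $\Gamma$. On the resulting conull set one has $D_{\pi}^{(\gamma)}(y) \sim D_{\pi}(y)$ for every $\gamma \in \Gamma$, so any two elements of $\{D_{\pi}^{(\gamma)}(y) : \gamma \in \Gamma\}$ are mutually absolutely continuous.

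The main obstacle is extending from $\Gamma$ to all of $G$, since $g \mapsto D_{\pi}^{(g)}(y)$ need not be continuous in any natural topology and a direct density argument is delicate. I would address this by passing to a continuous compact model via Lemma~\ref{L:compactmodels} and invoking Mackey's point realization to choose strict (everywhere-defined, pointwise-cocycle) representatives of $r_{g}$ and $s_{g}$; the almost-everywhere identities above then hold everywhere on a single $G$-invariant conull Borel set, giving $D_{\pi}^{(g)}(y) \sim D_{\pi}(y)$ for every $g \in G$ and every $y$ in that set. For applications that only use the measure class of the disintegration, the per-$g$ statement already suffices.
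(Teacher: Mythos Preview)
Your per-$g$ argument via explicit Radon--Nikodym derivatives is correct and is essentially the content of the paper's Lemma~\ref{L:abscontmeas} (which the paper invokes at this step), and the passage to a countable dense subgroup is identical to the paper's. The divergence is in the final step, extending from the countable dense $\Gamma$ to all of $G$.

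Here your argument has a genuine gap. Mackey's point realization and strict cocycle choices let you arrange that the cocycle identities $r_{gh} = r_h \cdot (r_g \circ h)$ and the group action hold everywhere, but the identity you actually need, $s_g(y)\,D_\pi^{(g)}(y) = r_g|_{\pi^{-1}(y)} \cdot D_\pi(y)$, is established via uniqueness of disintegration and is therefore only an $\eta$-a.e.\ statement \emph{for each fixed $g$}. There is no general mechanism by which point realization upgrades a family of per-$g$ a.e.\ identities involving the disintegration map $y \mapsto D_\pi(y)$ to a single conull set valid for all $g$ simultaneously: the disintegration is itself only a.e.\ defined, and altering it on a null set can break the identity for uncountably many $(g,y)$. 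Your closing hedge that ``the per-$g$ statement already suffices'' contradicts the proposition as stated, which asserts that for a.e.\ $y$ the equivalence holds for \emph{every} $g \in G$.

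The paper handles this step by a concrete continuity argument rather than an abstract realization appeal: assuming $\eta(A_g) > 0$ for some $g$, it passes to a continuous compact model, locates a nonnegative $f \in C(X)$ and $\epsilon > 0$ with $\eta\{y : D_\pi(y)(f) = 0,\ D_\pi^{(g)}(y)(f) \geq \epsilon\} > 0$, and observes that the function $h \mapsto h^{-1}\nu(\mathbbm{1}_{\pi^{-1}(A_{g,\epsilon,f})}\, f)$ is continuous in $h$, strictly positive at $h = g$, yet vanishes on the dense set $G_0$---a contradiction. This continuity of $h \mapsto h^{-1}\nu$ against a fixed test function is the missing ingredient in your approach.
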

\begin{proof}
For $g \in G$ write
\[
A_{g} = \{ y \in Y : D_{\pi}(y)~\text{and}~g^{-1}D_{\pi}(gy) \text{ are not in the same measure class} \}.
\]
Then $A_{g}$ is a Borel set for each $g \in G$ since $D_{\pi} : Y \to P(X)$ is a Borel map and the equivalence relation on $P(X)$ given by $\alpha \sim \beta$ if and only if $\alpha$ and $\beta$ is in the same measure class is Borel.

Since $g^{-1}D_{\pi}(gy)$ is the disintegration of $g^{-1}\nu$ over $g^{-1}\eta$ and $g^{-1}\nu$ is in the same measure class as $\nu$, Lemma \ref{L:abscontmeas} (following the proof) gives that $\eta(A_{g}) = 0$ for each $g \in G$.
Therefore
\[
\eta(\bigcup_{g \in G_{0}} A_{g}) = 0
\]
where the union is taken over a countable dense subset $G_{0}$ (the existence of such a subset is a consequence of the second countability of $G$).  When $G$ is itself countable the claim is then proven.

Suppose now that there is some $g$ such that $\eta(A_{g}) > 0$.  Take a continuous compact model for $\pi$ via Lemma \ref{L:compactmodels}.  Define the sets, for $g \in G$ and $\epsilon > 0$ and $f \in C(X)$ with $f \geq 0$,
\[
A_{g,\epsilon,f} = \{ y \in Y : D_{\pi}(y)(f) = 0, D_{\pi}^{(g)}(y)(f) \geq \epsilon \}.
\]
These sets are Borel since $y \mapsto D_{\pi}(y)(f)$ is Borel.
Now $A_{g} = \bigcup_{\epsilon > 0,f} A_{g,\epsilon,f}$ and since $\eta(A_{g}) > 0$, (taking a countable sequence $\epsilon \to 0$ and a countable dense set of $C(X)$) there is some $\epsilon > 0$ and $f \in C(X)$ with $f \geq 0$ such that
\[
\eta(A_{g,\epsilon,f}) > 0.
\]
But now for fixed $\epsilon > 0$ and $f \in C(X)$ with $f \geq 0$ we see that
\[
g^{-1}\nu(f) \geq \int_{A_{g,\epsilon,f}} D_{\pi}^{(g)}(y)(f)~dg^{-1}\eta(y) \geq g^{-1}\eta(A_{g,\epsilon,f})\epsilon > 0
\]
by the quasi-invariance of $\eta$.  

Consider the function $F : G \to \mathbb{R}$ given by
\[
F(h) = \int_{A_{g,\epsilon,f}} D_{\pi}^{(h)}(y)(f)~dh^{-1}\eta(y) = h^{-1}\nu(\bbone_{\pi^{-1}(A_{g,\epsilon,f})} f).
\]
Then $F(g) > 0$ by the above.  Now $F$ is continuous since $f \in C(X)$ and $G \actson X$ continuously.  Hence there is some open neighborhood $U$ of $g$ in $G$ such that $F(u) > 0$ for all $u \in U$.  

For $g_{0} \in G_{0}$, however, we know that $g_{0}^{-1}\nu(f) = 0$ and so, as $f \geq 0$, then $F(g_{0}) = 0$.
But $U \cap G_{0} \ne \emptyset$ since $G_{0}$ is dense and $U$ is open, leading to a contradiction.  Hence when $G$ is locally compact second countable the claim also holds.
\end{proof}

\begin{lemma}\label{L:abscontmeas}
Let $(X,\nu)$ be a probability space and $\pi : (X,\nu) \to (Y,\pi_{*}\nu)$ a measurable map to a probability space.  Let $\alpha$ be a probability measure in the same measure class as $\nu$.  Let $D(y)$ denote the disintegration of $\nu$ over $\pi_{*}\nu$ via $\pi$ and let $D^{\prime}(y)$ denote the disintegration of $\alpha$ over $\pi_{*}\alpha$ via $\pi$.  Then for almost every $y \in Y$, $D(y)$ and $D^{\prime}(y)$ are in the same measure class.
\end{lemma}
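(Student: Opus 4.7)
The plan is to reconstruct the disintegration $D'$ of $\alpha$ explicitly from the disintegration $D$ of $\nu$ by pushing the Radon-Nikodym derivative through, and then invoke uniqueness of disintegration.

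First, let $f = d\alpha/d\nu$, which exists and is strictly positive $\nu$-almost everywhere since $\alpha$ and $\nu$ are in the same measure class. For every Borel $B \subseteq X$,
\[
\alpha(B) = \int_X \mathbbm{1}_B f \, d\nu = \int_Y \Bigl( \int_X \mathbbm{1}_B f \, dD(y) \Bigr) d\pi_*\nu(y),
\]
using that $D$ disintegrates $\nu$ over $\pi_*\nu$.

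Second, define the finite measures $\mu_y$ on $X$ by $\mu_y(B) = \int_B f \, dD(y)$, and set $c(y) = \mu_y(X) = \int f \, dD(y)$. Each $\mu_y$ is supported on $\pi^{-1}(y)$ since $D(y)$ is. Taking $B = \pi^{-1}(E)$ in the display above for Borel $E \subseteq Y$, the support property gives
\[
\pi_*\alpha(E) = \int_E c(y) \, d\pi_*\nu(y),
\]
so $d\pi_*\alpha/d\pi_*\nu = c$, and $c(y) > 0$ for $\pi_*\nu$-a.e.\ $y$ (since $\pi_*\alpha$ and $\pi_*\nu$ are in the same measure class). Thus the normalized measures $\tilde \mu_y = c(y)^{-1} \mu_y$ are well-defined probability measures almost everywhere, and the change-of-variables $d\pi_*\alpha = c \, d\pi_*\nu$ yields
\[
\alpha(B) = \int_Y \tilde\mu_y(B) \, d\pi_*\alpha(y).
\]

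Third, since $\tilde\mu_y$ is a probability measure supported on $\pi^{-1}(y)$, the almost-sure uniqueness of the disintegration of $\alpha$ over $\pi_*\alpha$ forces $D'(y) = \tilde\mu_y$ for $\pi_*\alpha$-a.e.\ $y$ (and equivalently $\pi_*\nu$-a.e.\ $y$, since the two base measures are equivalent). Because $\tilde\mu_y = c(y)^{-1} f \cdot D(y)$ with $f > 0$ and $c(y) > 0$ almost surely, we conclude that $D'(y)$ and $D(y)$ are in the same measure class for almost every $y$.

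The only delicate point is the measurability and positivity of $c(y) = \int f \, dD(y)$, but this follows from Fubini/Tonelli applied to the disintegration formula, and positivity is forced by $\pi_*\alpha \sim \pi_*\nu$. The rest is bookkeeping plus uniqueness of disintegration.
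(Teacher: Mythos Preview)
Your proof is correct and follows essentially the same approach as the paper: both construct the disintegration $D'(y)$ explicitly as $c(y)^{-1}\,\frac{d\alpha}{d\nu}\cdot D(y)$ (the paper writes the normalizing factor as $\frac{d\pi_*\nu}{d\pi_*\alpha}(y)$, which is your $c(y)^{-1}$), verify the disintegration axioms, invoke uniqueness, and read off measure-class equivalence from positivity of the density.
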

\begin{proof}
Since $\alpha$ and $\nu$ are in the same measure class, the Radon-Nikodym derivative $\frac{d\alpha}{d\nu}$ exists and is in $L^{1}(X,\nu)$.  Likewise, $\pi_{*}\alpha$ and $\pi_{*}\nu$ are in the same measure class so $\frac{d\pi_{*}\nu}{d\pi_{*}\alpha}$ exists in $L^{1}(X,\pi_{*}\nu)$.

For $y \in Y$, define the measure $\alpha_{y}$ by, for $B \subseteq X$ measurable,
\[
\alpha_{y}(B) = \int_{B} \frac{d\alpha}{d\nu}(x)~dD(y)(x)~\frac{d\pi_{*}\nu}{d\pi_{*}\alpha}(y).
\]
Note that the Radon-Nikodym derivatives are always positive so these are positive measures.  Also $\alpha_{y}(X) = 1$ since
\[
\frac{d\pi_{*}\alpha}{d\pi_{*}\nu}(y) = \int_{X} \frac{d\alpha}{d\nu}(x)~dD(y)(x)
\]
which can be verified directly (using the uniqueness of the Radon-Nikodym derivative).

Now the support of $\alpha_{y}$ is contained in the support of $D(y)$ which is contained in $\pi^{-1}(y)$, hence $\alpha_{y}$ is supported on $\pi^{-1}(y)$.  For $B \subseteq X$ measurable,
\begin{align*}
\int_{Y} \alpha_{y}(B)~d\pi_{*}\alpha(y)
&= \int_{Y} \int_{B} \frac{d\alpha}{d\nu}(x)~dD(y)(x)~\frac{d\pi_{*}\nu}{d\pi_{*}\alpha}(y)~d\pi_{*}\alpha(y) \\
&= \int_{Y} \int_{X} \bbone_{B}(x) \frac{d\alpha}{d\nu}(x)~dD(y)(x)~d\pi_{*}\nu(y) \\
&= \int_{X} \bbone_{B}(x) \frac{d\alpha}{d\nu}(x)~d\nu(x) \\
&= \int_{X} \bbone_{B}(x)~d\alpha(x) = \alpha(B).
\end{align*}
Therefore, by uniqueness of disintegration, $D^{\prime}(y) = \alpha_{y}$ for almost every $y$.

Suppose that $D(y)(B) = 0$ for some $y$ and some measurable $B \subseteq X$.  Then 
\[
\alpha_{y}(B) = \int_{B} \frac{d\alpha}{d\nu}(x)~dD(y)(x)~\frac{d\pi_{*}\nu}{d\pi_{*}\alpha}(y) = 0
\]
since $D(y)(B) = 0$.  So $\alpha_{y}$ is absolutely continuous with respect to $D(y)$.

Therefore $D^{\prime}(y)$ is absolutely continuous with respect to $D(y)$ for almost every $y \in Y$.  The symmetric argument (reversing the roles of $\nu$ and $\alpha$) shows that $D(y)$ is also absolutely continuous with respect to $D(y)$ almost everywhere.
\end{proof}

%
%
%
%
%

\subsection{Definition of Relatively Contractive Maps}

We now define relatively contractive factor maps, which are the counterpart of relatively measure-preserving factor maps.

\subsubsection{Relatively Measure-Preserving}

We first recall the definition of relative measure-preserving:
\begin{definition}
Let $\pi : (X,\nu) \to (Y,\eta)$ be a $G$-map of $G$-spaces.  Then $\pi$ is \textbf{relatively measure-preserving} when for almost every $y \in Y$ the disintegration map $D_{\pi}$ is $G$-equivariant: $D_{\pi}(gy) = gD_{\pi}(y)$.
\end{definition}

In terms of conjugating disintegration measures, relative measure-preserving means that $D_{\pi}^{(g)}(y) = D_{\pi}(y)$ almost surely.

We also remark that a $G$-space $(X,\nu)$ is measure-preserving if and only if the map from $(X,\nu)$ to the trivial (one-point) space is relatively measure-preserving (the disintegration over the trivial space is $D^{(g)}(0) = g^{-1}D(g \cdot 0) = g^{-1}D(0) = g^{-1}\nu$).

\subsubsection{Relatively Contractive}

\begin{definition}
Let $\pi : (X, \nu) \to (Y, \eta)$ be a $G$-map of $G$-spaces.  We say $\pi$ is \textbf{relatively contractive} when for almost every $y \in Y$ and any measurable $B \subseteq X$ with $D_{\pi}(y)(B) < 1$ and any $\epsilon > 0$ there exists $g \in G$ such that $g^{-1}D_{\pi}(gy)(B) < \epsilon$.
\end{definition}

This is also stated as saying that $(X,\nu)$ is a \textbf{relatively contractive extension} or \textbf{contractive extension} of $(Y,\eta)$ or that $(Y,\eta)$ is a \textbf{relatively contractive factor} or just a \textbf{contractive factor} of $(X,\nu)$.

We have the following easy reformulation of the above definition:
\begin{proposition}
A $G$-map $\pi : (X,\nu) \to (Y,\eta)$ of $G$-spaces is relatively contractive if and only if for almost every $y$ and any measurable $B \subseteq Y$ with $D_{\pi}(y)(B) > 0$ we have
\[
\sup_{g \in G} D_{\pi}^{(g)}(y)(B) = 1.
\]
\end{proposition}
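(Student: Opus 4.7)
The proof is essentially a one-line complementation argument, so the plan is brief. For any $y \in Y$, the map $B \mapsto D_\pi^{(g)}(y)(B)$ is a probability measure on $X$ (since $g^{-1}D_\pi(gy)$ is the pushforward of a probability measure by the invertible action of $g^{-1}$). In particular, for any measurable $B \subseteq X$,
\[
D_\pi^{(g)}(y)(B) = 1 - D_\pi^{(g)}(y)(X \setminus B).
\]
The same identity holds with $g = e$ for $D_\pi(y)$ itself. Thus the two conditions on a fixed fiber $y$ are interchanged by sending $B \leftrightarrow X \setminus B$ and $\epsilon \leftrightarrow 1 - \epsilon$.

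More precisely, first I would fix a full-measure set $Y_0 \subseteq Y$ on which the defining condition of relative contractivity holds, and argue that the sup formulation holds on the same $Y_0$ (and conversely). For the forward implication, fix $y \in Y_0$ and a measurable $B \subseteq X$ with $D_\pi(y)(B) > 0$. Set $B' = X \setminus B$, so that $D_\pi(y)(B') = 1 - D_\pi(y)(B) < 1$. Given $\epsilon > 0$, the definition supplies $g \in G$ with $D_\pi^{(g)}(y)(B') < \epsilon$, hence
\[
D_\pi^{(g)}(y)(B) = 1 - D_\pi^{(g)}(y)(B') > 1 - \epsilon.
\]
Letting $\epsilon \to 0$ through a countable sequence yields $\sup_{g \in G} D_\pi^{(g)}(y)(B) = 1$. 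For the reverse implication, the symmetric argument applies: given $B$ with $D_\pi(y)(B) < 1$, the complement $B' = X \setminus B$ has positive $D_\pi(y)$-measure, and the sup formulation produces $g \in G$ with $D_\pi^{(g)}(y)(B') > 1 - \epsilon$, i.e., $D_\pi^{(g)}(y)(B) < \epsilon$.

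There is no real obstacle here: the quantifier over $y$ is pointwise and preserved under the complementation, and the fact that each $D_\pi^{(g)}(y)$ is a probability measure (already noted in Proposition on conjugated disintegration measures) is what makes complementation of measures of subsets of $X$ legitimate. The only mild care needed is to record the equivalence as holding on the same conull set of $y$'s, which is immediate from the pointwise nature of the argument.
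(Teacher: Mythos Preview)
Your proposal is correct and is exactly the intended argument: the paper does not write out a proof, calling the proposition an ``easy reformulation'' of the definition, and the complementation $B \leftrightarrow X \setminus B$ you describe is the one-line reason why. Your remark that the same conull set of $y$'s works for both formulations is the only point requiring any care, and you have addressed it.
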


\subsubsection{Contractive Extensions of a Point}

We now show that contractive can be defined in terms of relatively contractive extensions of a point (just as measure-preserving can be defined as being a relatively measure-preserving extension of a point).

\begin{theorem}
A $G$-space $(X, \nu)$ is contractive if and only if it is a relatively contractive extension of a point.
\end{theorem}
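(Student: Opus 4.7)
The plan is to observe that both definitions literally coincide once one specializes the relatively contractive definition to the case when the base $(Y,\eta)$ is the trivial one-point $G$-space. There is essentially no obstacle; the proof should be a short unpacking of definitions.

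First I would fix $(Y,\eta) = (\{*\},\delta_{*})$ and let $\pi : (X,\nu) \to (\{*\},\delta_{*})$ be the unique $G$-map. The disintegration of $\nu$ over $\delta_{*}$ via $\pi$ has only one fiber measure, namely $D_{\pi}(*) = \nu$. Consequently the conjugated disintegration measures are
\[
D_{\pi}^{(g)}(*) = g^{-1}D_{\pi}(g \cdot *) = g^{-1}\nu
\]
for every $g \in G$, since $G$ acts trivially on the one-point space.

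Next I would apply the definition of relatively contractive at the unique point $*$: since "almost every $y \in Y$" is vacuous here, the condition reads that for every measurable $B \subseteq X$ with $D_{\pi}(*)(B) = \nu(B) < 1$ and every $\epsilon > 0$ there is some $g \in G$ with
\[
g^{-1}D_{\pi}(g \cdot *)(B) = (g^{-1}\nu)(B) < \epsilon.
\]
By the definition of the $G$-action on measures, $(g^{-1}\nu)(B) = \nu(gB)$, so this statement is word-for-word Jaworski's definition: for every $B$ with $\nu(B) < 1$ and every $\epsilon > 0$ there exists $g \in G$ with $\nu(gB) < \epsilon$. Both implications of the equivalence therefore follow simultaneously, and since the trivial one-point space is the unique (up to isomorphism) $G$-space of cardinality one, this completes the argument.

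The only point that requires even momentary thought is verifying that conjugation of disintegration measures by a group element $g$ restricts to the translation $\nu \mapsto g^{-1}\nu$ in the degenerate case where the base is a point; this is immediate from the definition given just above Proposition~\ref{P:conjugateddisintegration}. No continuous compact model, no absolute continuity of conjugated fiber measures, and no additional structure is needed here, since in the base case the "fiber" is all of $X$ and the disintegration is trivial.
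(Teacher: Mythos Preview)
Your proof is correct and follows essentially the same approach as the paper: both simply observe that when the base is a single point, the disintegration measure is $\nu$ itself and the conjugated disintegration measures are $g^{-1}\nu$, so the relatively contractive condition collapses to Jaworski's SAT definition. The only superficial difference is that the paper phrases the conclusion via the equivalent ``$\sup_{g} g^{-1}\nu(B) = 1$ for $B$ of positive measure'' formulation rather than the ``$\nu(gB) < \epsilon$ for $B$ of less than full measure'' formulation you use.
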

\begin{proof}
In the case where $(Y, \eta) = 0$ is the trivial one point system, the disintegration measure is always $\nu$ and so being a relatively contractive extension reduces to the definition of contractive: $g^{-1}D_{\pi}(g \cdot 0) = g^{-1}\nu$ for all $g \in G$ since $g \cdot 0 = 0$ and therefore $\sup_{g} D_{\pi}^{(g)}(0)(B) = 1$ implies $\sup_{g} g^{-1}D_{\pi}(0)(B) = 1$ so $\sup_{g} g^{-1}\nu(B) = 1$ for all measurable $B$ with $\nu(B) > 0$.
\end{proof}

\subsection{The Algebraic Characterization}

Generalizing Jaworksi \cite{Ja94}, we characterize relatively contractive maps algebraically:
\begin{theorem}\label{T:contractiveexte}
Let $\pi : (X, \nu) \to (Y, \rho)$ be a $G$-map of $G$-spaces.  Then $\pi$ is relatively contractive if and only if the map $f \mapsto D_{\pi}^{(g)}(y)(f)$ is an isometry between $L^{\infty}(X, D_{\pi}(y))$ and $L^{\infty}(G, \mathrm{Haar})$ for almost every $y \in Y$ (here $D_{\pi}^{(g)}(y)(f)$ is a function of $g$).
\end{theorem}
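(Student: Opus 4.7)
The plan is to verify the claimed isometry directly, separately for each (almost every) fixed $y \in Y$. Write $F_{f}(g) := D_{\pi}^{(g)}(y)(f) = g^{-1}D_{\pi}(gy)(f)$, regarded as a function of $g \in G$. The preceding facts that the family $\{D_{\pi}^{(g)}(y)\}_{g \in G}$ consists of mutually absolutely continuous probability measures supported on $\pi^{-1}(y)$ will be used throughout; in particular, they imply $L^{\infty}(X,D_{\pi}^{(g)}(y)) = L^{\infty}(X,D_{\pi}(y))$ with equal norms for every $g$.

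The inequality $\|F_{f}\|_{L^{\infty}(G,\mathrm{Haar})} \leq \|f\|_{L^{\infty}(X,D_{\pi}(y))}$ requires no hypothesis: it follows from the pointwise estimate $|F_{f}(g)| \leq \|f\|_{L^{\infty}(X,D_{\pi}^{(g)}(y))}$, valid because $D_{\pi}^{(g)}(y)$ is a probability measure, combined with the identification of norms above. The content of the theorem is therefore the reverse inequality under the contractive hypothesis, together with its converse.

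For the forward direction, set $c := \|f\|_{L^{\infty}(X,D_{\pi}(y))}$ and fix $\epsilon > 0$. At least one of the super-level sets $B_{+} := \{f > c-\epsilon\}$ or $B_{-} := \{f < -(c-\epsilon)\}$ has positive $D_{\pi}(y)$-measure; after replacing $f$ by $-f$ if necessary, assume $D_{\pi}(y)(B_{+}) > 0$, so that $D_{\pi}(y)(B_{+}^{c}) < 1$. Relative contractivity applied to $B_{+}^{c}$ produces, for each $\delta > 0$, a $g \in G$ with $D_{\pi}^{(g)}(y)(B_{+}^{c}) < \delta$; for this $g$,
\[
F_{f}(g) \geq (c - \epsilon)\,D_{\pi}^{(g)}(y)(B_{+}) - c \cdot D_{\pi}^{(g)}(y)(B_{+}^{c}) \geq (c-\epsilon)(1-\delta) - c\delta,
\]
which tends to $c - \epsilon$ as $\delta \to 0$; then letting $\epsilon \to 0$ yields the desired lower bound. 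For the converse, given a measurable $B \subseteq X$ with $D_{\pi}(y)(B) < 1$, apply the isometry to $f = \mathbbm{1}_{X \setminus B}$, whose $L^{\infty}$-norm is $1$, obtaining $\|F_{f}\|_{L^{\infty}(G,\mathrm{Haar})} = 1$; for every $\epsilon > 0$ the set $\{g : F_{f}(g) > 1-\epsilon\}$ has positive Haar measure, hence is non-empty, and any such $g$ satisfies $D_{\pi}^{(g)}(y)(B) < \epsilon$, recovering the definition of relative contractivity.

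The main obstacle is a subtlety in the forward direction: the argument above produces a single element $g_{0}$ with $F_{f}(g_{0})$ close to $c$, while the target norm is an essential supremum against Haar measure, so this pointwise statement must be promoted to a lower bound on a Haar-positive set of $g$. I would handle this by passing to a continuous compact model via Lemma \ref{L:compactmodels} and exploiting the mutual absolute continuity of the conjugated disintegrations to propagate the pointwise estimate over a neighborhood of $g_{0}$, so that the lower bound survives passage to the Haar-essential supremum.
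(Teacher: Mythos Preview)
Your proposal is correct and follows essentially the same approach as the paper: the paper handles the forward direction via simple-function approximation (choosing the piece $B_{N}$ where $|f|$ attains its maximum and contracting onto it) rather than via the super-level set $B_{+}$ as you do, and for the converse it applies the isometry to $\mathbbm{1}_{B}$ rather than $\mathbbm{1}_{X\setminus B}$, but these are trivially equivalent reformulations of the same argument. The subtlety you flag about promoting a pointwise supremum to a Haar-essential supremum is real, but the paper's proof does not address it either---it simply writes $\sup_{g}|D_{\pi}^{(g)}(y)(f)|$ and treats this as the $L^{\infty}(G,\mathrm{Haar})$ norm---so on this point your write-up is, if anything, more scrupulous than the original.
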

\begin{proof}
Assume $\pi$ is relatively contractive.  Take $y$ in the measure one set where the disintegration measures are relatively contractive.  Let $f$ be any simple function $f = \sum a_{n} \bbone_{B_{n}}$ with $B_{n} \subseteq \pi^{-1}(y)$.  Choose $N$ such that $|a_{N}| = \max_{n} |a_{n}| = \| f \|_{\infty}$.  For $\epsilon > 0$ choose $g \in G$ such that $D_{\pi}^{(g)}(y)(B_{N}) > 1 - \epsilon$.  Then $D_{\pi}^{(g)}(y)(B_{N}^{C}) < \epsilon$ and since the $B_{n}$ are disjoint then $D_{\pi}^{(g)}(y)(B_{n}) < \epsilon$ for $n \ne N$.  This means that
\[
\big{|}D_{\pi}^{(g)}(y)(f) - a_{N}\big{|} = \big{|} \sum_{n} a_{n} D_{\pi}^{(g)}(y)(B_{n}) - a_{N}\big{|}
\leq \sum_{n \ne N} |a_{n}| \epsilon + |a_{N}| |1 - \epsilon - 1| = \epsilon \sum_{n} |a_{n}|
\]
and since $\epsilon > 0$ was arbitrary then $\sup_{g} |D_{\pi}^{(g)}(y)(f)| = |a_{N}| = \| f \|$.  As simple functions are uniformly dense in $L^{\infty}(X,D_{\pi}(y))$ and the map is a contraction this proves one direction.

Conversely, assume the map is an isometry for almost every $y$.  For such a $y$, let $B \subseteq \pi^{-1}(y)$ with $D_{\pi}(y)(B) > 0$ and then
$1 = \| \bbone_{B} \|_{\infty} = \sup_{g} D_{\pi}^{(g)}(y)(B)$
so $\pi$ relatively contractive.
\end{proof}

Note that $\pi$ is relatively measure-preserving if and only if the map that would be isometric for relatively contractive, $f \mapsto D_{\pi}^{(g)}(y)(f)$, is simply the map $f \mapsto D_{\pi}(y)(f)$ which is the projection to the ``constants" on each fiber.

We remark that in effect there is a zero-one law for relatively contractive extensions.  Namely, if $\pi : (X,\nu) \to (Y,\eta)$ is a $G$-map of ergodic $G$-spaces then the set of $y$ such that $D_{\pi}^{(g)}(y)$ induces an isometry $L^{\infty}(X,D_{\pi}(y)) \to L^{\infty}(G,\mathrm{Haar})$ has either measure zero or measure one.  This follows from the fact that the set of such $y$ must be $G$-invariant and hence follows by ergodicity: if $D_{\pi}^{(g)}(y)$ induces an isometry then for any $h \in G$ and $f \in L^{\infty}(X,\nu)$
\[
\sup_{g \in G} \big{|}D_{\pi}^{(g)}(hy)(f)\big{|} = \sup_{g \in G} \big{|}D_{\pi}^{(gh)}(y)(h \cdot f)\big{|}
= \sup_{g \in G} \big{|}D_{\pi}^{(g)}(y)(h \cdot f)\big{|} = \| h \cdot f \| = \| f \|.
\]

Specializing to the case of a contractive extension of the trivial one point system we obtain:
\begin{corollary}[Jaworski \cite{Ja94}]
A $G$-space $(X,\nu)$ is contractive if and only if the map $L^{\infty}(X,\nu) \to L^{\infty}(G,\mathrm{Haar})$ given by $f \mapsto g\nu(f)$ is an isometry.
\end{corollary}

One can also characterize relatively contractive maps in terms of convex combinations of measures:
\begin{theorem}
A $G$-map of $G$-spaces $\pi : (X,\nu) \to (Y,\eta)$ is relatively contractive if and only if for almost every $y \in Y$ the space of absolutely continuous measures $L_{1}^{1}(\pi^{-1}(y),D_{\pi}(y)) \subseteq \overline{\mathrm{conv}}~\mathcal{D}_{y}$.
\end{theorem}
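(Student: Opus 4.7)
I would prove this by recognizing it as a Hahn--Banach separation statement whose dual side reduces directly to the algebraic characterization from Theorem \ref{T:contractiveexte}.

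First I would fix the natural topology: view $\overline{\mathrm{conv}}~\mathcal{D}_{y}$ as sitting inside the space of absolutely continuous probability measures with respect to $D_{\pi}(y)$ (which is where $\mathcal{D}_{y}$ lives, by the mutual absolute continuity proved above), identified with a subset of $L^{1}(\pi^{-1}(y), D_{\pi}(y))$, with the weak topology given by the dual $L^{\infty}(\pi^{-1}(y), D_{\pi}(y))$. I would restrict attention to the full--measure set of $y \in Y$ on which the conjugated disintegration measures are mutually absolutely continuous and the isometry statement of Theorem \ref{T:contractiveexte} holds.

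For the forward direction, suppose $\pi$ is relatively contractive and, for contradiction, pick $\mu \in L^{1}_{1}(\pi^{-1}(y),D_{\pi}(y))$ with $\mu \notin \overline{\mathrm{conv}}~\mathcal{D}_{y}$. Hahn--Banach produces a real $f \in L^{\infty}(\pi^{-1}(y), D_{\pi}(y))$ and $\alpha < \beta$ with $\nu(f) \leq \alpha$ for every $\nu \in \overline{\mathrm{conv}}~\mathcal{D}_{y}$ and $\mu(f) \geq \beta$. In particular $\sup_{g} D^{(g)}_{\pi}(y)(f) \leq \alpha$. I would then show the opposite inequality $\sup_{g} D^{(g)}_{\pi}(y)(f) \geq \mathrm{ess\,sup}\, f$ by picking the level set $B_{\epsilon} = \{f > \mathrm{ess\,sup}\, f - \epsilon\}$, which has positive $D_{\pi}(y)$-measure, applying the definition of relatively contractive to find $g$ with $D^{(g)}_{\pi}(y)(B_{\epsilon}) > 1 - \epsilon$, and estimating
\[
D^{(g)}_{\pi}(y)(f) \;\geq\; (\mathrm{ess\,sup}\, f - \epsilon)(1-\epsilon) - \|f\|_{\infty} \epsilon,
\]
which tends to $\mathrm{ess\,sup}\, f$. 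Since $\mu(f) \leq \mathrm{ess\,sup}\, f$, this contradicts $\alpha < \beta$. Equivalently, one may cite Theorem \ref{T:contractiveexte} directly to conclude $\sup_{g} D^{(g)}_{\pi}(y)(f) = \|f\|_{\infty}$.

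For the reverse direction, suppose $L^{1}_{1}(\pi^{-1}(y),D_{\pi}(y)) \subseteq \overline{\mathrm{conv}}~\mathcal{D}_{y}$. Given a measurable $B \subseteq \pi^{-1}(y)$ with $0 < D_{\pi}(y)(B) < 1$, set $\mu = D_{\pi}(y)(B)^{-1}\mathbbm{1}_{B}\, D_{\pi}(y) \in L^{1}_{1}$. Pick a net $\nu_{n} = \sum_{i} c^{(n)}_{i}\, D^{(g_{i,n})}_{\pi}(y)$ in $\mathrm{conv}~\mathcal{D}_{y}$ converging weakly to $\mu$. Testing against $\mathbbm{1}_{B} \in L^{\infty}$ gives $\nu_{n}(B) \to \mu(B) = 1$, and since $\nu_{n}$ is a convex combination, some $D^{(g_{i,n})}_{\pi}(y)(B)$ must be at least $\nu_{n}(B)$; hence $\sup_{g} D^{(g)}_{\pi}(y)(B) = 1$, which is the definition of relatively contractive.

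The main (mild) obstacle is pinning down the right topology so that the Hahn--Banach duality correctly pairs the convex hull of the $D^{(g)}_{\pi}(y)$ against $L^{\infty}$-test functions; once one notes that each $D^{(g)}_{\pi}(y)$ is absolutely continuous with respect to $D_{\pi}(y)$ (so they all live in $L^{1}(D_{\pi}(y))$ with dual $L^{\infty}(D_{\pi}(y))$), the rest is a routine application of separation plus the isometry already established.
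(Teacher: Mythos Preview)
Your argument is correct. Both directions work exactly as you describe: the forward direction is Hahn--Banach separation followed by the level-set estimate (or, after shifting $f$ by a constant so that $f\geq 0$, a direct appeal to Theorem~\ref{T:contractiveexte}), and the reverse direction is the normalized-indicator trick together with the trivial observation that a convex combination is dominated by its largest term.

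The paper takes a different route: it deduces the statement as an immediate consequence of Theorem~\ref{T:contractiveexttopo}, the topological characterization via relatively contractible compact models. There, once one knows that $D_{\pi}^{(g_{n})}(y)\to\delta_{x}$ in weak* for every $x$ in the fiber, the point masses (hence all of $P(\pi^{-1}(y))$, and in particular $L_{1}^{1}$) lie in the weak*-closed convex hull of $\mathcal{D}_{y}$; the converse is read off similarly. Your approach is more elementary: it stays entirely inside the $L^{1}$--$L^{\infty}$ duality and avoids compact models, continuous actions, and the approximate-identity machinery that goes into Theorem~\ref{T:contractiveexttopo}. In effect you are showing that the convex-hull statement is the direct Hahn--Banach dual of the isometry statement in Theorem~\ref{T:contractiveexte}, whereas the paper passes through the heavier topological result. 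The paper's route has the advantage that Theorem~\ref{T:contractiveexttopo} is needed elsewhere anyway, so the corollary comes for free; your route has the advantage of making the equivalence with the algebraic characterization transparent.
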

\begin{proof}
An immediate consequence of Theorem \ref{T:contractiveexttopo} (in the following subsection).
\end{proof}

Specializing to the one point system:
\begin{corollary}[Jaworski \cite{Ja94}]
A $G$-space $(X,\nu)$ is contractive if and only if the space of absolutely continuous measures $L_{1}^{1}(X,\nu) \subseteq \overline{\mathrm{conv}}~G\nu$.
\end{corollary}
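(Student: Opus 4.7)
The plan is to deduce this corollary as a direct specialization of the preceding theorem to the case where the target $G$-space is the trivial one-point system. The earlier result in this section (that a $G$-space is contractive if and only if it is a relatively contractive extension of a point) lets me recast the contractivity of $(X,\nu)$ as the relative contractivity of the unique $G$-map $\pi : (X,\nu) \to (\{\mathrm{pt}\},\delta_{\mathrm{pt}})$.

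Once this reformulation is in place, I would unwind the definitions of the disintegration and its conjugates for this particular $\pi$. Since the target is a single point, $\pi^{-1}(\mathrm{pt}) = X$ and $D_{\pi}(\mathrm{pt}) = \nu$, so the conjugated disintegration measures satisfy
\[
D_{\pi}^{(g)}(\mathrm{pt}) = g^{-1} D_{\pi}(g \cdot \mathrm{pt}) = g^{-1} \nu,
\]
and consequently the family $\mathcal{D}_{\mathrm{pt}} = \{ g^{-1} \nu : g \in G \}$ coincides with the orbit $G\nu$ (the replacement $g \mapsto g^{-1}$ leaves the set invariant). The space of absolutely continuous probability measures on the fiber $\pi^{-1}(\mathrm{pt}) = X$ with respect to $D_{\pi}(\mathrm{pt}) = \nu$ is exactly $L_{1}^{1}(X,\nu)$.

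Substituting these identifications into the preceding theorem gives that $\pi$ is relatively contractive if and only if $L_{1}^{1}(X,\nu) \subseteq \overline{\mathrm{conv}}~G\nu$, and by the reformulation this is equivalent to $(X,\nu)$ being contractive, as required. The ``for almost every $y$'' quantifier in the preceding theorem is vacuous here since $Y$ is a single point, so no subtle measurability concern arises. I do not anticipate any obstacle: the corollary is essentially a dictionary translation from the relative language to the absolute one, with the heavy lifting already carried out in the preceding theorem.
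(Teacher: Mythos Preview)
Your proposal is correct and matches the paper's approach exactly: the paper states this corollary immediately after the preceding theorem with no separate proof, and it is meant to be read as the specialization to the one-point target that you describe. Your unwinding of the disintegration data over a point is precisely the intended dictionary.
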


\subsection{Relatively Contractible Spaces}

\begin{definition}[Furstenberg-Glasner \cite{FG10}]
A continuous compact model $(X_{0},\nu_{0})$ of a $G$-space $(X,\nu)$ is \textbf{contractible} when for every $x \in X_{0}$ there exists $g_{n} \in G$ such that $g_{n}\nu_{0} \to \delta_{x}$ in weak*.
\end{definition}

\begin{definition}
Let $\pi : (X,\nu) \to (Y,\eta)$ be a $G$-map of $G$-spaces.  A continuous compact model $\pi_{0} : (X_{0},\nu_{0}) \to (Y_{0},\eta_{0})$ for this map is \textbf{relatively contractible} when for $\eta_{0}$-almost every $y \in Y_{0}$ and every $x \in X_{0}$ such that $\pi_{0}(x) = y$ there exists a sequence $g_{n} \in G$ such that $D_{\pi_{0}}^{(g_{n})}(y) \to \delta_{x}$ in weak*.
\end{definition}

\begin{theorem}\label{T:contractiveexttopo}
Let $\pi : (X,\nu) \to (Y,\eta)$ be a $G$-map of $G$-spaces.  Then $\pi$ is relatively contractive if and only if  every continuous compact model of $\pi$ is relatively contractible.
\end{theorem}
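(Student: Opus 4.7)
This is the relative analogue of the Furstenberg--Glasner characterization of SAT actions, so the overall strategy mirrors theirs but at the level of fiber measures. Both directions serve as bridges between a measure-theoretic property of $\pi : (X,\nu) \to (Y,\eta)$ and a topological (weak*) concentration property in a continuous compact model, exploiting the weak* compactness of $P(X_{0})$ together with inner and outer regularity of Borel measures on a compact metric space. Since the conclusion is stated for \emph{every} model, I will pass back and forth via the $G$-isomorphisms supplied by the definition of a continuous compact model.

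For the easier backward direction, I would invoke Lemma \ref{L:compactmodels} to fix any continuous compact model $\pi_{0}$; by hypothesis it is relatively contractible. Take $y$ in the full-measure set. Given a Borel $B_{0} \subset X_{0}$ with $D_{\pi_{0}}(y)(B_{0}) < 1$, inner regularity produces a compact $K \subset B_{0}^{c}$ of positive $D_{\pi_{0}}(y)$-measure, so in particular $K \cap \pi_{0}^{-1}(y) \neq \emptyset$. Pick $x \in K$ and let $g_{n}$ be the sequence from relative contractibility with $D_{\pi_{0}}^{(g_{n})}(y) \to \delta_{x}$ in weak*. Using outer regularity to enclose $B_{0}$ in a small open $V$ with $V$ disjoint from some open neighborhood $U$ of $x$ and applying a continuous Urysohn function separating $x$ from $\overline{V}$, the portmanteau inequality forces $D_{\pi_{0}}^{(g_{n})}(y)(B_{0}) \leq D_{\pi_{0}}^{(g_{n})}(y)(V) \to 0$, which yields the relatively contractive property after transporting through the $G$-isomorphism of the model.

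For the forward direction, assume $\pi$ is relatively contractive and let $\pi_{0}$ be an arbitrary continuous compact model. By Theorem \ref{T:contractiveexte}, for $\eta_{0}$-a.e.\ $y$ the map $f \mapsto D_{\pi_{0}}^{(g)}(y)(f)$ is an isometry from $L^{\infty}(X_{0}, D_{\pi_{0}}(y))$ into $L^{\infty}(G,\mathrm{Haar})$. The first substantive step is to show that $\mathrm{supp}(D_{\pi_{0}}(y)) = \pi_{0}^{-1}(y)$ for $\eta_{0}$-a.e.\ $y$, since otherwise the mutual absolute continuity of the conjugated disintegrations established earlier forces $D_{\pi_{0}}^{(g)}(y)(V) = 0$ for all $g$ on some open $V$ meeting the fiber and no weak* limit could be a point mass there. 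I would obtain this support statement by picking a countable basis $\{V_{n}\}$ of $X_{0}$, defining $A_{n} = \{ y : D_{\pi_{0}}(y)(V_{n}) = 0,\ V_{n} \cap \pi_{0}^{-1}(y) \neq \emptyset \}$, and using quasi-invariance of $\nu_{0}$ under $G$ together with the full support of $\nu_{0}$ on $X_{0}$ to rule out $\eta_{0}(A_{n}) > 0$ for any $n$. Granted this, for any $x \in \pi_{0}^{-1}(y)$ and any open neighborhood $U_{n}$ of $x$ from a countable decreasing base $U_{n} \downarrow \{ x \}$, we have $D_{\pi_{0}}(y)(U_{n}^{c}) < 1$, so relative contractivity produces $g_{n}$ with $D_{\pi_{0}}^{(g_{n})}(y)(U_{n}^{c}) < 1/n$; every weak* cluster point of $\{ D_{\pi_{0}}^{(g_{n})}(y) \}$ is then $\delta_{x}$.

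The principal obstacle is the full-fiber-support claim in the forward direction: this is what makes the forward direction work for \emph{every} compact model rather than only for some canonical choice, and it relies delicately on the full-support hypothesis on $\nu_{0}$ built into the definition of a continuous compact model. The rest of the argument in both directions is a routine combination of weak*-approximation, Urysohn-type separation on a compact metric space, and transport through the $G$-isomorphism of the model.
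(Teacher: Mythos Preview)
Your forward direction is close to the paper's: where the paper uses continuous $f_n \downarrow \mathbbm{1}_{\{x\}}$ and the isometry characterization, you use open $U_n \downarrow \{x\}$ and apply the definition directly, but the mechanism is the same. You correctly isolate the full-fiber-support claim $\mathrm{supp}\,D_{\pi_0}(y) = \pi_0^{-1}(y)$ as the crux; note, though, that your sketch for it (``quasi-invariance plus full support of $\nu_0$'') is not a proof as stated --- nothing yet prevents $D_{\pi_0}(y)(V_n)=0$ on a positive-$\eta_0$-measure set of $y$, since $V_n \cap \pi_0^{-1}(A_n)$ need not be open --- so this step would need more work.

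The real problem is your backward direction, which you call ``easier'' but which is in fact the hard half. The portmanteau step fails: having chosen $x \in K \subset B_0^{c}$ with $K$ compact, there is no reason $x \notin \overline{B_0}$. A Borel $B_0$ can be dense in the fiber while still having $D_{\pi_0}(y)(B_0)<1$, and then no open $V \supset B_0$ is disjoint from any neighborhood of $x$. Weak* convergence $D_{\pi_0}^{(g_n)}(y)\to\delta_x$ controls only closed sets avoiding $x$, not arbitrary Borel sets; outer regularity gives $V \supset B_0$ close in $D_{\pi_0}(y)$-measure, but that is the wrong measure --- you need smallness in $D_{\pi_0}^{(g_n)}(y)$.

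The paper's proof of this direction is genuinely different and uses the hypothesis that \emph{every} compact model is contractible in an essential way. Arguing by contradiction, it takes a Borel $A$ witnessing failure of relative contractivity and mollifies $\mathbbm{1}_A$ by convolving with an approximate identity $\psi$ on $G$. The resulting $f=\mathbbm{1}_A*\psi$ is $G$-continuous, hence lies in $C(X_0)$ for \emph{some} compact model --- this is where changing models matters. A separate estimate (Proposition~\ref{P:fghelp}) shows $\|\mathbbm{1}_A*\psi_n\|_{L^\infty(X,D_\pi(y))}\to 1$, so contractibility of that model forces $\sup_g D_\pi^{(g)}(y)(f)$ close to $1$; unwinding the convolution then contradicts $\sup_g D_\pi^{(g)}(y)(A)\le 1-\delta$. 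Fixing a single model, as you do, cannot bridge the gap from $C(X_0)$ to $L^\infty(X_0,\nu_0)$.
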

\begin{proof}
Recall that a continuous compact model for $\pi$ means compact models for $X$ and $Y$ such that $G \actson X$ and $G \actson Y$ are continuous and the map $\pi$ is continuous (Lemma \ref{L:compactmodels}).

Assume that $\pi$ is relatively contractive.  By Theorem \ref{T:contractiveexte}, there is a measure one set of $y$ such that $f \mapsto D_{\pi}^{(g)}(y)(f)$ is an isometry between $L^{\infty}(X, D_{\pi}(y))$ and $L^{\infty}(G,\mathrm{Haar})$. Fix $y$ in that set and let $x \in X$ such that $\pi(x) = y$.  Choose $f_{n} \in C(X)$ such that $0 \leq f_{n} \leq 1$, $\| f_{n} \| = 1$ and $f_{n} \to \bbone_{\{x\}}$ (possible since $C(X)$ separates points) and such that $f_{n+1} \leq f_{n}$.  Since $\pi$ is relatively contractive, $\sup_{g} D_{\pi}^{(g)}(y)(f_{n}) = 1$ for each $n$.  Choose $g_{n} \in G$ such that
\[
1 - \frac{1}{n} < D_{\pi}^{(g_{n})}(y)(f_{n})
\]
and observe then that, since $f_{n+1} \leq f_{n}$,
\[
1 - \frac{1}{n+1} < D_{\pi}^{(g_{n+1})}(y)(f_{n+1}) \leq D_{\pi}^{(g_{n+1})}(y)(f_{n})
\]
and therefore $\lim_{m\to\infty} D_{\pi}^{(g_{m})}(f_{n}) = 1$ for each fixed $n$.

Now $P(X)$ is compact so there exists a limit point $\zeta \in P(X)$ such that $\zeta = \lim_{j} D_{\pi}^{(g_{n_{j}})}(y)$ along some subsequence.  Now $\zeta(f_{n}) = 1$ for each $n$ by the above and $f_{n} \to \bbone_{\{x\}}$ is pointwise decreasing so by bounded convergence $\zeta(\{ x \}) = \lim \zeta(f_{n}) = 1$.  This means that for almost every $y$, the conclusion holds for all $x \in \pi^{-1}(y)$.

For the converse, first consider any continuous compact model such that for almost every $y \in Y$ and every $x \in \pi^{-1}(y)$ there exists a sequence $\{ g_{n} \}$ such that $D_{\pi}^{(g_{n})}(y) \to \delta_{x}$.  Let $f \in C(X)$.  Then the supremum of $f$ on $\pi^{-1}(y)$ is attained at some $x \in \pi^{-1}(y)$ since $\pi^{-1}(y)$ is a closed, hence compact, set.  Take $g_{n}$ such that $g_{n}^{-1} D_{\pi}(g_{n}y) \to \delta_{x}$.   Then $g_{n}^{-1}D_{\pi}(g_{n}y)(f) \to f(x) = \| f \|_{L^{\infty}(\pi^{-1}(y))}$.  Hence for $f \in C(X)$ the map is an isometry.

Now assume that for every continuous compact model for $\pi$ and for almost every $y$ and every $x \in \pi^{-1}(y)$ there is a sequence $g_{n} \in G$ such that $g_{n}^{-1}D_{\pi}(g_{n}y) \to \delta_{x}$.

Suppose that $\pi$ is not relatively contractive.  Then there exists a measurable set $A \subseteq X$ with $\nu(A) > 0$ and $1 > \delta > 0$ such that 
\[
B = \{ y \in Y : D_{\pi}(y)(A) > 0 \text{ and } \sup_{g} D_{\pi}^{(g)}(y)(A) \leq 1 - \delta \} > 0
\]
has $\eta(B) > 0$.

Fix $\epsilon > 0$.  Let $\psi_{n} \in C_{c}(G)$ be an approximate identity ($\psi_{n}$ are nonnegative continuous functions with $\int \psi_{n} dm = 1$ where $m$ is a Haar measure on $G$ such that the compact supports of the $\psi_{n}$ are a decreasing sequence and $\cap_{n} \mathrm{supp}~\psi_{n} = \{ e \}$; the reader is referred to \cite{FG10} Corollary 8.7).  Define $f_{n} = \bbone_{A} * \psi_{n} = \int_{G} \bbone_{A}(hx) \psi_{n}(h)~dm(h)$.  Then the $f_{n}$ are $G$-continuous functions by \cite{FG10} Lemma 8.6.

By Proposition \ref{P:fghelp} (below),
\[
\lim_{n} \| \bbone_{A} * \psi_{n} \|_{L^{\infty}(X,D_{\pi}(y))} = 1
\]
for all $y \in B$.

There then exists a set $B_{1} \subseteq B$ with $\eta(B_{1}) > \eta(B) - \epsilon$ and $N \in \mathbb{N}$ such that for all $y \in B_{1}$ and all $n \geq N$, $\| \bbone_{A} * \psi_{n} \|_{L^{\infty}(X,D_{\pi}(y))} > 1 - \epsilon$.
Let $V$ be a compact set neighborhood of the identity in $G$ such that $| \eta(B_{1} \cap h^{-1}B_{1}) - \eta(B_{1})| < \epsilon$ for all $h \in V$ (possible as the $G$-action is continuous on the algebra of measurable sets).  Choose $n \geq N$ such that the support of $\psi = \psi_{n}$ is contained in $V$.

Set $f = \bbone_{A} * \psi$.
Since $f$ is $G$-continuous there exists a continuous compact model on which $f \in C(X)$ by \cite{FG10} Theorem 8.5.  Hence, for almost every $y \in Y$,
\[
\sup_{g} D_{\pi}^{(g)}(y)(f) = \| f \|_{L^{\infty}(X,D_{\pi}(y))}.
\]
Removing a null set from $B_{1}$, then for all $y \in B_{1}$ there exists $g_{y} \in G$ such that
\[
D_{\pi}^{(g_{y})}(y)(f) > \| f \|_{L^{\infty}(X,D_{\pi}(y))} - \epsilon > 1 - 2\epsilon.
\]
Observe that
\begin{align*}
(1 - 2\epsilon)\eta(B_{1}) &\leq \int_{B_{1}} D_{\pi}^{(g_{y})}(f)~d\eta(y) \\
&= \int_{B_{1}} \int_{X} f(g_{y}^{-1}x)~dD_{\pi}(g_{y}y)(x)~d\eta(y) \\
&= \int_{B_{1}} \int_{X} \int_{G} \bbone_{A}(hg_{y}^{-1}x) \psi(h)~dm(h)~dD_{\pi}(g_{y}y)~d\eta(y) \\
&= \int_{G} \int_{B_{1}} D_{\pi}(g_{y}y)(g_{y}h^{-1}A)~d\eta(y) \psi(h)~dm(h) \\
&= \int_{G} \int_{hB_{1}} D_{\pi}(g_{y}h^{-1}y)(g_{y}h^{-1}A)~dh\eta(y) \psi(h)~dm(h) \\
&= \int_{G} \int_{hB_{1}} D_{\pi}^{(g_{y}h^{-1})}(y)(A)~dh\eta(y) \psi(h)~dm(h) \\
&\leq \int_{G} \int_{hB_{1}} \sup_{g} D_{\pi}^{(g)}(y)(A)~dh\eta(y) \psi(h)~dm(h) \\
&= \int_{G} \Big{(} \int_{hB_{1} \setminus B_{1}} \sup_{g} D_{\pi}^{(g)}(y)(A)~dh\eta(y) + \int_{hB_{1} \cap B_{1}} \sup_{g} D_{\pi}^{(g)}(y)(A)~dh\eta(y) \Big{)} \psi(h)~dm(h) \\
&\leq \int_{G} \big{(} h\eta(hB_{1} \setminus B_{1}) + (1 - \delta)h\eta(hB_{1} \cap B_{1}) \big{)} \psi(h)~dm(h) \\
&= \int_{G} \big{(} h\eta(hB_{1}) - \delta h\eta(hB_{1} \cap B_{1}) \big{)} \psi(h)~dm(h) \\
&= \eta(B_{1}) - \delta \int_{G} \eta(B_{1} \cap h^{-1}B_{1}) \psi(h)~dm(h).
\end{align*}
Now the support of $\psi$ is contained in $V$ and $| \eta(B_{1} \cap h^{-1}B_{1}) - \eta(B_{1})| < \epsilon$ for all $h \in V$.  Therefore
\[
-2\epsilon\eta(B_{1}) \leq - \delta \int_{G} (\eta(B_{1}) - \epsilon) \psi(h)~dm(h) = - \delta\eta(B_{1}) + \delta\epsilon.
\]
Hence
$\delta \eta(B_{1}) \leq \epsilon(2\eta(B_{1}) + \delta)$ and so
\[
\delta \eta(B) \leq \delta (\eta(B_{1}) + \epsilon) \leq 2\epsilon(\eta(B_{1}) + \delta) \leq 2\epsilon(\eta(B) + \delta).
\]
Since $\delta$ is fixed and this holds for all $\epsilon > 0$, $\eta(B) = 0$ contradicting that $\pi$ is not relatively contractive.
\end{proof}


The above is a generalization of a result of Furstenberg and Glasner \cite{FG10} that
a $G$-space is contractive if and only if every continuous compact model of the space is contractible.

The following fact was used in the above proof and is step by step equivalent to the proof of \cite{FG10} Proposition 8.8 but relativized over a $G$-map:
\begin{proposition}\label{P:fghelp}
Let $\pi : (X,\nu) \to (Y,\eta)$ be a $G$-map of $G$-spaces.  Let $\psi_{n} \in C_{c}(G)$ be an approximate identity (the $\psi_{n}$ are nonnegative continuous functions with decreasing compact supports $V_{n}$ such that $\cap V_{n} = \{ e \}$ and $\int \psi_{n} dm = 1$ for $m$ a Haar measure on $G$).  Then for any measurable set $A \subseteq X$ and almost every $y \in Y$ such that $D_{\pi}(y)(A) > 0$,
\[
\lim_{n} \| \bbone_{A} * \psi_{n} \|_{L^{\infty}(X,D_{\pi}(y))} = 1.
\]
\end{proposition}
\begin{proof}
Take a continuous compact model for $\pi$.
Let
\[
B = \{ y \in Y : D_{\pi}(y)(A) > 0 \}.
\]
Fix $\delta > 0$ and choose $\epsilon_{y} > 0$ for each $y \in B$ such that $\epsilon_{y} < \frac{1}{4}\delta D_{\pi}(y)(A)$.

For each $y \in B$, let $C_{y} \subseteq A \subseteq U_{y}$ such that $C_{y}$ is closed and $U_{y}$ is open and $D_{\pi}(y)(U_{y} \setminus C_{y}) < \epsilon_{y}$ (possible since $D_{\pi}(y)$ is regular).  Let $V_{y}$ be a symmetric compact neighborhood of the identity in $G$ such that $D_{\pi}(y)(hC_{y} \symdiff C_{y}) < \epsilon_{y}$ and such that $hC_{y} \subseteq U_{y}$ for all $h \in V_{y}$ (possible since the $G$-action is continuous).  Then $\bbone_{C_{y}}(hx) = 0$ for all $x \notin U_{y}$ and $h \in V_{y}$.

Let $N_{y} \in \mathbb{N}$ such that $\mathrm{supp}~\psi_{n} \subseteq V_{y}$ for all $n \geq N_{y}$.  For $n \geq N_{y}$, set $f_{y,n} = \bbone_{C_{y}} * \psi_{n}$.  Then $f_{y,n}(x) = 0$ for $x \notin U_{y}$.  So
\begin{align*}
D_{\pi}(y)(f_{y,n}) &=
\int_{X} \int_{G} \bbone_{C_{y}}(hx) \psi_{n}(h)~dm(h)~dD_{\pi}(y)(x) = \int_{G} D_{\pi}(y)(h^{-1}C_{y}) \psi_{n}(h)~dm(h) \\
&\geq \int_{G} (D_{\pi}(y)(C_{y}) - \epsilon_{y}) \psi_{n}(h)~dm(h) = D_{\pi}(y)(C_{y}) - \epsilon_{y}.
\end{align*}
Define
$E_{y,n} = \{ x \in U : f_{y,n}(x) < 1 - \delta \}$.
Then
\begin{align*}
D_{\pi}(y)(C_{y}) - \epsilon_{y} &\leq \int_{X} f_{y,n}(x)~dD_{\pi}(y)(x) = \int_{U_{y}} f_{y,n}(x)~dD_{\pi}(y)(x) \\
&= \int_{E_{y,n}} f_{y,n}(x)~dD_{\pi}(y)(x) + \int_{U_{y} \setminus E_{y,n}} f_{y,n}(x)~dD_{\pi}(y)(x) \\
&\leq (1 - \delta) D_{\pi}(y)(E_{y,n}) + D_{\pi}(y)(U_{y} \setminus E_{y,n}) \\
&= D_{\pi}(y)(U_{y}) - \delta D_{\pi}(y)(E_{y,n}).
\end{align*}
Therefore
\[
\delta D_{\pi}(y)(E_{y,n}) \leq D_{\pi}(y)(U_{y} \setminus C_{y}) + \epsilon_{y} < 2\epsilon_{y}
\]
Hence $D_{\pi}(y)(E_{y,n}) < 2 \epsilon_{y} \delta^{-1} < \frac{1}{2} D_{\pi}(y)(A)$.  So, for $x \in U_{y} \setminus E_{y,n}$, $f_{y,n}(x) \geq 1 - \delta$ and $D_{\pi}(y)(U_{y} \setminus E_{y,n}) \geq \frac{1}{2}D_{\pi}(y)(A) > 0$.

Therefore
\[
(\bbone_{A} * \psi_{n})(x) \geq (\bbone_{C_{y}} * \psi(x)) \geq 1 - \delta
\]
for all $x$ in a $D_{\pi}(y)$-positive measure set.  Hence for $n \geq N_{y}$, $\| \bbone_{A} * \psi_{n} \|_{L^{\infty}(X,D_{\pi}(y))} \geq 1 - \delta$.
As this holds for all $\delta > 0$,
$\lim_{n} \| \bbone_{A} * \psi_{n} \|_{L^{\infty}(X,D_{\pi}(y))} = 1$
for all $y \in B$.
\end{proof}

\subsection{Examples of Relatively Contractive Maps}

Let $(X,\nu)$ and $(Y,\eta)$ be contractive $G$-spaces.  In general it need not hold that $(X \times Y, \nu \times \eta)$ is contractive (with the diagonal $G$-action), however:
\begin{theorem}\label{T:examplecont}
Let $(X,\nu)$ be a contractive $G$-space and $(Y,\eta)$ be a $G$-space.  The map $\mathrm{pr_{Y}} : (X \times Y, \nu \times \eta) \to (Y,\eta)$ is relatively contractive ($X\times Y$ has the diagonal $G$-action).
\end{theorem}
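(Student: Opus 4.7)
The plan is to reduce relative contractivity of the projection to the ordinary contractivity of $(X,\nu)$ by working out the disintegration and its conjugates explicitly.

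First I would identify the disintegration measures. Since the measure on $X \times Y$ is the product $\nu \times \eta$ and the projection is to $Y$, the fiber over $y$ is $X \times \{y\}$. The candidate measures $D_{\mathrm{pr}_Y}(y) = \nu \times \delta_y$ are supported on the correct fibers and satisfy $\int (\nu \times \delta_y)\, d\eta(y) = \nu \times \eta$, so by uniqueness of disintegration these are the disintegration measures.

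Next I would compute the conjugated disintegrations. Using the diagonal $G$-action $g(x,y') = (gx, gy')$, a direct computation on product sets $A \times B$ yields
\[
D_{\mathrm{pr}_Y}^{(g)}(y) = g^{-1}(\nu \times \delta_{gy}) = (g^{-1}\nu) \times \delta_y,
\]
since $g^{-1}\delta_{gy} = \delta_y$. This is the main concrete calculation and is essentially bookkeeping.

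Finally I would invoke contractivity of $(X,\nu)$. For any measurable $B \subseteq X \times Y$ let $B_y = \{ x \in X : (x,y) \in B \}$; this is measurable for almost every $y$ by Fubini. Then
\[
D_{\mathrm{pr}_Y}(y)(B) = (\nu \times \delta_y)(B) = \nu(B_y),
\qquad
D_{\mathrm{pr}_Y}^{(g)}(y)(B) = (g^{-1}\nu)(B_y).
\]
If $D_{\mathrm{pr}_Y}(y)(B) < 1$, then $\nu(B_y) < 1$, so by contractivity of $(X,\nu)$, for every $\epsilon > 0$ there exists $g \in G$ with $\nu(g B_y) < \epsilon$, i.e., $(g^{-1}\nu)(B_y) < \epsilon$, which is exactly $D_{\mathrm{pr}_Y}^{(g)}(y)(B) < \epsilon$. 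This establishes the definition of relatively contractive.

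There is no real obstacle here; the only mild subtlety is to confirm measurability of $B_y$ for almost every $y$ (standard Fubini) and to verify the splitting $g^{-1}(\nu \times \delta_{gy}) = (g^{-1}\nu) \times \delta_y$ carefully, which is what makes the independent joining special and why the argument would not go through for a general joining of $X$ and $Y$.
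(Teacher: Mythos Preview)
Your proof is correct and follows essentially the same approach as the paper: identify the disintegration as $D_{\mathrm{pr}_Y}(y)=\nu\times\delta_y$, compute the conjugate $D_{\mathrm{pr}_Y}^{(g)}(y)=g^{-1}\nu\times\delta_y$, and then invoke contractivity of $(X,\nu)$. The paper's proof is just a terser version of what you wrote, omitting the Fubini and slice-measurability remarks.
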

\begin{proof}
The disintegration measures $D_{\pi}(y)$ are supported on $X \times \delta_{y}$ and have the form $D_{\pi}(y) = \nu \times \delta_{y}$.  Clearly
\[
D_{\pi}^{(g)}(y) = g^{-1} (\nu \times \delta_{gy}) = g^{-1}\nu \times \delta_{y}
\]
and since $(X,\nu)$ is contractive then $\pi$ is relatively contractive.
\end{proof}

More generally, the following holds:
\begin{theorem}
Let $\pi : (X,\nu) \to (Y,\eta)$ be a relatively contractive $G$-map of $G$-spaces.  Let $(Z,\zeta)$ be a $G$-space.  The map $\pi \times \mathrm{id} : (X \times Z, \nu \times \zeta) \to (Y \times Z, \eta \times \zeta)$ is relatively contractive (where $X\times Z$ and $Y\times Z$ have the diagonal $G$-action).
\end{theorem}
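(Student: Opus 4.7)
The plan is to reduce the statement to relative contractivity of $\pi$ applied fiberwise over the $Z$-coordinate. The essential observation is that the diagonal $G$-action on $X \times Z$ and $Y \times Z$ turns the product into a ``constant family'' in the $Z$-direction as far as disintegration is concerned, so the contractive behavior of $\pi$ lifts unchanged.

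First, I would identify the disintegration measures of $\pi \times \mathrm{id}$. The fiber of $\pi \times \mathrm{id}$ over $(y,z) \in Y \times Z$ is $\pi^{-1}(y) \times \{z\}$, and it is immediate from Fubini that $D_{\pi \times \mathrm{id}}(y,z) = D_{\pi}(y) \times \delta_{z}$ for $(\eta \times \zeta)$-almost every $(y,z)$, since the product measure $\int D_{\pi}(y) \times \delta_{z} \, d(\eta \times \zeta)(y,z)$ integrates to $\nu \times \zeta$ and the support is correct. Next I would compute the conjugated disintegration: for any $g \in G$ and almost every $(y,z)$,
\begin{align*}
D_{\pi \times \mathrm{id}}^{(g)}(y,z)
 &= g^{-1} D_{\pi \times \mathrm{id}}(g(y,z))
  = g^{-1}\bigl(D_{\pi}(gy) \times \delta_{gz}\bigr) \\
 &= \bigl(g^{-1} D_{\pi}(gy)\bigr) \times \delta_{z}
  = D_{\pi}^{(g)}(y) \times \delta_{z},
\end{align*}
using that the diagonal $G$-action sends $(x,z') \mapsto (g^{-1}x, g^{-1}z')$ and that $g^{-1}\delta_{gz} = \delta_{z}$.

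Now fix a measurable set $B \subseteq X \times Z$ with $D_{\pi \times \mathrm{id}}(y,z)(B) < 1$, and let $B_{z} = \{x \in X : (x,z) \in B\}$ be the $z$-slice. Then
\[
D_{\pi \times \mathrm{id}}^{(g)}(y,z)(B)
 = \bigl(D_{\pi}^{(g)}(y) \times \delta_{z}\bigr)(B)
 = D_{\pi}^{(g)}(y)(B_{z}),
\]
and similarly $D_{\pi}(y)(B_{z}) = D_{\pi \times \mathrm{id}}(y,z)(B) < 1$. By Fubini, for $(\eta \times \zeta)$-almost every $(y,z)$ the slice $B_{z}$ is measurable and $y$ lies in the full-measure set where $\pi$ is relatively contractive. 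Hence, given $\epsilon > 0$, there exists $g \in G$ with $D_{\pi}^{(g)}(y)(B_{z}) < \epsilon$, which gives $D_{\pi \times \mathrm{id}}^{(g)}(y,z)(B) < \epsilon$, as required.

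There is no serious obstacle; the only minor point to be careful about is the measurability of the slices $B_{z}$ and the choice of the full-measure set for $(y,z)$, both handled by a routine Fubini argument. If one prefers, the same conclusion can be read off from the isometry characterization in Theorem \ref{T:contractiveexte}: the map $f \mapsto D_{\pi \times \mathrm{id}}^{(g)}(y,z)(f)$ factors as $f \mapsto D_{\pi}^{(g)}(y)\bigl(f(\,\cdot\,, z)\bigr)$, and applying the $L^{\infty}$-isometry for $\pi$ slicewise at $z$ yields the isometry for $\pi \times \mathrm{id}$.
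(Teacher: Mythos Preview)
Your proposal is correct and follows essentially the same approach as the paper: both proofs identify $D_{\pi \times \mathrm{id}}^{(g)}(y,z) = D_{\pi}^{(g)}(y) \times \delta_{z}$ and read off relative contractivity of $\pi \times \mathrm{id}$ directly from that of $\pi$. Your version simply spells out the slicing argument and the Fubini bookkeeping that the paper leaves implicit.
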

\begin{proof}
Since the disintegration of the identity is point masses, for almost every $(y,z) \in Y \times X$, it holds that
$D_{\pi \times \mathrm{id}}^{(g)}(y,z) = D_{\pi}^{(g)}(y) \times \delta_{z}$.
Then $\pi$ being relatively contractive implies $\pi \times \mathrm{id}$ is relatively contractive.
\end{proof}

Additionally, proximal maps (see Furstenberg and Glasner \cite{FG10} for a definition) are relatively contractive and if $\Gamma < G$ is a lattice and $(X,\nu)$ is a a contractive $\Gamma$-space then the natural projection map from the induced $G$-action $G \times_{\Gamma} X$ to $G/\Gamma$ is relatively contractive.  We leave the details to the reader as these facts are not necessary in the sequel.

\subsection{Factorization of Contractive Maps}

We now prove that if a composition of $G$-maps is relatively contractive then each of the maps is also relatively contractive.  This fact will be an important ingredient in the proof of the uniqueness of relatively contractive maps.

\begin{lemma}\label{L:compdisint}
Let $\pi : (X,\nu) \to (Y,\eta)$ and $\varphi : (Y,\eta) \to (Z,\rho)$ be $G$-maps of $G$-spaces.  Then for almost every $z \in Z$,
\[
\pi_{*}D_{\varphi \circ \pi}(z) = D_{\varphi}(z) \quad\quad\text{and}\quad\quad D_{\varphi\circ\pi}(z) = \int_{Y} D_{\pi}(y)~dD_{\varphi}(z)(y)
\]
\end{lemma}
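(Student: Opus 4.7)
The plan is to use the uniqueness of disintegration of measures: both identities follow by exhibiting measures with the correct support and correct integral, and then invoking uniqueness of the disintegration.

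For the first identity, I would define $\mu(z) = \pi_{*}D_{\varphi\circ\pi}(z)$. Since $D_{\varphi\circ\pi}(z)$ is supported on $(\varphi\circ\pi)^{-1}(z) = \pi^{-1}(\varphi^{-1}(z))$, its pushforward $\mu(z)$ is supported on $\varphi^{-1}(z)$, which is the correct fiber for $D_{\varphi}(z)$. Moreover,
\[
\int_{Z} \mu(z)\,d\rho(z) = \pi_{*}\!\int_{Z} D_{\varphi\circ\pi}(z)\,d\rho(z) = \pi_{*}\nu = \eta.
\]
By the almost-sure uniqueness of the disintegration of $\eta$ over $\rho$ via $\varphi$, it follows that $\mu(z) = D_{\varphi}(z)$ for $\rho$-almost every $z$.

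For the second identity, I would define $\tilde{D}(z) = \int_{Y} D_{\pi}(y)\,dD_{\varphi}(z)(y)$ and check the two defining properties of $D_{\varphi\circ\pi}(z)$. Support: since $D_{\varphi}(z)$ is concentrated on $\varphi^{-1}(z)$ and, for each $y\in\varphi^{-1}(z)$, $D_{\pi}(y)$ is concentrated on $\pi^{-1}(y)\subseteq(\varphi\circ\pi)^{-1}(z)$, the measure $\tilde{D}(z)$ is concentrated on $(\varphi\circ\pi)^{-1}(z)$. Integral: by Fubini's theorem (applied to the nonnegative measurable function $\mathbf 1_{B}$ for $B\subseteq X$ Borel),
\[
\int_{Z} \tilde{D}(z)\,d\rho(z) = \int_{Z}\!\int_{Y} D_{\pi}(y)\,dD_{\varphi}(z)(y)\,d\rho(z) = \int_{Y} D_{\pi}(y)\,d\eta(y) = \nu.
\]
Again by uniqueness of disintegration, $\tilde{D}(z) = D_{\varphi\circ\pi}(z)$ almost surely.

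The only mild subtlety is ensuring that the map $y\mapsto D_{\pi}(y)$ is sufficiently measurable as a function to $P(X)$ so that the integral $\int D_{\pi}(y)\,dD_{\varphi}(z)(y)$ makes sense and Fubini applies. This is standard in the setting of standard Borel probability spaces, where disintegrations can be chosen as Borel maps into $P(X)$, so the application of Fubini causes no real obstacle. Beyond this measurability bookkeeping, the argument is purely an invocation of the uniqueness clause in the disintegration theorem.
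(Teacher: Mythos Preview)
Your proposal is correct and follows essentially the same approach as the paper: both identities are verified by checking that the candidate measures have the right support and integrate to the right measure, then invoking the almost-sure uniqueness of disintegration. The paper carries out the integral checks by testing against arbitrary $f\in L^{\infty}$ rather than writing the pushforward abstractly, but the argument is the same.
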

\begin{proof}
Observe that the support of $\pi_{*}D_{\varphi\circ\pi}(z)$ is
\[
\pi((\varphi\circ\pi)^{-1}(z)) = \pi(\pi^{-1}(\varphi^{-1}(z))) = \varphi^{-1}(z).
\]
Also for $f \in L^{\infty}(Y,\eta)$, using the definition of disintegration over $\varphi\circ\pi$,
\begin{align*}
\int_{Z} \int_{Y} f(y)~d\pi_{*}&D_{\varphi\circ\pi}(z)(y)~d\zeta(z)
= \int_{Z} \int_{X} f(\pi(x))~dD_{\varphi\circ\pi}(z)(x)~d\zeta(z) \\
&= \int_{X} f(\pi(x))~d\nu(x) = \int_{Y} f(y)~d\pi_{*}\nu(y) = \int_{Y} f(y)~d\eta(y)
\end{align*}
and therefore by uniqueness of disintegration the first claim is proved.

Similarly, the support of $\int_{Y} D_{\pi}(y)~dD_{\varphi}(z)(y)$ is
\[
\bigcup_{y \in \varphi^{-1}(z)} \pi^{-1}(y) = \pi^{-1}(\varphi^{-1}(z)) = (\varphi\circ\pi)^{-1}(z)
\]
and also for $f \in L^{\infty}(X,\nu)$, using the definition of disintegration,
\begin{align*}
\int_{Z} \int_{X} f(x)~d\Big{(}\int_{Y} &D_{\pi}(y)~dD_{\varphi}(z)(y)\Big{)}(x)~d\zeta(z) \\
&= \int_{Z} \int_{Y} \int_{X} f(x)~dD_{\pi}(y)(x)~dD_{\varphi}(z)(y)~d\zeta(z) \\
&= \int_{Y} \int_{X} f(x)~dD_{\pi}(y)(x)~d\eta(y) = \int_{X} f(x)~d\nu(x)
\end{align*}
and therefore by uniqueness the second claim holds.
\end{proof}

\begin{theorem}\label{T:contractivecomp}
Let $\pi : (X,\nu) \to (Y,\eta)$ and $\varphi : (Y,\eta) \to (Z,\rho)$ be $G$-maps of $G$-spaces.  If $\varphi \circ \pi$ is relatively contractive then both $\varphi$ and $\pi$ are relatively contractive.
\end{theorem}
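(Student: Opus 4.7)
The plan is to argue the two maps separately: the contractivity of $\varphi$ follows formally by transporting the hypothesis through $\pi$, while the contractivity of $\pi$ requires a contrapositive argument that amplifies a hypothetical failure downstairs along the base $(Y,\eta)$.

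For $\varphi$: given any measurable $C \subseteq Y$ and a point $z$ with $D_\varphi(z)(C) < 1$, I set $B = \pi^{-1}(C) \subseteq X$. The $G$-equivariance of $\pi$ together with Lemma \ref{L:compdisint} yields the identity $\pi_* D_{\varphi\circ\pi}^{(g)}(z) = D_\varphi^{(g)}(z)$ for every $g \in G$; in particular $D_{\varphi\circ\pi}(z)(B) = D_\varphi(z)(C) < 1$. Applying the relatively contractive hypothesis on $\varphi\circ\pi$ to the set $B$ produces, for any $\epsilon > 0$, an element $g \in G$ with $D_{\varphi\circ\pi}^{(g)}(z)(B) < \epsilon$, and pushing forward under $\pi$ gives $D_\varphi^{(g)}(z)(C) < \epsilon$.

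For $\pi$, I argue by contradiction. First reduce to a countable dense subgroup $G_0 \leq G$ via Theorem \ref{T:relcontdense} (so that $\inf_{g \in G_0} D_\pi^{(g)}(y)(B)$ is a measurable function of $y$); it suffices to prove $G_0$-relative contractivity of $\pi$, since $G_0 \subseteq G$ then upgrades this to $G$-relative contractivity. A failure combined with a continuous compact model of $\pi$ and a countable basis for the topology of $X$ lets me extract a single Borel set $B \subseteq X$, a measurable set $A \subseteq Y$ with $\eta(A) > 0$, and a constant $\delta > 0$ such that for every $y \in A$ one has both $D_\pi(y)(B) < 1$ and $D_\pi^{(g)}(y)(B) \geq \delta$ for every $g \in G_0$. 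The key move is to enlarge $B$ to $B' := B \cup \pi^{-1}(Y \setminus A)$; because $D_\pi^{(g)}(y)$ is supported on the fiber $\pi^{-1}(y)$, it follows immediately that $D_\pi^{(g)}(y)(B') = D_\pi^{(g)}(y)(B) \geq \delta$ when $y \in A$, while $D_\pi^{(g)}(y)(B') = 1$ when $y \notin A$. Integrating over $Y$ using the conjugated version of Lemma \ref{L:compdisint}, for every $g \in G_0$
\[
D_{\varphi\circ\pi}^{(g)}(z)(B') \geq \delta\, D_\varphi^{(g)}(z)(A) + \bigl(1 - D_\varphi^{(g)}(z)(A)\bigr) \geq \delta.
\]
On the other hand, a direct computation gives $D_{\varphi\circ\pi}(z)(B') < 1$ for every $z$ in the positive-$\rho$-measure set $\{z : D_\varphi(z)(A) > 0\}$. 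At any such $z$ the $G_0$-relative contractivity of $\varphi\circ\pi$ (inherited from the $G$-hypothesis via Theorem \ref{T:relcontdense}) forces $\inf_{g \in G_0} D_{\varphi\circ\pi}^{(g)}(z)(B') = 0$, contradicting the uniform lower bound $\delta > 0$.

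The delicate step is the extraction of the uniform witness triple $(B, A, \delta)$ at the start of the contradiction. The naive negation of relative contractivity a priori only produces sets $B$ depending on $y$ together with pointwise positivity of the infimum over $G$, and upgrading this to a single Borel set with a uniform positive lower bound on a set of positive measure is precisely what requires both Theorem \ref{T:relcontdense} (to make $\inf_g$ a countable infimum, hence measurable) and countable generation of the Borel structure on $X$ (supplied by the compact model).
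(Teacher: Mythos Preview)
Your argument for $\varphi$ is correct and essentially the same as the paper's: both push the contractive hypothesis forward through $\pi$ using $\pi_* D_{\varphi\circ\pi}^{(g)}(z) = D_\varphi^{(g)}(z)$.

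For $\pi$, your approach is correct and takes a genuinely different route from the paper. The paper works with a continuous function $f$ in a carefully chosen compact model (invoking the proof of Theorem~\ref{T:contractiveexttopo}), bounds $D_{\varphi\circ\pi}^{(g)}(z)(f)$ from above by $\|f\|_{L^\infty(X,D_{\varphi\circ\pi}(z))} - \delta\,D_\varphi^{(g)}(z)(A)$, and then uses the point-mass characterization to select $g_n$ with $D_{\varphi\circ\pi}^{(g_n)}(z) \to \delta_x$ for an $x$ maximizing $f$ on $\pi^{-1}(A)$, forcing a contradiction. Your saturation trick $B' = B \cup \pi^{-1}(Y\setminus A)$ is more elementary: it makes the fiberwise lower bound $D_\pi^{(g)}(y)(B') \geq \delta$ hold for \emph{all} $y$ (equal to $1$ off $A$), so that integrating against $D_\varphi^{(g)}(z)$ via the conjugated Lemma~\ref{L:compdisint} immediately gives $D_{\varphi\circ\pi}^{(g)}(z)(B') \geq \delta$ uniformly in $g$, contradicting the definition of relative contractivity directly. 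This bypasses the compact-model and point-mass machinery in the contradiction step entirely, at the cost of needing to pass to a countable $G_0$ (via Theorem~\ref{T:relcontdense}) up front for measurability.

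One remark on the extraction: your justification via ``a countable basis for the topology of $X$'' does not quite close the gap. Knowing $\sup_{g} D_\pi^{(g)}(y)(U_n)=1$ for every basic open $U_n$ of positive $D_\pi(y)$-measure only yields point-mass limits at points of the \emph{support} of $D_\pi(y)$, and it is not clear this propagates to arbitrary Borel sets (density in the measure algebra does not control the conjugated measures uniformly in $g$). The paper faces exactly the same issue and handles it by invoking the proof of Theorem~\ref{T:contractiveexttopo} to produce a \emph{continuous} witness $f$ in a suitable model; you could do the same and then pass to a sublevel set of $f$, or alternatively use a measurable selection $y\mapsto B_y$ over the bad set and assemble these fiberwise into a single Borel $B\subseteq X$. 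Either route fills the gap, and once $(B,A,\delta)$ is in hand your saturation argument is cleaner than the paper's.
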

\begin{proof}
We use Theorem \ref{T:contractiveexttopo} and take a continuous compact model for $\pi$ to do so.  First observe, for all $g \in G$ and almost every $z$, that $\pi_{*}D_{\varphi\circ\pi}^{(g)}(z) = D_{\varphi}^{(g)}(z)$.  For such $z$ where also $\overline{\mathrm{conv}}~\{ D_{\varphi\circ\pi}^{(g)}(z) \} = P((\varphi\circ\pi)^{-1}(z))$ and every $x$ such that $\varphi(\pi(x)) = z$ there is $g_{n} \in G$ such that $D_{\varphi\circ\pi}^{(g_{n})}(z) \to \delta_{x}$.  Therefore
\[
D_{\varphi}^{(g_{n})}(z) = \pi_{*}D_{\varphi\circ\pi}^{(g_{n})}(z) \to \pi_{*}\delta_{x} = \delta_{\pi(x)}
\]
and so for every $y$ such that $\varphi(y) = z$ the point mass $\delta_{y}$ is a limit point of $D_{\varphi}^{(g)}(z)$.  Hence $\varphi$ is relatively contractive.

Suppose that $\pi$ is not relatively contractive.  Then, by the proof of Theorem \ref{T:contractiveexttopo}, there exists a continuous compact model for $\pi : X \to Y$ such that $f \mapsto |D_{\pi}^{(g)}(y)(f)|$ is not an isometry from $C(X)$ to $L^{\infty}(G)$ for a positive measure set of $y \in Y$.  

Observe that if the map is an isometry on a countable dense set $C_{0} \subseteq C(X)$ then for any $f \in C(X)$ there exists $f_{n} \in C_{0}$ with $f_{n} \to f$ in sup norm, hence
\[
|D_{\pi}^{(g)}(y)(f)| = |D_{\pi}^{(g)}(y)(f - f_{n}) + D_{\pi}^{(g)}(f_{n})| \geq |D_{\pi}^{(g)}(y)(f_{n})| - \| f - f_{n} \|_{\infty}.
\]
For $\epsilon > 0$, choose $n$ such that $\| f - f_{n} \|_{\infty} < \epsilon$.  Then choose $g$ such that $|D_{\pi}^{(g)}(y)(f_{n})| > \| f_{n} \| - \epsilon$.  Then
\[
|D_{\pi}^{(g)}(y)(f)| > \| f_{n} \| - \epsilon - \epsilon > \| f \| - 3\epsilon
\]
and so the map is an isometry for $f$ as well.

Therefore, there is a positive measure set of $y$ such that the map $f \mapsto |D_{\pi}^{(g)}(y)(f)|$ is not an isometry on $C_{0}$.  Hence, since $C_{0}$ is countable, there is some $f \in C_{0}$ and a positive measure set of $y$ such that $\sup_{g} |D_{\pi}^{(g)}(y)(f)| < \| f \|_{L^{\infty}(X,D_{\pi}(y))}$.  So there is some $\delta > 0$ and a measurable set $A \subseteq Y$ with $\eta(A) > 0$ such that $\sup_{g} |D_{\pi}^{(g)}(y)(f)| < \| f \|_{L^{\infty}(X,D_{\pi}(y))} - \delta$ for all $y \in A$.  We may assume (by taking a subset) that $A$ is closed.
Since $\eta$ is a Borel measure, it is regular, hence we may assume $A$ is closed (by taking a subset).

Now there exists a positive measure set $B \subseteq Z$ on which $D_{\varphi}(z)(A) > 0$ for $z \in B$.  For $z \in B$ such that $z$ is in the measure one set on which $\varphi \circ \pi$ contracts to point masses,
\begin{align*}
D_{\varphi \circ \pi}^{(g)}&(z)(f) \\
&= \int_{\varphi^{-1}(z)} D_{\pi}^{(g)}(y)(f)~dD_{\varphi}^{(g)}(z)(y) \\
&= \int_{\varphi^{-1}(z) \cap A} D_{\pi}^{(g)}(y)(f)~dD_{\varphi}^{(g)}(z)(y) + \int_{\varphi^{-1}(z) \setminus A} D_{\pi}^{(g)}(y)(f)~dD_{\varphi}^{(g)}(z)(y) \\
&\leq \int_{\varphi^{-1}(z) \cap A} \| f \|_{L^{\infty}(X,D_{\pi}(y))} - \delta~dD_{\varphi}^{(g)}(z)(y) + \int_{\varphi^{-1}(z) \setminus A} \| f \|_{L^{\infty}(X,D_{\pi}(y))}~dD_{\varphi}^{(g)}(z)(y) \\
&\leq \| f \|_{L^{\infty}(X,D_{\varphi \circ \pi}(z))} - \delta D_{\varphi}^{(g)}(z)(A).
\end{align*}

Now for any $x \in (\varphi \circ \pi)^{-1}(z)$, there exists $g_{n}$ such that $D_{\varphi \circ \pi}^{(g_{n})}(z) \to \delta_{x}$.  Hence also $D_{\varphi}^{(g_{n})}(z) \to \delta_{\pi(x)}$.  Choose $x \in \pi^{-1}(A) \cap (\varphi\circ\pi)^{-1}(z)$ such that $f(x) = \| f \|_{L^{\infty}(X,D_{\varphi \circ \pi}(z))}$ (possible since $\pi^{-1}(A) \cap (\varphi\circ\pi)^{-1}(z)$ is closed, hence compact, and $f$ is continuous).
Then
\begin{align*}
f(x) &= \lim_{n} D_{\varphi \circ \pi}^{(g_{n})}(z)(f)
\leq \lim_{n}  \| f \|_{L^{\infty}(X,D_{\varphi \circ \pi}(z))} - \delta D_{\varphi}^{(g_{n})}(z)(A) \\
&= \| f \|_{L^{\infty}(X,D_{\varphi \circ \pi}(z))} - \delta \delta_{\pi(x)}(A) = \| f \|_{L^{\infty}(X,D_{\varphi \circ \pi}(z))} - \delta
\end{align*}
is a contradiction.  Hence $\pi$ is relatively contractive.
\end{proof}

\subsection{Uniqueness of Relatively Contractive Maps}

Our next result generalizes a result of the first author and Shalom \cite{CS14} that factors of contractive spaces are unique (the case when $(Y,\eta)$ in the theorem below is trivial).

\begin{theorem}\label{T:relcontractiveunique}
Let $(X,\nu)$ be a contractive $G$-space and $(Y,\eta)$ be a measure-preserving $G$-space.  Let $\psi : (X \times Y, \nu \times \eta) \to (Y,\eta)$ be the natural projection map (treating $(X \times Y, \nu \times \eta)$ as $G$-space with the diagonal action).  Let $\pi : (X \times Y, \nu \times \eta) \to (Z,\alpha)$ be a $G$-map of $G$-spaces and let $\pi^{\prime} : (X \times Y, \nu \times \eta) \to (Z,\beta)$ be a $G$-map of $G$-spaces such that $\alpha$ is in the same measure class as $\beta$.  Let $\varphi : (Z,\alpha) \to (Y,\eta)$ and $\varphi^{\prime} : (Z,\beta) \to (Y,\eta)$ be $G$-maps such that $\varphi \circ \pi = \psi$ and $\varphi^{\prime} \circ \pi^{\prime} = \psi$.  Assume that the disintegrations $D_{\varphi}(y)$ of $\alpha$ over $\eta$ via $\varphi$ and the disintegrations $D_{\varphi^{\prime}}(y)$ of $\beta$ over $\eta$ via $\varphi^{\prime}$ have the property that $D_{\varphi}(y)$ and $D_{\varphi^{\prime}}(y)$ are in the same measure class almost surely.  Then $\pi = \pi^{\prime}$ almost everywhere, $\varphi = \varphi^{\prime}$ almost everywhere and $\alpha = \beta$.
\end{theorem}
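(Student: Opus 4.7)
The plan is to build the natural joining $\gamma = (\pi, \pi')_*(\nu \times \eta)$ on $Z \times Z$ and to exploit relative contractivity together with the measure-class hypothesis to force $\gamma$ onto the diagonal; the conclusions then follow almost immediately, since $\pi = \pi'$ almost everywhere gives $\alpha = \pi_*(\nu\times\eta) = \pi'_*(\nu\times\eta) = \beta$, and then $\varphi = \varphi'$ follows from $\varphi \circ \pi = \psi = \varphi' \circ \pi$ together with essential surjectivity of $\pi$.

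First I would invoke Theorem \ref{T:examplecont} to see that $\psi : (X \times Y, \nu \times \eta) \to (Y, \eta)$ is relatively contractive. Since $\varphi \circ \pi = \psi = \varphi' \circ \pi'$ are relatively contractive factorizations, Theorem \ref{T:contractivecomp} gives that each of $\pi, \varphi, \pi', \varphi'$ is relatively contractive. The measure $\gamma$ is quasi-invariant under the diagonal $G$-action, and since $\varphi(\pi(x,y)) = y = \varphi'(\pi'(x,y))$ identically, $\gamma$ is concentrated on the fiber product $\{(z,z') : \varphi(z) = \varphi'(z')\}$, giving a well-defined $G$-map $\tilde\varphi : (Z \times Z, \gamma) \to (Y, \eta)$ with $\tilde\varphi = \varphi \circ p_1 = \varphi' \circ p_2$ almost everywhere. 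Because $\tilde\varphi \circ (\pi, \pi') = \psi$ is relatively contractive, Theorem \ref{T:contractivecomp} again makes $\tilde\varphi$ and $(\pi, \pi')$ relatively contractive, and applied to $\pi = p_1 \circ (\pi, \pi')$ and $\pi' = p_2 \circ (\pi, \pi')$ it shows that both $p_1 : (Z \times Z, \gamma) \to (Z, \alpha)$ and $p_2 : (Z \times Z, \gamma) \to (Z, \beta)$ are relatively contractive.

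Next I would disintegrate $\gamma$ over $\eta$ via $\tilde\varphi$ as $\gamma = \int \gamma_y \, d\eta(y)$. By Lemma \ref{L:compdisint} the marginals of $\gamma_y$ are $(p_1)_* \gamma_y = D_\varphi(y)$ and $(p_2)_* \gamma_y = D_{\varphi'}(y)$, which are mutually absolutely continuous by hypothesis; in particular their essential supports in $Z$ coincide, and the $L^\infty$ norms with respect to these two measures agree on this common support. The entire problem then reduces to showing that $\gamma_y$ is concentrated on the diagonal of $Z \times Z$ for almost every $y$.

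The main obstacle is this diagonal concentration. In a continuous compact model the algebraic characterization of relative contractivity (Theorem \ref{T:contractiveexte}) applied to $\tilde\varphi$ gives $\sup_{g} | \gamma_y^{(g)}(F) | = \| F \|_{L^\infty(\gamma_y)}$ for every $F \in C(Z \times Z)$ and almost every $y$. Testing against antisymmetric functions of the form $F(z_1, z_2) = f(z_1) - f(z_2)$ with $f \in C(Z)$ reduces matters to showing
\[
\sup_{g \in G} \bigl| D_\varphi^{(g)}(y)(f) - D_{\varphi'}^{(g)}(y)(f) \bigr| = 0
\]
for all such $f$, which would force $f(z_1) = f(z_2)$ on the support of $\gamma_y$ and hence, by point-separation in a compact model, force the support onto the diagonal. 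To establish this cancellation I would combine the measure-class equivalence $D_\varphi(y) \sim D_{\varphi'}(y)$ with the individual isometries for $\varphi$ and $\varphi'$ (which, by Theorem \ref{T:contractiveexte}, have equal operator norms on the common $L^\infty$ algebra determined by the common measure class) and a Radon--Nikodym cocycle analysis along the $G$-action, propagated by contractivity via Lemma \ref{L:abscontmeas}, to conclude that $D_\varphi^{(g)}(y) = D_{\varphi'}^{(g)}(y)$ as measures for almost every $g$. Once $\gamma_y$ is concentrated on the diagonal for almost every $y$, the theorem follows as described above.
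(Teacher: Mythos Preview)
Your setup through the joining $\gamma$, the factorization properties, and the reduction to showing that $\gamma_y$ lives on the diagonal are all fine, and the computation $\gamma_y^{(g)}(F) = D_\varphi^{(g)}(y)(f) - D_{\varphi'}^{(g)}(y)(f)$ for $F(z_1,z_2) = f(z_1) - f(z_2)$ is correct. The gap is the final step: you assert that ``a Radon--Nikodym cocycle analysis \ldots\ propagated by contractivity'' yields $D_\varphi^{(g)}(y) = D_{\varphi'}^{(g)}(y)$ as measures for almost every $g$, but nothing in the hypotheses or the cited lemmas gives this. The assumption is only that $D_\varphi(y)$ and $D_{\varphi'}(y)$ lie in the same measure class; the isometry characterization tells you $\sup_g |D_\varphi^{(g)}(y)(f)| = \|f\|_{L^\infty} = \sup_g |D_{\varphi'}^{(g)}(y)(f)|$, but equality of these suprema says nothing about the pointwise difference $D_\varphi^{(g)}(y)(f) - D_{\varphi'}^{(g)}(y)(f)$. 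Indeed, using $D_\varphi^{(g)}(y) = \pi_*(g^{-1}\nu \times \delta_y)$ and $D_{\varphi'}^{(g)}(y) = \pi'_*(g^{-1}\nu \times \delta_y)$, one sees that along any sequence $g_n$ with $g_n^{-1}\nu \to \delta_{x_0}$ these converge to $\delta_{\pi(x_0,y)}$ and $\delta_{\pi'(x_0,y)}$ respectively, so the difference is typically nonzero whenever $\pi(x_0,y) \ne \pi'(x_0,y)$. Your target identity is thus equivalent to the conclusion, and the argument is circular at this point.

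The paper's proof uses exactly this concrete formula $D_\varphi^{(g)}(y) = \pi_*(g^{-1}\nu \times \delta_y)$ but in the opposite direction: assuming $\pi \ne \pi'$ on a positive-measure set, it fixes $x_0$ and a set $A$ of $y$'s where $d(\pi(x_0,y),\pi'(x_0,y)) > \delta$, picks $g_n$ with $g_n^{-1}\nu \to \delta_{x_0}$, builds an explicit set $U \subseteq Z$ separating $\pi(x_0,y)$ from $\pi'(x_0,y)$, and shows $\alpha(g_n U)$ stays bounded below while $\beta(g_n U) \to 0$. This contradicts $\alpha \sim \beta$ directly via a Borel--Cantelli type lemma (Lemma~\ref{L:notabscont}). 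The key move you are missing is to bring the global measure-class hypothesis on $\alpha,\beta$ into play rather than trying to force fiberwise equality from fiberwise equivalence.
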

\begin{proof}
First we consider $\varphi$ and $\varphi^{\prime}$.
Define the Borel set
\[
B = \{ z \in Z : \varphi(z) \ne \varphi^{\prime}(z) \}.
\]
Then for every $y \in Y$, it holds that $B \cap \varphi^{-1}(y) \cap (\varphi^{\prime})^{-1}(y) = \emptyset$.  Since $D_{\varphi}(y)$ is in the same measure class as $D_{\varphi^{\prime}}(y)$ almost everywhere and since $D_{\varphi^{\prime}}(y)((\varphi^{\prime})^{-1}(y)) = 1$, for almost every $y$ it holds that
\[
D_{\varphi}(y)(B) = D_{\varphi}(y)(B \cap \varphi^{-1}(y)) = D_{\varphi}(y)(B \cap \varphi^{-1}(y) \cap (\varphi^{\prime})^{-1}(y)) = D_{\varphi}(y)(\emptyset) = 0.
\]
Therefore $\zeta(B) = 0$.  Likewise, $\zeta^{\prime}(B) = 0$.  Hence $\varphi = \varphi^{\prime}$ almost everywhere.

Now we consider $\pi$ and $\pi^{\prime}$.  Suppose that
\[
\nu \times \eta (\{ (x,y) \in X \times Y : \pi(x,y) \ne \pi^{\prime}(x,y) \}) > 0.
\]
Fix compact models for $X$, $Y$ and $Z$ and let $d$ be a compatible metric on $Z$ and observe that
\[
\{ (x,y) \in X \times Y : \pi(x,y) \ne \pi^{\prime}(x,y) \} = \bigcup_{\delta > 0} \{ (x,y) \in X \times Y : d(\pi(x,y), \pi^{\prime}(x,y)) \geq \delta \}
\]
which is a decreasing union and therefore there is some $\delta > 0$ such that
\[
\nu \times \eta (\{ (x,y) \in X \times Y : d(\pi(x,y), \pi^{\prime}(x,y)) > \delta \}) > 0.
\]
By Fubini's Theorem there is then some $x_{0} \in X$ such that
\[
A = \{ y \in Y : d(\pi(x_{0},y),\pi^{\prime}(x_{0},y)) > \delta \}
\]
has $\eta(A) > 0$.

Since $(X,\nu)$ is contractive, there exists a sequence $\{ g_{n} \}$ in $G$ such that $g_{n}^{-1}\nu \to \delta_{x_{0}}$.  Observe that for almost every $y \in Y$,
\[
D_{\varphi}^{(g_{n})}(y) = \pi_{*}D_{\psi}^{(g_{n})}(y) = \pi_{*}(g_{n}^{-1}(\nu \times \delta_{g_{n}y})) = \pi_{*}(g_{n}^{-1}\nu \times \delta_{y})
\to \pi_{*}(\delta_{x_{0}} \times \delta_{y}) = \delta_{\pi(x_{0},y)}
\]
and likewise that
\[
D_{\varphi^{\prime}}^{(g_{n})}(y) \to \delta_{\pi^{\prime}(x_{0},y)}.
\]

Define the set
\[
U = \{ z \in Z : d(\pi(x_{0},\varphi(z)),z) < \frac{1}{2}\delta \} \cap \varphi^{-1}(A).
\]
Note that $U \cap \varphi^{-1}(y)$ is open in $\varphi^{-1}(y)$ for all $y \in A$ since $d$ is compatible.
Moreover, for each $y \in A \cap \varphi(U)$, it holds that $\pi(x_{0},y)$ is in the interior of $U$.  
Observe that for $z \in U$ we have that $d(\pi(x_{0},\varphi(z)),z) < \frac{1}{2}\delta$ and
\[
d(\pi(x_{0},\varphi(z)),\pi^{\prime}(x_{0},\varphi(z))) > \delta
\]
(using that $\varphi = \varphi^{\prime}$) meaning that $d(\pi^{\prime}(x_{0},\varphi(z)),z) > \frac{1}{2}\delta$ and therefore we conclude that $\pi^{\prime}(x_{0},y) \in (\overline{U})^{C}$ for every $y \in A$.  Therefore $U$ is a continuity set for $\delta_{\pi(x_{0},y)}$ and $\delta_{\pi^{\prime}(x_{0},y)}$ for all $y \in A$.

Then for almost every $y \in A$,
\[
D_{\varphi}^{(g_{n})}(y)(U) \to \delta_{\pi(x_{0},y)}(U) = 1
\]
and
\[
D_{\varphi^{\prime}}^{(g_{n})}(y)(U) \to \delta_{\pi^{\prime}(x_{0},y)}(U) = 0
\]
since $U$ is a continuity set.

For $\epsilon > 0$, define the Borel sets
\[
A_{\epsilon,n} = \{ y \in A : \text{ for all $m \geq n$, } D_{\varphi}^{(g_{m})}(y)(U) > 1 - \epsilon \text{ and } D_{\varphi^{\prime}}^{(g_{m})}(y)(U) < \epsilon \}.
\]
The sets $A_{\epsilon,n}$ increase with $n$ for a fixed $\epsilon$ and, up to a null set,
\[
A = \bigcup_{n=1}^{\infty} A_{\epsilon,n}
\]
and therefore, for each $\epsilon > 0$ there exists $n$ such that $\eta(A_{\epsilon,n}) > \eta(A) - \epsilon$.

Now, using that $\eta$ is measure-preserving, for every $\epsilon > 0$,
\begin{align*}
\alpha(g_{n}U) &= \int_{Y} D_{\varphi}(y)(g_{n}U)~d\eta(y) \\
&= \int_{g_{n}A} D_{\varphi}(y)(g_{n}U)~d\eta(y) \\
&= \int_{A} D_{\varphi}^{(g_{n})}(y)(U)~d\eta(y) \\
&\geq \int_{A_{n,\epsilon}} D_{\varphi}^{(g_{n})}(y)(U)~d\eta(y) \\
&\geq (1 - \epsilon) \eta(A_{n,\epsilon}) \geq (1 - \epsilon)(\eta(A) - \epsilon).
\end{align*}
Similarly,
\[
\beta(g_{n}U) \leq \int_{A_{n,\epsilon}} D_{\varphi^{\prime}}^{(g_{n})}(y)(U)~d\eta(y) + \eta(A) - \eta(A_{n,\epsilon})
\leq \epsilon \eta(A_{n,\epsilon}) + \epsilon \leq 2\epsilon.
\]

By Lemma \ref{L:notabscont} (following the proof), then $\alpha$ and $\beta$ are not in the same measure class, a contradiction.  Therefore we conclude that $\pi = \pi^{\prime}$ almost everywhere and so $\alpha = \pi_{*}\nu = (\pi^{\prime})_{*}\nu = \beta$.
\end{proof}

\begin{lemma}\label{L:notabscont}
Let $Z$ be a compact metric space and $\alpha, \beta \in P(Z)$ be Borel probability measures on it.  If there exists $\delta > 0$ such that for every $\epsilon > 0$ there exists a Borel set $B_{\epsilon} \subseteq Z$ such that $\alpha(B_{\epsilon}) < \epsilon$ and $\beta(B_{\epsilon}) > (1 - \epsilon)\delta$ then $\alpha$ is not absolutely continuous with respect to $\beta$.
\end{lemma}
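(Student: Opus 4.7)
The plan is a short Borel--Cantelli argument: I would manufacture a single Borel set that witnesses the failure of absolute continuity by using the hypothesis along a sequence of $\epsilon$'s whose $\alpha$-measures are summable.

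Concretely, I would choose $\epsilon_n = 2^{-n}$ and set $B_n := B_{\epsilon_n}$, so that by hypothesis $\alpha(B_n) < 2^{-n}$ and $\beta(B_n) > (1-2^{-n})\delta$. Put $E := \limsup_n B_n = \bigcap_{N \geq 1} \bigcup_{n \geq N} B_n$. Since $\sum_n \alpha(B_n) < \sum_n 2^{-n} = 1 < \infty$, the Borel--Cantelli lemma gives $\alpha(E) = 0$. To bound $\beta(E)$ below, set $E_N := \bigcup_{n \geq N} B_n$; the $E_N$ form a decreasing sequence with intersection $E$, and each satisfies $\beta(E_N) \geq \beta(B_N) > (1-2^{-N})\delta$. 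Finite-measure continuity from above then yields $\beta(E) = \lim_N \beta(E_N) \geq \delta > 0$.

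The Borel set $E$ therefore has $\alpha(E) = 0$ and $\beta(E) \geq \delta$, so $\alpha$ and $\beta$ do not lie in the same measure class. This is exactly the consequence invoked in the proof of Theorem~\ref{T:relcontractiveunique}, where the two measures $\pi_*\nu$ and $(\pi')_*\nu$ are being compared. Strictly speaking, the witness $E$ exhibits an $\alpha$-null, $\beta$-positive set, so it literally shows $\beta \not\ll \alpha$; since the application in the main theorem only needs that the two measures sit in different measure classes, either direction of the failure of absolute continuity suffices and the main argument closes. I do not anticipate any serious obstacle: Borel--Cantelli together with downward continuity of the finite measure $\beta$ is the whole argument, and the only subtle point is keeping track of which direction of the absolute-continuity statement is actually produced.
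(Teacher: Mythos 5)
Your proof is correct and is essentially identical to the paper's own argument: both form the limsup set $\bigcap_{N}\bigcup_{n\geq N}B_{2^{-n}}$, kill its $\alpha$-measure by a Borel--Cantelli tail-sum estimate, and bound its $\beta$-measure below by $\delta$ using continuity from above. Your observation that the witness literally shows $\beta \not\ll \alpha$ rather than $\alpha \not\ll \beta$ applies equally to the paper's own proof, and as you note, the application in Theorem \ref{T:relcontractiveunique} only needs that the two measures fail to be in the same measure class, so nothing is lost.
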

\begin{proof}
Observe that
\[
\alpha(\bigcap_{n=1}^{\infty}\bigcup_{m=n+1} B_{2^{-m}}) \leq \limsup_{n \to \infty} \sum_{m=n+1}^{\infty} \alpha(B_{2^{-m}}) \leq \limsup_{n\to\infty} \sum_{m=n+1}^{\infty} 2^{-m} = 0
\]
but that
\[
\beta(\bigcap_{n=1}^{\infty}\bigcup_{m=n+1} B_{2^{-m}}) \geq \liminf_{n \to \infty} \beta(B_{2^{-n-1}}) = \liminf_{n \to \infty} (1 - 2^{-n-1})\delta = \delta > 0.
\]
\end{proof}

\subsection{Relatively Contractive Maps and Finite Index Subgroups}

\begin{theorem}\label{T:rcfi}
Let $G$ be a locally compact second countable group and $H < G$ be a finite index subgroup.  Let $\pi : (X,\nu) \to (Y,\eta)$ be a relatively contractive $G$-map of ergodic $G$-spaces.  Then, restricting the actions to $H$ makes $\pi$ a relatively contractive $H$-map.
\end{theorem}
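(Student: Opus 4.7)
The plan is to work through an operator-algebraic reformulation of relative contractivity: by rephrasing Theorem \ref{T:moregeneral} in the spirit of the preceding Operator Algebraic Formulation subsection, the map $\pi : (X,\nu) \to (Y,\eta)$ is $G$-relatively contractive if and only if every $G$-equivariant normal unital completely positive (UCP) map $\Phi : L^{\infty}(X,\nu) \to L^{\infty}(X,\nu)$ whose restriction to $L^{\infty}(Y,\eta)$ is the identity must itself equal the identity. Such a $\Phi$ corresponds to a $G$-equivariant family of probability measures $K_x \in P(X)$ supported on $\pi^{-1}(\pi(x))$, giving a $G$-invariant joining of $(X,\nu)$ with itself over $(Y,\eta)$ whose first projection is relatively measure-preserving; uniqueness of this joining as the diagonal is the natural self-joining analog of Theorem \ref{T:moregeneral}, proved by the same technique.

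First, reduce to the case in which $H$ is normal in $G$ by replacing $H$ with its normal core $N = \bigcap_{g \in G} gHg^{-1}$, which is a normal finite-index subgroup of $G$ contained in $H$. Any sequence witnessing $N$-relative contractivity lies in $H$, so $N$-relative contractivity immediately implies $H$-relative contractivity; it therefore suffices to prove the theorem with $N$ in place of $H$. Rename $N$ to $H$ and assume $H \triangleleft G$.

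Let $\Phi : L^{\infty}(X,\nu) \to L^{\infty}(X,\nu)$ be an $H$-equivariant normal UCP map fixing $L^{\infty}(Y,\eta)$ pointwise; the goal is to show $\Phi = \mathrm{id}$. Choose coset representatives $g_1,\ldots,g_k$ for $G/H$ and set
\[
\tilde\Phi = \frac{1}{k}\sum_{i=1}^{k} g_i \Phi g_i^{-1},
\]
where $g_i$ acts on $L^{\infty}(X,\nu)$ via the $G$-action. Then $\tilde\Phi$ is normal UCP (a convex combination of such maps), it fixes $L^{\infty}(Y,\eta)$ pointwise (each $g_i$ preserves the subalgebra $L^{\infty}(Y,\eta)$ and $\Phi$ fixes it), and it is $G$-equivariant: for any $g \in G$ write $g g_i = g_{\sigma(i)} h_i$ with $h_i \in H$ --- possible because normality of $H$ makes $\{g g_i\}$ another complete set of coset representatives --- and compute, using $h_i \Phi h_i^{-1} = \Phi$, that $(g g_i)\Phi(g g_i)^{-1} = g_{\sigma(i)}\Phi g_{\sigma(i)}^{-1}$, so $g \tilde\Phi g^{-1} = \tilde\Phi$. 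Applying the $G$-relative contractivity of $\pi$ through the operator-algebraic characterization yields $\tilde\Phi = \mathrm{id}$.

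Finally, deduce $\Phi = \mathrm{id}$ from $\frac{1}{k}\sum_i g_i \Phi g_i^{-1} = \mathrm{id}$. In a continuous compact model each $g_i \Phi g_i^{-1}$ is represented by a Markov kernel $x \mapsto K^i_x \in P(X)$, with the identity corresponding to $x \mapsto \delta_x$; the averaged equation becomes $\frac{1}{k}\sum_i K^i_x = \delta_x$ for almost every $x$, and since $\delta_x$ is extremal in $P(X)$, each $K^i_x$ must equal $\delta_x$, so $g_i \Phi g_i^{-1} = \mathrm{id}$ for every $i$ and hence $\Phi = \mathrm{id}$. The main obstacle in this plan is establishing the operator-algebraic characterization of relative contractivity in the first step: this requires identifying $\Phi$ with a $G$-invariant self-joining of $(X,\nu)$ over $(Y,\eta)$ and adapting the uniqueness argument from Theorem \ref{T:moregeneral} (which treats joinings of $(X,\nu)$ with $(Y,\eta)$) to this self-joining setting.
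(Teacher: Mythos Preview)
Your averaging argument over $G/H$ is clean, and the reduction to normal $H$ and the extremality step at the end are both fine. The genuine gap is exactly where you locate it: the operator-algebraic characterization you invoke is not established in the paper, and in fact neither direction follows from the results available.

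For the forward direction (relatively contractive $\Rightarrow$ every $G$-equivariant UCP map fixing $L^{\infty}(Y)$ is the identity), you propose to adapt Theorem~\ref{T:moregeneral}. But that theorem, like the operator-algebraic reformulation in the paper, is stated only for $(X,\nu)$ contractive over a point; its proof uses that $g_n\nu \to \delta_{x_0}$ globally to compare $g_n\eta$ and $g_n\eta'$. In the relative setting you only have fiberwise contraction $D_\pi^{(g_n)}(y) \to \delta_x$, and the joining you build from $\Phi$ is a self-joining of $(X,\nu)$ over $(Y,\eta)$ whose second marginal need not even be $\nu$ (a normal UCP map need not preserve the state). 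Making the uniqueness argument go through fiberwise is real work, not a routine adaptation.

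More seriously, you need the \emph{backward} direction for $H$: from ``every $H$-equivariant UCP map fixing $L^{\infty}(Y)$ is the identity'' you must conclude that $\pi$ is $H$-relatively contractive. Nothing in the paper addresses this converse, and you do not indicate how to prove it. Without it, showing $\Phi = \mathrm{id}$ for all such $\Phi$ does not yield the conclusion.

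By contrast, the paper's proof is short and direct via Theorem~\ref{T:contractiveexttopo}. Fix continuous compact models and let $Q = \{x \in X : \exists\,h_n \in H \text{ with } D_\pi^{(h_n)}(\pi(x)) \to \delta_x\}$. Writing $G = \bigcup_j H\ell_j$, for each $x$ the $G$-contracting sequence lives in some coset $H\ell_j$, and one checks $\ell_j C_j \subseteq Q$ where $C_j$ is the set of $x$ contracted via $H\ell_j$; hence $\nu(Q) > 0$. Since $Q$ is $H$-invariant and $(X,\nu)$ is $H$-ergodic (finite index in an ergodic $G$-space), $\nu(Q) = 1$. This avoids any need for the UCP characterization.
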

\begin{proof}
Fix continuous compact models of $X$, $Y$ and $\pi$.  Let $\ell_{1},\ell_{2},\ldots,\ell_{N}$ be a system of representatives for $H \backslash G$.  Define the set
\[
Q = \{ x \in X : \text{ there exists $\{ h_{n} \}$ in $H$ such that $D_{\pi}^{(h_{n})}(\pi(x)) \to \delta_{x}$ } \}.
\]
By Theorem \ref{T:contractiveexttopo}, for every $x \in X$ there exists $\{ g_{n} \}$ in $G$ such that $D_{\pi}^{(g_{n})}(\pi(x)) \to \delta_{x}$.  Write $g_{n} = h_{n}\ell_{j_{n}}$ for $h_{n} \in H$ and $j_{n} \in \{ 1, 2, \ldots, N \}$.  Since $N$ is finite, there exists a subsequence $\{ n_{t} \}$ along which $g_{n_{t}} = h_{n_{t}} \ell$ for some fixed $\ell$ in the system of representatives.  Define the sets, for $j = 1, 2, \ldots, N$,
\[
C_{j} = \{ x \in X : \text{ there exists $\{ g_{n} \}$ such that $g_{n} \in H \ell_{j}$ for all $n$ and $D_{\pi}^{(g_{n})}(y) \to \delta_{x}$ } \}.
\]
Then $X = \cup_{j=1}^{N} C_{j}$ by the above.  

Let $x \in C_{j}$.  Then, writing $g_{n} = h_{n}\ell_{j}$, it holds that $\ell_{j}^{-1} D_{\pi}^{(h_{n})}(\ell_{j} \pi(x)) = D_{\pi}^{(h_{n}\ell_{j})}(\pi(x)) \to \delta_{x}$ and so $D_{\pi}^{(h_{n})}(\pi(\ell_{j} x)) \to \ell_{j} \delta_{x} = \delta_{\ell_{j} x}$.  Therefore $\ell_{j} x \in Q$ and so we have that
\[
\bigcup_{j=1}^{N} \ell_{j}C_{j} \subseteq Q.
\]
Since $X = \cup_{j} C_{j}$, there is some $j$ such that $\nu(C_{j}) > 0$.  So $\nu(Q) \geq \nu(\ell_{j}C_{j}) > 0$ as $\nu$ is quasi-invariant.

Now let $x \in Q$ and $h \in H$.  There exists $\{ h_{n} \}$ in $H$ such that $D_{\pi}^{(h_{n})}(\pi(x)) \to \delta_{x}$ and therefore
\[
D_{\pi}^{(h_{n}h^{-1})}(\pi(hx)) = h D_{\pi}^{(h_{n})}(h^{-1}\pi(hx)) = h D_{\pi}^{(h_{n})}(\pi(x)) \to h \delta_{x} = \delta_{hx}
\]
meaning that $hx \in Q$.  Since $(X,\nu)$ is $G$-ergodic, it is also $H$-ergodic ($H$ being finite index) and therefore $\nu(Q) = 1$ meaning precisely that $\pi$ is a relatively contractive $H$-map.
\end{proof}

\subsection{The Intermediate Contractive Factor Theorem}

The main result we will need in the sequel is Theorem \ref{T:groupoidcontractive1}.  The next result is a simple case of that Theorem and provides motivation both in terms of the statement and the proof.

\begin{theorem}\label{T:contractiveIFT}
Let $\Gamma < G$ be a lattice in a locally compact second countable group and let $\Lambda$ contain and commensurate $\Gamma$ and be dense in $G$.  Let $(X,\nu)$ be a contractive $G$-space which is also contractive as a $\Gamma$-space and $(Y,\eta)$ be a measure-preserving $G$-space.  Let $\pi : (X \times Y, \nu \times \eta) \to (Y,\eta)$ be the natural projection map from the product space with the diagonal action.
Let $(Z,\zeta)$ be a $\Lambda$-space such that there exist $\Gamma$-maps $\varphi : (X\times Y,\nu\times\eta) \to (Z,\zeta)$ and $\rho : (Z,\zeta) \to (Y,\eta)$ with $\rho \circ \varphi = \pi$.  Then $\varphi$ and $\rho$ are $\Lambda$-maps and $(Z,\zeta)$ is $\Lambda$-isomorphic to a $G$-space and over this isomorphism the maps $\varphi$ and $\rho$ become $G$-maps.
\end{theorem}
\begin{proof}
Write $(W,\rho) = (X \times Y, \nu \times \eta)$.
Fix $\lambda \in \Lambda$.  Define the maps $\varphi_{\lambda} : W \to Z$ and $\rho_{\lambda} : Z \to Y$ by $\varphi_{\lambda}(w) = \lambda^{-1}\varphi(\lambda w)$ and $\rho_{\lambda}(z) = \lambda^{-1}\rho(\lambda z)$.  Then $\rho_{\lambda} \circ \varphi_{\lambda}(w) = \lambda^{-1} \rho(\lambda \lambda^{-1}\varphi(\lambda w)) = \lambda^{-1}\rho(\varphi(\lambda w)) = \lambda^{-1}\pi(\lambda w) = \pi(w)$ since $\pi$ is $\Lambda$-equivariant.  Let $\Gamma_{0} = \Gamma \cap \lambda^{-1}\Gamma\lambda$.  Then for $\gamma_{0} \in \Gamma_{0}$, write $\gamma_{0} = \lambda^{-1}\gamma\lambda$ for some $\gamma \in \Gamma$ and we see that $\varphi_{\lambda}(\gamma_{0}w) = \lambda^{-1}\varphi(\lambda\gamma_{0}w) = \lambda^{-1}\varphi(\gamma \lambda w) = \lambda^{-1}\gamma\varphi(\lambda w) = \gamma_{0} \lambda^{-1}\varphi(\lambda w) = \gamma_{0} \varphi_{\lambda}(w)$ meaning that $\varphi_{\lambda}$ is $\Gamma_{0}$-equivariant.  Likewise $\rho_{\lambda}$ is $\Gamma_{0}$-equivariant.  Hence $\varphi$, $\varphi_{\lambda}$, $\rho$ and $\rho_{\lambda}$ are all $\Gamma_{0}$-equivariant.  

Since $(X,\nu)$ is a contractive $\Gamma$-space, by Theorem \ref{T:relcontractiveunique} applied to $\Gamma$, we can conclude that $\varphi_{\lambda} = \varphi$ and that $\rho_{\lambda} = \rho$ provided we can show that the disintegration measures $D_{\rho}(y)$ and $D_{\rho_{\lambda}}(y)$ are in the same measure class for almost every $y$.  Assuming this for the moment, we then conclude that $\varphi$ is $\Lambda$-equivariant since $\varphi_{\lambda} = \varphi$ for each $\lambda$.  The $\sigma$-algebra of pullbacks of measurable functions on $(Z,\zeta)$ form a $\Lambda$-invariant sub-$\sigma$-algebra of $L^{\infty}(W,\rho)$ which is therefore also $G$-invariant (because $\Lambda$ is dense in $G$) and so $(Z,\zeta)$ has a point realization as a $G$-space \cite{Ma62} and likewise $\varphi$ and $\rho$ as $G$-maps.

It remains only to show that the disintegration measures have the required property.  First note that $D_{\rho}(y) = \varphi_{*}D_{\rho\circ\varphi}(y)$ by the uniqueness of the disintegration measure and likewise that $D_{\rho_{\lambda}}(y) = (\varphi_{\lambda})_{*}D_{\rho_{\lambda}\circ\varphi_{\lambda}}(y) = \lambda^{-1}\varphi_{*}\lambda D_{\rho\circ\varphi}(y) = \lambda^{-1}\varphi_{*}D_{\rho\circ\varphi}^{(\lambda^{-1})}(\lambda y)$.  Now $\rho\circ\varphi = \pi$ is a $\Lambda$-map so $D_{\rho\circ\varphi}^{(\lambda^{-1})}(\lambda y)$ is in the same measure class as $D_{\rho\circ\varphi}(\lambda y)$.  Therefore $D_{\rho_{\lambda}}(y)$ is in the same measure class as $\lambda^{-1}\varphi_{*}D_{\rho\circ\varphi}(\lambda y) = \lambda^{-1} D_{\rho}(\lambda y)$.  Now $\lambda^{-1} D_{\rho}(\lambda y)$ disintegrates $\lambda^{-1}\zeta$ over $\lambda^{-1}\eta$ via $\rho$ and $\lambda^{-1}\zeta$ is in the same measure class as $\zeta$ since $(Z,\zeta)$ is a $\Lambda$-space.  Therefore, by Lemma \ref{L:abscontmeas}, $\lambda^{-1}D_{\rho}(\lambda y)$ and $D_{\rho}(y)$ are in the same measure class for almost every $y$.  Hence $D_{\rho_{\lambda}}(y)$ and $D_{\rho}(y)$ are in the same measure class for almost every $y$ as needed.
\end{proof}

%


\begin{theorem}\label{T:groupoidcontractive1}
Let $\Gamma$ be a group and let $\Lambda$ be a group that contains and commensurates $\Gamma$.
Let $(W,\rho)$ be a $\Lambda$-space such that the action restricted to $\Gamma$ on $(W,\rho)$ is contractive and let $(X,\nu)$ be a measure-preserving $\Lambda$-space.  Set $(Y,\eta) = (W \times X, \rho\times\nu)$ to be the product space with the diagonal action.
Let $p : (Y,\eta) \to (X,\nu)$ be the natural projection map.
Let $(Z,\zeta)$ be a $\Gamma$-space and $\pi : (Y,\eta) \to (Z,\zeta)$ and $\varphi : (Z,\zeta) \to (X,\nu)$ be $\Gamma$-maps such that $\varphi \circ \pi = p$.

Assume that $Z$ is orbital over $X$: for any $\gamma \in \Gamma$ and $x \in X$ such that $\gamma x = x$, if $z \in Z$ such that $\varphi(z) = x$ then $\gamma z = z$.

Fix $\lambda \in \Lambda$, define the Borel set
\[
E = E_{\lambda} = \{ x \in X : \lambda x \in \Gamma x \},
\]
and define the map $\theta_{\lambda} : \varphi^{-1}(E) \to Z$ as follows: for $z \in \varphi^{-1}(E)$ choose $\gamma \in \Gamma$ such that $\lambda \varphi(z) = \gamma \varphi(z)$ and define $\theta_{\lambda}(z) = \gamma z$ (this is well-defined since $Z$ is orbital).

Then $\pi(\lambda y) = \theta_{\lambda}(\pi(y))$ for almost every $y \in p^{-1}(E)$.  In particular, for almost every $y$ such that $\lambda p(y) = p(y)$ we have that $\pi(\lambda y) = \pi(y)$.
\end{theorem}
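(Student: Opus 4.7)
The plan is to apply Theorem \ref{T:relcontractiveunique} (uniqueness of relatively contractive factorizations) to two $\Gamma_0$-equivariant maps defined on $p^{-1}(E)$, where $\Gamma_0 = \Gamma \cap \lambda^{-1}\Gamma\lambda$. Since $\Lambda$ commensurates $\Gamma$, the group $\Gamma_0$ has finite index in $\Gamma$; contractivity of $(W,\rho)$ (as a $\Gamma$-space) implies ergodicity, so Theorem \ref{T:rcfi} applied to the trivial extension $(W,\rho)\to 0$ makes $(W,\rho)$ a contractive $\Gamma_0$-space. By Theorem \ref{T:examplecont} the projection $p:(W\times E,\rho\times\nu|_E)\to(E,\nu|_E)$ is then a relatively contractive $\Gamma_0$-map, and $\nu|_E$ is $\Gamma_0$-measure-preserving because $\nu$ is $\Lambda$-measure-preserving.

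First I would check that $E$ is $\Gamma_0$-invariant: if $\lambda x=\gamma x$ and $\gamma_0\in\Gamma_0$ with $\lambda\gamma_0=\gamma'\lambda$ for some $\gamma'\in\Gamma$, then $\lambda(\gamma_0 x)=\gamma'\gamma\gamma_0^{-1}\cdot(\gamma_0 x)$. Since $\Gamma$ is countable, I pick a Borel selector $x\mapsto\gamma_x\in\Gamma$ on $E$ with $\lambda x=\gamma_x x$, and define
\[
\tilde\pi:p^{-1}(E)\to Z,\qquad \tilde\pi(y)=\gamma_{p(y)}^{-1}\pi(\lambda y).
\]
The orbital hypothesis shows $\tilde\pi$ is independent of the selector: any two valid choices $\gamma_1,\gamma_2\in\Gamma$ satisfy $\gamma_2\gamma_1^{-1}\in\mathrm{stab}_\Gamma(\lambda p(y))$ and hence fix $\pi(\lambda y)\in\varphi^{-1}(\lambda p(y))$. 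A direct computation gives $\varphi\circ\tilde\pi=p$ on $p^{-1}(E)$. For $\Gamma_0$-equivariance, write $\lambda\gamma_0=\gamma'\lambda$ with $\gamma'=\lambda\gamma_0\lambda^{-1}\in\Gamma$; then $\tilde\pi(\gamma_0 y)=\gamma_{\gamma_0 p(y)}^{-1}\gamma'\pi(\lambda y)$, and the element $\gamma_0\gamma_{p(y)}^{-1}\gamma'^{-1}\gamma_{\gamma_0 p(y)}$ stabilizes $\gamma_0 p(y)$ (using that both $\gamma_{\gamma_0 p(y)}$ and $\gamma'\gamma_{p(y)}\gamma_0^{-1}$ send $\gamma_0 p(y)$ to $\lambda\gamma_0 p(y)$), so by the orbital hypothesis it fixes $\tilde\pi(\gamma_0 y)\in\varphi^{-1}(\gamma_0 p(y))$; rearranging gives $\gamma_0\tilde\pi(y)=\tilde\pi(\gamma_0 y)$.

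Both $\pi|_{p^{-1}(E)}$ and $\tilde\pi$ are thus $\Gamma_0$-equivariant maps from the product $(W\times E,\rho\times\nu|_E)$ to $Z$, factoring $p$ through $\varphi$. Partitioning $E=\bigsqcup_n E_n$ according to the countably many values of $\gamma_x$, the pushforward measures on $Z$ and their fiber disintegrations over $\nu|_E$ via $\varphi$ are seen to sit in the same measure class on each piece (using $\Gamma$-quasi-invariance of $\zeta$ and $\Lambda$-quasi-invariance of $\rho$), hence globally. Theorem \ref{T:relcontractiveunique} then yields $\pi|_{p^{-1}(E)}=\tilde\pi$ almost everywhere, which rearranges to $\pi(\lambda y)=\gamma_{p(y)}\pi(y)=\theta_\lambda(\pi(y))$ on $p^{-1}(E)$. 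The final assertion is the case $\gamma_x=e$, valid wherever $\lambda p(y)=p(y)$.

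The main obstacle will be pushing the orbital hypothesis hard enough to trivialize every selector-dependent ambiguity—both in the definition of $\tilde\pi$ and in verifying its $\Gamma_0$-equivariance—and then checking that the measure-class hypotheses of Theorem \ref{T:relcontractiveunique} transfer faithfully across the Borel partition of $E$ by the values of $\gamma_x$.
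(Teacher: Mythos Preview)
Your proposal is correct and follows essentially the same approach as the paper: define an auxiliary map $\tilde\pi=\theta_\lambda^{-1}\circ\pi\circ\lambda$, verify it is $\Gamma_0$-equivariant and factors $p$ through $\varphi$, check the relevant measure-class condition via the Borel partition of $E$ by the values of the selector $\gamma_x$, and then invoke Theorem~\ref{T:relcontractiveunique}. The paper organizes these steps as a sequence of separate propositions and extends $\pi_\lambda$ to all of $Y$ by declaring it equal to $\pi$ off $p^{-1}(E)$, whereas you work directly on the restricted product $(W\times E,\rho\times\nu|_E)$; both are valid, and your restriction is arguably cleaner since $(E,\nu|_E)$ is already a measure-preserving $\Gamma_0$-space.
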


The proof of the theorem will proceed as a series of Propositions.  Retain the notation above throughout:
\begin{proposition}
$\theta_{\lambda}(\varphi^{-1}(E)) = \varphi^{-1}(\lambda E)$.
\end{proposition}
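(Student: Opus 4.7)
The plan is to prove the two set-theoretic inclusions directly from the definitions, using the orbital hypothesis to make sense of $\theta_\lambda$ independently of the choice of $\gamma$.

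For the inclusion $\theta_\lambda(\varphi^{-1}(E)) \subseteq \varphi^{-1}(\lambda E)$: let $z \in \varphi^{-1}(E)$, so $\varphi(z) \in E$. By definition, there exists $\gamma \in \Gamma$ such that $\lambda \varphi(z) = \gamma \varphi(z)$, and $\theta_\lambda(z) = \gamma z$. Since $\varphi$ is $\Gamma$-equivariant, $\varphi(\theta_\lambda(z)) = \varphi(\gamma z) = \gamma \varphi(z) = \lambda \varphi(z)$. As $\varphi(z) \in E$, we get $\varphi(\theta_\lambda(z)) = \lambda \varphi(z) \in \lambda E$, i.e., $\theta_\lambda(z) \in \varphi^{-1}(\lambda E)$.

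For the reverse inclusion $\varphi^{-1}(\lambda E) \subseteq \theta_\lambda(\varphi^{-1}(E))$: let $w \in \varphi^{-1}(\lambda E)$, so $\varphi(w) = \lambda x$ for some $x \in E$. Since $x \in E$ there is some $\gamma \in \Gamma$ with $\lambda x = \gamma x$. Set $z = \gamma^{-1} w$. Then by $\Gamma$-equivariance, $\varphi(z) = \gamma^{-1} \varphi(w) = \gamma^{-1} \lambda x = x \in E$, so $z \in \varphi^{-1}(E)$. Moreover, $\lambda \varphi(z) = \lambda x = \gamma x = \gamma \varphi(z)$, so $\gamma$ is a valid choice in the definition of $\theta_\lambda(z)$, giving $\theta_\lambda(z) = \gamma z = \gamma \gamma^{-1} w = w$. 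Hence $w \in \theta_\lambda(\varphi^{-1}(E))$.

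The only subtlety is to double-check that $\theta_\lambda$ is well-defined, which is where the orbital hypothesis enters: if $\gamma_1, \gamma_2 \in \Gamma$ both satisfy $\gamma_i \varphi(z) = \lambda \varphi(z)$, then $\gamma_2^{-1}\gamma_1$ fixes $\varphi(z)$, so by orbitality $\gamma_2^{-1}\gamma_1 z = z$, hence $\gamma_1 z = \gamma_2 z$. This is already assumed in the statement, so no real obstacle remains --- the proof is essentially a direct unwinding of definitions.
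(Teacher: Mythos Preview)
Your proof is correct and essentially identical to the paper's: both prove the two inclusions directly, using $\Gamma$-equivariance of $\varphi$ for the forward direction and the substitution $z = \gamma^{-1}w$ for the reverse. Your added remark on well-definedness of $\theta_\lambda$ via the orbital hypothesis is not needed for this particular proposition (it is used elsewhere), but it does no harm.
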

\begin{proof}
Let $z \in \theta_{\lambda}(\varphi^{-1}(E))$.  Then $z = \theta_{\lambda}(w)$ for some $w \in \varphi^{-1}(E)$ so $\lambda\varphi(w) = \gamma\varphi(w)$ for some $\gamma \in \Gamma$ hence $z = \gamma w$ by the definition of $\theta_{\lambda}$.  Then $\varphi(z) = \varphi(\gamma w) = \gamma \varphi(w) = \lambda \varphi(w) \in \lambda E$.  Therefore $\theta_{\lambda}(\varphi^{-1}(E)) \subseteq \varphi^{-1}(\lambda E)$.

Conversely, let $z \in \varphi^{-1}(\lambda E)$.  Then $\varphi(z) = \lambda x$ for some $x \in E$ so there exists $\gamma \in \Gamma$ such that $\varphi(z) = \lambda x = \gamma x$.  Now $\varphi(\gamma^{-1}z) = \gamma^{-1}\varphi(z) = x \in E$ and $\lambda x = \gamma x$ so $\theta_{\lambda}(\gamma^{-1}z) = \gamma (\gamma^{-1}z) = z$.  Therefore $z = \theta_{\lambda}(\gamma^{-1}z) \in \theta_{\lambda}(\varphi^{-1}(E))$ so $\varphi^{-1}(\lambda E) \subseteq \theta_{\lambda}(\varphi^{-1}(E))$.
\end{proof}

\begin{proposition}
$\theta_{\lambda}$ is invertible: there exists $\theta_{\lambda}^{-1} : \theta_{\lambda}(\varphi^{-1}(E)) \to E$ such that $\theta_{\lambda}^{-1}\theta_{\lambda}$ is the identity on $\varphi^{-1}(E)$ and $\theta_{\lambda}\theta_{\lambda}^{-1}$ is the identity on $\theta_{\lambda}(\varphi^{-1}(E))$.
\end{proposition}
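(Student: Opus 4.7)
The plan is to exhibit $\theta_\lambda^{-1}$ as $\theta_{\lambda^{-1}}$ restricted to an appropriate set, and then to verify the inverse relations by a direct manipulation using only the orbital hypothesis.

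First I would record, as a sanity check, that $\theta_\lambda$ is well-defined. If $\gamma_1,\gamma_2\in\Gamma$ both satisfy $\gamma_i\varphi(z)=\lambda\varphi(z)$, then $\gamma_2^{-1}\gamma_1\in\Gamma$ fixes $\varphi(z)\in X$, so by the orbital hypothesis $\gamma_2^{-1}\gamma_1 z=z$, giving $\gamma_1 z=\gamma_2 z$. Thus $\theta_\lambda(z)$ does not depend on the choice of $\gamma$.

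Next I would identify the natural domain for the inverse. A short computation shows $\lambda E_\lambda=E_{\lambda^{-1}}$: if $x\in E_\lambda$ with $\lambda x=\gamma x$ then $y:=\lambda x=\gamma x$ satisfies $\lambda^{-1}y=x=\gamma^{-1}y$, so $y\in E_{\lambda^{-1}}$; conversely, any $y\in E_{\lambda^{-1}}$ with $\lambda^{-1}y=\gamma y$ is of the form $\lambda x$ for $x=\gamma y\in E_\lambda$. Combined with the previous proposition this gives $\theta_\lambda(\varphi^{-1}(E_\lambda))=\varphi^{-1}(\lambda E_\lambda)=\varphi^{-1}(E_{\lambda^{-1}})$, so $\theta_{\lambda^{-1}}$ is defined on exactly the image of $\theta_\lambda$.

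Finally I would verify the two compositions. Let $z\in\varphi^{-1}(E_\lambda)$ and write $\theta_\lambda(z)=\gamma z$ where $\gamma\varphi(z)=\lambda\varphi(z)$. Then $\varphi(\gamma z)=\gamma\varphi(z)=\lambda\varphi(z)$, so $\lambda^{-1}\varphi(\gamma z)=\varphi(z)=\gamma^{-1}\varphi(\gamma z)$, meaning the element $\gamma^{-1}\in\Gamma$ realises the defining relation for $\theta_{\lambda^{-1}}$ at the point $\gamma z$. Hence $\theta_{\lambda^{-1}}(\theta_\lambda(z))=\theta_{\lambda^{-1}}(\gamma z)=\gamma^{-1}\gamma z=z$. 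The other composition $\theta_\lambda\circ\theta_{\lambda^{-1}}$ on $\varphi^{-1}(E_{\lambda^{-1}})$ is the symmetric computation (with $\lambda$ replaced by $\lambda^{-1}$), so setting $\theta_\lambda^{-1}:=\theta_{\lambda^{-1}}\restriction_{\varphi^{-1}(E_{\lambda^{-1}})}$ gives the required inverse.

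There is no real obstacle here: the content is bookkeeping once one notices that the orbital hypothesis guarantees $\theta_\lambda$ is defined independently of the chosen $\gamma$, and that $\gamma^{-1}$ is the natural witness on the other side. The only subtlety worth flagging, should it become relevant for later measurability considerations, is that Borel measurable selections of $\gamma$ can be made using the von Neumann selection theorem as was done earlier in the paper, but since $\theta_\lambda$ itself is intrinsically defined this is not needed for the present statement.
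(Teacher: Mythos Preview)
Your proof is correct. Both arguments ultimately rest on the same orbital computation, but the packaging differs: the paper constructs the inverse directly by declaring $\theta_\lambda^{-1}(w)=\gamma^{-1}w$ for any witness $\gamma$ with $w=\gamma z$ and $\lambda\varphi(z)=\gamma\varphi(z)$, and then checks by hand that this is independent of the choice. You instead identify the inverse as $\theta_{\lambda^{-1}}$ after observing $\lambda E_\lambda=E_{\lambda^{-1}}$, so well-definedness is inherited for free from the argument already given for $\theta_\lambda$. Your route is slightly cleaner in that it exposes the $\lambda\leftrightarrow\lambda^{-1}$ symmetry and avoids a separate well-definedness check; the paper's route is more self-contained in that it does not require introducing $E_{\lambda^{-1}}$ or invoking the previous proposition. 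Either way the content is the same bookkeeping.
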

\begin{proof}
Let $w \in \theta_{\lambda}(\varphi^{-1}(E))$.  Then $w = \theta_{\lambda}(z)$ for some $z \in \varphi^{-1}(E)$ so $w = \gamma z$ for some $\gamma \in \Gamma$ such that $\lambda \varphi(z) = \gamma \varphi(z)$.  Note that if $\gamma,\gamma^{\prime} \in \Gamma$ are both such that $\lambda \varphi(z) = \gamma \varphi(z) = \gamma^{\prime} \varphi(z)$ then $\gamma^{-1}\gamma^{\prime} \varphi(z) = \varphi(z)$ so as $Z$ is orbital then $\gamma^{-1}\gamma^{\prime} z = z$.  Define $\theta_{\lambda}^{-1}(w) = \gamma^{-1}w$.  This is then well-defined since $(\gamma^{\prime})^{-1}w = (\gamma^{\prime})^{-1}\gamma \gamma^{-1}w = (\gamma^{\prime})^{-1}\gamma z = z = \gamma^{-1}w$ because $\gamma^{-1}\gamma^{\prime} z = z$.  Then $\theta_{\lambda}^{-1}(\theta_{\lambda}(z)) = \theta_{\lambda}^{-1}(w) = z$ and $\theta_{\lambda}(\theta_{\lambda}^{-1}(w)) = \theta_{\lambda}(z) = w$ hence the proof is complete (since $\theta_{\lambda}$ maps onto its image).
\end{proof}

\begin{proposition}
$\Gamma_{0} = \Gamma \cap \lambda^{-1} \Gamma \lambda$ is a lattice in $\Gamma$ and $E$ is $\Gamma_{0}$-invariant.
\end{proposition}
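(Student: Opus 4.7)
The proposition is essentially a formal consequence of the commensuration hypothesis together with the definition of $E$, so I expect the proof to be short and computational rather than requiring any of the dynamical machinery developed earlier.

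For the first assertion, the plan is to invoke the definition of commensuration directly: since $\Lambda$ commensurates $\Gamma$, applying the definition to the element $\lambda^{-1} \in \Lambda$ gives that $\Gamma \cap \lambda^{-1}\Gamma\lambda$ has finite index in $\Gamma$ (and in $\lambda^{-1}\Gamma\lambda$). Thus $\Gamma_0$ is a finite-index subgroup of $\Gamma$, which is what ``lattice in $\Gamma$'' means for the countable (discrete) group $\Gamma$. No other structure is needed for this half.

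For the $\Gamma_0$-invariance of $E$, the plan is a direct orbit-chase. Fix $x \in E$ and $\gamma_0 \in \Gamma_0$; I need to show $\lambda (\gamma_0 x) \in \Gamma (\gamma_0 x)$. Because $\gamma_0 \in \Gamma$, the target orbit satisfies $\Gamma(\gamma_0 x) = \Gamma x$, so it suffices to show $\lambda \gamma_0 x \in \Gamma x$. Now use the other defining property of $\Gamma_0$: since $\gamma_0 \in \lambda^{-1}\Gamma\lambda$, there exists $\gamma \in \Gamma$ with $\lambda \gamma_0 = \gamma \lambda$, and hence
\[
\lambda \gamma_0 x \;=\; \gamma (\lambda x).
\]
Since $x \in E$ we have $\lambda x = \gamma' x$ for some $\gamma' \in \Gamma$, giving $\lambda \gamma_0 x = \gamma\gamma' x \in \Gamma x$, as required.

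The main ``obstacle'', to the extent there is one, is simply keeping track of the two different roles played by the decomposition $\gamma_0 = \lambda^{-1}\gamma\lambda$: it is used to conjugate $\lambda$ past $\gamma_0$, while the membership $\gamma_0 \in \Gamma$ is used to identify the orbits $\Gamma(\gamma_0 x)$ and $\Gamma x$. Since both facts are packaged into the single definition of $\Gamma_0$, the verification is formal. I would present the two statements in the order above (finite-index first, invariance second), keeping the computation for $E$ to a single displayed line as written.
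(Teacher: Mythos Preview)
Your proposal is correct and matches the paper's own proof essentially line for line: the paper also invokes commensuration for the finite-index claim and, for invariance, writes $\gamma_0 = \lambda^{-1}\gamma\lambda$ to compute $\lambda\gamma_0 x = \gamma\lambda x \in \gamma\Gamma x = \Gamma x$. The only cosmetic difference is that you explicitly note $\Gamma(\gamma_0 x) = \Gamma x$ before the computation, whereas the paper leaves this implicit.
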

\begin{proof}
$\Gamma_{0}$ has finite index in $\Gamma$ since $\Lambda$ commensurates $\Gamma$ hence is a lattice.
Observe that for $\gamma_{0} \in \Gamma_{0}$ and $x \in E$, writing $\gamma_{0} = \lambda^{-1}\gamma\lambda$ for some $\gamma \in \Gamma$ we have that
\[
\lambda \gamma_{0} x = \lambda \lambda^{-1} \gamma \lambda x = \gamma \lambda x \in \gamma \Gamma x = \Gamma x
\]
and therefore the set $E$ is $\Gamma_{0}$-invariant, that is $\lambda \gamma_{0}x \in \Gamma x$ whenever $\lambda x \in \Gamma x$.
\end{proof}

\begin{proposition}
Define the map $\pi_{\lambda} : Y \to Z$ as follows: for $y \in p^{-1}(E)$ set $\pi_{\lambda}(y) = \theta_{\lambda}^{-1}(\pi(\lambda y))$ and for $y \notin p^{-1}(E)$ set $\pi_{\lambda}(y) = \pi(y)$.  Likewise define the map $\varphi_{\lambda} : Z \to X$ by $\varphi_{\lambda}(z) = \lambda^{-1}\varphi(\theta_{\lambda} (z))$ for $z$ such that $\varphi(z) \in E$ and $\varphi_{\lambda}(z) = \varphi(z)$ for $z$ such that $\varphi(z) \notin E$.

Then $\varphi_{\lambda} \circ \pi_{\lambda} = \varphi \circ \pi = p$ and both $\pi_{\lambda}$ and $\varphi_{\lambda}$ are $\Gamma_{0}$-equivariant.
\end{proposition}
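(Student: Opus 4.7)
The plan is pure bookkeeping: verify each claim by splitting into the two cases in the piecewise definitions and invoking the preceding propositions (which give $\theta_\lambda(\varphi^{-1}(E)) = \varphi^{-1}(\lambda E)$, the invertibility of $\theta_\lambda$, and the $\Gamma_0$-invariance of $E$) as black boxes. The only piece of real content is a single intertwining identity relating $\theta_\lambda$ to the $\Gamma_0$-action; once that is established, both equivariance statements fall out mechanically.

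First I would verify $\varphi_\lambda\circ\pi_\lambda = p$. On $y\in p^{-1}(E)$, since $p = \varphi\circ\pi$ is $\Lambda$-equivariant, $\varphi(\pi(\lambda y)) = \lambda p(y) \in \lambda E$, so $\pi(\lambda y)\in\varphi^{-1}(\lambda E) = \theta_\lambda(\varphi^{-1}(E))$ by the first preparatory proposition, and hence $z := \theta_\lambda^{-1}(\pi(\lambda y))$ lies in $\varphi^{-1}(E)$. The first branch of $\varphi_\lambda$ then applies and yields
\[
\varphi_\lambda(\pi_\lambda(y)) = \lambda^{-1}\varphi(\theta_\lambda(z)) = \lambda^{-1}\varphi(\pi(\lambda y)) = \lambda^{-1}p(\lambda y) = p(y).
\]
On $y\notin p^{-1}(E)$, $\varphi(\pi(y)) = p(y)\notin E$, so the second branch applies and returns $p(y)$ directly.

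The key technical step is the identity $\theta_\lambda(\gamma_0 z) = \gamma\,\theta_\lambda(z)$ for every $z\in\varphi^{-1}(E)$ and every $\gamma_0 = \lambda^{-1}\gamma\lambda \in \Gamma_0$. Indeed, choosing $\gamma'\in\Gamma$ with $\lambda\varphi(z) = \gamma'\varphi(z)$ so that $\theta_\lambda(z) = \gamma' z$, I would compute
\[
\lambda\varphi(\gamma_0 z) = \lambda\gamma_0\varphi(z) = \gamma\lambda\varphi(z) = \gamma\gamma'\varphi(z) = (\gamma\gamma'\gamma_0^{-1})\varphi(\gamma_0 z),
\]
which identifies $\gamma\gamma'\gamma_0^{-1}$ as an acceptable choice for the representative defining $\theta_\lambda(\gamma_0 z)$, giving $\theta_\lambda(\gamma_0 z) = (\gamma\gamma'\gamma_0^{-1})(\gamma_0 z) = \gamma\gamma' z = \gamma\theta_\lambda(z)$. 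Applying $\theta_\lambda^{-1}$ to both sides (well-defined by the second preparatory proposition) gives the equivalent form $\theta_\lambda^{-1}(\gamma w) = \gamma_0\theta_\lambda^{-1}(w)$ for $w\in\varphi^{-1}(\lambda E)$.

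With this identity the two equivariance assertions are immediate. Since $E$ is $\Gamma_0$-invariant (by the third preparatory proposition), so is its preimage under $p$, hence for $y\in p^{-1}(E)$,
\[
\pi_\lambda(\gamma_0 y) = \theta_\lambda^{-1}(\pi(\lambda\gamma_0 y)) = \theta_\lambda^{-1}(\pi(\gamma\lambda y)) = \theta_\lambda^{-1}(\gamma\pi(\lambda y)) = \gamma_0\theta_\lambda^{-1}(\pi(\lambda y)) = \gamma_0\pi_\lambda(y),
\]
using $\lambda\gamma_0=\gamma\lambda$, the $\Gamma$-equivariance of $\pi$, and the intertwining identity in turn. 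On the complement $\pi_\lambda = \pi$ and equivariance is by hypothesis, since the complement of $E$ is also $\Gamma_0$-invariant. For $\varphi_\lambda$ on $\varphi^{-1}(E)$, the intertwining identity and $\Gamma$-equivariance of $\varphi$ give
\[
\varphi_\lambda(\gamma_0 z) = \lambda^{-1}\varphi(\theta_\lambda(\gamma_0 z)) = \lambda^{-1}\varphi(\gamma\theta_\lambda(z)) = \lambda^{-1}\gamma\lambda\cdot\lambda^{-1}\varphi(\theta_\lambda(z)) = \gamma_0\varphi_\lambda(z),
\]
and the argument off $\varphi^{-1}(E)$ is again immediate from $\Gamma_0$-invariance of the complement. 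There is no genuine obstacle, as the preceding three propositions have already isolated all the non-trivial content; the only thing to be a little careful about is checking on each branch that the image lands in the correct piece of the target partition so that the right branch of $\varphi_\lambda$ is triggered after applying $\pi_\lambda$.
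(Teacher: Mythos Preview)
Your proof is correct and follows essentially the same approach as the paper: verify the composition on each branch, establish the intertwining identity $\theta_\lambda(\gamma_0 z) = \gamma\theta_\lambda(z)$, and deduce equivariance. The paper takes one small shortcut you omit---it observes that in fact $\varphi_\lambda = \varphi$ pointwise (since $\lambda^{-1}\varphi(\theta_\lambda(z)) = \lambda^{-1}\gamma\varphi(z) = \varphi(z)$ when $\lambda\varphi(z) = \gamma\varphi(z)$), which makes the $\Gamma_0$-equivariance of $\varphi_\lambda$ immediate---but your direct verification via the intertwining identity works equally well.
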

\begin{proof}
Note that in fact $\varphi_{\lambda} = \varphi$ since for $z \in \varphi^{-1}(E)$ and $\gamma \in \Gamma$ such that $\lambda\varphi(z) = \gamma\varphi(z)$ we have that $\lambda^{-1} \varphi(\gamma z) = \lambda^{-1}\gamma \varphi(z) = \varphi(z)$ but we will find it helpful to distinguish these maps since the measures $\pi_{*}\eta$ and $(\pi_{\lambda})_{*}\eta$ may be distinct and we will be treating $\varphi_{\lambda}$ as a map $(Z,(\pi_{\lambda})_{*}\eta) \to (X,\nu)$ and $\varphi$ as a map $(Z,\pi_{*}\eta) \to (X,\nu)$.

Now for $y$ such that $p(y) \in E$, observe that
\[
\varphi_{\lambda}(\pi_{\lambda}(y)) = \lambda^{-1}\varphi(\theta_{\lambda} \theta_{\lambda}^{-1} \pi(\lambda y))
= \lambda^{-1} \varphi(\pi(\lambda y)) = \lambda^{-1} p(\lambda y) = p(y)
\]
since $p$ is $\Lambda$-equivariant.  Clearly for $y$ such that $p(y) \notin E$ we have $\varphi_{\lambda}(\pi_{\lambda}(y)) = \varphi_{\lambda}(\pi(y)) = \varphi(\pi(y)) = p(y)$.  Hence $\varphi_{\lambda} \circ \pi_{\lambda} = p$.

Observe that for $\gamma_{0} \in \Gamma_{0}$, writing $\gamma_{0} = \lambda^{-1}\gamma\lambda$ for some $\gamma \in \Gamma$, we have that for $y$ such that $p(y) \in E$, also $p(\gamma_{0}y) = \gamma_{0} p(y) \in E$ since $E$ is $\Gamma_{0}$-invariant, so
\[
\pi_{\lambda}(\gamma_{0}y) = \theta_{\lambda}^{-1} \pi(\lambda \gamma_{0}y)
= \theta_{\lambda}^{-1} \pi(\gamma \lambda y)
= \theta_{\lambda}^{-1} \gamma \pi(\lambda y)
= \theta_{\lambda}^{-1} \gamma \theta_{\lambda} \theta_{\lambda}^{-1} \pi(\lambda y)
= \theta_{\lambda}^{-1} \gamma \theta_{\lambda} \pi_{\lambda}(y).
\]
Now observe that for $z$ such that $\varphi(z) \in E$ (which includes $\pi_{\lambda}(y)$ for $p(y) \in E$), write $\gamma^{\prime} \in \Gamma$ such that $\theta_{\lambda} z = \gamma^{\prime}z$ and observe that then $\lambda \varphi(z) = \gamma^{\prime} \varphi(z)$ and so
\[
\gamma \gamma^{\prime} \gamma_{0}^{-1} \varphi(\gamma_{0} z)
= \gamma \gamma^{\prime} \varphi(z) = \gamma \lambda \varphi(z) = \lambda \gamma_{0} \varphi(z)
= \lambda \varphi(\gamma_{0} z)
\]
which in turn means that
\[
\theta_{\lambda} (\gamma_{0} z) = \gamma \gamma^{\prime} \gamma_{0}^{-1} (\gamma_{0} z) = \gamma \gamma^{\prime} z = \gamma \theta_{\lambda} (z)
\]
and therefore
\[
\pi_{\lambda}(\gamma_{0}y) =  \theta_{\lambda}^{-1} \gamma \theta_{\lambda} \pi_{\lambda}(y) 
= \theta_{\lambda}^{-1}\theta_{\lambda}\gamma_{0}\pi_{\lambda}(y) = \gamma_{0} \pi_{\lambda}(y).
\]
Of course, for $y$ such that $p(y) \notin E$ we have that $\gamma_{0} y \notin E$ and so
\[
\pi_{\lambda}(\gamma_{0} y) = \pi(\gamma_{0} y) = \gamma_{0} \pi(y) = \gamma_{0} \pi_{\lambda}(y)
\]
and we conclude that $\pi_{\lambda}$ is $\Gamma_{0}$-equivariant.  Note that $\varphi_{\lambda} = \varphi$ so $\varphi_{\lambda}$ is likewise $\Gamma_{0}$-equivariant.
\end{proof}

\begin{proposition}
The maps $\pi,\varphi,\pi_{\lambda},\varphi_{\lambda}$ are all relatively contractive $\Gamma_{0}$-maps.
\end{proposition}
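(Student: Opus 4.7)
The plan is to reduce everything to the observation that $p$ itself is relatively contractive and then invoke the fact (Theorem \ref{T:contractivecomp}) that factors of relatively contractive maps are themselves relatively contractive. I will use two main ingredients: Theorem \ref{T:examplecont} (projections from products with contractive factors are relatively contractive) and Theorem \ref{T:rcfi} (relative contractivity descends to finite index subgroups).

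First, I would observe that $(Y,\eta) = (W \times X, \rho \times \nu)$ with the diagonal action is precisely the setup of Theorem \ref{T:examplecont} applied to the contractive $\Gamma$-space $(W,\rho)$ and the $\Gamma$-space $(X,\nu)$. Hence the natural projection $p : Y \to X$ is a relatively contractive $\Gamma$-map.

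Second, since $\Lambda$ commensurates $\Gamma$, the subgroup $\Gamma_0 = \Gamma \cap \lambda^{-1}\Gamma\lambda$ has finite index in $\Gamma$, as already noted. Applying Theorem \ref{T:rcfi} (taking $G = \Gamma$ and $H = \Gamma_0$) to the $\Gamma$-relatively contractive map $p$ yields that $p$ is also a relatively contractive $\Gamma_0$-map.

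Third, since the previous propositions established the factorizations
\[
p = \varphi \circ \pi = \varphi_\lambda \circ \pi_\lambda
\]
through $\Gamma_0$-maps, I would apply Theorem \ref{T:contractivecomp} twice: once to the factorization $p = \varphi \circ \pi$ to conclude that both $\varphi$ and $\pi$ are $\Gamma_0$-relatively contractive, and once to $p = \varphi_\lambda \circ \pi_\lambda$ to conclude likewise for $\varphi_\lambda$ and $\pi_\lambda$. (Note that $\varphi_\lambda$ and $\varphi$ agree as maps but are considered with potentially different measures $(\pi_\lambda)_*\eta$ and $\pi_*\eta$ on $Z$; Theorem \ref{T:contractivecomp} applies to each factorization separately.)

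The only genuine subtlety is the hypothesis of Theorem \ref{T:rcfi}, which requires that the $G$-action on the source and target be ergodic. In our context this forces us either to assume $\Gamma$-ergodicity of $(Y,\eta)$ and $(X,\nu)$ — which is harmless since the enveloping theorem will be applied to the setting of ergodic actions — or to pass to $\Gamma$-ergodic components, noting that relative contractivity of $p$ is a fiberwise property and so is inherited by each ergodic component on which the argument above can then be run. Apart from this bookkeeping point, each step is an immediate appeal to machinery already established.
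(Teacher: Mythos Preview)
Your proposal is correct and follows essentially the same route as the paper: invoke Theorem~\ref{T:examplecont} to see that $p$ is relatively contractive for $\Gamma$, pass to $\Gamma_0$ via Theorem~\ref{T:rcfi}, and then apply Theorem~\ref{T:contractivecomp} to each factorization $p = \varphi\circ\pi = \varphi_\lambda\circ\pi_\lambda$. Your remark on the ergodicity hypothesis of Theorem~\ref{T:rcfi} is a fair observation that the paper glosses over; the cleanest resolution is to note that the contractive space $(W,\rho)$ is automatically $\Gamma$-ergodic, apply Theorem~\ref{T:rcfi} to the map $(W,\rho)\to 0$ to get $\Gamma_0$-contractivity of $(W,\rho)$, and then apply Theorem~\ref{T:examplecont} directly at the level of $\Gamma_0$, which avoids needing any ergodicity assumption on $(X,\nu)$.
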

\begin{proof}
$p$ is a relatively contractive $\Gamma$-map hence is a relatively contractive $\Gamma_{0}$-map since $\Gamma_{0}$ has finite index in $\Gamma$ (Theorem \ref{T:rcfi}).  Since $\varphi_{\lambda} \circ \pi_{\lambda} = \varphi \circ \pi = p$ then the maps are all relatively contractive (Theorems \ref{T:contractivecomp} and \ref{T:examplecont}).
\end{proof}

\begin{proposition}
$\zeta_{\lambda}$ is in the same measure class as $\zeta$.
\end{proposition}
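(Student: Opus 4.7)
The plan is to decompose $\eta$ according to the partition $Y = p^{-1}(E) \sqcup p^{-1}(E^c)$ and to show that $\pi$ and $\pi_\lambda$ push $\eta$ to equivalent measures on each piece. On the complement $Y \setminus p^{-1}(E)$ there is nothing to do, since $\pi_\lambda = \pi$ there by definition.

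For the interesting piece, I would enumerate $\Gamma = \{\gamma_1, \gamma_2, \ldots\}$ and partition $E = \bigsqcup_n E_n$, where $E_n = \{x \in E : \lambda x = \gamma_n x\} \setminus \bigcup_{k<n} E_k$; each $E_n$ is measurable. By the orbital hypothesis on $Z$, whenever $x \in E_n$ the map $\theta_\lambda$ on the fiber $\varphi^{-1}(x)$ is multiplication by $\gamma_n$, so $\theta_\lambda^{-1}$ on $\varphi^{-1}(\lambda E_n) = \gamma_n \varphi^{-1}(E_n)$ is multiplication by $\gamma_n^{-1}$. Consequently, for $y \in p^{-1}(E_n)$, the definition of $\pi_\lambda$ unfolds to
\[
\pi_\lambda(y) = \gamma_n^{-1} \pi(\lambda y).
\]

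Now the measure $(\pi_\lambda)_*(\eta|_{p^{-1}(E_n)})$ is obtained from $\eta|_{p^{-1}(E_n)}$ by the three-step composition: (i) translation by $\lambda$, which sends $\eta|_{p^{-1}(E_n)}$ to a measure in the same class as $\eta|_{p^{-1}(\lambda E_n)}$ because $\eta$ is $\Lambda$-quasi-invariant; (ii) the pushforward by $\pi$, which takes this to a measure in the same class as $\pi_*(\eta|_{p^{-1}(\lambda E_n)}) = \zeta|_{\varphi^{-1}(\lambda E_n)}$ (using $\varphi \circ \pi = p$); (iii) translation by $\gamma_n^{-1}$, which, by $\Gamma$-quasi-invariance of $\zeta$, produces a measure in the same class as $\zeta|_{\varphi^{-1}(\gamma_n^{-1}\lambda E_n)} = \zeta|_{\varphi^{-1}(E_n)}$. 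But $\zeta|_{\varphi^{-1}(E_n)}$ is exactly $\pi_*(\eta|_{p^{-1}(E_n)})$, so the two pushforwards are equivalent on each $p^{-1}(E_n)$.

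Summing over $n$ (the supports $\varphi^{-1}(E_n)$ are disjoint so equivalence of summands passes to equivalence of sums) and combining with the trivial agreement on $Y \setminus p^{-1}(E)$ gives $\zeta_\lambda \sim \zeta$. The only subtlety, and thus the main technical obstacle, is checking that the partition $\{E_n\}$ and the identification $\theta_\lambda^{-1}|_{\varphi^{-1}(\lambda E_n)} = \gamma_n^{-1}$ are genuinely measurable and consistent with the well-definedness coming from the orbital hypothesis; both are routine once the orbital property is invoked.
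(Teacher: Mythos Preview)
Your proof is correct and follows essentially the same approach as the paper: both decompose $E$ according to which $\gamma \in \Gamma$ witnesses $\lambda x = \gamma x$, and then use the $\Lambda$-quasi-invariance of $\eta$ together with the $\Gamma$-quasi-invariance of $\zeta$ to transfer measure-class equivalence through the formula $\pi_\lambda = \gamma_n^{-1} \circ \pi \circ \lambda$ on each piece. The only organizational difference is that the paper argues by contradiction (assuming $\zeta(B)=0$ but $\zeta_\lambda(B)>0$ on a set $B \subseteq \varphi^{-1}(E)$ and deriving a contradiction, then the reverse), whereas you track the equivalence of the restricted measures directly through the three-step composition; your version is arguably a bit cleaner, but the content is the same.
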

\begin{proof}
Let $B \subseteq Z$ be measurable such that $B \cap \varphi^{-1}(E) = \emptyset$.  Then $\pi^{-1}(B) \cap p^{-1}(E) = \pi^{-1}(B \cap \varphi^{-1}(E)) = \emptyset$ and $\pi_{\lambda}^{-1}(B) \cap p^{-1}(E) = \pi_{\lambda}^{-1}(B \cap \varphi_{\lambda}^{-1}(E)) = \emptyset$ since $\varphi = \varphi_{\lambda}$ pointwise.  So $\pi_{\lambda}(y) = \pi(y)$ for $y \in \pi^{-1}(B)$ and for $y \in \pi_{\lambda}^{-1}(B)$.  Then
\[
\zeta_{\lambda}(B) = \eta(\pi_{\lambda}^{-1}(B)) \leq \eta(\pi_{\lambda}^{-1}(\pi(\pi^{-1}(B)))) = \eta(\pi^{-1}(B)) = \zeta(B)
\]
and likewise
\[
\zeta(B) = \eta(\pi^{-1}(B)) \leq \eta(\pi^{-1}(\pi_{\lambda}(\pi_{\lambda}^{-1}(B)))) = \eta(\pi_{\lambda}^{-1}(B)) = \zeta_{\lambda}(B)
\]
hence $\zeta(B) = \zeta_{\lambda}(B)$ for $B \subseteq \varphi^{-1}(E^{C})$.

Now let $B \subseteq Z$ be measurable such that $B \subseteq \varphi^{-1}(E)$.  For $x \in E$, measurably choose $\gamma_{x} \in \Gamma$ such that $\lambda x = \gamma_{x} x$.  Write $F_{\gamma} = \{ x \in E : \gamma_{x} = \gamma \}$.  Define the disjoint sets
\[
B_{\gamma} = B \cap \varphi^{-1}(B_{\gamma}).
\]
Then $\theta_{\lambda}(B_{\gamma}) = \gamma B_{\gamma}$ by the definition of $\theta_{\lambda}$.

Suppose first that $\zeta(B) = 0$ but that $\zeta_{\lambda}(B) > 0$.  Then
\[
0 < \zeta_{\lambda}(B) = \eta(\lambda^{-1}\pi^{-1}(\theta_{\lambda}(B))) = \lambda\eta(\pi^{-1}(\theta_{\lambda}(B)))
\]
so, since $\eta$ is in the same measure class as $\lambda\eta$,
\[
0 < \eta(\pi^{-1}(\theta_{\lambda}(B))) = \zeta(\theta_{\lambda}(B)).
\]
Now
\begin{align*}
\zeta(\theta_{\lambda}(B)) &= \zeta(\theta_{\lambda}(\bigsqcup_{\gamma}B_{\gamma})) = \sum_{\gamma} \zeta(\gamma B_{\gamma}) = \sum_{\gamma} \gamma^{-1}\zeta(B_{\gamma})
\end{align*}
and therefore there exists $\gamma \in \Gamma$ such that $\gamma^{-1}\zeta(B_{\gamma}) > 0$.  Since $\zeta$ is $\Gamma$-quasi-invariant then $\zeta(B_{\gamma}) > 0$ for some $\gamma \in \Gamma$.  But then $\zeta(B) \geq \zeta(B_{\gamma}) > 0$ contradicting that $\zeta(B) = 0$.

Suppose now that $\zeta(B) > 0$ but that $\zeta_{\lambda}(B) = 0$.  Observe that
\begin{align*}
\zeta_{\lambda}(B) &= (\pi_{\lambda})_{*}\eta(B) = \eta(\lambda^{-1}\pi^{-1}(\theta_{\lambda}(B))) \\
&= \lambda\eta(\pi^{-1}(\theta_{\lambda}(\bigsqcup_{\gamma} B_{\gamma}))) = \sum_{\gamma} \gamma^{-1}\lambda\eta(\pi^{-1}(B_{\gamma}))
\end{align*}
and therefore $\gamma^{-1}\lambda\eta(\pi^{-1}(B_{\gamma})) = 0$ for all $\gamma \in \Gamma$.  By the $\Lambda$-quasi-invariance of $\eta$, then $\eta(\pi^{-1}(B_{\gamma})) = 0$ for all $\gamma \in \Gamma$.  But then
\[
\zeta(B) = \eta(\pi^{-1}(B)) = \eta(\bigsqcup_{\gamma} \pi^{-1}(B_{\gamma})) = 0
\]
contradicting that $\zeta(B) > 0$.
\end{proof}

\begin{proof}[Proof of Theorem \ref{T:groupoidcontractive1}]  We are now in the situation of having $\pi : (Y,\eta) \to (Z,\zeta)$, $\varphi : (Z,\zeta) \to (X,\nu)$, $\pi_{\lambda} : (Y,\eta) \to (Z,\zeta_{\lambda})$ and $\varphi_{\lambda} : (Z,\zeta_{\lambda}) \to (X,\nu)$ all $\Gamma_{0}$-maps of $\Gamma_{0}$-spaces such that $\varphi \circ \pi = \varphi_{\lambda} \circ \pi_{\lambda} = p$ is a relatively contractive $\Gamma_{0}$-map and such that the disintegration measures $D_{\varphi}(x)$ and $D_{\varphi_{\lambda}}(x)$ are in the same measure class for almost every $x$ (which follows from the previous proposition and Lemma \ref{L:abscontmeas}).  By Theorem \ref{T:relcontractiveunique}, as $(Y,\eta)$ is a product of a contractive space and a measure-preserving space, then $\pi = \pi_{\lambda}$ almost surely and $\zeta_{\lambda} = \zeta$.
Therefore for almost every $y$ such that $p(y) \in E$ we have that
\[
\pi(\lambda y) = \theta_{\lambda}\theta_{\lambda}^{-1}\pi(\lambda y) = \theta_{\lambda} \pi_{\lambda}(y) = \theta_{\lambda} \pi(y).
\]
\end{proof}

\section{Weak Amenability of Actions of Lattices}\label{S:weakamenlatt}

A key fact in our study of stabilizers of commensurators and lattices is that if an action of the commensurator has infinite stabilizers then the restriction of the action to the lattice is weakly amenable (the equivalence relation corresponding to the action of the lattice is amenable).

\begin{theorem}\label{T:main}
Let $\Gamma < G$ be a lattice in a locally compact second countable group and let $\Lambda$ be a countable dense subgroup of $G$ such that $\Gamma <_{c} \Lambda$.

Assume that for every ergodic measure-preserving action of $G$ either the restriction of the action to $\Lambda$ has finite stabilizers or the restriction of the action to $\Gamma$ has finite orbits.

Let $\Lambda \actson (X,\nu)$ be an ergodic measure-preserving action.
Then either $\Lambda$ has finite stabilizers or the restriction of the action to $\Gamma$ is weakly amenable.
\end{theorem}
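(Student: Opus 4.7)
The plan is to argue by contrapositive: assume $\Lambda \actson (X,\nu)$ does \emph{not} have finite stabilizers, and show that the restriction $\Gamma \actson (X,\nu)$ is weakly amenable. By Theorem \ref{T:SZ3} (i.e.\ Zimmer's characterization), it suffices to produce, for every orbital affine $\Gamma$-space $A$ over $(X,\nu)$, a $\Gamma$-invariant section $X \to A$.

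Fix an admissible $\mu \in P(G)$ and let $(B,\beta)$ be the $(G,\mu)$-Poisson boundary. By Propositions \ref{P:PBamenably} and \ref{P:amenclosed}, $\Gamma \actson B$ is amenable, and by Theorem \ref{T:contractivelattice} the $\Gamma$-action on $B$ is also contractive. Proposition \ref{P:amenprod1} then gives that $\Gamma$ acts amenably on $(B \times X, \beta \times \nu)$ with the diagonal action. Given an orbital affine $\Gamma$-space $A$ over $X$, Proposition \ref{P:amenproduct} produces $\Gamma$-maps $\psi : B \times X \to A$ and $\varphi : A \to X$ composing to the natural projection $p$.

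Now apply the piecewise Intermediate Contractive Factor Theorem (Theorem \ref{T:groupoidcontractive1}), taking $W = B$ as a $\Lambda$-space whose $\Gamma$-restriction is contractive, $(X,\nu)$ as the measure-preserving base, and $Z = A$ orbital over $X$ (orbitality of $A$ is exactly the hypothesis on its cocycle). The conclusion is that for every $\lambda \in \Lambda$ and almost every $x$ with $\lambda x = x$, one has $\psi(\lambda b, x) = \psi(b, x)$ for almost every $b \in B$. In other words, for almost every $x$, the map $\psi(\cdot, x) : B \to A_x$ is invariant under the action of $\mathrm{stab}_\Lambda(x)$ on $B$.

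It remains to show $\psi(\cdot,x)$ is essentially constant, for then $\psi$ descends to a $\Gamma$-invariant section $X \to A$. To force this, I plan to invoke the hypothesis on $G$-actions: push $\nu$ forward under the $\Lambda$-equivariant map $x \mapsto \overline{\mathrm{stab}_\Lambda(x)} \in S(G)$ to obtain a measure $\theta$ on $S(G)$ that is $\Lambda$-invariant under conjugation and, by density of $\Lambda$ in $G$ together with weak-$*$ continuity of the conjugation action on $P(S(G))$, is in fact a $G$-invariant random subgroup. Ergodically decomposing and applying Proposition \ref{P:gaussian} produces ergodic measure-preserving $G$-spaces $(Z,\zeta)$ on which the $\Lambda$-stabilizers contain the (infinite) $\Lambda$-stabilizers of $(X,\nu)$; the standing hypothesis on $G$-actions then forces the $\Gamma$-orbits on each $(Z,\zeta)$ to be finite. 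The main obstacle will be to leverage these finite $\Gamma$-orbits to rule out non-trivial $b$-dependence of $\psi(\cdot,x)$: the plan is to transport $\psi$ via a further application of Theorem \ref{T:groupoidcontractive1} into an orbital affine $\Gamma$-structure over $(Z,\zeta)$, where finiteness of $\Gamma$-orbits together with amenability of the restricted equivalence relation trivializes the structure, contradicting the assumed non-constancy. Handling the measurable dependence of $\overline{\mathrm{stab}_\Lambda(x)}$ on $x$ and establishing $G$-invariance of $\theta$ rigorously is the delicate technical core.
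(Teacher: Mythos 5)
Your proposal tracks the paper's argument faithfully up through the application of the piecewise Intermediate Contractive Factor Theorem and the passage to the invariant random subgroup $\theta = s_*\nu$ on $S(G)$: the setup with the Poisson boundary, Propositions \ref{P:PBamenably}, \ref{P:amenclosed}, \ref{P:amenprod1}, \ref{P:amenproduct}, Theorems \ref{T:contractivelattice} and \ref{T:groupoidcontractive1}, and the realization of $\theta$ by an ergodic $G$-action via Proposition \ref{P:gaussian} are all exactly the paper's steps. But the step you flag as ``the main obstacle'' is precisely where your plan breaks down, and the route you sketch for it is not viable. There is no natural way to ``transport $\psi$ into an orbital affine $\Gamma$-structure over $(Z,\zeta)$'': the affine space $A$ lives over $(X,\nu)$, and $(Z,\zeta)$ is an auxiliary $G$-space that shares only the invariant random subgroup with $(X,\nu)$, not a common extension through which a cocycle could be pulled back. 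Moreover, invoking ``amenability of the restricted equivalence relation'' at that point is circular, since weak amenability of $\Gamma \actson (X,\nu)$ is the conclusion you are trying to establish.

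The missing idea is this. The only information you need to extract from $(Z,\zeta)$ is group-theoretic: finiteness of the $\Gamma$-orbits there means $\Gamma_x := \Gamma \cap \overline{\mathrm{stab}_\Lambda(x)}$ has finite index in $\Gamma$ for $\nu$-a.e.\ $x$, hence $\Gamma_x$ is itself a lattice in $G$. (For this you must first upgrade the invariance $\psi(\lambda b,x)=\psi(b,x)$ from $\lambda \in \mathrm{stab}_\Lambda(x)$ to all $g \in \overline{\mathrm{stab}_\Lambda(x)}$, via Borel sections and weak continuity of composition, since elements of $\Gamma_x$ need not lie in $\mathrm{stab}_\Lambda(x)$ itself.) Then apply Theorem \ref{T:contractivelattice} a \emph{second} time, now to the lattice $\Gamma_x < G$, to conclude that $(B,\beta)$ is a contractive $\Gamma_x$-space. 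Since $\psi(\cdot,x) : B \to A_x$ is $\Gamma_x$-invariant, it is a $\Gamma_x$-map onto the fiber $A_x$ equipped with the \emph{trivial} $\Gamma_x$-action, so $(A_x, \psi(\cdot,x)_*\beta)$ is a contractive factor of a contractive space (Corollary \ref{C:factorsstaycontractive}) on which the group acts trivially; a contractive measure for a trivial action must be a point mass $\delta_{c_x}$. This forces $\psi(\cdot,x)$ to be essentially constant, and $x \mapsto c_x$ is the desired $\Gamma$-equivariant section. Without this final contraction argument on the fibers, the proof does not close.
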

\begin{proof}
Let $(B,\rho)$ be any Poisson boundary of $G$ with respect to a measure whose support generates $G$.  Then $G \actson (B,\rho)$ is contractive (Jaworski \cite{Ja94}) and amenable (Zimmer \cite{Zi78}).  Then $\Gamma \actson (B,\rho)$ amenably since $\Gamma$ is closed in $G$ (Zimmer \cite{Zi84} Theorem 4.3.5).  Let $A$ be an affine orbital $\Gamma$-space over $(X,\nu)$ (see \cite{Zi84} Section 4.3).  Then there exists $\Gamma$-maps $\pi : B \times X \to A$ and $p : A \to X$ such that $p \circ \pi$ is the natural projection to $X$ (Zimmer \cite{Zi84} Section 4.3).  Since $(B,\rho)$ is a Poisson boundary of $G$, by Proposition 2.4 in \cite{CS14}, the restriction of the action on $(B,\rho)$ to $\Gamma$ makes it a $\Gamma$-contractive space.

By the Intermediate Contractive Factor Theorem (the piecewise version--Theorem \ref{T:groupoidcontractive1}), for almost every $x \in X$ and $b \in B$ and for any $\lambda \in \mathrm{stab}_{\Lambda}(x)$ we have $\pi(\lambda(b,x)) = \pi(b,x)$.

As in the proof of existence of continuous compact models (Lemma \ref{L:compactmodels}; see also \cite{Zi84} Appendix B), there exist Borel models for the spaces $B$, $X$ and $A$, and the maps $\pi$ and $\varphi$.  Moreover, there is a Borel section $X \to (B \to A)$ for $\pi$: for almost every $x$ there is a Borel map $\pi_{x} : B \times \{ x \} \to A_{x}$ where $A_{x} = \varphi^{-1}(\{ x \})$.  The conclusion of the Intermediate Contractive Factor Theorem is that $\pi_{x} \circ \lambda = \pi_{x}$ for almost every $x$ and all $\lambda \in \mathrm{stab}_{\Lambda}(x)$.  Since composition is weakly continuous on the space of Borel maps, treating the $\Lambda$ action as a Borel map $B \to B$, then $\pi_{x} \circ g = \pi_{x}$ for almost every $x$ and all $g \in \overline{\mathrm{stab}_{\Lambda}(x)}$; that is, $\pi(gb,x) = \pi(b,x)$ for almost every $x$ and $b$ and all $g \in \overline{\mathrm{stab}_{\Lambda}(x)}$.

Define the map $s : X \to S(G)$, where $S(G)$ is the Borel space of closed subgroups of $G$ equipped with the conjugation action by $G$, by $s(x) = \overline{\mathrm{stab}_{\Lambda}(x)}$.  Observe that $s(\lambda x) = \overline{\lambda\hspace{.1em}\mathrm{stab}_{\Lambda}(x)\hspace{.03em}\lambda^{-1}} = \lambda \cdot s(x)$ so $s$ is a $\Lambda$-map.  Let $\eta \in P(S(G))$ be $\eta = s_{*}\nu$.  

Let $(\widetilde{X},\widetilde{\nu})$ be an action of $G$ giving rise to the invariant random subgroup $\eta$.  Such an action exists by Theorem \ref{T:gaussian}.  Then $(S(G),\eta)$ is a $G$-factor of $(\widetilde{X},\widetilde{\nu})$ and $\eta = \widetilde{s}_{*}\widetilde{\nu}$ where $\widetilde{s}(\widetilde{x}) = \mathrm{stab}_{G}(\widetilde{x})$.  Then anything true of the stabilizer $\mathrm{stab}_{G}(\widetilde{x})$ of almost every $\widetilde{x} \in \widetilde{X}$ is also true of the closure of the stabilizer $\overline{\mathrm{stab}_{\Lambda}(x)}$ of almost every $x \in X$.

Since $\Lambda$ acts ergodically on $(X,\nu)$ and $(S(G),\eta)$ is a $\Lambda$-factor of $(X,\nu)$ then $\Lambda$ acts ergodically on $(S(G),\eta)$.  Since $\Lambda$ is dense in $G$, $G$ acts ergodically on $(S(G),\eta)$.  Therefore we may assume $G$ acts ergodically on $(\widetilde{X},\widetilde{\nu})$ by Proposition \ref{P:gaussian}.

By hypothesis, the $G$-action on $\widetilde{X}$ either has finite orbits when restricted to $\Gamma$ or the restriction to $\Lambda$ of the action has finite stabilizers.  Suppose first that the action is such that $\Lambda\spacedcap\mathrm{stab}_{G}(\widetilde{x})$ is finite for almost every $\widetilde{x}$ (for some affine orbital $\Gamma$-space over $(X,\nu)$).  Then $\overline{\mathrm{stab}_{\Lambda}(x)}\spacedcap\Lambda$ is finite for almost every $x$ and therefore $\mathrm{stab}_{\Lambda}(x)$ is finite for almost every $x$ meaning the $\Lambda$-action on $(X,\nu)$ has finite stabilizers, in which case the proof is complete.

So assume instead that $G \actson (\widetilde{X},\widetilde{\nu})$ has finite orbits when restricted to $\Gamma$ (for every affine orbital $\Gamma$-space over $(X,\nu)$).  Then $\Gamma \cap \mathrm{stab}_{G}(\widetilde{x})$ has finite index in $\Gamma$ for $\widetilde{\nu}$-almost every $\widetilde{x}$ (since the $\Gamma$-orbits are finite almost surely).  Therefore $\Gamma \cap \overline{\mathrm{stab}_{\Lambda}(x)}$ has finite index in $\Gamma$ for $\nu$-almost every $x$.  Let $\Gamma_{x} = \Gamma \cap \overline{\mathrm{stab}_{\Lambda}(x)}$ be this lattice.  Note that $\pi(\gamma b,x) = \pi(b,x)$ for every $\gamma \in \Gamma_{x}$ and almost every $b \in B$.

For each such $x$, let $A_{x}$ be the fiber over $x$ in $A$ and define the map $\pi_{x} : B \to A_{x}$ by $\pi_{x}(b) = \pi(b,x)$.  Now $(B,\beta)$ is Poisson boundary of $G$ hence is a contractive $\Gamma_{x}$-space (by Proposition 2.4 in \cite{CS14} since $\Gamma_{x}$ is a lattice in $G$) and we will now treat $(A_{x},(\pi_{x})_{*}\beta)$ as a $\Gamma_{x}$-space with the trivial action.  Observe that for any $\gamma \in \Gamma_{x}$ and almost every $b \in B$,
\[
\pi_{x}(\gamma b) = \pi(\gamma b,x) = \pi(b,x) = \pi_{x}(b) = \gamma \pi_{x}(b)
\]
and therefore $\pi_{x}$ is a $\Gamma_{x}$-map meaning that $(A_{x},(\pi_{x})_{*}\beta)$ is a contractive $\Gamma_{x}$-space.  Since the $\Gamma_{x}$-action on it is trivial, $(\pi_{x})_{*}\beta$ must be a point mass.  Let $c_{x} \in A_{x}$ be the point $(\pi_{x})_{*}\beta$ is supported on.  Then $\pi(b,x) = c_{x}$ for almost every $b$ so the mapping $x \mapsto c_{x}$ inverts $\varphi$.  Moreover, this map provides an invariant section for $A$ since for any $\gamma \in \Gamma$ we have that $c_{\gamma x} = \pi(b,\gamma x)$ for almost every $b \in B$ and so $c_{\gamma x} = \pi(\gamma b, \gamma x) = \gamma \pi(b,x) = \gamma c_{x}$ for almost every $b \in B$ and $x \in X$ so $x \mapsto c_{x}$ is $\Gamma$-equivariant.

As this holds for all affine orbital $\Gamma$-spaces over $(X,\nu)$ the action of $\Gamma$ on $(X,\nu)$ is weakly amenable.
\end{proof}


\begin{corollary}\label{C:main}
Let $\Gamma < G$ be a lattice in a locally compact second countable group with property $(T)$ and let $\Lambda$ be a countable dense subgroup of $G$ such that $\Gamma <_{c} \Lambda$.

Assume that for every ergodic measure-preserving action of $G$ either the restriction of the action to $\Lambda$ has finite stabilizers or the restriction of the action to $\Gamma$ has finite orbits.

Then any ergodic measure-preserving action $\Lambda \actson (X,\nu)$ either has finite stabilizers or the restriction of the action to $\Gamma$ has finite orbits.
\end{corollary}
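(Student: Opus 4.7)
The plan is to reduce this corollary to Theorem \ref{T:main} by invoking property $(T)$ for the lattice $\Gamma$. The additional hypothesis here, beyond those of Theorem \ref{T:main}, is that the ambient group $G$ has property $(T)$. This is exactly the extra input needed to upgrade the conclusion ``weakly amenable'' into ``finite orbits''.

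First, I would apply Theorem \ref{T:main} directly to the ergodic measure-preserving action $\Lambda \actson (X,\nu)$. Its hypotheses are satisfied by assumption, so the dichotomy gives one of two conclusions: either the $\Lambda$-action on $(X,\nu)$ has finite stabilizers, in which case we are already done, or the restriction of the action to $\Gamma$ is weakly amenable.

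In the second case, I would use the standard fact that property $(T)$ passes to lattices: since $G$ has property $(T)$ and $\Gamma < G$ is a lattice, the countable group $\Gamma$ itself has property $(T)$. At this point I would invoke Proposition \ref{P:finiteorbits}, which says precisely that a measure-preserving action of a discrete countable group with property $(T)$ that is weakly amenable must have almost every orbit finite (the action need not even be ergodic, which is convenient because the restriction of an ergodic $\Lambda$-action to $\Gamma$ need not remain ergodic). Applied to $\Gamma \actson (X,\nu)$, this gives that almost every $\Gamma$-orbit in $X$ is finite, completing the dichotomy.

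There is essentially no obstacle here beyond correctly citing the earlier results; all the real work has already been done in Theorem \ref{T:main} (which uses the Intermediate Contractive Factor Theorem and the lattice-contractive-restriction result) and in Proposition \ref{P:finiteorbits} (which uses Zimmer's equivalence of weak amenability and amenability of the orbit equivalence relation, together with Connes--Feldman--Weiss and Stuck--Zimmer). The role of property $(T)$ in the corollary is purely to activate Proposition \ref{P:finiteorbits} for $\Gamma$, converting the dynamical conclusion of Theorem \ref{T:main} into the desired structural conclusion about orbits.
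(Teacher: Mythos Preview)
Your proposal is correct and follows essentially the same approach as the paper: apply Theorem \ref{T:main} to obtain the dichotomy between finite stabilizers and weak amenability of the $\Gamma$-action, then use that $\Gamma$ inherits property $(T)$ from $G$ together with Proposition \ref{P:finiteorbits} to convert weak amenability into finite $\Gamma$-orbits.
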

\begin{proof}
By Theorem \ref{T:main}, if the action of $\Lambda$ does not have finite stabilizers then the restriction of the action to $\Gamma$ is weakly amenable.  Since the action is weakly amenable, the associated equivalence relation is weakly amenable (Zimmer \cite{Zi77}).  By the definition of amenable equivalence relation, the equivalence relation on almost every ergodic component is amenable.  Therefore the action on almost every ergodic component is weakly amenable by Zimmer \cite{Zi77}.  Since $\Gamma$ has property $(T)$, by Lemma 1.5 in \cite{SZ94} (an easy consequence of Connes-Feldman-Weiss \cite{CFW}), the action on almost every ergodic component is essentially transitive.  Since $\Gamma$ is discrete, for an ergodic component $(Y,\eta)$ with $y \in Y$ such that $\eta(\Gamma y) = 1$ it follows that $\eta(\gamma y) > 0$ for some $\gamma \in \Gamma$ and then by the invariance of $\eta$ that $\eta(\gamma y)$ is constant and nonzero over $\Gamma$ hence $\Gamma y$ must be a finite set.  As this holds for almost every ergodic component then almost every $\Gamma$-orbit in $X$ must be finite (though the size of each orbit can vary over the ergodic components).
\end{proof}

\section{The One-One Correspondence}\label{S:oneone}

We obtain a correspondence between invariant random subgroups of $\Lambda$ and of the relative profinite completion (see Section \ref{sec:rpf} below) using the previous corollary.

\subsection{Invariant Random Subgroups of Commensurators}

We can restate our previous corollary in terms of invariant random subgroups:
\begin{corollary}\label{C:rsmain}
Let $\Gamma < G$ be a lattice in a locally compact second countable group with property $(T)$ and let $\Lambda$ be a countable dense subgroup of $G$ such that $\Gamma <_{c} \Lambda$.

Assume that for every ergodic measure-preserving action of $G$ either the restriction of the action to $\Lambda$ has finite stabilizers or the restriction of the action to $\Gamma$ has finite orbits.

Then any ergodic invariant random subgroup $\eta \in P(S(\Lambda))$ of $\Lambda$ is either finite ($\eta$-almost every $H \in S(\Lambda)$ is finite) or $\eta$ contains $\Gamma$ up to finite index: for $\eta$-almost every $H \in S(\Lambda)$, we have $[\Gamma : H \cap \Gamma]$ is finite.
\end{corollary}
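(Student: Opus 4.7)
The plan is to translate the invariant random subgroup statement into a statement about stabilizers of a measure-preserving action and then invoke Corollary \ref{C:main}.

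First I would realize $\eta$ as the stabilizer distribution of an ergodic measure-preserving action. Since $\Lambda$ is countable, Theorem \ref{T:silly} produces a measure-preserving $\Lambda$-space $(X,\nu)$ with $\mathrm{stab}_*\nu = \eta$. This action need not be ergodic, but following the argument in Proposition \ref{P:gaussian}, the stabilizer pushforward of almost every ergodic component is a $\Lambda$-invariant random subgroup whose integral over the ergodic decomposition equals $\eta$; extremality of the ergodic $\eta$ forces this pushforward to equal $\eta$ on a positive-measure set of components. Picking one such component gives an ergodic measure-preserving $\Lambda$-action $(X_0,\nu_0)$ with $\mathrm{stab}_*\nu_0 = \eta$.

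Next I would apply Corollary \ref{C:main} to $\Lambda \actson (X_0,\nu_0)$, which delivers the dichotomy: either $\Lambda$ has finite stabilizers, or the restriction to $\Gamma$ has finite orbits. In the first case, $\mathrm{stab}_\Lambda(x)$ is finite for $\nu_0$-almost every $x$, so $\eta$-almost every $H \in S(\Lambda)$ is finite, giving the first alternative of the corollary. In the second case, for $\nu_0$-almost every $x$ the orbit $\Gamma x$ is finite; since the action is measure-preserving on a probability space, each point in such a finite orbit carries equal positive mass, so $\mathrm{stab}_\Gamma(x) = \mathrm{stab}_\Lambda(x) \cap \Gamma$ has finite index in $\Gamma$. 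Pushing forward by the stabilizer map then yields $[\Gamma : H \cap \Gamma] < \infty$ for $\eta$-almost every $H$, which is the second alternative.

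There is no serious obstacle here; the only point requiring care is the passage from a general (non-ergodic) realization of $\eta$ to an ergodic one, which is handled by the extremality argument of Proposition \ref{P:gaussian}. Everything else is a direct translation between the stabilizer viewpoint and the invariant random subgroup viewpoint, together with the standard observation that for a measure-preserving action of a countable group, having a finite orbit is equivalent to having a finite-index stabilizer.
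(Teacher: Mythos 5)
Your proposal is correct and follows the same route as the paper: realize the ergodic invariant random subgroup as the stabilizer distribution of an ergodic measure-preserving $\Lambda$-action (the paper cites Theorem \ref{T:gaussian}/Proposition \ref{P:gaussian} for this), then apply Corollary \ref{C:main} and translate the two alternatives back into statements about $\eta$. The only cosmetic difference is that your justification of the finite-orbit case via equal point masses is unnecessary, since the orbit--stabilizer relation already gives $[\Gamma:\mathrm{stab}_\Gamma(x)]=|\Gamma x|$ for any action.
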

\begin{proof}
An ergodic invariant random subgroup can always be realized as the stabilizer subgroups of a measure-preserving $\Lambda$-action (Theorem \ref{T:gaussian}).  By Corollary \ref{C:main} this action either has finite stabilizers, in which case the invariant random subgroup is finite, or has finite $\Gamma$-orbits which means that a finite index subgroup of $\Gamma$ fixes each point.
\end{proof}

\subsubsection{The Relative Profinite Completion}\label{sec:rpf}

We recall the construction and some basic facts about the relative profinite completion of commensurated subgroups.  This construction has appeared in the study of commensurated subgroups \cite{Sch80}, \cite{Tz00}, \cite{Tz03}, \cite{capracemonod} and the reader is referred to \cite{SW09} and \cite{CrD11} for more information and proofs of the following basic facts.

\begin{definition}
Let $\Gamma <_{c} \Lambda$ be countable groups with $\Lambda$ commensurating a subgroup $\Gamma$.  Define the map $\tau : \Lambda \to \Symm(\Lambda / \Gamma)$ to be the natural mapping of $\Lambda$ to the symmetry group of the coset space.  Endow $\Symm(\Lambda / \Gamma)$ with the topology of pointwise convergence.  Then $\Symm(\Lambda / \Gamma)$ is a Polish group since $\Lambda$ is countable but in general is not locally compact.

The \textbf{relative profinite completion} of $\Lambda$ over $\Gamma$ is 
\[
\rpf{\Lambda}{\Gamma} := \overline{\tau(\Lambda)}.
\]
\end{definition}

\begin{theorem}
Let $\Gamma <_{c} \Lambda$.  The relative profinite completion $\rpf{\Lambda}{\Gamma}$ is a totally disconnected locally compact group and $\overline{\tau(\Gamma)}$ is a compact open subgroup.
\end{theorem}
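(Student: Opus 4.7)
The plan is to realize $\rpf{\Lambda}{\Gamma}$ as a closed subgroup of the Polish group $S := \Symm(\Lambda/\Gamma)$, transfer topological properties from $S$, and identify $\overline{\tau(\Gamma)}$ with a point stabilizer whose compactness will come directly from the commensuration hypothesis.

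First I would record the basic structure of $S$ with the pointwise convergence topology. For each finite subset $F \subset \Lambda/\Gamma$ the set $U_F := \{\sigma \in S : \sigma|_F = \mathrm{id}_F\}$ is clopen, and the collection $\{U_F\}$ is a neighborhood base at the identity. A direct verification using discreteness of $\Lambda/\Gamma$ shows that multiplication and inversion are continuous, so $S$ is a totally disconnected Polish topological group. Consequently $\rpf{\Lambda}{\Gamma} = \overline{\tau(\Lambda)}$, being the closure of a subgroup in a topological group, is itself a totally disconnected topological group. Next I would identify $\overline{\tau(\Gamma)}$ with the stabilizer in $\rpf{\Lambda}{\Gamma}$ of the identity coset $e_\Gamma := \Gamma \in \Lambda/\Gamma$. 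The set $H := \rpf{\Lambda}{\Gamma} \cap U_{\{e_\Gamma\}}$ is clopen and contains $\tau(\Gamma)$, so it contains $\overline{\tau(\Gamma)}$. Conversely, if $\sigma \in H$ is the limit of $\tau(\lambda_n)$, then eventually $\lambda_n \Gamma = \Gamma$, forcing $\lambda_n \in \Gamma$ and hence $\sigma \in \overline{\tau(\Gamma)}$. Thus $\overline{\tau(\Gamma)} = H$, which is a clopen subgroup of $\rpf{\Lambda}{\Gamma}$.

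The main ingredient, and the only place the commensuration hypothesis enters, is compactness of $\overline{\tau(\Gamma)}$. For each $\lambda \in \Lambda$ the orbit of $\lambda\Gamma$ under left multiplication by $\Gamma$ has cardinality $[\Gamma : \Gamma \cap \lambda\Gamma\lambda^{-1}]$, which is finite by the assumption $\Gamma <_c \Lambda$. Hence $\Lambda/\Gamma$ decomposes as a disjoint union of finite $\Gamma$-orbits $\{\mathcal{O}_i\}_{i \in I}$. Since each $\mathcal{O}_i$ is finite and therefore discrete in $\Lambda/\Gamma$, any pointwise limit of elements of $\tau(\Gamma)$ must preserve each $\mathcal{O}_i$ setwise. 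Restriction then yields a continuous injective group homomorphism
\[
\overline{\tau(\Gamma)} \hookrightarrow \prod_{i \in I} \Symm(\mathcal{O}_i),
\]
and the point I would take care to verify is that the subspace topology from pointwise convergence on $\Lambda/\Gamma$ agrees with the product topology on the right-hand side with each finite factor $\Symm(\mathcal{O}_i)$ discrete; this is immediate since convergence at every coset is the same as convergence on every orbit. The image is therefore a closed subgroup of a compact profinite group, so $\overline{\tau(\Gamma)}$ is compact. Local compactness of $\rpf{\Lambda}{\Gamma}$ then follows from having the compact open neighborhood $\overline{\tau(\Gamma)}$ of the identity, completing the proof.

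Nothing in this argument is genuinely hard; the conceptual heart is the one-line observation that commensuration forces every $\Gamma$-orbit on $\Lambda/\Gamma$ to be finite, after which the compactness and local compactness statements unwind mechanically, and total disconnectedness is automatic from working inside the permutation group of a countable set.
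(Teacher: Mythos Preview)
Your proof is correct. The paper itself does not actually prove this theorem: it is listed in the preliminaries on the relative profinite completion as one of several ``basic facts'' with proofs deferred to the references (notably \cite{SW09} and \cite{CrD11}). Your argument---identifying $\overline{\tau(\Gamma)}$ with the stabilizer of the base coset, using commensuration to see that every $\Gamma$-orbit on $\Lambda/\Gamma$ is finite, and embedding $\overline{\tau(\Gamma)}$ into the compact product $\prod_i \Symm(\mathcal{O}_i)$---is the standard one found in those sources. The only point worth making slightly more explicit is why the image is closed in the compact product: this follows because $\overline{\tau(\Gamma)}$ is already closed in $S$ (as a closure inside the closed subgroup $\rpf{\Lambda}{\Gamma}$), and the block-diagonal subgroup $\prod_i \Symm(\mathcal{O}_i)$ carries the subspace topology from $S$, which you did verify agrees with the product topology.
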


\begin{proposition}
Let $\Gamma <_{c} \Lambda$.  Then $\rpf{\Lambda}{\Gamma}$ is compact if and only if $[\Lambda : \Gamma] < \infty$.  In particular, $\rpf{\Lambda}{\Gamma}$ is finite if and only if it is compact.
\end{proposition}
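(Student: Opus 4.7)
The plan is to prove both directions by exploiting the fact (stated just before) that $\overline{\tau(\Gamma)}$ is a compact \emph{open} subgroup of $\rpf{\Lambda}{\Gamma}$, together with the key observation that $\tau(\Lambda) \cap \overline{\tau(\Gamma)} = \tau(\Gamma)$.

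For the easy direction, if $[\Lambda : \Gamma] < \infty$ then $\Lambda/\Gamma$ is a finite set, so $\Symm(\Lambda/\Gamma)$ is itself a finite (discrete) group. Hence $\rpf{\Lambda}{\Gamma} \subseteq \Symm(\Lambda/\Gamma)$ is finite, in particular compact. This reasoning also disposes of the ``in particular'' clause at the end: once we show that compactness forces $[\Lambda:\Gamma] < \infty$, we will conclude finiteness from the same argument.

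For the substantive direction, assume $\rpf{\Lambda}{\Gamma}$ is compact. Since $\overline{\tau(\Gamma)}$ is open, compactness gives $n := [\rpf{\Lambda}{\Gamma} : \overline{\tau(\Gamma)}] < \infty$, and density of $\tau(\Lambda)$ lets me choose coset representatives inside $\tau(\Lambda)$, so $\rpf{\Lambda}{\Gamma} = \bigsqcup_{i=1}^{n} \tau(\lambda_i)\overline{\tau(\Gamma)}$. The crucial step I would verify next is the identity
\[
\overline{\tau(\Gamma)} \cap \tau(\Lambda) = \tau(\Gamma).
\]
The nontrivial containment is $\subseteq$: if $\tau(\mu) \in \overline{\tau(\Gamma)}$, then evaluating at the single point $e\Gamma \in \Lambda/\Gamma$ (pointwise convergence on a singleton already exhausts a neighborhood basis at $\tau(\mu)$ restricted to this point) yields some $\gamma \in \Gamma$ with $\mu\Gamma = \gamma\Gamma$, forcing $\mu \in \Gamma$. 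Given this identity, the natural injection $\tau(\Lambda)/\tau(\Gamma) \hookrightarrow \rpf{\Lambda}{\Gamma}/\overline{\tau(\Gamma)}$ shows $[\tau(\Lambda):\tau(\Gamma)] \le n$.

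Finally I would transfer this back to $\Lambda$ itself. The kernel of $\tau : \Lambda \to \Symm(\Lambda/\Gamma)$ is the normal core $N = \bigcap_{\lambda \in \Lambda} \lambda \Gamma \lambda^{-1}$, which lies inside $\Gamma$, so the quotient map $\Lambda \to \tau(\Lambda)$ induces a bijection $\Lambda/\Gamma \leftrightarrow \tau(\Lambda)/\tau(\Gamma)$. Therefore $[\Lambda : \Gamma] = [\tau(\Lambda) : \tau(\Gamma)] \le n < \infty$, completing the equivalence. The ``in particular'' claim now follows by feeding $[\Lambda:\Gamma]<\infty$ back into the easy direction: $\Symm(\Lambda/\Gamma)$ becomes finite, hence so does $\rpf{\Lambda}{\Gamma}$. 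The only real obstacle is the identity $\overline{\tau(\Gamma)} \cap \tau(\Lambda) = \tau(\Gamma)$; everything else is bookkeeping with the topology of pointwise convergence and the open-compact subgroup structure.
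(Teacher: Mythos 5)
Your proof is correct, and there is nothing in the paper to compare it against: the paper states this proposition without proof, deferring to the references on relative profinite completions. The one identity you single out as the real content, $\overline{\tau(\Gamma)} \cap \tau(\Lambda) = \tau(\Gamma)$, is precisely Proposition \ref{P:rpfK}, recorded two results later in the same subsection, and your one-point-evaluation argument for it is valid; the remaining steps (an open subgroup of a compact group has finite index, coset representatives can be taken in the dense subgroup $\tau(\Lambda)$, and the index transfers back to $[\Lambda:\Gamma]$ because $\ker\tau \subseteq \Gamma$) are all correct, so the argument is the standard one and goes through.
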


\begin{proposition}\label{P:discreterpf}
Let $\Gamma <_{c} \Lambda$.  Then $\rpf{\Lambda}{\Gamma}$ is discrete if and only if there exists $\Gamma_{0} < \Gamma$ such that $[\Gamma : \Gamma_{0}] < \infty$ and $\Gamma_{0} \normal \Lambda$.
\end{proposition}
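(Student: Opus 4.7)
The plan is to analyze the kernel of $\tau : \Lambda \to \Symm(\Lambda/\Gamma)$ and use the fact that $\overline{\tau(\Gamma)}$ is a compact open subgroup of $\rpf{\Lambda}{\Gamma}$. The key observation is that
\[
\ker(\tau) \;=\; \bigcap_{\mu \in \Lambda} \mu \Gamma \mu^{-1},
\]
since $\lambda \in \ker(\tau)$ iff $\lambda \mu \Gamma = \mu\Gamma$ for every $\mu \in \Lambda$, iff $\mu^{-1}\lambda\mu \in \Gamma$ for every $\mu$. In particular $\ker(\tau) \subseteq \Gamma$ (take $\mu = e$), and $\ker(\tau)$ is the largest subgroup of $\Gamma$ that is normal in $\Lambda$.

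For the easier direction ($\Leftarrow$), suppose $\Gamma_0 \leq \Gamma$ has finite index and is normal in $\Lambda$. Then for each $\mu \in \Lambda$ we have $\mu^{-1}\Gamma_0\mu = \Gamma_0 \subseteq \Gamma$, so $\Gamma_0 \subseteq \ker(\tau)$. Hence $\tau(\Gamma)$ is a quotient of $\Gamma/\Gamma_0$, so $\tau(\Gamma)$ is finite. A finite subgroup of the Hausdorff group $\Symm(\Lambda/\Gamma)$ is already closed, so $\overline{\tau(\Gamma)} = \tau(\Gamma)$ is finite. Since $\overline{\tau(\Gamma)}$ is an open subgroup of $\rpf{\Lambda}{\Gamma}$ and is finite (hence discrete), the identity is open in $\rpf{\Lambda}{\Gamma}$, i.e.\ $\rpf{\Lambda}{\Gamma}$ is discrete.

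For the converse ($\Rightarrow$), suppose $\rpf{\Lambda}{\Gamma}$ is discrete. Then its compact open subgroup $\overline{\tau(\Gamma)}$ is both compact and discrete, hence finite. In particular its dense subgroup $\tau(\Gamma)$ is finite and equals $\overline{\tau(\Gamma)}$. Setting $\Gamma_0 := \ker(\tau|_\Gamma) = \ker(\tau)$, the quotient $\Gamma/\Gamma_0$ embeds in $\tau(\Gamma)$, so $[\Gamma : \Gamma_0] < \infty$; and $\Gamma_0$ is normal in $\Lambda$ because it is the kernel of a homomorphism from $\Lambda$.

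No step looks like a serious obstacle: the content is just unpacking the definition of $\tau$, identifying its kernel as the intersection of $\Lambda$-conjugates of $\Gamma$, and invoking the standard fact (stated in the preceding theorem) that $\overline{\tau(\Gamma)}$ is a compact open subgroup of $\rpf{\Lambda}{\Gamma}$. The only subtlety worth flagging is verifying that $\ker(\tau) \subseteq \Gamma$ so that $\Gamma_0$ really does sit inside $\Gamma$, which follows from taking $\mu = e$ in the intersection formula.
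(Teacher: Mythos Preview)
Your proof is correct. The paper does not actually prove this proposition; it is stated as one of several basic facts about the relative profinite completion, with the reader referred to \cite{SW09} and \cite{CrD11} for proofs. Your argument---identifying $\ker(\tau)=\bigcap_{\mu\in\Lambda}\mu\Gamma\mu^{-1}$ as the largest $\Lambda$-normal subgroup contained in $\Gamma$, and then using that $\overline{\tau(\Gamma)}$ is compact open so that discreteness of $\rpf{\Lambda}{\Gamma}$ forces $\overline{\tau(\Gamma)}$ (hence $\tau(\Gamma)$) to be finite---is exactly the standard one found in those references.
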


\begin{proposition}\label{P:rpfK}
Let $\Gamma <_{c} \Lambda$.  Then $\tau(\Lambda) \cap \overline{\tau(\Gamma)} = \tau(\Gamma)$ and $\tau^{-1}(\overline{\tau(\Gamma)}) = \Gamma$.
\end{proposition}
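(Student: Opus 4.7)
The plan is to read off everything directly from the definition of the topology on $\Symm(\Lambda/\Gamma)$: a net $\tau(\lambda_\alpha)$ converges to $\sigma$ precisely when for each coset $g\Gamma$ one has $\lambda_\alpha g\Gamma = \sigma(g\Gamma)$ eventually (the target space $\Lambda/\Gamma$ being discrete). The key idea is that evaluating permutations at the distinguished coset $\Gamma \in \Lambda/\Gamma$ already forces membership in $\Gamma$, so no deeper analysis is required.

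For the inclusion $\tau(\Lambda) \cap \overline{\tau(\Gamma)} \subseteq \tau(\Gamma)$ (the reverse inclusion being immediate), I would fix $\lambda \in \Lambda$ with $\tau(\lambda) \in \overline{\tau(\Gamma)}$ and consider the basic open neighborhood
\[
U = \{\sigma \in \Symm(\Lambda/\Gamma) : \sigma(\Gamma) = \lambda\Gamma\}
\]
of $\tau(\lambda)$. By hypothesis $U$ meets $\tau(\Gamma)$, so there is $\gamma \in \Gamma$ with $\gamma\Gamma = \lambda\Gamma$. Since $\gamma \in \Gamma$ the left-hand side is $\Gamma$, so $\lambda \in \Gamma$ and hence $\tau(\lambda) \in \tau(\Gamma)$.

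For the second statement $\tau^{-1}(\overline{\tau(\Gamma)}) = \Gamma$, the inclusion $\Gamma \subseteq \tau^{-1}(\overline{\tau(\Gamma)})$ is clear. Conversely, if $\lambda \in \Lambda$ satisfies $\tau(\lambda) \in \overline{\tau(\Gamma)}$, then the first part of the proposition gives $\tau(\lambda) \in \tau(\Gamma)$, so $\tau(\lambda) = \tau(\gamma)$ for some $\gamma \in \Gamma$; evaluating both sides at $\Gamma$ yields $\lambda\Gamma = \gamma\Gamma = \Gamma$, whence $\lambda \in \Gamma$. There is no real obstacle here: the only thing to notice is that testing pointwise convergence at the single coset $\Gamma$ is already enough to pin everything down, which is why both halves of the statement follow from essentially the same one-line observation.
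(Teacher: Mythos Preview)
Your argument is correct. The paper does not actually supply its own proof of this proposition: it is listed among the ``basic facts'' in Section~\ref{sec:rpf} for which the reader is referred to \cite{SW09} and \cite{CrD11}. Your approach---exploiting that the basic open set $\{\sigma : \sigma(\Gamma)=\lambda\Gamma\}$ already forces $\lambda\Gamma=\Gamma$---is the standard direct argument from the definition of the pointwise-convergence topology, and it is complete as written.

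One minor remark: in proving the first assertion you in fact showed $\lambda\in\Gamma$ outright (not merely $\tau(\lambda)\in\tau(\Gamma)$), so the second assertion $\tau^{-1}(\overline{\tau(\Gamma)})=\Gamma$ follows immediately from that same line without the extra step of writing $\tau(\lambda)=\tau(\gamma)$ and re-evaluating at $\Gamma$. This is not an error, just a slight redundancy.
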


\begin{proposition}\label{P:idrpf}
Let $H$ be a totally disconnected locally compact group and $K$ be a compact open subgroup of $H$.  Define $\tau_{H,K} : H \to \Symm(H / K)$ as before ($K$ is necessarily commensurated by $H$).  Then $\tau_{H,K}$ is a continuous open map with closed range.

Moreover $\rpf{H}{K}$ is isomorphic to $H / \ker(\tau_{H,K})$ and in fact $\ker(\tau_{H,K})$ is the largest normal subgroup of $H$ that is contained in $K$.
\end{proposition}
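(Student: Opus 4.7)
The plan is to unpack the topology on $\Symm(H/K)$ as pointwise convergence with $H/K$ discrete, and then verify continuity, openness onto the image, identification of the kernel, and finally closedness of the range in that order. Closedness of the range will be the only non-routine point, and the strategy there is to observe that the image is locally compact (being isomorphic to $H/N$) and then invoke the standard fact that a locally compact subgroup of a Hausdorff topological group is closed.

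First, I would note that $K$ is commensurated by $H$: for any $h$, $hKh^{-1}$ is another compact open subgroup, so $K \cap hKh^{-1}$ is open in the compact group $K$, hence of finite index in both $K$ and $hKh^{-1}$. This justifies that $\tau_{H,K}$ lands in $\Symm(H/K)$ in the same formal way as in the discrete setting. For continuity, a sub-basic open neighborhood of $\sigma_0 = \tau_{H,K}(h_0)$ is determined by finitely many cosets $g_1K,\dots,g_nK$ and the requirement $\sigma(g_iK)=\sigma_0(g_iK)$; its preimage in $H$ is a coset of $\bigcap_i g_iKg_i^{-1}$, a finite intersection of open conjugates of $K$, hence open. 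For openness onto the image, the single condition $\sigma(K)=K$ cuts out exactly $\tau_{H,K}(K)$ inside $\tau_{H,K}(H)$, so $\tau_{H,K}(K)$ is open in the image; translating by $\tau_{H,K}(h)$ shows $\tau_{H,K}(hK)$ is open in the image for every $h$, and since the cosets $hK$ form a neighborhood basis at the points of $H$, $\tau_{H,K}$ is open onto its image.

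Next, I would compute the kernel. A group element $h$ kills every coset iff $hgK=gK$ for all $g \in H$, i.e.\ $g^{-1}hg \in K$ for all $g$, i.e.\ $h \in \bigcap_{g \in H} gKg^{-1}$. This shows $\ker(\tau_{H,K})$ is contained in $K$ and is normal in $H$; conversely, any normal subgroup of $H$ contained in $K$ lies in every conjugate $gKg^{-1}$, so equals $\bigcap gKg^{-1}$ at most, giving that $\ker(\tau_{H,K})$ is the largest normal subgroup of $H$ contained in $K$. Setting $N=\ker(\tau_{H,K})$, which is closed as an intersection of closed subgroups, the induced injective continuous homomorphism $\bar\tau : H/N \to \Symm(H/K)$ is a continuous open bijection onto $\tau_{H,K}(H)$ by the previous paragraph, hence a topological isomorphism $H/N \cong \tau_{H,K}(H)$.

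Finally, for the closed range claim, observe that $H/N$ is a locally compact group, so $\tau_{H,K}(H)$ is a locally compact subgroup of the Hausdorff topological group $\Symm(H/K)$. The standard fact is that a locally compact subgroup $L$ of a Hausdorff topological group $M$ is closed in $M$; one sees this by taking a relatively compact open neighborhood $U$ of the identity in $L$, writing $U = V \cap L$ for $V$ open in $M$, and checking that the closure of $V\cap L$ in $M$ meets $L$ exactly in the $L$-closure, which is compact and hence closed in $M$, making $L$ locally closed and therefore closed as a subgroup. Applying this to $L=\tau_{H,K}(H)$ gives that $\tau_{H,K}(H)$ is closed in $\Symm(H/K)$, hence equal to its closure $\rpf{H}{K}$, and via $\bar\tau$ we obtain $\rpf{H}{K} \cong H/\ker(\tau_{H,K})$. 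The only step where a non-formal input is needed is the closed-subgroup lemma, and that is the main obstacle (though it is a well-known result).
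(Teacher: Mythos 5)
The paper itself offers no proof of this proposition---it is listed among ``basic facts'' with references to the literature---so your argument has to stand on its own. Most of it does: the verification that $K$ is commensurated, the continuity argument via preimages of sub-basic sets (which are cosets of conjugates of $K$, hence open), the identification of $\ker(\tau_{H,K})$ with $\bigcap_{g} gKg^{-1}$ as the largest normal subgroup of $H$ contained in $K$, and the closed-subgroup lemma for locally compact subgroups of Hausdorff groups are all correct. The genuine gap is in the openness step. You correctly show that $\tau_{H,K}(K) = \tau_{H,K}(H) \cap \{\sigma : \sigma(K)=K\}$ is open in the image, and hence so is each $\tau_{H,K}(hK)$; but the bridge from there to openness of $\tau_{H,K}$---``the cosets $hK$ form a neighborhood basis at the points of $H$''---is false whenever $K \neq \{e\}$: the only coset of $K$ containing $h$ is $hK$ itself, and since $H$ is Hausdorff there are neighborhoods of $h$ that do not contain $hK$. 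What you have actually shown is that $\tau_{H,K}$ carries unions of $K$-cosets to open sets, which is strictly weaker than openness. Since openness is what makes $\rpf{H}{K} \cong H/\ker(\tau_{H,K})$ a topological isomorphism, and your route to local compactness of the image also passes through it, the gap is not cosmetic.

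The repair is short and uses an ingredient you already have in hand, namely compactness of $K$. Writing $N = \ker(\tau_{H,K})$, the restriction $\tau_{H,K}|_K$ factors as the open quotient map $K \to K/N$ followed by a continuous bijection from the compact group $K/N$ onto the Hausdorff group $\tau_{H,K}(K)$; a continuous bijection from a compact space to a Hausdorff space is a homeomorphism, so $\tau_{H,K}|_K$ is open onto $\tau_{H,K}(K)$, which you have already shown is open in $\tau_{H,K}(H)$. For a general open $U \subseteq H$ and $h \in U$, the set $\tau_{H,K}(h)\,\tau_{H,K}(K \cap h^{-1}U)$ is then an open subset of $\tau_{H,K}(H)$ containing $\tau_{H,K}(h)$ and contained in $\tau_{H,K}(U)$, so $\tau_{H,K}$ is open. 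Note also that your closed-range argument can be decoupled from openness entirely: $\tau_{H,K}(K)$ is compact as a continuous image of $K$ and is open in $\tau_{H,K}(H)$, so $\tau_{H,K}(H)$ is locally compact in the subspace topology without any appeal to the isomorphism with $H/N$, and the closed-subgroup lemma then applies directly.
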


\begin{proposition}\label{P:alpha}
Let $B < A$ be any countable groups.  Let $H$ be a locally compact totally disconnected group and $K < H$ a compact open subgroup.  Let $\varphi : A \to H$ be a homomorphism such that $\varphi(A)$ is dense in $H$ and $\varphi^{-1}(K) = B$.

Then $B <_{c} A$ and $\rpf{B}{A}$ is isomorphic to $\rpf{H}{K}$.
\end{proposition}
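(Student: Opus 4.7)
My plan is to transport the construction of $\rpf{A}{B}$ in $\Symm(A/B)$ over to that of $\rpf{H}{K}$ in $\Symm(H/K)$ via $\varphi$, and then invoke Proposition~\ref{P:idrpf} to identify the result.

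First I would verify commensuration. For any $a \in A$, the identity $aBa^{-1} = \varphi^{-1}(\varphi(a)K\varphi(a)^{-1})$ follows from $B = \varphi^{-1}(K)$ together with $\varphi$ being a homomorphism. Setting $L := K \cap \varphi(a)K\varphi(a)^{-1}$, the subgroup $L$ is open in the compact group $K$, hence of finite index. Since $B \cap aBa^{-1} = \varphi^{-1}(L)$, the assignment $b(B \cap aBa^{-1}) \mapsto \varphi(b)L$ injects $B/(B \cap aBa^{-1})$ into $K/L$, forcing $[B : B \cap aBa^{-1}] \le [K : L] < \infty$; the symmetric argument handles the index in $aBa^{-1}$. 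Thus $B <_c A$, and the analog $\tau_{A,B} : A \to \Symm(A/B)$ of the map defining the relative profinite completion is well-defined.

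Next, I would set up the comparison of coset spaces. Define $\psi : A/B \to H/K$ by $aB \mapsto \varphi(a)K$. This is well-defined and injective because $B = \varphi^{-1}(K)$, and surjective because each coset $hK$ is an open neighborhood of $h$ in $H$ and thus meets the dense set $\varphi(A)$. Conjugation by $\psi$ yields a topological group isomorphism $\Psi : \Symm(A/B) \to \Symm(H/K)$ for the respective topologies of pointwise convergence on the discrete coset spaces. A direct computation shows that left multiplication by $a$ on $A/B$ transports via $\psi$ to left multiplication by $\varphi(a)$ on $H/K$, i.e. $\Psi(\tau_{A,B}(a)) = \tau_{H,K}(\varphi(a))$ for every $a \in A$, whence $\Psi(\tau_{A,B}(A)) = \tau_{H,K}(\varphi(A))$.

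It then suffices to take closures and apply Proposition~\ref{P:idrpf}. Since $\Psi$ is a homeomorphism, $\Psi(\rpf{A}{B}) = \overline{\tau_{H,K}(\varphi(A))}$. The proposition tells us that $\tau_{H,K}(H)$ is closed in $\Symm(H/K)$ and canonically isomorphic to $\rpf{H}{K} \cong H/\ker(\tau_{H,K})$. Continuity of $\tau_{H,K}$ together with density of $\varphi(A)$ in $H$ then yields $\tau_{H,K}(H) = \tau_{H,K}(\overline{\varphi(A)}) \subseteq \overline{\tau_{H,K}(\varphi(A))} \subseteq \tau_{H,K}(H)$, so all three coincide, and $\Psi$ carries $\rpf{A}{B}$ isomorphically onto $\rpf{H}{K}$. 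The main obstacle I anticipate is precisely this last density step: density of $\varphi(A)$ in $H$ is a priori weaker than density of $\tau_{H,K}(\varphi(A))$ in $\rpf{H}{K}$, and it is exactly the closed-range assertion of Proposition~\ref{P:idrpf} that bridges this gap.
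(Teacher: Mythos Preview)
The paper does not give its own proof of this proposition: it is one of the ``basic facts'' in Section~\ref{sec:rpf} whose proofs are deferred to the references \cite{SW09} and \cite{CrD11}. Your argument is correct and is essentially the standard one. The bijection $\psi : A/B \to H/K$ induced by $\varphi$, the resulting conjugation isomorphism $\Psi$ of symmetric groups, the identity $\Psi \circ \tau_{A,B} = \tau_{H,K} \circ \varphi$, and the closure step via the closed-range assertion of Proposition~\ref{P:idrpf} are exactly the ingredients one expects; there is nothing to add. (Minor note: the statement as printed has $\rpf{B}{A}$, but you correctly use $\rpf{A}{B}$ in accordance with the paper's convention $\rpf{\Lambda}{\Gamma}$ for $\Gamma <_{c} \Lambda$.)
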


\subsection{The One-One Correspondence of Invariant Random Subgroups}

\begin{theorem}\label{T:oneone}
Let $\Gamma < G$ be a lattice in a locally compact second countable group with property $(T)$ and let $\Lambda$ be a countable dense subgroup of $G$ such that $\Gamma <_{c} \Lambda$.

Assume that for every ergodic measure-preserving action of $G$ either the restriction of the action to $\Lambda$ has finite stabilizers or the restriction of the action to $\Gamma$ has finite orbits.

Then there is a one-one, onto correspondence between commensurability classes of infinite ergodic invariant random subgroups of $\Lambda$ and commensurability classes of open ergodic invariant random subgroups of $\rpf{\Lambda}{\Gamma}$.
\end{theorem}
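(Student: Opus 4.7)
The plan is to construct pushforward maps $\Phi$ and $\Psi$ between infinite ergodic invariant random subgroups of $\Lambda$ and open ergodic invariant random subgroups of $\rpf{\Lambda}{\Gamma}$, show each descends to commensurability classes, and verify they are mutually inverse on those classes. Define $\Phi(\eta)$ as the pushforward of $\eta$ under the Borel $\Lambda$-equivariant map $H \mapsto \overline{\tau(H)}$, and $\Psi(\mu)$ as the pushforward of $\mu$ under the $\Lambda$-equivariant map $U \mapsto \tau^{-1}(U)$.

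The first step is well-definedness. By Corollary \ref{C:rsmain}, for $\eta$-almost every $H$ the intersection $\Gamma_0 := H \cap \Gamma$ has finite index in $\Gamma$, so $\overline{\tau(\Gamma_0)}$ is a finite-index, hence open, subgroup of the compact open $K := \overline{\tau(\Gamma)}$; therefore $\overline{\tau(H)} \supseteq \overline{\tau(\Gamma_0)}$ is open in $\rpf{\Lambda}{\Gamma}$, so $\Phi(\eta)$ is supported on open subgroups. Conversely, if $U$ is open in $\rpf{\Lambda}{\Gamma}$ then $U \cap K$ is open hence of finite index in the compact group $K$, and by Proposition \ref{P:rpfK}, $\tau^{-1}(U) \cap \Gamma = \tau^{-1}(U \cap K)$ has finite index in $\Gamma$, making $\tau^{-1}(U)$ infinite. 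Ergodicity passes through both pushforwards since $\tau(\Lambda)$ is dense in $\rpf{\Lambda}{\Gamma}$ and the conjugation action on closed subgroups is continuous, so $\tau(\Lambda)$-invariance extends by weak-$*$ continuity to $\rpf{\Lambda}{\Gamma}$-invariance, and ergodicity under a dense subgroup coincides with ergodicity of the ambient action.

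The heart of the argument is the identities $\Phi \circ \Psi = \mathrm{id}$ and $\Psi \circ \Phi \sim \mathrm{id}$ (commensurate). For the first, any open subgroup $U$ is also closed, and since $\tau(\Lambda)$ is dense, $\tau(\tau^{-1}(U)) = \tau(\Lambda) \cap U$ is dense in $U$, so $\overline{\tau(\tau^{-1}(U))} = U$. For the second, fix an infinite $H$ with $\Gamma_0 := H \cap \Gamma$ of finite index in $\Gamma$, and set $L := \tau^{-1}(\overline{\tau(H)})$. Given $\lambda \in L$, approximate $\tau(\lambda)$ by a net $\tau(h_\alpha) \in \tau(H)$; since $\overline{\tau(\Gamma_0)}$ is an open neighborhood of the identity, eventually $\tau(\lambda^{-1} h_\alpha) \in \overline{\tau(\Gamma_0)}$, so $\lambda^{-1} h_\alpha \in \Gamma_1 := \tau^{-1}(\overline{\tau(\Gamma_0)})$ and therefore $\lambda \in H \Gamma_1$. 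Since $\Gamma_1 \subseteq L$ trivially and $\Gamma_0 \subseteq \Gamma_1 \subseteq \Gamma$ with $[\Gamma_1 : \Gamma_0] < \infty$, we conclude $L = H \Gamma_1$ and $[L : H] = [\Gamma_1 : \Gamma_0] < \infty$. Pushing $\eta$ forward via $H \mapsto (H, L)$ yields a $\Lambda$-invariant joining of $\eta$ and $\Psi\Phi(\eta)$ witnessing commensuration.

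Finally, one checks commensuration is preserved by the pushforwards. If $\alpha$ witnesses commensuration of infinite ergodic invariant random subgroups $\eta_1, \eta_2$ of $\Lambda$, then for $\alpha$-almost every $(H_1, H_2)$ we have $H_1 \cap H_2$ of finite index in each $H_i$; writing $\tau(H_i)$ as a finite union of cosets of $\tau(H_1 \cap H_2)$ and taking closures shows that $\overline{\tau(H_1 \cap H_2)}$ has finite index in each $\overline{\tau(H_i)}$, hence $(\Phi \times \Phi)_* \alpha$ witnesses commensuration of $\Phi(\eta_1)$ and $\Phi(\eta_2)$. The symmetric argument applies to $\Psi$ via $U \mapsto \tau^{-1}(U)$, using that $\tau^{-1}$ preserves finite index for open subgroups. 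The main obstacle is the density-based calculation identifying $\tau^{-1}(\overline{\tau(H)})$ with $H \Gamma_1$; it crucially relies on Corollary \ref{C:rsmain} so that $H$ intersects $K$ through a finite-index subgroup of $\Gamma$, allowing the topological approximation in $\rpf{\Lambda}{\Gamma}$ to be translated into finite data in $\Lambda$.
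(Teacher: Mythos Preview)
Your proposal is correct and follows essentially the same architecture as the paper: define the closure map $c(L)=\overline{\tau(L)}$ (your $\Phi$) and the pullback map $d(M)=\tau^{-1}(M)$ (your $\Psi$), show $c\circ d=\mathrm{id}$ on open subgroups, $d\circ c$ is a finite-index extension on infinite subgroups containing $\Gamma$ up to finite index, and both maps preserve commensuration. The paper packages the properties of $d$ into a five-part Proposition and proves the crucial finite-index bound $[d(c(L)):L]<\infty$ via a general counting lemma (Lemma~\ref{L:indextricks}) comparing indices under a homomorphism with the index inside the kernel.

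Your argument for that same step is different and more direct: you identify $\tau^{-1}(\overline{\tau(H)})$ explicitly as the set $H\Gamma_1$ with $\Gamma_1=\tau^{-1}(\overline{\tau(\Gamma_0)})$, by using that $\overline{\tau(\Gamma_0)}$ is an open neighborhood of the identity so that any approximation $\tau(h_\alpha)\to\tau(\lambda)$ eventually lands $\lambda^{-1}h_\alpha$ in $\Gamma_1$. Since $H\cap\Gamma_1=\Gamma_0$ and $\Gamma_0\subseteq\Gamma_1\subseteq\Gamma$, the index $[L:H]=[\Gamma_1:\Gamma_0]$ is finite. This is a cleaner route than the paper's index-juggling; the paper's lemma is more general-purpose but your topological argument exploits the specific structure of the relative profinite completion more transparently. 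One small point to make explicit: ergodicity of $\Psi(\mu)$ follows because $c\circ d=\mathrm{id}$ makes $d$ injective on open subgroups, so any $\Lambda$-invariant decomposition of $\Psi(\mu)$ pushes forward to an $\rpf{\Lambda}{\Gamma}$-invariant decomposition of $\mu$.
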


We prove some preliminary facts before proving the theorem.

\begin{proposition}\label{P:oneone1}
Let $\Gamma <_{c} \Lambda$ such that every infinite ergodic invariant random subgroup of $\Lambda$ contains $\Gamma$ up to finite index.  Let $\tau : \Lambda \to \Symm(\rpf{\Lambda}{\Gamma})$ be the map defining the relative profinite completion (see Section \ref{sec:rpf}).

The map $c : S(\Lambda) \to S(\rpf{\Lambda}{\Gamma})$ given by $c(L) = \overline{\tau(L)}$ is a $\Lambda$-equivariant map taking infinite ergodic invariant random subgroups of $\Lambda$ to open ergodic invariant random subgroups of $\rpf{\Lambda}{\Gamma}$.
\end{proposition}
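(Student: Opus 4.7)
The plan is to verify four properties of the map $c$: well-definedness into $S(\rpf{\Lambda}{\Gamma})$, Borel measurability, $\Lambda$-equivariance (where $\Lambda$ acts on $S(\rpf{\Lambda}{\Gamma})$ via $\tau$), and the claim that infinite ergodic invariant random subgroups of $\Lambda$ push forward to open ergodic invariant random subgroups of $\rpf{\Lambda}{\Gamma}$. The first is immediate since $\tau(L)$ is a subgroup of $\tau(\Lambda)$ and closures of subgroups in a topological group are closed subgroups by continuity of the group operations. Borelness is a routine verification using countability of $\Lambda$: preimages of Chabauty subbasic sets $\{F : F \cap U \neq \emptyset\}$ for open $U$ unwind to $\bigcup_{\tau(\lambda) \in U} \{L : \lambda \in L\}$, a countable union of clopen sets in $S(\Lambda)$, and the preimage of $\{F : F \cap K = \emptyset\}$ for compact $K$ is handled similarly using second countability of $\rpf{\Lambda}{\Gamma}$. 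Equivariance is the direct calculation $c(\lambda L \lambda^{-1}) = \overline{\tau(\lambda)\tau(L)\tau(\lambda)^{-1}} = \tau(\lambda)\overline{\tau(L)}\tau(\lambda)^{-1}$ using continuity of conjugation.

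Given an infinite ergodic invariant random subgroup $\eta$ of $\Lambda$, set $\eta' = c_*\eta$. Equivariance of $c$ gives $\tau(\Lambda)$-invariance of $\eta'$ immediately. Since conjugation acts continuously on the Chabauty space, the set of elements of $\rpf{\Lambda}{\Gamma}$ preserving $\eta'$ is closed in $\rpf{\Lambda}{\Gamma}$ and contains the dense subgroup $\tau(\Lambda)$, so it is all of $\rpf{\Lambda}{\Gamma}$ and $\eta'$ is conjugation-invariant. Ergodicity transfers by pullback: any $\rpf{\Lambda}{\Gamma}$-invariant Borel set in $S(\rpf{\Lambda}{\Gamma})$ is in particular $\tau(\Lambda)$-invariant, so its $c$-preimage is $\Lambda$-invariant and has $\eta$-measure zero or one, giving $\eta'$-measure zero or one.

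The main step is openness of $c(L)$ for $\eta$-almost every $L$. By hypothesis, $[\Gamma : L \cap \Gamma] < \infty$ almost surely. Applying $\tau$ gives $[\tau(\Gamma) : \tau(L \cap \Gamma)] \leq [\Gamma : L \cap \Gamma] < \infty$; writing $\tau(\Gamma) = \bigsqcup_{i=1}^n \tau(L \cap \Gamma) g_i$ and taking closures, I would verify $\overline{\tau(\Gamma)} = \bigcup_{i=1}^n \overline{\tau(L \cap \Gamma)} g_i$ since the right hand side is a finite union of closed sets, hence closed, and contains $\tau(\Gamma)$. Thus $\overline{\tau(L \cap \Gamma)}$ has finite index in the compact open subgroup $\overline{\tau(\Gamma)} \subseteq \rpf{\Lambda}{\Gamma}$. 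Any closed finite-index subgroup of a topological group is open, since its complement is a finite union of closed cosets and hence closed. Therefore $\overline{\tau(L \cap \Gamma)}$ is open in $\overline{\tau(\Gamma)}$ and therefore in $\rpf{\Lambda}{\Gamma}$, so $c(L) \supseteq \overline{\tau(L \cap \Gamma)}$ is open. This is the only place the hypothesis on ergodic IRS is used; everything else is standard manipulation in Polish groups and invariant measures, so the hard part — such as it is — is the index comparison after passing to closures.
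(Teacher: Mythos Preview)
Your proof is correct and follows essentially the same approach as the paper's: both establish $\Lambda$-equivariance by the direct computation $c(\lambda L \lambda^{-1}) = \tau(\lambda)\overline{\tau(L)}\tau(\lambda)^{-1}$, both prove openness of $c(L)$ by showing that $\overline{\tau(L\cap\Gamma)}$ has finite index in the compact open subgroup $\overline{\tau(\Gamma)}$ via $[\tau(\Gamma):\tau(L\cap\Gamma)] \leq [\Gamma:L\cap\Gamma] < \infty$ and passage to closures, and both obtain $\rpf{\Lambda}{\Gamma}$-invariance of the pushforward from $\tau(\Lambda)$-invariance plus density and continuity. You add a verification of Borel measurability of $c$ and spell out the ergodicity-by-pullback argument, which the paper leaves implicit, but the core argument is identical.
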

\begin{proof}
For notational purposes, write
\[
H = \rpf{\Lambda}{\Gamma} = \overline{\tau(\Lambda)}\quad\quad\text{and}\quad\quad K = \overline{\tau(\Gamma)}
\]
and note that $K$ is a compact open subgroup of $H$.

Let $\nu \in P(S(\Lambda))$ be an infinite ergodic invariant random subgroup of $\Lambda$.  By hypothesis, $\nu$ contains $\Gamma$ up to finite index almost surely.
For $L \in S(\Lambda)$, let
\[
K_{L} = \overline{\tau(L \cap \Gamma)}.
\]
Since $L \cap \Gamma$ has finite index in $\Gamma$ almost surely, we have that $K_{L}$ has finite index in $K$ almost surely:
$[\tau(\Gamma) : \tau(L \cap \Gamma)] \leq [\Gamma : L \cap \Gamma] < \infty$
so $[\overline{\tau(\Gamma)} : \overline{\tau(L \cap \Gamma)}] < \infty$ since finite index passes to closures.
Therefore $K_{L}$ is a compact open subgroup (since $K$ is a compact open subgroup of the locally compact totally disconnected group $H$).  In particular, $c(L)$ contains $K_{L}$ and therefore $c(L)$ is an open subgroup of $H$ almost surely.

Therefore $c$ maps $S(\Lambda)$ to open subgroups of $H$.  Recall that $H \actson S(H)$ by conjugation and therefore $\Lambda \actson S(H)$ by $\lambda \cdot L = \tau(\lambda) L \tau(\lambda)^{-1}$.
For $\lambda \in \Lambda$ and $L \in S(\Lambda)$
\[
c(\lambda \cdot L) = \overline{\tau(\lambda L \lambda^{-1})} = \tau(\lambda) \overline{\tau(L)} \tau(\lambda)^{-1} = \lambda \cdot c(L)
\]
and therefore this mapping is $\Lambda$-equivariant.  Let $\eta \in P(S(H))$ be the pushforward of $\nu$ under this map.  Then $\eta$ is $\tau(\Lambda)$-invariant hence $H$-invariant since $\tau(\Lambda)$ is dense in $H$ and $H$ acts continuously on $S(H)$.  Since $\nu$ is ergodic, so is $\eta$.
\end{proof}

\begin{proposition}\label{P:oneone2}
The map $d : S(\rpf{\Lambda}{\Gamma}) \to S(\Lambda)$ by $d(M) = \tau^{-1}(M \cap \tau(\Lambda))$ has the following properties:
\begin{enumerate}[\hspace{2em}\emph{(}i\emph{)}\hspace{1em}]
\item $c(d(M)) = M$ for all open $M \in S(\rpf{\Lambda}{\Gamma})$;
\item $d(M \cap Q) = d(M) \cap d(Q)$ for all $M,Q \in S(\rpf{\Lambda}{\Gamma})$;
\item $L < d(c(L))$ for all $L \in S(\Lambda)$;
\item $[d(c(L)) : L] < \infty$ for all $L \in S(\Lambda)$ such that $[\Gamma : \Gamma \cap L] < \infty$; and
\item for open $M,Q \in S(\rpf{\Lambda}{\Gamma})$ with $Q < M$, if $[M : Q] < \infty$ then $[d(M) : d(Q)] < \infty$.
\end{enumerate}
\end{proposition}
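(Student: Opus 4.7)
The strategy is to handle parts (i), (ii), (iii), and (v) by direct unwinding of the definitions, combined with the density of $\tau(\Lambda)$ in $\rpf{\Lambda}{\Gamma}$, the openness of $K := \overline{\tau(\Gamma)}$, and Proposition \ref{P:rpfK}. Part (iv) will be the main substantive point and requires an additional coset argument.

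For (i), unwinding gives $c(d(M)) = \overline{\tau(\tau^{-1}(M \cap \tau(\Lambda)))} = \overline{M \cap \tau(\Lambda)}$, and since $M$ is open in $\overline{\tau(\Lambda)}$ with $\tau(\Lambda)$ dense, the intersection $M \cap \tau(\Lambda)$ is dense in $M$, so $c(d(M)) = M$. Part (ii) is immediate since both sides equal $\tau^{-1}((M \cap \tau(\Lambda)) \cap (Q \cap \tau(\Lambda)))$. For (iii), the containment $\tau(L) \subseteq \overline{\tau(L)} \cap \tau(\Lambda) = c(L) \cap \tau(\Lambda)$ yields $L \subseteq \tau^{-1}(\tau(L)) \subseteq d(c(L))$. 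For (v), I would enumerate the finitely many cosets as $M = \bigsqcup_{i=1}^{n} Qm_i$; since $Q$ is open, each $Qm_i$ is open in $\overline{\tau(\Lambda)}$, so by density I may choose $\ell_i \in \Lambda$ with $\tau(\ell_i) \in Qm_i$, and then $\ell_i \in d(M)$. For $\lambda \in d(M)$, pick $i$ with $\tau(\lambda) \in Qm_i$, write $\tau(\lambda) = qm_i$ and $\tau(\ell_i) = q_im_i$, and compute $\tau(\lambda\ell_i^{-1}) = qq_i^{-1} \in Q$, which shows $\lambda \in d(Q)\ell_i$ and hence $[d(M) : d(Q)] \leq n$.

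The hard part is (iv). The key step is the decomposition $d(c(L)) = L \cdot (\Gamma \cap d(c(L)))$. For $\lambda \in d(c(L))$, so $\tau(\lambda) \in \overline{\tau(L)}$, I would pick a net $\ell_i \in L$ with $\tau(\ell_i) \to \tau(\lambda)$; then $\tau(\ell_i^{-1}\lambda) \to e$ eventually enters the open neighborhood $K$ of the identity, and Proposition \ref{P:rpfK} gives $\ell_i^{-1}\lambda \in \Gamma$ for some $i$, establishing the decomposition. Given this, every right coset $L\mu$ of $L$ in $d(c(L))$ has a representative $\gamma \in \Gamma \cap d(c(L))$ (namely $\gamma = \ell_i^{-1}\lambda$ in the notation above, since $L\mu = L\ell\gamma = L\gamma$), and two such representatives $\gamma_1, \gamma_2$ give the same coset iff $\gamma_1\gamma_2^{-1} \in L$, which since $\gamma_1,\gamma_2 \in \Gamma$ is equivalent to $\gamma_1\gamma_2^{-1} \in L \cap \Gamma$. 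Since $L \cap \Gamma \subseteq \Gamma \cap d(c(L)) \subseteq \Gamma$ by (iii), the number of cosets is $[\Gamma \cap d(c(L)) : L \cap \Gamma] \leq [\Gamma : L \cap \Gamma] < \infty$, finishing (iv).
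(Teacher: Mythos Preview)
Your proof is correct. Parts (i), (ii), and (iii) are essentially identical to the paper's arguments. Parts (iv) and (v), however, take a genuinely different and more elementary route than the paper does.

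For (iv), the paper also establishes (in effect) the inclusion $\overline{\tau(L)}\cap\tau(\Lambda)\subseteq \tau(\Gamma)\tau(L)$ via Proposition~\ref{P:rpfK}, but then proceeds indirectly: it bounds $[\overline{\tau(L)}\cap\tau(\Lambda):\tau(L)]$ by $[\Gamma:\Gamma\cap L]$, separately bounds an index inside $\ker\tau$, and combines these via an auxiliary lemma of the form $[A:B]\le[\phi(A):\phi(B)]\cdot[\ker\phi:B\cap\ker\phi]$ for a homomorphism $\phi$. Your decomposition $d(c(L))=L\cdot(\Gamma\cap d(c(L)))$ followed by the coset bijection gives directly $[d(c(L)):L]=[\Gamma\cap d(c(L)):\Gamma\cap L]\le[\Gamma:\Gamma\cap L]$, avoiding this detour entirely.

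For (v), the paper again invokes the same index lemma, together with a compactness argument showing first that $[\Gamma:\Gamma\cap d(Q)]<\infty$. Your argument, lifting the finitely many $Q$-coset representatives in $M$ to $\Lambda$ by density through the open cosets $Qm_i$, is shorter and yields the sharp bound $[d(M):d(Q)]\le[M:Q]$. The paper's approach has the virtue of reusing a single systematic lemma; yours is more hands-on but considerably more transparent.
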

\begin{proof}
Let $M$ be an open subgroup of $\rpf{\Lambda}{\Gamma}$.  Then
\[
c(d(M)) = \overline{\tau(\tau^{-1}(M \cap \tau(\Lambda)))} = \overline{M \cap \tau(\Lambda)} = M
\]
since $M$ is open (hence also closed) and $\tau(\Lambda)$ is dense in $\rpf{\Lambda}{\Gamma}$, proving the first statement.

Now let $M,Q \in S(\rpf{\Lambda}{\Gamma})$.  Then
\[
d(M) \cap d(Q) = \tau^{-1}(M \cap \tau(\Lambda)) \cap \tau^{-1}(Q \cap \tau(\Lambda)) = \tau^{-1}(M \cap Q \cap \tau(\Lambda)) = d(M \cap Q)
\]
proving the second statement.

Let $L \in S(\Lambda)$.  Then
$d(c(L)) = \tau^{-1}(\overline{\tau(L)} \cap \tau(\Lambda))$
and $\tau(L) \subseteq \tau(\Lambda)$ so $L$ is a subgroup of $d(c(L))$, proving the third statement.
Now let $L$ be an infinite subgroup of $\Lambda$.  Define the group
\[
Q = c(L) \cap \tau(\Lambda).
\]
Then $\tau(L)$ is dense in $Q$ and $K = \overline{\tau(\Gamma)}$ is open in $H = \overline{\tau(\Lambda)}$ so
$Q \subseteq K \tau(L)$.
Let $h \in Q$, then $h = kn$ for some $k \in K$ and $n \in \tau(L)$.  Therefore $hn^{-1} \in K$ and also $hn^{-1} \in \tau(\Lambda)$.  By Proposition \ref{P:rpfK}, $K \cap \tau(\Lambda) = \tau(\Gamma)$ so we have that $hn^{-1} \in \tau(\Gamma)$.  Hence
\[
Q \subseteq \tau(\Gamma)\tau(L) = \tau(\Gamma L).
\]

We will use the notation $[A:B]$ when $A$ and $B$ are merely subsets (and not necessarily subgroups) to refer to the smallest number of elements of $A$ such that the left translates of $B$ by those elements cover $A$.  Observe that, since $L$ contains $\Gamma$ up to finite index,
\[
[Q : \tau(L)] \leq [\tau(\Gamma L) : \tau(L)] \leq [\Gamma L : L] = [\Gamma : \Gamma \cap L] < \infty
\]
so $Q$ is a finite index extension of $\tau(L)$.

Now write $R = \tau^{-1}(Q) = \tau^{-1}(\overline{\tau(L)} \cap \tau(\Lambda))$.  Then $\tau(R) = Q$.  Write $R_{0} = R \cap \ker(\tau)$ and $L_{0} = L \cap \ker(\tau)$.  Since $R_{0} \subseteq \ker(\tau)$ and $\ker(\tau) \subseteq \Gamma$, by the isomorphism theorems we have that
\[
[R_{0} : L_{0}] \leq [\ker(\tau) : L \cap \ker(\tau)] = [L \ker(\tau) : L] \leq [L \Gamma : L] = [\Gamma : \Gamma \cap L] < \infty.
\]
By Lemma \ref{L:indextricks} below,
\[
[R : L] \leq [\tau(R) : \tau(L)] [R_{0} : L_{0}] = [Q : \tau(L)] [R_{0} : L_{0}] < \infty
\]
since $Q$ is a finite index extension of $\tau(L)$.  Therefore $L$ has finite index in $\tau^{-1}(\overline{\tau(L)} \cap \tau(\Lambda)) = d(c(L))$ proving the fourth statement.

Now let $M,Q$ be open subgroups of $\rpf{\Lambda}{\Gamma}$ such that $[M : Q] < \infty$.
Observe that $Q \cap \overline{\tau(\Gamma)}$ is then open so $\overline{\tau(\Gamma)} / Q \cap \overline{\tau(\Gamma)}$ is both compact and discrete, hence finite.  Since $\overline{\tau(d(Q))} = Q$ then we have $[\overline{\tau(\Gamma)} : \overline{\tau(d(Q))} \cap\overline{\tau(\Gamma)}] < \infty$.  
Therefore $[\tau(\Gamma) : \tau(d(Q)) \cap \tau(\Gamma)] < \infty$.

Since $\mathrm{ker}(\tau) = \tau^{-1}(\{ e \}) = d(\{ e \})$,
\[
[\mathrm{ker}(\tau) : d(Q) \cap \mathrm{ker}(\tau)]
= [d(\{ e \}) : d(Q) \cap d(\{ e \})] = [d(\{ e \}) : d(Q \cap \{ e \})] = 1
\]
hence by Lemma \ref{L:indextricks},
\[
[\Gamma : \Gamma \cap d(Q)] \leq [\tau(\Gamma) : \tau(\Gamma \cap d(Q))][\mathrm{ker}(\tau) : d(Q) \cap \mathrm{ker}(\tau)]
= [\tau(\Gamma) : \tau(\Gamma \cap d(Q))] < \infty.
\]
Similarly, $[d(M) \cap \mathrm{ker}(\tau) : d(Q) \cap \mathrm{ker}(\tau)] = 1$, so
by Lemma \ref{L:indextricks},
\[
[d(M) : d(Q)] \leq [\tau(d(M)) : \tau(d(Q))][d(M) \cap \mathrm{ker}(\tau) : d(Q) \cap \mathrm{ker}(\tau)] < \infty
\]
since $[\overline{\tau(d(M))} : \overline{\tau(d(Q))}] = [c(d(M)) : c(d(Q))] = [M : Q] < \infty$, proving the final statement.
\end{proof}

\begin{lemma}\label{L:indextricks}
Let $\phi : C \to D$ be a group homomorphism and $A \subseteq C$ and $B \subseteq A$ be subsets.  Define $[A : B]$ to be the smallest number $n$ such that there exists $a_{1},\ldots,a_{n} \in A$ with $A \subseteq \cup_{j} a_{j}B$.  Then
\[
[A : B] \leq [\phi(A) : \phi(B)] [\mathrm{ker}(\phi) : B \cap \mathrm{ker}(\phi)].
\]
\end{lemma}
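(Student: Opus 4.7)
The plan is to establish the bound by a direct covering argument that mirrors the standard tower-law proof for indices of subgroups. Write $m = [\phi(A):\phi(B)]$ and $n = [\ker(\phi):B\cap\ker(\phi)]$. I would first choose elements $a_1,\dots,a_m\in A$ so that $\phi(A)\subseteq\bigcup_{i=1}^m\phi(a_i)\phi(B)$, realizing the first index, and elements $k_1,\dots,k_n\in\ker(\phi)$ so that $\ker(\phi)\subseteq\bigcup_{j=1}^n k_j(B\cap\ker(\phi))$, realizing the second. The goal is then to show the $mn$ translates $\{a_ik_jB\}_{i,j}$ cover $A$, which immediately yields $[A:B]\leq mn$.

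Given $a\in A$, the first covering supplies an index $i$ and an element $b\in B$ with $\phi(a)=\phi(a_i)\phi(b)=\phi(a_ib)$, so that $a_i^{-1}ab^{-1}\in\ker(\phi)$. The second covering then supplies $j$ and $b'\in B\cap\ker(\phi)$ with $a_i^{-1}ab^{-1}=k_jb'$, so $a=a_ik_jb'b$. Since $b,b'$ both lie in $B$ and $B$ is a subgroup (which is the case in every application of the lemma in the preceding proposition, even though the notation $[A:B]$ has been extended to general subsets), we have $b'b\in B$ and hence $a\in a_ik_jB$.

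The only subtlety — and the main conceptual obstacle — is that the argument essentially uses closure of $B$ under multiplication in the last step: the natural combination of the two covering decompositions produces an expression of the form $a_ik_j\cdot(b'b)$ rather than one directly of the form $a_ik_jb$, so one cannot entirely dispense with group structure on $B$. This is unproblematic for the applications above, where $B$ is always a subgroup.
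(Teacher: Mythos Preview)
Your argument is correct and is essentially identical to the paper's proof: both choose lifts of representatives for $\phi(A)/\phi(B)$ together with representatives for $\ker(\phi)/(B\cap\ker(\phi))$, decompose an arbitrary $a\in A$ accordingly, and arrive at an expression $a=(\text{lift})\cdot(\text{kernel rep})\cdot b'b$ with $b,b'\in B$. The paper handles your ``subtlety'' in exactly the same way you do --- it simply asserts ``$kb\in B$'' without comment, implicitly using that $B$ is closed under multiplication, which holds in every application of the lemma.
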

\begin{proof}
Assume both indices on the right are finite, otherwise there is nothing to prove.  Let $X$ be a finite system of representatives for $\phi(A) / \phi(B)$ (that is, $\phi(A) \subseteq \cup_{x \in X} x \phi(B)$).  Let $Y$ be a finite system of representatives for $\mathrm{ker}(\phi) / B \cap \mathrm{ker}(\phi)$.  Let $\widetilde{X}$ contain one element $\widetilde{x}$ for each $x \in X$ such that $\phi(\widetilde{x}) = x$ so $|\widetilde{X}| = |X|$.

Let $a \in A$.  Then $\phi(a) = x \phi(b)$ for some $x \in X$ and $b \in B$.  So $\phi(\widetilde{x}^{-1}ab^{-1}) = e$ hence $\widetilde{x}^{-1}ab^{-1} \in \mathrm{ker}(\phi)$ and therefore $\widetilde{x}^{-1}ab^{-1} = y k$ for some $y \in Y$ and some $k \in B \cap \mathrm{ker}(\phi)$.  Then $a = \widetilde{x} y k b$.  Now $kb \in B$ and there are at most $|\widetilde{X}| |Y| = |X| |Y|$ choices for $\widetilde{x}y$ so the claim follows.
\end{proof}

\begin{proof}[Proof of Theorem \ref{T:oneone}]
Let $c$ and $d$ denote the maps in the previous propositions.  The correspondence will be given by the map $c$ on commensurability classes.  By Corollary \ref{C:rsmain}, any infinite ergodic invariant random subgroup $\nu$ of $\Lambda$ contains $\Gamma$ up to finite index almost surely.  By Proposition \ref{P:oneone1}, $c_{*}\nu$ is then an open ergodic invariant random subgroup of $\rpf{\Lambda}{\Gamma}$.

Let $\nu_{1}$ and $\nu_{2}$ be infinite ergodic invariant random subgroups of $\Lambda$ such that $\nu_{1}$ and $\nu_{2}$ are commensurate invariant random subgroups.  Let $\alpha \in P(S(\Lambda) \times S(\Lambda))$ be a joining of $\eta_{1}$ and $\eta_{2}$ witnessing the commensuration.  Define $\beta \in P(S(\rpf{\Lambda}{\Gamma}) \times S(\rpf{\Lambda}{\Gamma}))$ by $\beta = (c \times c)_{*}\alpha$.  Then $\beta$ is a joining of $c_{*}\nu_{1}$ and $c_{*}\nu_{2}$ that is clearly measure-preserving.  Since, in general $\overline{X \cap Y} \subseteq \overline{X} \cap \overline{Y}$, for any $H,L \in S(\Lambda)$,
\[
[c(H) : c(H) \cap c(L)] = [\overline{\tau(H)} : \overline{\tau(H)} \cap \overline{\tau(L)}]
\leq [\overline{\tau(H)} : \overline{\tau(H) \cap \tau(L)}].
\]
For $\alpha$-almost every $H,L$, we have that $[H : H \cap L] < \infty$ and since $\tau$ is a homomorphism then $[\tau(H) : \tau(H) \cap \tau(L)] < \infty$.  Therefore $[c(H) : c(H) \cap c(L)] < \infty$ since finite index passes to closures.  Likewise, $[c(L) : c(H) \cap c(L)] < \infty$.  

Hence for $\beta$-almost every $M,Q$, the subgroup $M \cap Q$ has finite index in both $M$ and $Q$.  Therefore $\beta$ makes $c_{*}\eta_{1}$ and $c_{*}\eta_{2}$ commensurate invariant random subgroups.  Hence $c$ defines a correspondence from commensurability classes of infinite ergodic invariant random subgroups of $\Lambda$ to commensurability classes of open ergodic invariant random subgroups of $\rpf{\Lambda}{\Gamma}$.

Now let $\nu_{1}$ and $\nu_{2}$ be infinite ergodic invariant random subgroups of $\Lambda$ such that $c_{*}\nu_{1}$ and $c_{*}\nu_{2}$ are commensurate open ergodic invariant random subgroups of $\rpf{\Lambda}{\Gamma}$.  Let $\beta \in P(S(\rpf{\Lambda}{\Gamma}) \times S(\rpf{\Lambda}{\Gamma}))$ be a joining of $c_{*}\nu_{1}$ and $c_{*}\nu_{2}$ such that for $\beta$-almost every $M,Q$, the subgroup $M \cap Q$ has finite index in $M$ and $Q$.  Define $\nu_{3} = d_{*}c_{*}\nu_{1}$.  Then by Proposition \ref{P:oneone2} (iv), $[d(c(L)) : L] < \infty$ for $\nu_{1}$-almost every $L \in S(\Lambda)$.  Define $\rho \in P(S(\Lambda) \times S(\Lambda))$ by
\[
\rho = \int_{L} \delta_{L} \times \delta_{d(c(L))}~d\nu_{1}(L).
\]
Then $\rho$ is a joining of $\nu_{1}$ and $\nu_{3}$ and clearly $L \cap d(c(L)) = L$ has finite index in both $L$ and $d(c(L))$ almost surely so $\rho$ makes $\nu_{1}$ and $\nu_{3}$ commensurate invariant random subgroups.  Likewise $\nu_{2}$ and $\nu_{4} = d_{*}c_{*}\nu_{2}$ are commensurate invariant random subgroups.  Since commensurability is an equivalence relation (Proposition \ref{T:commequiv}), it is enough to show that $\nu_{3}$ and $\nu_{4}$ are commensurate.

Define $\alpha \in P(S(\Lambda) \times S(\Lambda))$ by $\alpha = (d \times d)_{*}\beta$.  Then $\alpha$ is a joining of $d_{*}c_{*}\nu_{1} = \nu_{3}$ and $d_{*}c_{*}\nu_{2} = \nu_{4}$.  By Proposition \ref{P:oneone2} (ii), for open $M,Q \in S(\rpf{\Lambda}{\Gamma})$,
$d(M) \cap d(Q) = d(M \cap Q)$.
Observe that $\nu_{3}$ and $\nu_{4}$ are infinite ergodic invariant random subgroup of $\Lambda$ hence $d(M)$ and $d(Q)$ both contain $\Gamma$ up to finite index almost surely.  Then $d(M \cap Q)$ contains $\Gamma$ up to finite index almost surely.  For $\beta$-almost every $M,Q$ we also know that $[M : M \cap Q] < \infty$.
So by Proposition \ref{P:oneone2} (v),
\[
[d(M) : d(M) \cap d(Q)] = [d(M) : d(M \cap Q)] < \infty
\]
almost surely.  Hence for $\alpha$-almost every $H,L$ the subgroup $H \cap L$ has finite index in both so $\nu_{3}$ and $\nu_{4}$ are commensurate invariant random subgroups.  Therefore the correspondence is one-one.

Let $\eta \in P(S(\rpf{\Lambda}{\Gamma}))$ be an open ergodic invariant random subgroup of $\rpf{\Lambda}{\Gamma}$.  For $M$ an open subgroup of $\rpf{\Lambda}{\Gamma}$ we have that $d(M) = \tau^{-1}(M \cap \tau(\Lambda))$ is infinite since otherwise $M \cap \tau(\Lambda)$ is finite but $\tau(\Lambda)$ is dense.  Therefore $d_{*}\eta$ is an infinite invariant random subgroup of $\Lambda$ and must be ergodic since $c_{*}d_{*}\eta = \eta$ by Proposition \ref{P:oneone2} (i).  Therefore the correspondence is onto.
\end{proof}

\subsection{The Dichotomy for Actions of Commensurators}

We now are ready to state the conclusion of our study of stabilizer subgroups that will be the main ingredient in the various consequences we prove in the rest of the paper:
\begin{corollary}\label{T:useful}
Let $\Gamma < G$ be a lattice in a locally compact second countable group with property $(T)$ and let $\Lambda$ be a countable dense subgroup of $G$ such that $\Gamma <_{c} \Lambda$.

Assume that for every ergodic measure-preserving action of $G$ either the restriction of the action to $\Lambda$ has finite stabilizers or the restriction of the action to $\Gamma$ has finite orbits.

Assume that every ergodic measure-preserving action of $\rpf{\Lambda}{\Gamma}$ with open stabilizer subgroups is necessarily on the trivial space.

Then any ergodic measure-preserving action of $\Lambda$ on a probability space either has finite orbits or has finite stabilizers.
\end{corollary}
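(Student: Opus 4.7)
My plan is to use Corollary \ref{C:main} to split into two cases, handling the ``finite $\Lambda$-stabilizers'' case directly and reducing the ``finite $\Gamma$-orbits'' case to the hypothesis on $\rpf{\Lambda}{\Gamma}$-actions via the invariant random subgroup machinery of Section \ref{S:oneone}. The index bookkeeping of Proposition \ref{P:oneone2} will then upgrade the finiteness of $\Gamma$-orbits to finiteness of $\Lambda$-orbits.

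Concretely, let $\Lambda \actson (X,\nu)$ be the given ergodic measure-preserving action. Corollary \ref{C:main} applies under the first hypothesis and yields the dichotomy that either the $\Lambda$-stabilizers are finite almost surely (in which case we are done), or almost every $\Gamma$-orbit is finite. Assume the latter; then the Borel $\Lambda$-equivariant stabilizer map $\sigma(x) = \mathrm{stab}_{\Lambda}(x)$ pushes $\nu$ to an ergodic invariant random subgroup $\mu \in P(S(\Lambda))$ satisfying $[\Gamma : L \cap \Gamma] < \infty$ for $\mu$-a.e. $L$. By Proposition \ref{P:oneone1}, the map $c : L \mapsto \overline{\tau(L)}$ pushes $\mu$ to an open ergodic invariant random subgroup $c_{*}\mu$ of $\rpf{\Lambda}{\Gamma}$. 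Proposition \ref{P:gaussian} then realizes $c_{*}\mu$ as the stabilizer invariant random subgroup of some ergodic measure-preserving $\rpf{\Lambda}{\Gamma}$-action, which necessarily has open stabilizers almost surely.

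The second hypothesis forces this $\rpf{\Lambda}{\Gamma}$-action to be on a one-point space, so its almost-sure stabilizer is the whole group; equivalently $c_{*}\mu = \delta_{\rpf{\Lambda}{\Gamma}}$, and $\overline{\tau(L)} = \rpf{\Lambda}{\Gamma}$ for $\mu$-a.e. $L$. Since $d(\rpf{\Lambda}{\Gamma}) = \tau^{-1}(\tau(\Lambda)) = \Lambda$, Proposition \ref{P:oneone2}(iv), applied to $L$ (which contains $\Gamma$ up to finite index), gives $[\Lambda : L] = [d(c(L)) : L] < \infty$ for $\mu$-a.e. $L$. Hence the $\Lambda$-stabilizer of a.e. $x$ has finite index in $\Lambda$, so the $\Lambda$-orbits are finite and the proof concludes. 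There is no genuine obstacle beyond the assembly of prior results; the only slightly delicate point is translating ``action on the trivial space'' into the statement $c_{*}\mu = \delta_{\rpf{\Lambda}{\Gamma}}$, which relies on ergodicity together with the fact that open stabilizers in an ergodic measure-preserving action force essential transitivity onto a single orbit.
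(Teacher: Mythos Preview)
Your proposal is correct and follows essentially the same route as the paper's proof. The paper phrases the argument more tersely by invoking ``the one-one correspondence theorem'' and ``the one-one correspondence construction,'' whereas you unpack these into explicit citations of Corollary \ref{C:main}, Proposition \ref{P:oneone1}, Proposition \ref{P:gaussian}, and Proposition \ref{P:oneone2}(iv); the underlying logic is identical, including the final step $[\Lambda : \mathrm{stab}_{\Lambda}(x)] = [d(c(\mathrm{stab}_{\Lambda}(x))) : \mathrm{stab}_{\Lambda}(x)] < \infty$.
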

\begin{proof}
Let $\Lambda \actson (X,\nu)$ be an ergodic measure-preserving action that does not have finite stabilizers.  By the one-one correspondence theorem, the invariant random subgroup of stabilizer subgroups corresponds to an ergodic open invariant random subgroup $\eta$ of $H = \rpf{\Lambda}{\Gamma}$.  This invariant random subgroup corresponds to an ergodic action of $H$ with open stabilizer groups and so by hypothesis then $\eta = \delta_{H}$ meaning $\overline{\tau(\mathrm{stab}_{\Lambda}(x))} = H$ for almost every $x$.

By the one-one correspondence construction we then have that
\[
[\Lambda : \mathrm{stab}_{\Lambda}(x)] = [\tau^{-1}(\overline{\mathrm{stab}_{\Lambda}(x)} \cap \tau(\Lambda)) : \mathrm{stab}_{\Lambda}(x)] = [d(c(\mathrm{stab}_{\Lambda}(x))) : \mathrm{stab}_{\Lambda}(x)] < \infty
\]
for almost every $x$.  
This means that almost every $\Lambda$-orbit is finite so by ergodicity $(X,\nu)$ consists of exactly one such orbit.
\end{proof}

\section{Howe-Moore Groups}\label{S:howemoore}

We now discuss the properties one can impose on the ambient group $G$ to ensure that for every nontrivial ergodic measure-preserving action of $G$ the restriction of the action to $\Lambda$ has finite stabilizers.  The main property we impose on the ambient group will be the Howe-Moore property.

\subsection{Actions of Subgroups of Simple Lie Groups}

The results in this subsection are consequences of the Stuck-Zimmer Theorem \cite{SZ94} and also follow from earlier work by Zimmer, \cite{zimmer3} Lemma 6, and of Iozzi, \cite{iozzi} Proposition 2.1, showing that the stabilizers of any nontrivial irreducible action of a semisimple real Lie group are discrete.  However, we opt to include the following elementary argument proving what we need directly.

\begin{theorem}\label{T:lieactionssubgroups}
Let $G$ be a connected (real) Lie group with trivial center and let $\Lambda < G$ be any countable subgroup.  Let $G \actson (X,\nu)$ be a faithful weakly mixing measure-preserving action.  Then the restriction of the action to $\Lambda$ is essentially free.
\end{theorem}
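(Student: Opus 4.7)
The plan is to reduce the statement to showing that $\nu(\{x \in X : \lambda x = x\}) = 0$ for every $\lambda \in G \setminus \{e\}$; since $\Lambda$ is countable, essential freeness of the restricted action then follows by countable additivity. The crucial structural input is that a connected Lie group with trivial center has a proper centralizer $C_{G}(\lambda)$ for every $\lambda \ne e$, since $\lambda$ fails to commute with at least one element of $G$.

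Fix $\lambda \ne e$, set $A = \{x \in X : \lambda x = x\}$, and assume for contradiction that $\nu(A) > 0$. Faithfulness forces $\nu(A) < 1$, so $f := \mathbf{1}_{A} - \nu(A) \in L^{2}_{0}(X,\nu)$ is a non-zero vector fixed by the Koopman operator $U(\lambda)$. The setwise stabilizer $H := \{g \in G : gA = A \text{ in the measure algebra}\}$ is a closed subgroup of $G$ containing both $\lambda$ and the centralizer $C_{G}(\lambda)$ (since any element commuting with $\lambda$ permutes the level sets of $\lambda$ to themselves). Weak mixing yields ergodicity, which together with $\nu(A) < 1$ forces $H \ne G$, so $H$ is a proper closed subgroup of $G$ properly containing $\langle \lambda, C_{G}(\lambda)\rangle$.

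To derive a contradiction, I would show that the existence of a non-zero $f \in L^{2}_{0}(X)$ whose setwise stabilizer is a proper closed subgroup of $G$ containing $\lambda$ and $C_{G}(\lambda)$ implies the existence of a non-trivial finite-dimensional $G$-invariant subspace of $L^{2}_{0}(X)$, contradicting the standard characterization of weak mixing. Since $Z(G) = \{e\}$ we have $\mathrm{Ad}(\lambda) - \mathrm{Id} \ne 0$ on $\mathfrak{g}$, so there is a one-parameter subgroup $\{\exp(tY)\}$ with $[\lambda, \exp(tY)] \ne e$ for small $t \ne 0$. Applying continuity of $U$ and a Mautner-phenomenon argument to these commutators shows that $H$ absorbs a neighborhood of $e$ along a direction transverse to itself, hence contains a non-trivial one-parameter subgroup. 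Iterating this commutator enlargement, combined with connectedness of $G$, forces $H$ to grow to all of $G$, the desired contradiction.

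The main obstacle is the case where $\lambda$ is contained in a compact subgroup of $G$, in which the commutator enlargement of $H$ may stall in a compact direction. The fallback is to apply the Peter--Weyl decomposition of the compact group $\overline{\langle \lambda \rangle}$ acting on $L^{2}(A,\nu|_{A}/\nu(A))$, extract a non-trivial finite-dimensional isotypic component, and propagate it via connectedness of $G$ together with faithfulness (using $Z(G) = \{e\}$ to prevent a normal subgroup from acting trivially on a positive-measure set) to a finite-dimensional $G$-sub-representation of $L^{2}_{0}(X)$, again contradicting weak mixing. Should a unified argument prove elusive, one can pass to the Levi decomposition $G = R \rtimes L$, treating the semisimple factor $L$ via Theorem \ref{T:SZ1} (the Stuck--Zimmer Theorem) and handling the solvable radical $R$ via faithfulness combined with triviality of the center of $G$.
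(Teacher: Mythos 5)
Your setup is sound as far as it goes: reducing to a single $\lambda \ne e$, observing that $A = \{x : \lambda x = x\}$ would satisfy $0 < \nu(A) < 1$ if the claim failed, and noting that its setwise stabilizer $H$ is a proper closed subgroup containing $\lambda$ and $C_{G}(\lambda)$. But from that point on the proposal is a sequence of plans rather than a proof, and each of the three proposed routes breaks down. (i) The central claim --- that a nonzero $f \in L^{2}_{0}(X)$ fixed by such an $H$ forces a nontrivial finite-dimensional $G$-invariant subspace of $L^{2}_{0}(X)$ --- is unsubstantiated, and no mechanism is offered that would produce finite-dimensionality; a vector fixed by a large proper closed subgroup generates a quasi-regular-type subrepresentation with no reason to be finite-dimensional. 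The Mautner mechanism meant to enlarge $H$ needs directions contracted by $\mathrm{Ad}(\lambda)$, which need not exist (e.g.\ for elliptic $\lambda$). (ii) You acknowledge the elliptic case, but the fallback is vacuous: $\lambda$ fixes $A$ \emph{pointwise}, so $\overline{\langle\lambda\rangle}$ acts trivially on $L^{2}(A,\nu|_{A})$ and Peter--Weyl yields only the trivial isotypic component --- there is nothing nontrivial to propagate. (iii) The last resort via Theorem \ref{T:SZ1} is unavailable: that theorem requires every simple factor to have real rank at least two, which the Levi factor of an arbitrary connected centerless Lie group need not satisfy, and no argument is given for the solvable radical.

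The missing idea is the intermediate discreteness statement on which the paper's proof is built. The paper first shows that for $n = \dim G$ and almost every $\tilde{x} \in X^{n+1}$ the stabilizer $\mathrm{stab}_{G}(\tilde{x})$ is \emph{discrete}: the dimension of the identity component $C(x)$ of $\mathrm{stab}_{G}(x)$ is a.e.\ constant by ergodicity, weak mixing makes every finite power of the action ergodic, and this dimension must strictly drop in passing from $X^{m}$ to $X^{m+1}$ unless it is already zero (two connected Lie subgroups of equal dimension, one contained in the other, coincide, so a non-drop would place a fixed positive-dimensional connected subgroup in the kernel, contradicting faithfulness). Only then does an elementary perturbation argument, close in spirit to yours, finish the proof: if $\lambda \ne e$ fixed a positive-measure set $E$, one takes $g_{k} \to e$ with $g_{k}^{-1}\lambda g_{k} \ne \lambda$ (possible since $Z(G)$ is trivial and $G$ is nondiscrete), observes that $E \cap \bigcap_{k} g_{k}E$ still has positive measure after passing to a subsequence, and finds points whose stabilizer contains the nonconstant sequence $g_{k}^{-1}\lambda g_{k} \to \lambda$ --- a contradiction \emph{with discreteness}. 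Without that discreteness step, or some substitute for it, accumulating conjugates of $\lambda$ inside a stabilizer contradicts nothing, and your proposal supplies no such substitute.
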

\begin{proof}
For $x \in X$ let $C(x)$ be the connected component of the identity in the stabilizer subgroup $\mathrm{stab}_{G}(x)$.  Let $n(x)$ be the dimension of $C(x)$.  Then $n(gx)$ is the dimension of $C(gx) = gC(x)g^{-1}$ hence $n(x)$ is $G$-invariant.  By ergodicity then $n(x) = n$ is constant almost surely.

Since the action of $G$ is weakly mixing, the diagonal action $G \actson (X^{2},\nu^{2})$ is ergodic.  Let $n_{1}(x,y)$ be the dimension of the connected component of the identity $C(x,y)$ in $\mathrm{stab}_{G}(x,y)$ and then $n_{1}(x,y) = n_{1}$ is constant almost surely by ergodicity.

When $n = 0$, the stabilizer subgroup $\mathrm{stab}_{G}(x)$ is discrete for almost every $x$ (since the stabilizer subgroup is closed).  Assume now that $n \ne 0$.  Since $\mathrm{stab}_{G}(x,y) = \mathrm{stab}_{G}(x) \cap \mathrm{stab}_{G}(y)$ we have that $C(x,y) = C(x) \cap C(y)$.  Suppose that $n = n_{1}$.  Then for almost every $x$ and $y$ we have that $C(x,y) = C(x) \cap C(y)$ has the same dimension as $C(x)$ and $C(y)$.

If $C(x,y) = C(x) \cap C(y)$ has the same dimension as $C(x)$ and $C(y)$ then in fact $C(x) = C(y)$ since both are connected. So if $n > 0$ this then means there is a positive dimension subgroup in the kernel of the action contradicting that the action is faithful.  So if $n = n_{1}$ then $n = n_{1} = 0$.

So instead we have that $n_{1} < n$.  Proceeding by induction, since $G$ acts ergodically on $(X^{m},\nu^{m})$ for any $m \in \mathbb{N}$, we conclude that for almost every $\tilde{x} \in X^{n+1}$ the stabilizer subgroup $\mathrm{stab}_{G}(\tilde{x})$ is discrete.

Since the action $\Lambda \actson (X,\nu)$ is essentially free if and only if the diagonal action $\Lambda \actson (X^{n+1},\nu^{n+1})$ is essentially free, the conclusion now follows from the following proposition.
\end{proof}

\begin{proposition}
Let $G$ be a connected locally compact second countable group and $\Lambda < G$ a countable subgroup such that $\Lambda$ does not intersect the center of $G$.  Let $G \actson (X,\nu)$ be a measure-preserving action such that almost every stabilizer subgroup is discrete.  Then the restriction of the action to $\Lambda$ is essentially free.
\end{proposition}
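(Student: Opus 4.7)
The plan is to argue one $\lambda$ at a time: since $\Lambda$ is countable,
\[
\{x\in X:\mathrm{stab}_\Lambda(x)\ne\{e\}\}\;=\;\bigcup_{\lambda\in\Lambda\setminus\{e\}}F_\lambda,\qquad F_\lambda:=\{x\in X:\lambda x=x\},
\]
so it suffices to show $\nu(F_\lambda)=0$ for every $\lambda\in\Lambda\setminus\{e\}$. I would fix such a $\lambda$ and assume for contradiction that $\nu(F_\lambda)>0$. Using $\lambda\notin Z(G)$ together with the nondiscreteness of $G$, the first step is to produce a sequence $g_n\to e$ in $G$ with $h_n:=g_n\lambda g_n^{-1}\ne\lambda$ for every $n$, so that $(h_n)$ is a sequence of distinct conjugates of $\lambda$ converging to $\lambda$ in $G$. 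This amounts to the assertion that the centraliser $C_G(\lambda)$ does not contain a neighbourhood of the identity.

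The next step uses the standard fact that a measure-preserving action of a locally compact second countable group is strongly continuous in $L^1$: the map $g\mapsto g\cdot\mathbf{1}_{F_\lambda}$ is norm-continuous from $G$ to $L^1(X,\nu)$. Since a direct computation gives $gF_\lambda=F_{g\lambda g^{-1}}$, applying this with $g=g_n$ yields $\nu(F_\lambda\triangle F_{h_n})\to 0$, and hence $\nu(F_\lambda\cap F_{h_n})\to\nu(F_\lambda)>0$. Passing to a subsequence with $\nu(F_\lambda\setminus F_{h_n})\le 2^{-n}$, the Borel--Cantelli lemma will then produce a positive-measure set of $x\in F_\lambda$ which lie in $F_{h_n}$ for infinitely many $n$.

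For any such $x$, the stabiliser $\mathrm{stab}_G(x)$ contains $\lambda$ together with infinitely many distinct conjugates $h_n\to\lambda$, so $\lambda$ is not isolated in $\mathrm{stab}_G(x)$, contradicting its assumed discreteness. The main obstacle is the first step, namely the production of nontrivial conjugates of $\lambda$ arbitrarily close to $\lambda$: this fails exactly when $C_G(\lambda)$ is an open subgroup of $G$, a situation that is ruled out whenever $G$ is connected (a connected topological group admits no proper open subgroup), which is the setting in which the proposition is actually invoked in the proof of Theorem~\ref{T:lieactionssubgroups}. The remaining ingredients --- $L^1$-continuity of the action and Borel--Cantelli --- are entirely routine.
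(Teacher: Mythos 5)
Your argument is correct and follows essentially the same route as the paper's: conjugate $\lambda$ by a sequence $g_{n} \to e$ with $g_{n}\lambda g_{n}^{-1} \ne \lambda$, use continuity of the action on the measure algebra to make the fixed-point sets of these conjugates overlap $F_{\lambda}$ in positive measure (the paper uses a direct union bound where you invoke Borel--Cantelli, an immaterial difference), and derive a contradiction with discreteness of the stabilizer. Your observation that producing such $g_{n}$ requires the centralizer $C_{G}(\lambda)$ not to be an open subgroup is a genuine subtlety that the paper's own proof passes over with the parenthetical ``possible as $G$ is nondiscrete and $\lambda$ is not in the center,'' and, as you correctly note, it is harmless where the proposition is actually invoked (Theorem \ref{T:lieactionssubgroups}, where $G$ is connected and hence has no proper open subgroups).
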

\begin{proof}
Suppose the $\Lambda$-action is not essentially free.  Then there exists $\lambda \in \Lambda$, $\lambda \ne e$, such that $E = \{ x \in X : \lambda x = x \}$ has positive measure.  Since $G$ is connected and $\lambda \notin Z(G)$, the centralizer subgroup of $\lambda$ cannot contain an open neighborhood the identity in $G$.  Therefore there exists $g_{n} \to e$ in $G$ such that $g_{n}\lambda g_{n}^{-1} \ne \lambda$ for all $n$.  Note that $\nu(g_{n}E \symdiff E) \to 0$ since $G$ acts continuously.  Take a subsequence along which $\nu(g_{n}E \symdiff E) < 2^{-n-1} \nu(E)$.  Then
\[
\nu(E \cap \bigcap_{n} g_{n}E) = \nu(E) - \nu(E \symdiff \bigcap_{n} g_{n}E) \geq \nu(E) - \sum_{n=1}^{\infty} \nu(E \symdiff g_{n}E) > \frac{1}{2}\nu(E) > 0
\]
For $x \in E \cap (\cap_{n} g_{n}E)$ we have that $\lambda x = x$ and $g_{n}\lambda g_{n}^{-1} x = x$ for all $n$, hence $g_{n}\lambda g_{n}^{-1} \in \mathrm{stab}_{G}(x)$ and $\lambda \in \mathrm{stab}_{G}(x)$.  But $g_{n}\lambda g_{n}^{-1} \ne \lambda$ and $g_{n}\lambda g_{n}^{-1} \to \lambda$ contradicting that $\mathrm{stab}_{G}(x)$ is discrete almost everywhere.
\end{proof}

\begin{theorem}\label{T:productGconnected}
Let $G$ be a product of noncompact connected simple locally compact second countable groups with the Howe-Moore property.  Let $\Lambda < G$ be a countable subgroup of $G$ such that the $\Lambda$ intersection with any proper subproduct of $G$ is finite.  Then the restriction of any nontrivial ergodic measure-preserving action of $G$ to $\Lambda$ has finite stabilizers.
\end{theorem}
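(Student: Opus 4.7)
The plan is to adapt the dimension-counting argument of Theorem~\ref{T:lieactionssubgroups} to the product setting, using the Howe--Moore property of each simple factor (via Theorem~\ref{T:schmidt}) as a substitute for weak mixing of $G$ itself. By Rothman's Theorem~\ref{T:rothman}, each $G_i$ is a connected simple Lie group, and by Proposition~\ref{P:HMcompact} the center $Z(G_i)$ is compact and therefore finite (being discrete in a Lie group). Consequently $Z(G) = \prod_i Z(G_i)$ is finite, and in particular $\Lambda \cap Z(G)$ is finite. I would first reduce to the case that each $G_i$ acts nontrivially on $(X,\nu)$: letting $J = \{i : G_i \text{ acts nontrivially}\}$, the action factors through $G_J = \prod_{i \in J} G_i$, and the hypothesis on $\Lambda$ transfers to the projection $p_J(\Lambda) \leq G_J$ (the kernel of $p_J|_\Lambda$ is $\Lambda \cap \prod_{i \notin J} G_i$, which is finite by hypothesis when $J \neq I$). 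Thus without loss of generality $J = I$.

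The heart of the argument is to show that $\mathrm{stab}_G(x)$ is a discrete subgroup of $G$ for almost every $x$. Fix $i$ and decompose $(X,\nu)$ into $G_i$-ergodic components; on each component the action of $G_i$ is ergodic, hence mixing by Theorem~\ref{T:schmidt}, hence weakly mixing, and (modulo the finite center $Z(G_i)$) faithful. Running the dimension-counting argument of Theorem~\ref{T:lieactionssubgroups} for $G_i$ on each such component shows that the connected component of the identity in $\mathrm{stab}_G(x) \cap G_i$ is contained in $Z(G_i)$ for a.e.~$x$, so that if $C(x)$ denotes the connected component of the identity in $\mathrm{stab}_G(x)$, then $C(x) \cap G_i$ is connected and discrete, hence trivial, for each $i$. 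By Goursat's lemma applied to the centerless simple quotients $G_i/Z(G_i)$, any closed connected subgroup of $G$ with $C(x) \cap G_i = \{e\}$ for all $i$ must be the graph of isomorphisms among subfamilies of the $G_i$'s. I would rule out such diagonal stabilizers by extending the dimension argument to $X \times X$: working within $G_i$-ergodic components of $X \times X$ (which support ergodic $G_i$-actions by $G_i$-weak mixing on $X$), the intersection $C(x) \cap C(y)$ for generic $(x,y)$ must drop strictly in dimension below $\dim C(x)$, and iterating forces $C(x) = \{e\}$ a.s.

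With $\mathrm{stab}_G(x)$ discrete for a.e.~$x$, the proposition following Theorem~\ref{T:lieactionssubgroups} (applied with $G$ nondiscrete and the countable subgroup $\Lambda$, noting that $\Lambda \cap Z(G)$ is finite) gives that the restriction of the action to $\Lambda$ is essentially free modulo the finite subgroup $\Lambda \cap Z(G)$, yielding finite $\Lambda$-stabilizers. The main obstacle lies in the second paragraph: the absence of $G$-Howe--Moore (a product of Howe--Moore groups is typically not Howe--Moore) means one cannot invoke $G$-weak mixing directly to execute the diagonal dimension step, so the ``$C(x) \cap C(y)$ drops in dimension'' argument must be carried out inside $G_i$-ergodic components of $X \times X$, with careful bookkeeping to ensure these components are large enough to support the induction and that the projections of $C(x)$ to distinct factors can be jointly shrunk.
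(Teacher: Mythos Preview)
Your reduction in the first paragraph (passing to the subproduct $G_J$ on which the action is faithful, and replacing $\Lambda$ by its projection) is exactly what the paper does. The divergence begins immediately after, and the paper's route is much shorter than yours.

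Once the kernel subproduct is removed, the quotient $G_0$ is a connected semisimple real Lie group with no compact factors and trivial center (each factor is simple as an abstract group, so centerless). Such a group is \emph{minimally almost periodic}: it has no nontrivial finite-dimensional unitary representations, hence no nontrivial compact quotients, hence every ergodic measure-preserving action of $G_0$ is automatically weakly mixing. This is the one fact you are missing. With it in hand, Theorem~\ref{T:lieactionssubgroups} applies directly to the faithful ergodic $G_0$-action on $(X,\nu)$ and the countable subgroup $\mathrm{proj}_{G_0}\Lambda$, giving essential freeness of the $\mathrm{proj}_{G_0}\Lambda$-action; hence $\mathrm{stab}_\Lambda(x)$ lies in the kernel subproduct and is finite by hypothesis.

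Your factor-by-factor program---ergodic decomposition under each $G_i$, Goursat's lemma to classify connected subgroups of the product, and an inductive dimension drop on $X\times X$ carried out inside $G_i$-ergodic components---is aimed at proving discreteness of $\mathrm{stab}_G(x)$ without ever knowing the $G_0$-action is weakly mixing. As you yourself note, the bookkeeping for the diagonal step is not obviously sound: the $G_i$-ergodic components of $X\times X$ need not be $G$-invariant, so it is unclear how a dimension drop for $C(x)\cap G_i$ inside one such component interacts with the components for $G_j$, $j\neq i$, or how the projections of $C(x)$ to several factors can be shrunk simultaneously. This may well be salvageable, but it is unnecessary: the minimal almost periodicity of $G_0$ gives global weak mixing for free and collapses the whole argument to a single invocation of Theorem~\ref{T:lieactionssubgroups}.
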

\begin{proof}
Let $G \actson (X,\nu)$ be a nontrivial ergodic action.  Then the kernel of the action is some subproduct $G^{\prime}$ of the $G$-factors (as all are normal).  Let $G_{0} = G / G^{\prime}$.  By a result of Rothman \cite{rothman}, each simple factor of $G_{0}$, being a Howe-Moore group that is simple and connected, is necessarily a simple Lie group.  Hence $G_{0}$ is a minimally almost periodic group, being a semisimple Lie group without compact factors, so any ergodic action of $G_{0}$ is weakly mixing.  Since each factor is simple, $G_{0}$ has trivial center.  Therefore $G_{0} \actson (X,\nu)$ is a faithful weakly mixing action and so Theorem \ref{T:lieactionssubgroups} implies that $\mathrm{proj}_{G_{0}}~\Lambda$ acts essentially freely.  Therefore $\mathrm{proj}_{G_{0}}~\mathrm{stab}_{\Lambda}(x) = \{ e \}$ for almost every $x$ hence $\mathrm{stab}_{\Lambda}(x) \subseteq G^{\prime}$ and $|\Lambda \cap G^{\prime}| < \infty$ by hypothesis.
\end{proof}

\subsection{Actions of Lattices in Howe-Moore Groups}

In the case when $G$ is not connected we have a similar result:
\begin{definition}
A countable discrete group $\Gamma$ is \textbf{locally finite} when every finitely generated subgroup is finite.
\end{definition}

\begin{theorem}\label{T:mixing1}
Let $G$ be a locally compact second countable group and $\Gamma < G$ be a lattice in $G$.  Let $G \actson (X,\nu)$ be an ergodic measure-preserving action of $G$ such that the restriction of the action to $\Gamma$ is mixing.  Then either the kernel of the $G$-action is noncompact or $\mathrm{stab}_{\Gamma}(x) = \mathrm{stab}_{G}(x) \cap \Gamma$ is locally finite almost surely.
\end{theorem}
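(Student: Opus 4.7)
\textbf{Proof proposal for Theorem \ref{T:mixing1}.} The plan is to assume the kernel $N$ of the $G$-action is compact and then deduce local finiteness of $\mathrm{stab}_{\Gamma}(x)$ almost surely by a simple mixing argument applied to fixed-point sets of finitely generated subgroups. Since $\Gamma$ is countable, it has only countably many finitely generated subgroups; hence the set
\[
B = \{ x \in X : \mathrm{stab}_{\Gamma}(x)~\text{is not locally finite} \}
\]
is the union, over infinite finitely generated subgroups $H < \Gamma$, of the sets $E_{H} = \{ x \in X : H \subseteq \mathrm{stab}_{\Gamma}(x) \}$. It therefore suffices to show that $\nu(E_{H}) = 0$ for every infinite finitely generated $H < \Gamma$.

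The key observation is that $E_{H}$ is pointwise $H$-invariant: if every element of $H$ fixes $x$, then $H$ fixes $kx = x$ for any $k \in H$. Consequently $\nu(hE_{H} \cap E_{H}) = \nu(E_{H})$ for every $h \in H$. Since $\Gamma$ is discrete and $H$ is infinite, any sequence of distinct elements $h_{n} \in H$ leaves every finite subset of $\Gamma$, so by mixing of the $\Gamma$-action applied to $A = B = E_{H}$ we obtain
\[
\nu(E_{H}) = \lim_{n} \nu(h_{n} E_{H} \cap E_{H}) = \nu(E_{H})^{2},
\]
forcing $\nu(E_{H}) \in \{0,1\}$.

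To rule out $\nu(E_{H}) = 1$, suppose every element of $H$ fixes almost every point. Then $H$ is contained in the kernel of the $\Gamma$-action on $(X,\nu)$, which is exactly $N \cap \Gamma$. Under the standing assumption that $N$ is compact, $N \cap \Gamma$ is a discrete subset of a compact set and therefore finite, contradicting $|H| = \infty$. Hence $\nu(E_{H}) = 0$ for every infinite finitely generated $H < \Gamma$, and $\nu(B) = 0$, as required.

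No serious obstacle is expected: the only subtlety is the pointwise (as opposed to almost-sure) $H$-invariance of $E_{H}$, which makes $\nu(hE_{H} \cap E_{H}) = \nu(E_{H})$ an exact equality so that mixing can be applied directly. The compactness of $N$ enters solely to guarantee $|N \cap \Gamma| < \infty$ via discreteness of $\Gamma$.
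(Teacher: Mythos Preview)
The proposal is correct and follows essentially the same approach as the paper: both use that the fixed-point set of an infinite finitely generated subgroup $H<\Gamma$ is $H$-invariant, apply mixing along $H$ to force $\nu(E_H)\in\{0,1\}$, and then rule out $\nu(E_H)=1$ by noting it would place the infinite discrete group $H$ inside the (assumed compact) kernel. Your version is marginally cleaner in that it shows each $E_H$ is null directly and takes a countable union, whereas the paper first invokes ergodicity to pass from $\nu(B)>0$ to $\nu(B)=1$ before extracting a single $H$; the core mixing argument is identical.
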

\begin{proof}
Let $E = \{ x \in X : \mathrm{stab}_{\Gamma}(x) \text{ is locally finite} \}$.  Then $E$ is $\Gamma$-invariant.  Assume that $\nu(E) < 1$.  By ergodicity then $\nu(E) = 0$.  Then $\mathrm{stab}_{\Gamma}(x)$ contains a finitely generated infinite subgroup for almost every $x$.  Since there are countably many finitely generated infinite subgroups of $\Gamma$, there exists an infinite finitely generated subgroup $\Gamma_{0} < \Gamma$ and a positive measure set $F \subseteq X$ such that $\Gamma_{0} < \mathrm{stab}_{\Gamma}(x)$ for each $x \in F$.

Since the action of $\Gamma$ on $(X,\nu)$ is mixing, we have that $\nu(F) = 1$ (as $\Gamma_{0}$ is infinite so is unbounded in $\Gamma$ and therefore must also be mixing but $\Gamma_{0}$ acts trivially on $F$).  Therefore $\Gamma_{0}$ is contained in the kernel of the $G$-action which is therefore noncompact (as $\Gamma$ is a lattice so $\Gamma_{0}$ is unbounded in $G$).
\end{proof}

\begin{corollary}\label{C:howemoorea}
Let $G$ be a noncompact  locally compact second countable group with the Howe-Moore property and $\Gamma < G$ be a lattice.  Let $G \actson (X,\nu)$ be a nontrivial ergodic measure-preserving action.  Then $\mathrm{stab}_{\Gamma}(x)$ is locally finite for almost every $x$.
\end{corollary}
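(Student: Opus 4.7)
The plan is to deduce this directly from Theorem \ref{T:mixing1} by combining the Howe-Moore property (which forces every ergodic action to be mixing, via Theorem \ref{T:schmidt}) with the structural constraint that all proper closed normal subgroups of $G$ are compact (Proposition \ref{P:HMcompact}). These two ingredients between them rule out the bad alternative in Theorem \ref{T:mixing1}.

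Concretely, I would first apply Theorem \ref{T:schmidt}: since $G$ has the Howe-Moore property and $G \actson (X,\nu)$ is an ergodic measure-preserving action, the $G$-action is mixing. Next, I would verify that the restriction of this action to $\Gamma$ is again mixing, which is where the discreteness of $\Gamma$ enters: any sequence $\{\gamma_n\}$ in $\Gamma$ that eventually leaves every finite subset of $\Gamma$ must, by discreteness, eventually leave every compact subset of $G$, so mixing on $G$ transfers verbatim to mixing on $\Gamma$.

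With the restriction to $\Gamma$ mixing, Theorem \ref{T:mixing1} gives the dichotomy: either the kernel $N$ of the $G$-action on $(X,\nu)$ is noncompact, or $\mathrm{stab}_\Gamma(x)$ is locally finite for almost every $x$. To eliminate the first alternative, note that $N$ is a closed normal subgroup of $G$; by Proposition \ref{P:HMcompact}, if $N$ is noncompact then $N = G$, meaning $G$ acts trivially on $(X,\nu)$. But then every measurable subset of $X$ is $G$-invariant, so ergodicity forces $(X,\nu)$ to be a point mass, contradicting the assumption that the action is nontrivial. Therefore the second alternative must hold.

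There is no real obstacle here; the work has already been done in Theorem \ref{T:mixing1}, Theorem \ref{T:schmidt}, and Proposition \ref{P:HMcompact}, and the only minor step requiring a line of justification is the passage from $G$-mixing to $\Gamma$-mixing, which is immediate from $\Gamma$ being discrete in $G$.
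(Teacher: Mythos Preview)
Your proposal is correct and follows essentially the same route as the paper: invoke Theorem~\ref{T:schmidt} to get mixing, pass to mixing for $\Gamma$ (the paper phrases this as ``since $\Gamma$ is a lattice''; your discreteness argument is the same observation), apply Theorem~\ref{T:mixing1}, and rule out the noncompact-kernel alternative via Proposition~\ref{P:HMcompact} and nontriviality of the action.
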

\begin{proof}
The Howe-Moore property applied to the Koopman representation for $G \actson (X,\nu)$ implies that $G \actson (X,\nu)$ is mixing (Schmidt \cite{schmidtmixing} Theorem 3.6).  Since $\Gamma$ is a lattice in $G$ then $\Gamma \actson (X,\nu)$ by restriction is also mixing.  If $\mathrm{stab}_{\Gamma}(x)$ is not locally finite almost surely then by Theorem \ref{T:mixing1} the kernel $N$ of the $G$-action is a noncompact closed normal subgroup.  Since $G$ has Howe-Moore any proper normal subgroup is compact so the kernel is all of $G$.
\end{proof}

The following argument is due to R.~Tucker-Drob \cite{tuckerdrob} and we are grateful to him for allowing us to present it here:
\begin{corollary}\label{C:howemoore}
Let $G$ be a noncompact  locally compact second countable group with the Howe-Moore property and $\Gamma < G$ be a lattice.  Let $G \actson (X,\nu)$ be a nontrivial ergodic measure-preserving action.  Then $\mathrm{stab}_{\Gamma}(x)$ is finite for almost every $x$.
\end{corollary}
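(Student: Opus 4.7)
I argue by contradiction. Set $A := \{x \in X : |\mathrm{stab}_\Gamma(x)| = \infty\}$ and suppose $\nu(A) > 0$. Since $\mathrm{stab}_\Gamma(\gamma x) = \gamma\,\mathrm{stab}_\Gamma(x)\gamma^{-1}$, the set $A$ is $\Gamma$-invariant; by Corollary \ref{C:howemoorea}, $\mathrm{stab}_\Gamma(x)$ is locally finite for $\nu$-a.e.\ $x$, so on $A$ it is an infinite locally finite torsion group. By Tonelli, $\sum_{\gamma \in \Gamma}\nu(\mathrm{Fix}(\gamma)) = \int_X |\mathrm{stab}_\Gamma(x)|\,d\nu(x) = +\infty$.

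The main quantitative input from the Howe--Moore hypothesis is the vanishing lemma: for every sequence $g_n \to \infty$ in $G$, $\nu(\mathrm{Fix}(g_n)) \to 0$. By Theorem \ref{T:schmidt}, the $G$-action is mixing. I may assume $(X,\nu)$ nonatomic: otherwise the ergodic $G$-action on a finite space $G/H$ would force $H$ open of finite index in $G$, whence by Proposition \ref{P:HMopen} and noncompactness of $G$ one would have $H = G$, contradicting nontriviality. Fix $N \geq 1$ and partition $X = B_1 \sqcup \cdots \sqcup B_N$ into Borel pieces of measure $1/N$ each. Since $g_n$ fixes $\mathrm{Fix}(g_n)$ pointwise, $\mathrm{Fix}(g_n) \cap B_i \subseteq g_n B_i \cap B_i$, hence
\[
\nu(\mathrm{Fix}(g_n)) \;\leq\; \sum_{i=1}^{N}\nu(g_n B_i \cap B_i).
\]
Mixing sends each summand to $\nu(B_i)^2 = 1/N^2$, so $\limsup_n \nu(\mathrm{Fix}(g_n)) \leq 1/N$; letting $N \to \infty$ yields the lemma. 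Since $\Gamma$ is closed and discrete in $G$, the set $\Gamma_\varepsilon := \{\gamma \in \Gamma : \nu(\mathrm{Fix}(\gamma)) \geq \varepsilon\}$ is finite for every $\varepsilon > 0$.

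To finish, for each $N$ set $U_N := \bigcup_{\gamma \in \Gamma \setminus \Gamma_{1/N}}\mathrm{Fix}(\gamma)$; since $\Gamma_{1/N}$ is finite and $\mathrm{stab}_\Gamma(x)$ is infinite on $A$, each $x \in A$ admits some $\gamma \in \mathrm{stab}_\Gamma(x)\setminus\Gamma_{1/N}$, so $A \subseteq U_N$ for every $N$. The remaining and hardest step is to show $\nu\bigl(\bigcap_N U_N\bigr) = 0$, contradicting $\nu(A) > 0$. I expect this step to be carried out via a quantitative Borel--Cantelli-type estimate that uses mixing as an asymptotic-independence input, controlling pairwise measures $\nu(\mathrm{Fix}(\gamma) \cap \mathrm{Fix}(\gamma'))$ for $\gamma, \gamma' \in \Gamma \setminus \Gamma_{1/N}$ via the fact that $\gamma(\gamma')^{-1}$ ranges to infinity in $G$; the Tucker-Drob argument credited by the authors likely streamlines this case analysis for groups with torsion. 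In the torsion-free case this second half is avoided entirely: local finiteness from Corollary \ref{C:howemoorea} already forces $\mathrm{stab}_\Gamma(x) = \{e\}$ a.e.
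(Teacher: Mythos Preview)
Your proof is incomplete: the final step, showing $\nu\bigl(\bigcap_N U_N\bigr)=0$, is not proved but only ``expected,'' and the Borel--Cantelli strategy you sketch does not obviously go through. The first Borel--Cantelli lemma would require $\sum_{\gamma}\nu(\mathrm{Fix}(\gamma))<\infty$, which you already observed fails. A second-moment or quasi-independence version would need control of $\nu(\mathrm{Fix}(\gamma)\cap\mathrm{Fix}(\gamma'))$ in terms of $\nu(\mathrm{Fix}(\gamma))\nu(\mathrm{Fix}(\gamma'))$, but mixing gives you nothing of the sort: the only obvious bound is $\nu(\mathrm{Fix}(\gamma)\cap\mathrm{Fix}(\gamma'))\leq\nu(\mathrm{Fix}(\gamma(\gamma')^{-1}))$, and that quantity can be much larger than either factor individually (for instance when $\gamma,\gamma'$ lie in a common finite subgroup and $\gamma(\gamma')^{-1}$ is close to the identity in that subgroup). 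So the heuristic ``$\gamma(\gamma')^{-1}$ ranges to infinity'' does not produce the needed decorrelation.

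The paper's argument (due to Tucker-Drob) avoids this entirely via a structural fact you did not invoke: the Hall--Kulatilaka theorem, which says every infinite locally finite group contains an infinite \emph{abelian} subgroup. Given this, pass to $G'=G/K$ where $K$ is the compact kernel, so the action is faithful and mixing. If $\gamma\in\Gamma'$ lies in some infinite abelian subgroup $A<\Gamma'$, then $E_\gamma=\mathrm{Fix}(\gamma)$ is $A$-invariant (because $A$ is abelian: $\gamma a x = a\gamma x = ax$), and mixing plus faithfulness forces $\nu(E_\gamma)=0$. Now if the set of $x$ whose $\Gamma'$-stabilizer is infinite had positive measure, by Hall--Kulatilaka each such stabilizer contains an infinite abelian subgroup, and by countability some single $\gamma\neq e$ (lying in such a subgroup) would fix a positive-measure set, contradicting $\nu(E_\gamma)=0$. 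The abelian structure is exactly what converts ``$\gamma$ fixes $x$'' into an invariance statement to which mixing applies directly; your approach never isolates such a structure, which is why the final step stalls.

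Your vanishing lemma $\nu(\mathrm{Fix}(g_n))\to 0$ is correct and is a nice observation, but by itself it only tells you that individual fixed-point sets are small for elements far out in $G$, not that their union over an infinite set of $\gamma$'s is small.
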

\begin{proof}(Tucker-Drob \cite{tuckerdrob})
By the previous corollary, the stabilizer subgroups are locally finite almost surely.  Hall and Kulatilaka \cite{hallkulatilaka} showed that any infinite locally finite group contains an infinite abelian subgroup.

Since $G$ has Howe-Moore, the action is mixing and has compact kernel $K$.  Let $G^{\prime} = G / K$ and $\Gamma^{\prime} = \Gamma / \Gamma \cap K$.  Then $G^{\prime} \actson (X,\nu)$ is a faithful mixing action and $\Gamma^{\prime}$ is a lattice in $G^{\prime}$.

Let $\gamma \in \Gamma^{\prime}$, $\gamma \ne e$, such that there exists an infinite abelian subgroup $A < \Gamma^{\prime}$ with $\gamma \in A$.  Let $E_{\gamma} = \{ x \in X : \gamma x = x \}$.  Then for $a \in A$, $\gamma a x = a \gamma x = a x$ so $E_{\gamma}$ is an $A$-invariant set.  Since the action is mixing and faithful, and since $A$ is infinite and discrete, $\nu(E_{\gamma}) = 0$.

Let $F = \{ x \in X : \mathrm{stab}_{\Gamma^{\prime}}(x) \text{ contains an infinite abelian subgroup} \}$ and suppose $\nu(F) > 0$.  Since $\Gamma^{\prime}$ is countable there then exists some $\gamma \ne e$ such that $\nu\{ x \in F : \gamma x = x \} > 0$.  But this contradicts the above since $\gamma$ is then contained in an infinite abelian subgroup.

Hence $\mathrm{stab}_{\Gamma^{\prime}}(x)$ is finite for almost every $x$ and since $\Gamma \cap K$ is finite, then $\mathrm{stab}_{\Gamma}(x)$ is also finite for almost every $x$.
\end{proof}

\subsection{A Normal Subgroup of the Commensurator}

\begin{proposition}\label{P:upgrade}
Let $\Gamma$ be a finitely generated countable group that is not virtually abelian and let $\Lambda$ be a countable group such that $\Gamma <_{c} \Lambda$.  Let $\Lambda \actson (X,\nu)$ be a measure-preserving action such that $\mathrm{stab}_{\Lambda}(x)$ is infinite almost surely.  If $\mathrm{stab}_{\Gamma}(x)$ are finite on a positive measure set then $\Lambda$ contains an infinite normal subgroup $N \normal \Lambda$ such that $[\Gamma : \Gamma \cap N] = \infty$.
\end{proposition}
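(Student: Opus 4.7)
The plan begins with a reduction to the ergodic setting. Let $E = \{x \in X : \mathrm{stab}_\Gamma(x)\text{ is finite}\}$; this set has positive measure by hypothesis. I would first verify that $E$ is $\Lambda$-invariant: for $\lambda \in \Lambda$ and $x \in E$, conjugation by $\lambda^{-1}$ puts $\mathrm{stab}_\Gamma(\lambda x) = \Gamma \cap \lambda\mathrm{stab}_\Lambda(x)\lambda^{-1}$ in bijection with $\lambda^{-1}\Gamma\lambda \cap \mathrm{stab}_\Lambda(x)$; since $\Gamma \cap \lambda^{-1}\Gamma\lambda$ has finite index in $\lambda^{-1}\Gamma\lambda$ by commensuration, and its intersection with $\mathrm{stab}_\Lambda(x)$ sits inside $\mathrm{stab}_\Gamma(x)$, the displayed set is a finite union of finite cosets and hence finite. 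Passing to an ergodic component of $(E, \nu|_E)$ of positive measure yields an ergodic measure-preserving $\Lambda$-action in which $\mathrm{stab}_\Lambda$ is infinite and $\mathrm{stab}_\Gamma$ is finite almost surely.

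The natural candidate is the kernel $N_0 = \{\lambda \in \Lambda : \lambda x = x \text{ for $\nu$-a.e.\ }x\}$ of this ergodic action, which is automatically normal. Because $N_0 \cap \Gamma$ sits inside $\mathrm{stab}_\Gamma(x)$ almost surely, it is finite; since $\Gamma$ is infinite (finitely generated and not virtually abelian), this gives $[\Gamma : \Gamma \cap N_0] = \infty$ for free. It remains only to show $N_0$ is infinite, or else to produce an alternative normal subgroup. Letting $\eta = s_*\nu$ denote the ergodic invariant random subgroup on $S(\Lambda)$ coming from $s(x) = \mathrm{stab}_\Lambda(x)$, if $\eta$ is atomic then ergodicity forces $\eta = \delta_{H_0}$ with $H_0 \normal \Lambda$; the generic stabilizer $H_0$ is then infinite and contained in $N_0$, completing the argument in this case.

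The key difficulty is the remaining case, where $\eta$ is nonatomic and $N_0$ may genuinely be finite. Here, using finite generation of $\Gamma$ and thus countable additivity over the countably many finite subgroups of $\Gamma$, I would select $F < \Gamma$ finite with $A_F := \{x : \mathrm{stab}_\Gamma(x) = F\}$ of positive measure, and then analyze the pushforward of $\eta$ to the relative profinite completion via $\tau : \Lambda \to \rpf{\Lambda}{\Gamma}$ (Proposition \ref{P:idrpf}). Each closure $\overline{\tau(\mathrm{stab}_\Lambda(x))}$ is a closed subgroup of the totally disconnected group $\rpf{\Lambda}{\Gamma}$ meeting the compact-open subgroup $\overline{\tau(\Gamma)}$ only in (the closure of) the finite set $\tau(F)$, while the image of $\mathrm{stab}_\Lambda(x)$ itself is infinite. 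The plan is to extract an open normal subgroup $M$ of $\rpf{\Lambda}{\Gamma}$ generically contained in this closure and set $N := \tau^{-1}(M)$. The hypothesis that $\Gamma$ is not virtually abelian enters exactly as in Tucker-Drob's argument reproduced in the proof of Corollary \ref{C:howemoore}, via the Hall--Kulatilaka theorem, to rule out a locally finite abelian collapse of the generic stabilizer image and guarantee the desired $M$ is nontrivial; this extraction in the nonatomic regime is the main obstacle of the proof.
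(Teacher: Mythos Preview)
Your reduction to an ergodic component and the atomic-IRS case are fine, but the nonatomic case has a genuine gap, and your proposed route through the relative profinite completion cannot close it. You observe yourself that $\overline{\tau(\mathrm{stab}_\Lambda(x))}$ meets the compact open subgroup $\overline{\tau(\Gamma)}$ in a finite set; this forces $\overline{\tau(\mathrm{stab}_\Lambda(x))}$ to be a \emph{discrete} subgroup of $\rpf{\Lambda}{\Gamma}$. A discrete subgroup of a nondiscrete locally compact group contains no open subgroup, so there is no open normal $M$ to extract unless $\rpf{\Lambda}{\Gamma}$ is itself discrete, which by Proposition~\ref{P:discreterpf} is the very special situation where $\Gamma$ already contains a finite-index subgroup normal in $\Lambda$. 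The appeal to Hall--Kulatilaka and to the argument in Corollary~\ref{C:howemoore} is also misplaced: that argument uses mixing to kill fixed-point sets of infinite abelian subgroups, which has no analogue here, and it does not interact with the hypothesis that $\Gamma$ is not virtually abelian.

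The paper's proof takes a completely different, elementary route that avoids the IRS machinery entirely. Fix a finite $\Sigma<\Gamma$ with $E=\{x:\mathrm{stab}_\Gamma(x)=\Sigma\}$ of positive measure. For each $\lambda$ with $\nu(E_\lambda)>0$ where $E_\lambda=E\cap\mathrm{Fix}(\lambda)$, a simple covering argument with the $\Gamma_\lambda$-translates of $E_\lambda$ produces a finite $F\subset\Gamma_\lambda$ such that every $\gamma\in\Gamma_\lambda$ has $\nu(\gamma E_\lambda\cap fE_\lambda)>0$ for some $f\in F$; on that intersection one computes $\lambda^{-1}(f^{-1}\gamma)\lambda(f^{-1}\gamma)^{-1}\in\Sigma$, so $\gamma\lambda\gamma^{-1}\in F\lambda\Sigma F^{-1}$ and hence $[\Gamma:C_\Gamma(\lambda)]<\infty$. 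The normal subgroup is then the FC-centralizer $N=\{\lambda\in\Lambda:[\Gamma:C_\Gamma(\lambda)]<\infty\}$, which is infinite by the above and normal by commensuration. Finite generation and the not-virtually-abelian hypothesis enter only at the last step: if $\Gamma_0=\Gamma\cap N$ had finite index, it would be finitely generated with every element having finite-index centralizer, forcing $\Gamma_0$ (and hence $\Gamma$) to be virtually abelian.
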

\begin{proof}
Since there are only countably many finite subgroups of $\Gamma$, let us assume there exists some finite subgroup $\Sigma < \Gamma$ such that $\mathrm{stab}_{\Gamma}(x) = \Sigma$ for all $x \in E$ where $\nu(E) > 0$.

For $\lambda \in \Lambda$ define the set
\[
E_{\lambda} = \{ x \in X : \lambda x = x \} \cap E
\]
and denote by $\Gamma_{\lambda} = \Gamma \cap \lambda \Gamma \lambda^{-1}$ the subgroup with finite index in $\Gamma$ and $\lambda \Gamma \lambda^{-1}$.

By hypothesis, $\nu(E_{\lambda}) > 0$ for infinitely many $\lambda \in \Lambda$ (since otherwise $\mathrm{stab}_{\Lambda}(x)$ is finite for all $x \in E$ which has positive measure, see \cite{vershiktotallynonfree}).  For such $\lambda$, define $\overline{E_\lambda} \subset X$ to be $\cup_{\gamma \in \Gamma_{\lambda}} \gamma E_\lambda$.  For any $\epsilon > 0$ there exists a finite set $F \subseteq \Gamma_{\lambda}$ such that $\nu(\overline{E_{\lambda}}) - \nu(\cup_{f \in F} f E_{\lambda}) < \epsilon$.  Take $\epsilon = \nu(E_{\lambda})$.  Then there is a finite set $F \subseteq \Gamma_{\lambda}$ such that $\nu(\cup_{f \in F} f E_{\lambda}) > \nu(\overline{E_{\lambda}}) - \nu(E_{\lambda})$.

Then for each $\gamma \in \Gamma_{\lambda}$ there exists $f \in F$ such that $\nu(\gamma E_{\lambda} \cap f E_{\lambda}) > 0$ because $\gamma E_{\lambda} \subseteq \overline{E_{\lambda}}$ and $\nu(\gamma E_{\lambda}) = \nu(E_{\lambda})$.  For $x \in f^{-1} \gamma E_{\lambda} \cap E_{\lambda}$ we have that $x \in E$, $\lambda x = x$ and $\lambda \gamma^{-1}fx = \gamma^{-1}fx$, therefore
\[
\lambda^{-1} (f^{-1} \gamma) \lambda (f^{-1} \gamma)^{-1} x = \lambda^{-1} (f^{-1}\gamma) (f^{-1}\gamma)^{-1}x = \lambda^{-1} x = x
\]
and so $\lambda^{-1} (f^{-1} \gamma) \lambda (f^{-1} \gamma)^{-1} \in \Sigma$.  This in turn means that $\gamma \lambda \gamma^{-1} \in f \lambda \Sigma f^{-1} \subseteq F \lambda \Sigma F^{-1}$.  Since $F$ and $\Sigma$ are finite then the centralizer
\[
C_{\Gamma_{\lambda}}(\lambda) = \{ \gamma \in \Gamma_{\lambda} : \gamma \lambda \gamma^{-1} = \lambda \}
\]
has finite index in $\Gamma_{\lambda}$.  Therefore $[\Gamma : C_{\Gamma}(\lambda)] < \infty$ since $\Gamma_{\lambda}$ has finite index in $\Gamma$.

Consider the subgroup
\[
N = \{ \lambda \in \Lambda : [\Gamma : C_{\Gamma}(\lambda)] < \infty \}
\]
which is infinite by the above (it is a subgroup since $C_{\Gamma}(\lambda_{1}) \cap C_{\Gamma}(\lambda_{2}) \subseteq C_{\Gamma}(\lambda_{1}\lambda_{2})$).  Since $\Gamma <_{c} \Lambda$, for $\lambda_{0} \in \Lambda$,
\begin{align*}
\lambda_{0} N \lambda_{0}^{-1} &= \{ \lambda \in \Lambda : [\Gamma : C_{\Gamma}(\lambda_{0}^{-1}\lambda\lambda_{0})] < \infty \} \\
&= \{ \lambda \in \Lambda : [\Gamma : \lambda_{0}^{-1}C_{\Gamma}(\lambda)\lambda_{0}] < \infty \} \\
&= \{ \lambda \in \Lambda : [\lambda_{0} \Gamma \lambda_{0}^{-1} : C_{\Gamma}(\lambda)] < \infty \} \\
&= \{ \lambda \in \Lambda : [\Gamma : C_{\Gamma}(\lambda)] < \infty \} = N
\end{align*}
where the last line follows since $\Gamma \cap \lambda_{0} \Gamma \lambda_{0}^{-1}$ has finite index in $\Gamma$ by commensuration.

Therefore $N$ is an infinite normal subgroup of $\Lambda$.  If $[\Gamma : \Gamma \cap N] < \infty$ then there exists $\Gamma_{0} = \Gamma \cap N$ of finite index in $\Gamma$ such that for every $\gamma \in \Gamma_{0}$ we have that $[\Gamma_{0} : C_{\Gamma_{0}}(\gamma)] < \infty$.  Hence for any finite set $F \subseteq \Gamma_{0}$ we have that $[\Gamma_{0} : C_{\Gamma_{0}}(F)] < \infty$.  As $\Gamma$ is finitely generated so is $\Gamma_{0}$ so let $S$ be a finite generating set of $\Gamma_{0}$ and then $C_{\Gamma_{0}}(S)$ has finite index in $\Gamma_{0}$.  But $C_{\Gamma_{0}}(S)$ commutes with $\Gamma_{0}$ so $\Gamma_{0}$ is virtually abelian hence so is $\Gamma$.
\end{proof}

\subsection{Ensuring Actions of the Ambient Group ``Behave"}

\begin{theorem}\label{T:howemooreplan}
Let $G$ be a noncompact compactly generated locally compact second countable group with the Howe-Moore property.  Let $\Gamma < G$ be a finitely generated lattice that is not virtually abelian and let $\Lambda < G$ be a dense subgroup such that $\Gamma <_{c} \Lambda$.  Assume that for every compact normal subgroup $M \normal G$ we have that $|M \cap \Lambda| < \infty$.

Then for any nontrivial ergodic measure-preserving action of $G$ the restriction of the action to $\Lambda$ has finite stabilizers.
\end{theorem}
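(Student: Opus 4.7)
The plan is to argue by contradiction using Proposition \ref{P:upgrade} together with the dichotomy that the Howe-Moore property imposes on closed normal subgroups.  Fix a nontrivial ergodic measure-preserving action $G \actson (X,\nu)$.  By Corollary \ref{C:howemoore}, the Howe-Moore property already forces $\mathrm{stab}_\Gamma(x)$ to be finite for almost every $x$.  The task is to upgrade this to the commensurator $\Lambda$.  Suppose for contradiction that $\mathrm{stab}_\Lambda(x)$ is not almost surely finite.  The set on which it is finite is $\Lambda$-invariant, and since $\Lambda$ is dense in $G$ and measure-preserving actions of locally compact groups are continuous in $L^1$, every $\Lambda$-invariant measurable set is in fact $G$-invariant.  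By $G$-ergodicity this set must then be null, and so $\mathrm{stab}_\Lambda(x)$ is infinite almost everywhere.

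I would then apply Proposition \ref{P:upgrade}; its auxiliary hypotheses are easily met in the present context since lattices in compactly generated LCSC groups are finitely generated, and a virtually abelian lattice would force $G$ itself to be amenable, contradicting the nonamenability of a noncompact Howe-Moore group.  The proposition produces an infinite subgroup $N \normal \Lambda$ with $[\Gamma : \Gamma \cap N] = \infty$.  Take the closure $\overline{N}$ of $N$ in $G$: because $N$ is normalized by the dense subgroup $\Lambda$ and conjugation is jointly continuous on $G$, the closure $\overline{N}$ is a closed \emph{normal} subgroup of $G$.  Proposition \ref{P:HMcompact} then gives the Howe-Moore dichotomy: $\overline{N}$ is either compact or all of $G$.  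The compact case is immediate: the hypothesis on $\Lambda$ forces $|\overline{N} \cap \Lambda| < \infty$, contradicting that $N \subseteq \overline{N} \cap \Lambda$ is infinite.

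The main obstacle, and the subtlest part of the argument, is the remaining case $\overline{N} = G$, in which the density of $N$ in $G$ must somehow be shown to conflict with $\Gamma \cap N$ having infinite index in $\Gamma$.  To close this case I would appeal to the Normal Subgroup Theorem for Commensurators (Theorem \ref{T:CSnormalsubgroups}): its structural hypotheses are all visible under Howe-Moore and noncompactness, since a compact extension of an abelian group would admit nontrivial unitary characters whose matrix coefficients do not vanish at infinity (contradicting Howe-Moore), and under Proposition \ref{P:HMcompact} ``not cocompact closed normal'' coincides with ``compact proper normal'', matching our hypothesis on $M \cap \Lambda$ precisely.  The conclusion of Theorem \ref{T:CSnormalsubgroups} then forces $[\Gamma : \Gamma \cap N] < \infty$, directly contradicting the property of $N$ produced by Proposition \ref{P:upgrade}.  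The technical ingredient entering here is integrability of $\Gamma$ (equivalently, by the remark following Theorem \ref{T:CSnormalsubgroups}, property $(T)$ of $G$ when available); this is the point at which one must either invoke one of these additional conditions or develop a more direct density argument adapted to the specific Howe-Moore structure of $G$.
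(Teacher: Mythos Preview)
Your approach is essentially the paper's: invoke Corollary \ref{C:howemoore} for finite $\Gamma$-stabilizers almost surely, apply Proposition \ref{P:upgrade} to produce an infinite $N \normal \Lambda$ with $[\Gamma : \Gamma \cap N] = \infty$, and contradict this via the Normal Subgroup Theorem for Commensurators (Theorem \ref{T:CSnormalsubgroups}). Your detour through $\overline{N}$ and the compact/dense dichotomy from Proposition \ref{P:HMcompact} is unnecessary: the paper applies Theorem \ref{T:CSnormalsubgroups} directly to $N$ without passing to closures, since that theorem already covers all infinite normal subgroups of $\Lambda$ at once (and in your dense case you end up invoking it anyway). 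Your concern about integrability is on point and is exactly the same hypothesis the paper's proof implicitly carries; it is harmless in practice because every subsequent application (Corollaries \ref{C:maintheorem1}, \ref{C:maintheorem2}, \ref{C:actualmain}) assumes property $(T)$, which by the remark following Theorem \ref{T:CSnormalsubgroups} substitutes for integrability.
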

\begin{proof}
By Corollary \ref{C:howemoore}, since $G$ has the Howe-Moore property almost every $\Gamma$-stabilizer is finite, hence the restriction of the action to $\Gamma$ has finite stabilizers.  Suppose the restriction of the action to $\Lambda$ does not have finite stabilizers.  Then by Proposition \ref{P:upgrade} there exists an infinite normal subgroup $N \normal \Lambda$ such that $[\Gamma : \Gamma \cap N] < \infty$.  But $\Gamma < \Lambda < G$ satisfy the hypotheses of the Normal Subgroup Theorem for Commensurators due to Shalom and the first author \cite{CS14}, as any proper closed normal subgroup is compact by Howe-Moore, so for any normal subgroup $N \normal \Lambda$ we have that  $[\Gamma : \Gamma \cap N] < \infty$ or $|N| < \infty$.  This contradiction means the $\Lambda$-action has finite stabilizers.
\end{proof}

\subsection{Ensuring Actions of the Relative Profinite Completion ``Behave"}

To handle invariant random subgroups coming from the relative profinite completion we also need:
\begin{theorem}\label{T:howemooreplan2}
Let $H$ be a simple nondiscrete locally compact second countable totally disconnected group with the Howe-Moore property.  If $H \actson (X,\nu)$ is an ergodic measure-preserving action with open stabilizer subgroups then $(X,\nu)$ is trivial.
\end{theorem}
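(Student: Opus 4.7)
The plan is to analyze the invariant random subgroup $\eta = s_{*}\nu$ defined by the Borel $H$-equivariant stabilizer map $s : X \to S(H)$, $s(x) = \mathrm{stab}_{H}(x)$; ergodicity of $\nu$ transfers to $\eta$. By hypothesis $\eta$ is supported on open subgroups of $H$, and by Proposition \ref{P:HMopen} every such subgroup is either $H$ itself or compact open. The singleton $\{H\}$ is a conjugation-invariant Borel subset of $S(H)$, so ergodicity gives $\eta(\{H\}) \in \{0,1\}$. If $\eta(\{H\}) = 1$ then $H$ acts trivially on $X$ almost everywhere, so every measurable set is essentially $H$-invariant, and ergodicity forces it to have measure $0$ or $1$; hence $(X,\nu)$ is a single atom, as required. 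The case to rule out is therefore $\eta(\{H\}) = 0$.

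Assume $\eta(\{H\}) = 0$. Van Dantzig's theorem together with second countability furnishes a countable decreasing basis $V_{1} \supseteq V_{2} \supseteq \cdots$ of the identity in $H$ consisting of compact open subgroups. Every compact open subgroup of $H$ is a neighbourhood of $e$, hence contains some $V_{n}$, so the sets $F_{n} = \{x : V_{n}x = x\}$ cover $X$ modulo the null set $s^{-1}(\{H\})$. It therefore suffices to show $\nu(F_{n}) = 0$ for every $n$. By Schmidt's Theorem \ref{T:schmidt} the Howe--Moore property makes the action mixing, so $\nu(F_{n} \cap hF_{n}) \to \nu(F_{n})^{2}$ as $h \to \infty$. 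Since $hF_{n} = F_{hV_{n}h^{-1}}$, any point of $F_{n} \cap hF_{n}$ has stabilizer containing the closed open subgroup $L_{h} := \overline{\langle V_{n}, hV_{n}h^{-1} \rangle}$; by Proposition \ref{P:HMopen} either $L_{h} = H$ or $L_{h}$ is compact open, and when $L_{h} = H$ one has $F_{n} \cap hF_{n} \subseteq s^{-1}(\{H\})$, which is null.

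It therefore suffices to show that $L_{h} = H$ for all $h$ outside a compact subset of $H$, for then the mixing limit yields $\nu(F_{n})^{2} = 0$, and summing over $n$ gives $\nu(X) = 0$, the desired contradiction. This compactness claim is the main obstacle, and I would prove it by contradiction: if $h_{k} \to \infty$ with $L_{h_{k}}$ compact open, each $L_{h_{k}}$ is a compact open subgroup containing the fixed $V_{n}$, hence a finite-index extension of $V_{n}$ determined by a finite subset of the countable discrete coset space $H/V_{n}$. There are therefore only countably many possibilities, so a pigeonhole gives $L_{h_{k}} = L$ for a fixed $L$ along a subsequence. Since $L \supseteq h_{k}V_{n}h_{k}^{-1}$, the conjugate $h_{k}^{-1}Lh_{k}$ also contains $V_{n}$, and a second pigeonhole yields a fixed compact open $K$ with $h_{k}Kh_{k}^{-1} = L$ along a further subsequence; this places all such $h_{k}$ in a single coset of $N_{H}(K)$. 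Simplicity of $H$ together with $K$ being a proper nontrivial open subgroup forces $K$ to be non-normal, so $N_{H}(K)$ is a proper open subgroup of $H$, hence compact by Proposition \ref{P:HMopen}, contradicting $h_{k} \to \infty$.
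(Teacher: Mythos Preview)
Your argument has a genuine gap at the pigeonhole step. You write that since there are only countably many compact open subgroups containing $V_n$, ``a pigeonhole gives $L_{h_k} = L$ for a fixed $L$ along a subsequence.'' This inference is invalid: an infinite sequence taking values in a countable set need not have any value repeated infinitely often (consider $h_k$ chosen so that the $L_{h_k}$ enumerate \emph{distinct} compact open overgroups of $V_n$). The same flaw recurs in the ``second pigeonhole'' on $h_k^{-1}Lh_k$. Without these steps your claim that $L_h = H$ for all $h$ outside a compact set is unproved, and the mixing argument cannot conclude $\nu(F_n)=0$. The claim itself is plausible (and true for algebraic groups over local fields, via the Bruhat--Tits building: $\langle V_n, hV_nh^{-1}\rangle$ is bounded iff the fixed-point sets of $V_n$ and $hV_nh^{-1}$ meet), but establishing it for an abstract simple totally disconnected Howe--Moore group requires a real argument, not pigeonhole.

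The paper avoids this obstacle entirely by exploiting countability in the opposite direction. Rather than covering $X$ by the sets $F_n$ and invoking mixing, it observes that since there are only countably many compact open subgroups, some specific $K_0$ satisfies $\nu(E)>0$ where $E=\{x:\mathrm{stab}_H(x)=K_0\}$. The translates $hE$ for $h$ ranging over distinct $N_H(K_0)$-cosets are pairwise disjoint (the stabilizers are distinct conjugates of $K_0$), all of measure $\nu(E)$, and by ergodicity cover $X$; hence there are only finitely many, say $h_1E,\ldots,h_nE$. The intersection $K=\bigcap_j h_jK_0h_j^{-1}$ is then open, nontrivial (as $H$ is nondiscrete), and contained in every stabilizer, hence normal --- contradicting simplicity. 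No mixing, no analysis of $L_h$, and no pigeonhole are needed.
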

\begin{proof}
Suppose $(X,\nu)$ is nontrivial so that $\mathrm{stab}_{H}(x) \ne H$ almost surely.
For almost every $x \in X$, since $\mathrm{stab}_{H}(x)$ is open in $H$ and $H$ has Howe-Moore then $\mathrm{stab}_{H}(x)$ is compact almost surely.  There are only countably many compact open subgroups of $H$ (as $H$ is second countable) so there exists $E \subseteq X$ with $\nu(E) > 0$ and $K_{0}$ a compact open subgroup such that $\mathrm{stab}_{H}(x) = K_{0}$ for all $x \in E$.  Now for $h \in H \setminus N_{H}(K_{0})$ we have that $hE \cap E = \emptyset$ (since $\mathrm{stab}_{H}(hx) = hK_{0}h^{-1}$ for $x \in E$).  As $\nu(E) > 0$ and $\nu$ is preserved by $H$ there exists a finite collection $h_{1},h_{2},\ldots,h_{n} \in H$ such that $X = \sqcup_{j=1}^{n} h_{j}E$.  Then $K = \cap_{j=1}^{n} h_{j}K_{0}h_{j}^{-1}$ is a compact open subgroup and $K < \mathrm{stab}_{H}(x)$ for almost every $x$ hence $K$ is in the kernel of the $H$-action.  As $H$ is simple and $K$ is nontrivial (since $H$ is nondiscrete) then the kernel is all of $H$ so $X$ is trivial.
\end{proof}

\begin{proposition}\label{P:usefulproduct}
Let $H = H_{1} \times \cdots \times H_{m}$ be a product of locally compact second countable groups where each $H_{j}$ has the property that any ergodic measure-preserving action of $H_{j}$ with open stabilizer subgroups is necessarily on the trivial space.  Then any ergodic measure-preserving action of $H$ with open stabilizer subgroups is necessarily on the trivial space.
\end{proposition}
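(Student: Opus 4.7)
The plan is to proceed by an ergodic-decomposition argument applied one factor at a time; in fact no induction is needed, we can treat the factors independently. Fix an ergodic measure-preserving action $H \actson (X,\nu)$ with open stabilizers, and fix an index $j \in \{1,\ldots,m\}$. I will show that the restricted action $H_j \actson (X,\nu)$ is trivial; since this holds for every $j$ and $H$ is generated by $H_1,\ldots,H_m$, the full action is trivial, and ergodicity forces $(X,\nu)$ to be a point.

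To carry out the restriction step, I first observe that the $H_j$-stabilizer of almost every point is open in $H_j$: since each $H_j$ is a closed subgroup of $H$ and
\[
\mathrm{stab}_{H_j}(x) = \mathrm{stab}_H(x) \cap H_j,
\]
openness of $\mathrm{stab}_H(x)$ in $H$ gives openness of $\mathrm{stab}_{H_j}(x)$ in $H_j$. Then I pass to the $H_j$-ergodic decomposition $(X,\nu) = \int_Z (X_z,\nu_z)\, d\zeta(z)$; almost every fiber is an ergodic measure-preserving $H_j$-space, and the $H_j$-stabilizers along the fiber coincide with $\mathrm{stab}_{H_j}(x)$, hence remain open. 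The hypothesis on $H_j$ now applies to each such fiber and forces $(X_z,\nu_z)$ to be trivial, i.e.\ supported on a single point. Consequently $H_j$ acts trivially on $(X,\nu)$.

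Applying this to each $j \in \{1,\ldots,m\}$ shows that every factor $H_j$ acts trivially, so the product action of $H$ on $(X,\nu)$ is trivial. Since $\nu$ is $H$-ergodic and $H$ acts trivially, $(X,\nu)$ must be a single atom, which is what we wanted to show.

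There is no substantive obstacle here; the only point that needs a small sanity check is that the ergodic components of the $H_j$-restriction genuinely inherit the open-stabilizer hypothesis, and this is immediate from the identity $\mathrm{stab}_{H_j}(x) = \mathrm{stab}_H(x) \cap H_j$ together with closedness of $H_j$ in $H$. The argument uses neither property $(T)$ nor the Howe--Moore property of the factors; it relies only on the stated hypothesis about open-stabilizer actions of each $H_j$ and on the existence of the $H_j$-ergodic decomposition of a standard measure-preserving action.
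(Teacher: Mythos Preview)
Your argument is correct. The passage to the $H_j$-ergodic decomposition is legitimate (these are locally compact second countable groups acting measure-preservingly on a standard space), and the openness of $\mathrm{stab}_{H_j}(x) = \mathrm{stab}_H(x) \cap H_j$ passes to $\nu_z$-almost every point for $\zeta$-almost every component since $\nu = \int \nu_z\,d\zeta(z)$. Once each ergodic component is a point mass, $H_j$ fixes $\nu$-almost every point, and running over all $j$ gives triviality of the $H$-action, hence of $(X,\nu)$ by ergodicity.

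The paper's proof reaches the same conclusion by a different route, staying inside the invariant random subgroup framework it has built. Rather than decomposing the $H_j$-action, it pushes the stabilizer map to $(S(H),\eta)$ and then applies the projection $p_j : S(H) \to S(H_j)$, $L \mapsto L \cap H_j$. The key observation there is that conjugation by the factors $H_i$ with $i \ne j$ is trivial on $S(H_j)$, so the pushforward $\eta_j = (p_j)_*\eta$ is not merely $H$-ergodic but $H_j$-ergodic; it is then realized (via Theorem~\ref{T:gaussian}) as the stabilizer IRS of an ergodic $H_j$-action with open stabilizers, forcing $\eta_j = \delta_{H_j}$. Your approach is more elementary in that it avoids both the IRS language and the appeal to Theorem~\ref{T:gaussian}; the paper's approach has the virtue of being uniform with the surrounding arguments and of isolating exactly which invariant (the IRS $\eta_j$) is being controlled.
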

\begin{proof}
Let $H \actson (X,\nu)$ be an ergodic measure-preserving action.  Let $(S(H),\eta)$ be the ergodic open invariant random subgroup corresponding to the $(X,\nu)$ stabilizers.  Fix $j$ and consider the map $p_{j} : S(H) \to S(H_{j})$ by $p_{j}(L) = L \cap H_{j}$ (meaning that $p_{1}(L) = L \cap H_{1} \times \{ e \} \times \cdots \times \{ e \}$).  Treat $S(H_{j})$ as an $H$-space where $H_{i}$ acts trivially on $S(H_{j})$ for $i \ne j$.  Then $p_{j}$ is an $H$-map from $(S(H),\eta)$ to $(S(H_{j}),\eta_{j})$ where $\eta_{j} = (p_{j})_{*}\eta$.

Since $\eta$-almost every $L \in S(H)$ is open so is $\eta_{j}$-almost every $L_{j} < H_{j}$.  Since $\eta$ is $H$-ergodic, $\eta_{j}$ is $H_{j}$-ergodic hence corresponds to an ergodic action of $H_{j}$ with open stabilizer subgroups  (Theorem \ref{T:gaussian}).  By hypothesis then $\eta_{j} = \delta_{H_{j}}$.  As this holds for each $j$, for $\eta$-almost every $L < H$ we have that $L \cap H_{j} = H_{j}$ hence $\langle H_{1}, \cdots, H_{m} \rangle \subseteq L$ and therefore $\eta = \delta_{H}$.  So $\overline{\mathrm{stab}_{\Lambda}(x)} = H$ for almost every $x$ and therefore $(X,\nu)$ is the trivial space.
\end{proof}

\begin{proposition}\label{P:usefulproduct2}
Let $H$ be a restricted infinite product $\prod^{\prime} H_{j}$ of locally compact second countable groups where each $H_{j}$ has the property that any ergodic measure-preserving action of $H_{j}$ with open stabilizer subgroups is necessarily on the trivial space.  Then any ergodic measure-preserving action of $H$ with open stabilizer subgroups is necessarily on the trivial space.
\end{proposition}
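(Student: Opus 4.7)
The plan is to follow the strategy of Proposition \ref{P:usefulproduct} verbatim, and then to close the argument by a density observation specific to the restricted product topology. Recall that the restricted product $H=\prod'_{j} H_{j}$ comes equipped with distinguished compact open subgroups $K_{j}<H_{j}$, and a base of neighborhoods of the identity is given by sets of the form $V_{1}\times\cdots\times V_{n}\times \prod_{j>n} K_{j}$ with $V_{i}$ ranging over identity neighborhoods of $H_{i}$.

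Start with an ergodic measure-preserving action $H\actson(X,\nu)$ with open stabilizers and let $\eta\in P(S(H))$ be the associated invariant random subgroup, which is open and $H$-ergodic. For each $j$, identify $H_{j}$ with its canonical embedding in $H$ (identity in all other coordinates) and consider the map $p_{j}:S(H)\to S(H_{j})$ defined by $p_{j}(L)=L\cap H_{j}$. Because distinct factors commute inside $H$, exactly as in the proof of Proposition \ref{P:usefulproduct} the map $p_{j}$ is $H$-equivariant when $H_{i}$ for $i\ne j$ is made to act trivially on $S(H_{j})$ and $H_{j}$ acts by conjugation. The pushforward $\eta_{j}=(p_{j})_{*}\eta$ is therefore an $H_{j}$-ergodic invariant random subgroup of $H_{j}$; moreover it is open, since $L\cap H_{j}$ is an open subgroup of $H_{j}$ whenever $L$ is open in $H$.

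By Theorem \ref{T:gaussian}, $\eta_{j}$ is realized by some ergodic measure-preserving $H_{j}$-action with open stabilizers, so the hypothesis on $H_{j}$ forces $\eta_{j}=\delta_{H_{j}}$. Intersecting over the countably many $j$, one concludes that for $\eta$-almost every $L\in S(H)$ one has $H_{j}\subseteq L$ for every $j$, and hence $\bigoplus_{j} H_{j}\subseteq L$.

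The step that has no analogue in the finite case, and which I expect to be the only nonroutine point, is to show that the algebraic direct sum $\bigoplus_{j} H_{j}$ is dense in $H$. Given $g=(g_{j})\in H$, fix $N$ such that $g_{j}\in K_{j}$ for all $j>N$, and consider an arbitrary basic neighborhood $g\cdot W$ with $W=V_{1}\times\cdots\times V_{n}\times \prod_{j>n} K_{j}$; we may assume $n\geq N$. The finitely supported element $h=(g_{1},\ldots,g_{n},e,e,\ldots)$ satisfies $(g^{-1}h)_{j}=e\in V_{j}$ for $j\leq n$ and $(g^{-1}h)_{j}=g_{j}^{-1}\in K_{j}$ for $j>n\geq N$, so $h\in g\cdot W\cap \bigoplus_{j} H_{j}$. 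Because each $L$ is a closed subgroup of $H$ and contains the dense subgroup $\bigoplus_{j} H_{j}$, we conclude $L=H$ for $\eta$-almost every $L$. Thus $\eta=\delta_{H}$, the stabilizer of almost every point is all of $H$, and $(X,\nu)$ must be trivial.
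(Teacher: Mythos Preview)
Your proof is correct and follows exactly the approach the paper intends: the paper's own proof is simply the one-line remark ``This follows exactly as in the proof of Proposition \ref{P:usefulproduct},'' and you have reproduced that argument while making explicit the one extra step (density of $\bigoplus_j H_j$ in the restricted product) that the finite case does not require. One minor point: to realize the ergodic invariant random subgroup $\eta_j$ by an ergodic action you should cite Proposition \ref{P:gaussian} rather than Theorem \ref{T:gaussian}, though the paper itself is loose about this in the proof of Proposition \ref{P:usefulproduct}.
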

\begin{proof}
This follows exactly as in the proof of Proposition \ref{P:usefulproduct}.
\end{proof}

\section[Actions of Commensurators in Howe-Moore (T) Groups]{Actions of Commensurators in Howe-Moore $(T)$ Groups}\label{S:howemoorecomm}

Here we apply the results of the previous sections to derive concrete consequences about actions of commensurators in groups with Howe-Moore and property $(T)$:
\begin{corollary}\label{C:maintheorem1}
Let $G$ be a noncompact locally compact second countable group with the Howe-Moore property and property $(T)$.  Let $\Gamma < G$ be a lattice and $\Lambda < G$ be a countable dense subgroup such that $\Gamma <_{c} \Lambda$ and such that $\Lambda$ has finite intersection with every compact normal subgroup of $G$.

Then any ergodic measure-preserving action of $\Lambda$ either has finite stabilizers or the restriction of the action to $\Gamma$ has finite orbits.
\end{corollary}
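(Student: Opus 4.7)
The plan is to verify the hypotheses of Corollary \ref{C:main} and then invoke it directly; this corollary is essentially a packaging of Corollary \ref{C:main} with Theorem \ref{T:howemooreplan}, so no new dynamical content is required.

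First I would observe that property $(T)$ implies $G$ is compactly generated (noted in the remark following Theorem \ref{T:CSnormalsubgroups}), so the standing hypotheses of Theorem \ref{T:howemooreplan} are satisfied by $G$, $\Gamma$, and $\Lambda$: namely, $G$ is a noncompact compactly generated lcsc Howe-Moore group, $\Gamma < G$ is a lattice, $\Lambda$ is countable, dense, and commensurates $\Gamma$, and $\Lambda \cap M$ is finite for every compact normal $M \normal G$. Applying Theorem \ref{T:howemooreplan} then yields that for every \emph{nontrivial} ergodic measure-preserving action of $G$, the restriction to $\Lambda$ has finite stabilizers.

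Next I would dispose of the trivial case: if $G$ acts ergodically and measure-preservingly on a one-point space, the restriction to $\Gamma$ has the single orbit which is finite. Combining with the previous paragraph, every ergodic measure-preserving action of $G$ satisfies the dichotomy that either the restriction to $\Lambda$ has finite stabilizers or the restriction to $\Gamma$ has finite orbits. This is exactly the hypothesis appearing in Corollary \ref{C:main}.

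Since $G$ has property $(T)$ by assumption, the remaining hypotheses of Corollary \ref{C:main} are satisfied, and applying it yields the stated dichotomy for ergodic measure-preserving $\Lambda$-actions. The main difficulty of the result is not located in this corollary but rather in the proofs of Theorem \ref{T:main} and Theorem \ref{T:howemooreplan}: the former relies on the Piecewise Intermediate Contractive Factor Theorem (Theorem \ref{T:groupoidcontractive1}) together with amenability of Poisson boundaries and the Gaussian realization of invariant random subgroups, while the latter combines the Howe-Moore mixing/Hall--Kulatilaka argument of Corollary \ref{C:howemoore} with the Creutz--Shalom Normal Subgroup Theorem (Theorem \ref{T:CSnormalsubgroups}) to rule out infinite $\Lambda$-stabilizers.
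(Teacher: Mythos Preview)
Your proposal is correct and follows exactly the paper's approach: the paper's proof is the single line ``This follows from Corollary \ref{C:main} and Theorem \ref{T:howemooreplan},'' and you have simply unpacked the minor verifications (compact generation from property $(T)$, the trivial-action case) needed to see that those two results apply.
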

\begin{proof}
This follows from Corollary \ref{C:main} and Theorem \ref{T:howemooreplan} (note that $\Gamma$ inherits property $(T)$ so must be finitely generated and cannot be virtually abelian).
\end{proof}

\begin{corollary}\label{C:maintheorem2}
Let $G$ be a noncompact locally compact second countable group with the Howe-Moore property and property $(T)$.  Let $\Gamma < G$ be a lattice and $\Lambda < G$ be a countable dense subgroup such that $\Gamma <_{c} \Lambda$ and such that $\Lambda$ has finite intersection with every compact normal subgroup of $G$.

The commensurability classes of infinite ergodic invariant random subgroups of $\Lambda$ are in one-one, onto correspondence with the commensurability classes of open ergodic invariant random subgroups of $\rpf{\Lambda}{\Gamma}$.
\end{corollary}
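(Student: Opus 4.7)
The plan is to deduce the corollary directly from the general one-one correspondence of Theorem \ref{T:oneone} by verifying its dynamical hypothesis using the Howe-Moore machinery developed in Section \ref{S:howemoore}. Specifically, Theorem \ref{T:oneone} requires that for every ergodic measure-preserving action of $G$, either the restriction to $\Lambda$ has finite stabilizers or the restriction to $\Gamma$ has finite orbits. My task reduces to checking this dichotomy.

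First I would observe that property $(T)$ implies $G$ is compactly generated, so the hypotheses of Theorem \ref{T:howemooreplan} are in force: $G$ is a noncompact compactly generated locally compact second countable group with the Howe-Moore property, $\Gamma<G$ is a lattice, $\Lambda<G$ is dense, $\Gamma<_c\Lambda$, and $\Lambda$ meets every compact normal subgroup of $G$ in a finite set. Theorem \ref{T:howemooreplan} then yields that every \emph{nontrivial} ergodic measure-preserving action of $G$ has finite $\Lambda$-stabilizers almost surely. For the trivial (one-point) action, $\Gamma$-orbits are trivially finite. Thus the dichotomy needed by Theorem \ref{T:oneone} holds for every ergodic measure-preserving $G$-action.

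Having verified the hypothesis, Theorem \ref{T:oneone} immediately produces the required one-one, onto correspondence between commensurability classes of infinite ergodic invariant random subgroups of $\Lambda$ and commensurability classes of open ergodic invariant random subgroups of $\rpf{\Lambda}{\Gamma}$. The correspondence itself is given by the map $c\colon S(\Lambda)\to S(\rpf{\Lambda}{\Gamma})$, $c(L)=\overline{\tau(L)}$, with inverse induced by $d(M)=\tau^{-1}(M\cap\tau(\Lambda))$, exactly as in the proof of Theorem \ref{T:oneone}.

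No step here is a genuine obstacle since all the heavy lifting has already been done in the previous sections: the ergodic-theoretic input is Corollary \ref{C:howemoore} (Howe-Moore $\Rightarrow$ finite $\Gamma$-stabilizers), upgraded via Proposition \ref{P:upgrade} and the Normal Subgroup Theorem for Commensurators (Theorem \ref{T:CSnormalsubgroups}) inside Theorem \ref{T:howemooreplan}, and the invariant random subgroup bookkeeping is encapsulated in Theorem \ref{T:oneone}. The only thing one might pause on is confirming that the trivial $G$-action does not cause a degenerate case in the dichotomy, but as noted above it falls on the finite-$\Gamma$-orbit side, so Theorem \ref{T:oneone} applies without modification.
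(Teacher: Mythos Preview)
Your proposal is correct and matches the paper's own proof, which simply reads ``This follows from Theorem \ref{T:oneone} and Theorem \ref{T:howemooreplan}.'' You have merely spelled out the verification (property $(T)$ gives compact generation, and the trivial action falls on the finite-$\Gamma$-orbit side), which is exactly what is implicit in the paper's one-line argument.
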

\begin{proof}
This follows from Theorem \ref{T:oneone} and Theorem \ref{T:howemooreplan}.
\end{proof}

\begin{corollary}\label{C:actualmain}
Let $G$ be a noncompact locally compact second countable group with the Howe-Moore property and property $(T)$.  Let $\Gamma < G$ be a lattice and $\Lambda < G$ be a countable dense subgroup such that $\Gamma <_{c} \Lambda$ and such that $\Lambda$ has finite intersection with every compact normal subgroup of $G$.

Assume that $\rpf{\Lambda}{\Gamma}$ is isomorphic to a finite (or restricted infinite) product $\prod H_{j}$ such that each $H_{j}$ is a simple nondiscrete locally compact second countable group with the Howe-Moore property.

Then any ergodic measure-preserving action of $\Lambda$ either has finite orbits or has finite stabilizers.
\end{corollary}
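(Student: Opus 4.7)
The proof will essentially be an assembly of the machinery already developed. The strategy is to verify that all the hypotheses of Corollary \ref{T:useful} are satisfied under the assumptions given, then invoke it directly.

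First, I will check the hypothesis concerning ergodic measure-preserving actions of $G$. Since $G$ has property $(T)$, it is compactly generated, and together with the Howe-Moore property and the assumption that $\Lambda$ has finite intersection with every compact normal subgroup of $G$, Theorem \ref{T:howemooreplan} applies. It yields the stronger conclusion that for every nontrivial ergodic measure-preserving action of $G$, the restriction to $\Lambda$ has finite stabilizers. Thus the first hypothesis of Corollary \ref{T:useful} holds.

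Next, I will verify the hypothesis concerning actions of $H = \rpf{\Lambda}{\Gamma}$ with open stabilizers. By assumption $H = \prod H_j$, where each $H_j$ is simple, nondiscrete, locally compact second countable, and has the Howe-Moore property. Observe that each $H_j$ must be totally disconnected: indeed, $H$ itself is totally disconnected by the general theory of the relative profinite completion (the theorem in Section \ref{sec:rpf}), and hence every factor $H_j$ inherits total disconnectedness. Now Theorem \ref{T:howemooreplan2} tells us that any ergodic measure-preserving action of $H_j$ with open stabilizer subgroups must be on the trivial space. Applying Proposition \ref{P:usefulproduct} in the finite product case (or Proposition \ref{P:usefulproduct2} in the restricted infinite product case), the same holds for the full product $H$. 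Thus the second hypothesis of Corollary \ref{T:useful} holds.

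With both hypotheses verified, Corollary \ref{T:useful} yields precisely the desired conclusion: any ergodic measure-preserving action of $\Lambda$ on a probability space either has finite orbits or has finite stabilizers. The main obstacle, if any, is bookkeeping---making sure that total disconnectedness of each $H_j$ is automatic from the construction of the relative profinite completion, and confirming that property $(T)$ indeed delivers compact generation so that Theorem \ref{T:howemooreplan} applies. Neither step requires new ideas; the whole argument is a clean assembly of the preceding results.
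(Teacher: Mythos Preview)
Your proposal is correct and follows essentially the same route as the paper's own proof: invoke Theorem \ref{T:howemooreplan} for the first hypothesis of Corollary \ref{T:useful}, then Theorem \ref{T:howemooreplan2} on each factor $H_j$ together with Propositions \ref{P:usefulproduct} and \ref{P:usefulproduct2} for the second hypothesis, and conclude. Your additional remarks --- that property $(T)$ supplies the compact generation needed for Theorem \ref{T:howemooreplan}, and that total disconnectedness of each $H_j$ (required by Theorem \ref{T:howemooreplan2}) is inherited from the relative profinite completion --- are correct and fill in details the paper leaves implicit.
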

\begin{proof}
By Theorem \ref{T:howemooreplan}, any nontrivial ergodic action of $G$ has finite stabilizers when restricted to $\Lambda$.  Theorem \ref{T:howemooreplan2} applied to each $H_{j}$ says that open ergodic invariant random subgroups of $H_{j}$ correspond to the trivial space.  Theorem \ref{T:useful} combined with Propositions \ref{P:usefulproduct} and \ref{P:usefulproduct2} then gives the conclusion.
\end{proof}

\begin{corollary}\label{C:actualmain2}
Let $G$ be a product of connected noncompact locally compact second countable groups with the Howe-Moore property and property $(T)$.  Let $\Gamma < G$ be a lattice and $\Lambda < G$ be a countable dense subgroup such that $\Gamma <_{c} \Lambda$ and such that $\Lambda$ has finite intersection with every proper closed normal subgroup of $G$.

Assume that $\rpf{\Lambda}{\Gamma}$ is isomorphic to a finite (or restricted infinite) product $\prod H_{j}$ such that each $H_{j}$ is a simple nondiscrete locally compact second countable group with the Howe-Moore property.

Then any ergodic measure-preserving action of $\Lambda$ either has finite orbits or has finite stabilizers.
\end{corollary}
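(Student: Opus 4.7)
The plan mirrors the proof of Corollary \ref{C:actualmain}. By Corollary \ref{T:useful}, it suffices to verify two things: first, that every nontrivial ergodic measure-preserving action of $G$ restricts to $\Lambda$ with finite stabilizers, and second, that every ergodic measure-preserving action of $\rpf{\Lambda}{\Gamma}$ with open stabilizer subgroups is on the trivial space. Property $(T)$ for $G$, also required by Corollary \ref{T:useful}, follows from $G$ being a finite product of property $(T)$ groups.

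The first verification is the main new content, replacing the role of Theorem \ref{T:howemooreplan} in Corollary \ref{C:actualmain}. I would apply Theorem \ref{T:productGconnected}, which requires simple factors. To arrange simplicity, first observe that each connected noncompact Howe-Moore factor $G_i$ has a unique maximal compact normal subgroup $K_i$ (by Proposition \ref{P:HMcompact} every proper closed normal subgroup of $G_i$ is compact, and the join of compact normal subgroups is compact normal), and $G_i/K_i$ is a connected simple Howe-Moore group, hence a simple Lie group by Rothman (Theorem \ref{T:rothman}). Letting $K = \prod_i K_i$, the quotient $G/K$ is a product of simple connected noncompact Howe-Moore Lie groups, and the image of $\Lambda$ in $G/K$ inherits the finite-intersection property with proper subproducts, since these lift to proper closed normal subgroups of $G$ strictly containing $K$. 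For a nontrivial ergodic $G$-action on $(X,\nu)$ with kernel $N$, the hypothesis yields $|N \cap \Lambda| < \infty$. Passing to the $K$-ergodic decomposition gives a $G/K$-factor $(Y,\eta)$ on which $G/K$ acts ergodically. If this factor is nontrivial, Theorem \ref{T:productGconnected} applied to $G/K$ yields finite stabilizers for the image of $\Lambda$, which combined with $|\Lambda \cap K| < \infty$ produces finite $\Lambda$-stabilizers on $(X,\nu)$. If $(Y,\eta)$ is trivial, then $K$ acts ergodically on $(X,\nu)$, essentially transitively as a compact group action, and finiteness of $\Lambda \cap K$ again bounds stabilizers.

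For the second verification, each $H_j$ in the decomposition $\rpf{\Lambda}{\Gamma} \cong \prod H_j$ is a simple nondiscrete locally compact Howe-Moore group, so Theorem \ref{T:howemooreplan2} gives that any ergodic measure-preserving action of $H_j$ with open stabilizer subgroups is trivial. Propositions \ref{P:usefulproduct} and \ref{P:usefulproduct2} then propagate this through the finite or restricted infinite product to $\rpf{\Lambda}{\Gamma}$. With both verifications in hand, Corollary \ref{T:useful} delivers the desired dichotomy.

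The main obstacle is executing the reduction to simple factors in the first verification, specifically handling the case where the maximal compact normal $K$ is not contained in the kernel of the given $G$-action. This requires care with the $K$-ergodic decomposition (which defines a legitimate $G/K$-factor because $K$ is normal) and the combination of Theorem \ref{T:productGconnected} for $G/K$ with the hypothesis $|\Lambda \cap K| < \infty$; these together should control the full $\Lambda$-stabilizers uniformly across all cases.
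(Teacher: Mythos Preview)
Your overall structure matches the paper's proof exactly: invoke Theorem~\ref{T:productGconnected} for the first hypothesis of Corollary~\ref{T:useful}, then Theorem~\ref{T:howemooreplan2} combined with Propositions~\ref{P:usefulproduct} and~\ref{P:usefulproduct2} for the second. The paper simply applies Theorem~\ref{T:productGconnected} to $G$ directly (implicitly treating the factors as simple, as they are in every application), whereas you insert an explicit reduction to simple factors by passing to $G/K$. That extra care is reasonable, and your Case~1 argument (when the $K$-ergodic decomposition $(Y,\eta)$ is nontrivial) is correct: $\mathrm{stab}_\Lambda(x)$ maps into $\mathrm{stab}_{\bar\Lambda}(\pi(x))$ with kernel contained in the finite group $\Lambda\cap K$, and Theorem~\ref{T:productGconnected} applied to $G/K$ bounds the image.

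Your Case~2, however, has a genuine gap. If $K$ acts ergodically on a nontrivial $(X,\nu)$, then $(X,\nu)\cong (K/L,m)$ as a $K$-space, but the full group $G$ acts, and the $G$-stabilizer of the basepoint is a closed subgroup $H$ with $KH=G$ and $K\cap H=L$; in particular $H$ surjects onto $G/K$ and is cocompact in $G$. Thus $\mathrm{stab}_\Lambda(eL)=\Lambda\cap H$ is not contained in $K$, and finiteness of $\Lambda\cap K$ does not bound it. The sentence ``finiteness of $\Lambda\cap K$ again bounds stabilizers'' is unjustified as written.

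The cleanest repair is to show Case~2 is vacuous. If each $K_i$ is finite then $K$ is finite, $K/L$ is a finite set, and the connected group $G$ cannot act nontrivially on it, contradicting nontriviality of the $G$-action. Finiteness of $K_i$ holds whenever $G_i$ is a Lie group (compact normal subgroups of connected semisimple-type Lie groups are finite central), and in the paper's applications the factors are simple, hence Lie by Rothman. If you want the result in the generality stated, you should either argue that connected noncompact Howe-Moore property~$(T)$ groups have finite maximal compact normal subgroup, or handle Case~2 by a different route (for instance, via the ``finite $\Gamma$-orbits'' disjunct in the hypothesis of Corollary~\ref{T:useful}).
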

\begin{proof}
By Theorem \ref{T:productGconnected}, any nontrivial ergodic action of $G$ has finite stabilizers when restricted to $\Lambda$.  Theorem \ref{T:howemooreplan2} applied to the $H_{j}$ shows that any ergodic action of $H_{j}$ with open stabilizers is on the trivial space.  Theorem \ref{T:useful} combined with Propositions \ref{P:usefulproduct} and \ref{P:usefulproduct2} then gives the conclusion.
\end{proof}

\section[Actions of Lattices in Products of Howe-Moore (T) Groups]{Actions of Lattices in Products of Howe-Moore $(T)$ Groups}\label{S:howemoorelatt}

A consequence of the previous results is a generalization of the Bader-Shalom Normal Subgroup Theorem for Lattices in Product Groups to measure-preserving actions for certain product groups:
\begin{theorem}\label{T:BSbetter}
Let $G$ be a product of at least two simple nondiscrete noncompact locally compact second countable groups with the Howe-Moore property, at least one of which has property $(T)$, at least one of which is totally disconnected and such that every connected simple factor has property $(T)$.
Let $\Gamma < G$ be an irreducible lattice.

Then any ergodic measure-preserving action of $\Gamma$ either has finite orbits or has finite stabilizers.
\end{theorem}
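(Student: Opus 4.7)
The strategy is to treat the irreducible lattice $\Gamma$ as a dense commensurator of a sublattice living in a proper subproduct of $G$, in order to apply Corollary \ref{C:actualmain} or Corollary \ref{C:actualmain2}. Decompose $G = G_{1} \times G_{2}$, where $G_{2}$ is a nontrivial product of totally disconnected factors of $G$ (possible by hypothesis) and $G_{1}$ is chosen to have property $(T)$: if $G$ has at least one connected factor, take $G_{1}$ to be the product of all connected factors (each has $(T)$ by hypothesis and together they satisfy the hypothesis of Corollary \ref{C:actualmain2}); otherwise take $G_{1}$ to be a single totally disconnected factor of $G$ that has $(T)$, guaranteed to exist by hypothesis, and apply Corollary \ref{C:actualmain}. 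Let $\pi_{i} : G \to G_{i}$ denote the projections.

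Since $\Gamma$ is irreducible, $\Lambda := \pi_{1}(\Gamma)$ is dense in $G_{1}$. Fix a compact open subgroup $K_{0} = \prod_{i} K_{2,i} < G_{2}$, where each $K_{2,i}$ is a proper compact open subgroup of the simple totally disconnected factor $G_{2,i}$, and let $\Gamma_{1} := \pi_{1}(\Gamma \cap (G_{1} \times K_{0}))$, which is a lattice in $G_{1}$. The subgroups $K_{0}$ and $g K_{0} g^{-1}$ are commensurate in $G_{2}$ for every $g \in G_{2}$, so $\Lambda$ commensurates $\Gamma_{1}$. Assuming $\pi_{1}|_{\Gamma}$ is injective, the map $\varphi : \Lambda \to G_{2}$ defined by $\varphi(\pi_{1}(\gamma)) := \pi_{2}(\gamma)$ is a well-defined homomorphism with dense image and $\varphi^{-1}(K_{0}) = \Gamma_{1}$. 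Propositions \ref{P:alpha} and \ref{P:idrpf} then identify $\rpf{\Lambda}{\Gamma_{1}} \simeq \rpf{G_{2}}{K_{0}} \simeq G_{2}$ (the second isomorphism follows from our choice of $K_{0}$ containing no nontrivial normal subgroup of $G_{2}$), and $G_{2}$ is by construction a product of simple nondiscrete totally disconnected Howe-Moore groups. The chosen corollary concludes that every ergodic measure-preserving action of $\Lambda$ has finite orbits or finite stabilizers, and the identification $\Gamma \simeq \Lambda$ transfers this to $\Gamma$.

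The main obstacle is verifying the structural hypotheses: injectivity of $\pi_{1}|_{\Gamma}$ and finiteness of $\Lambda \cap H$ for every proper closed normal subgroup $H < G_{1}$. The kernel $\Gamma \cap G_{2}$ is a discrete subgroup of $G_{2}$ normalized by the dense image $\pi_{2}(\Gamma)$, hence central in $G_{2}$; as each simple factor of $G_{2}$ is Howe-Moore its center is handled directly, and one controls any residual finite kernel by passing to the quotient $X \to X / (\Gamma \cap G_{2})$ on which the action descends to $\Lambda$, with the finite-orbit and finite-stabilizer properties transferring back to $\Gamma$ up to the finite factor $|\Gamma \cap G_{2}|$. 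A parallel irreducibility-plus-Howe-Moore argument handles the intersection of $\Lambda$ with proper closed normal subgroups $H < G_{1}$ by lifting to the noncocompact normal subgroup $H \times G_{2} < G$ and invoking the discreteness and irreducibility of $\Gamma$. These rigidity verifications are the technical heart of the argument; once they are in place, the result is immediate from the corollary applied to $(G_{1}, \Gamma_{1}, \Lambda)$.
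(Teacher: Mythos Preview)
Your approach is essentially the paper's: decompose $G = G_0 \times H$ (your $G_1 \times G_2$), project $\Gamma$ to $\Lambda_0 = \pi_{G_0}(\Gamma)$ commensurating the lattice $\Gamma_0 = \pi_{G_0}(\Gamma \cap (G_0 \times K))$, identify $\rpf{\Lambda_0}{\Gamma_0} \simeq H$ via Propositions~\ref{P:alpha} and~\ref{P:idrpf}, and apply Corollary~\ref{C:actualmain} or~\ref{C:actualmain2}.

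One small correction: discreteness plus normality of $\Gamma \cap G_2$ in $G_2$ does \emph{not} force centrality when $G_2$ is totally disconnected (the usual connectedness argument fails). The right statement is that a closed normal subgroup of a finite product of topologically simple groups is a subproduct, and a discrete subproduct of nondiscrete factors is trivial---so the kernel vanishes outright and your finite-quotient fallback is unnecessary. Your parallel verification of the finite-intersection hypothesis for Corollary~\ref{C:actualmain2} (which the paper actually leaves implicit) goes through by the same subproduct argument applied to $\Gamma \cap (H \times G_2)$ inside $H \times G_2$, using that $\pi_{H \times G_2}(\Gamma)$ is dense by irreducibility.
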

\begin{proof}
Write $G_{0}$ to be the product of all the connected simple factors of $G$.  In the case when there are no connected simple factors instead take $G_{0}$ to be a simple factor with property $(T)$.  Write $H$ to be the product of all the simple factors not in $G_{0}$.  So $H$ is totally disconnected and nondiscrete.

Write $G = G_{0} \times H$ and let $K$ be a compact open subgroup of $H$.  Let $L = \Gamma \cap (G_{0} \times K)$.  Then $\mathrm{proj}_{K}~L$ is dense in $K$ since $\Gamma$ is irreducible.  $L$ is a lattice in $G_{0} \times K$ since $K$ is open.

Set $\Gamma_{0} = \proj_{G_{0}}~L$.  Since $K$ is compact, $\Gamma_{0}$ has finite covolume in $G_{0}$ since $L$ does in $G \times K$.  Moreover, $\Gamma_{0}$ is discrete since $L$ is discrete.  Therefore $\Gamma_{0}$ is a lattice in $G_{0}$.

Set $\Lambda_{0} = \proj_{G_{0}}~\Gamma$.  Then $\Lambda_{0}$ is dense in $G_{0}$ since $\Gamma$ is irreducible and $\Gamma_{0} <_{c} \Lambda_{0}$ since $K <_{c} H$.

By Propositions \ref{P:alpha} and \ref{P:idrpf}, $\rpf{\Gamma}{L}$ is isomorphic to $H / \mathrm{ker}(\tau_{H,K})$ since $\mathrm{proj} : \Gamma \to H$ is a homomorphism with dense image and pullback of $K$ equal to $L$.  Since $\mathrm{ker}(\tau_{H,K})$ is contained in $K$ and $H$ is semisimple then the kernel is trivial so $\rpf{\Gamma}{L}$ is isomorphic to $H$.

Set $N = \Gamma \cap \{ e \} \times H$ and write $M$ for the subgroup of $H$ such that $N = \{ e \} \times M$.  Then $N \normal \Gamma$ since $\{ e \} \times H \normal G \times H$ and $M$ is discrete in $H$ so $M = \overline{\mathrm{proj}_{H}~N} \normal \overline{\mathrm{proj}_{H}~\Gamma} = H$ by the irreducibility of $\Gamma$.  Since $H$ is simple, $M$ is trivial so $\Gamma \cap \{ e \} \times H$ is trivial.  This means that $\mathrm{proj}_{G} : \Gamma \to \Lambda_{0}$ is an isomorphism and so
$\rpf{\Lambda_{0}}{\Gamma_{0}} \simeq H$.

By Corollary \ref{C:actualmain} or Corollary \ref{C:actualmain2} (depending on whether $G_{0}$ is a single factor or a product of connected factors) then any ergodic measure-preserving action of $\Lambda_{0}$ either has finite orbits or has finite stabilizers.  The same then holds for $\Gamma \simeq \Lambda_{0}$.
\end{proof}

We remark that the above construction, writing an irreducible lattice in a product of nondiscrete groups, at least one of which is totally disconnected, as the commensurator of a lattice in one of the groups can also be reversed:
\begin{theorem}
Let $\Gamma$ be a lattice in a locally compact second countable group $G$ and let $\Lambda$ be a subgroup of $G$ such that $\Gamma <_{c} \Lambda$.  Then $\Lambda$ sits diagonally as a lattice in $G \times (\rpf{\Lambda}{\Gamma})$.
\end{theorem}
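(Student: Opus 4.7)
The plan is to show directly that the diagonal map $\iota : \Lambda \to G \times \rpf{\Lambda}{\Gamma}$ defined by $\iota(\lambda) = (\lambda, \tau(\lambda))$ (where $\tau : \Lambda \to \rpf{\Lambda}{\Gamma}$ is the canonical map from Section~\ref{sec:rpf}) realizes $\Lambda$ as a lattice. Injectivity is immediate from the first coordinate, so the two things to verify are discreteness and finite covolume, both of which will follow from the structural properties of $\rpf{\Lambda}{\Gamma}$ already assembled in the preliminaries.

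For discreteness, I would pick an identity neighborhood of the form $U \times K$ in $G \times \rpf{\Lambda}{\Gamma}$, where $K = \overline{\tau(\Gamma)}$ is a compact open subgroup (by the first theorem in Section~\ref{sec:rpf}) and $U$ is an identity neighborhood in $G$ with $U \cap \Gamma = \{e\}$, which exists because $\Gamma$ is discrete. If $\iota(\lambda) \in U \times K$ then $\tau(\lambda) \in K$, so Proposition~\ref{P:rpfK} forces $\lambda \in \Gamma$; combined with $\lambda \in U$ this gives $\lambda = e$.

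For finite covolume, I would take a fundamental domain $F \subset G$ for the left $\Gamma$-action with $\mathrm{Haar}(F) < \infty$, and claim that $F \times K$ is a fundamental domain for the left diagonal $\iota(\Lambda)$-action on $G \times \rpf{\Lambda}{\Gamma}$. Its Haar measure is finite because $F$ has finite Haar measure and $K$ is compact. To see that every $\iota(\Lambda)$-orbit meets $F \times K$, given $(g,h) \in G \times \rpf{\Lambda}{\Gamma}$, use density of $\tau(\Lambda)$ and openness of $K$ to find $\lambda_{0} \in \Lambda$ with $\tau(\lambda_{0})h \in K$; then adjust by some $\gamma \in \Gamma$ so that $\gamma\lambda_{0}g \in F$, noting that $\tau(\gamma) \in K$ leaves the second coordinate inside $KK = K$. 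For uniqueness on $F \times K$, suppose $\iota(\lambda) \cdot (f,k) = (f',k')$ with all points in $F \times K$; then $\lambda f \in F$ forces $\lambda \in \Gamma$ by the defining property of the fundamental domain, and then the freeness of the $\Gamma$-action on $G$ gives $\lambda = e$.

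I don't anticipate a serious obstacle here: the real content is already in the standard facts recorded in Section~\ref{sec:rpf}, namely that $K = \overline{\tau(\Gamma)}$ is a compact open subgroup of $\rpf{\Lambda}{\Gamma}$, that $\tau(\Lambda)$ is dense, and that $\tau^{-1}(K) = \Gamma$. The only minor care needed is in choosing the correct sidedness for the action and arranging the fundamental domain adjustments in the right order (first correct the $\rpf{\Lambda}{\Gamma}$-coordinate using a general $\lambda_{0} \in \Lambda$, then correct the $G$-coordinate using $\Gamma$, which fortuitously preserves the already-achieved condition in the second coordinate).
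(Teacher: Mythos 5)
Your approach is the same as the paper's: the same diagonal embedding $\lambda \mapsto (\lambda,\tau(\lambda))$, the same candidate fundamental domain $F \times K$ with $K = \overline{\tau(\Gamma)}$, and the same two-step orbit adjustment (first fix the $\rpf{\Lambda}{\Gamma}$-coordinate using density of $\tau(\Lambda)$ and openness of $K$, then fix the $G$-coordinate using $\Gamma$, which preserves the second coordinate because $\tau(\Gamma) \subseteq K$). Your discreteness and surjectivity arguments are correct and match the paper's.

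The one step that fails as written is in your uniqueness check: you assert that $\lambda f \in F$ with $f \in F$ forces $\lambda \in \Gamma$ ``by the defining property of the fundamental domain.'' That property only controls translates by elements of $\Gamma$; an arbitrary $\lambda \in \Lambda$ (which is typically dense in $G$) can carry a point of $F$ into $F$ without lying in $\Gamma$ --- already for $G = \mathbb{R}$, $\Gamma = \mathbb{Z}$, $F = [0,1)$, $\Lambda = \mathbb{Q}$, the element $\lambda = \tfrac12$ does this. The correct deduction comes from the second coordinate, exactly as in your own discreteness argument: $\tau(\lambda)k = k'$ with $k,k' \in K$ gives $\tau(\lambda) \in K$, hence $\lambda \in \tau^{-1}(K) = \Gamma$ by Proposition \ref{P:rpfK}, and only then does the fundamental domain property together with freeness of the translation action give $\lambda = e$. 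Note also that uniqueness is not strictly needed for the conclusion: once discreteness is established, a finite-Haar-measure set meeting every orbit already yields finite covolume, which is all the paper itself verifies.
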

\begin{proof}
Let $\tau : \Lambda \to \rpf{\Lambda}{\Gamma}$ be the map defining the relative profinite completion and let
\[
\Lambda_{0} = \{ (\lambda, \tau(\lambda)) : \lambda \in \Lambda \} < G \times (\rpf{\Lambda}{\Gamma})
\]
be the diagonal embedding of $\Lambda$.  

Let $F$ be a fundamental domain for $G / \Gamma$: $F$ is of finite volume, $F \cap \Gamma = \{ e \}$ and $\Gamma \cdot F = G$. Let $K = \overline{\tau(\Gamma)}$ be the canonical compact open subgroup.  Let $\lambda_{0} \in \Lambda_{0} \cap F \times K$.  Then $\lambda_{0} = (\lambda, \tau(\lambda))$ for some $\lambda \in \Lambda \cap F$ such that $\tau(\lambda) \in K$.  Now $K = \overline{\tau(\Gamma)}$ and by Proposition \ref{P:rpfK}, $K \cap \tau(\Lambda) = \tau(\Gamma)$ so $\tau(\lambda) \in \tau(\Gamma)$ meaning that $\lambda \in \Gamma$ (as the kernel of $\tau$ is contained in $\Gamma$).  But $\lambda \in \Lambda \cap F$ so $\lambda \in \Gamma \cap F = \{ e \}$.
Therefore $F \times K$ is a subset of $G \times (\rpf{\Lambda}{\Gamma})$ of finite volume such that $\Lambda_{0} \cap F \times K = \{ e \}$ and, in particular, $\Lambda_{0}$ is discrete in $G \times (\rpf{\Lambda}{\Gamma})$.

Let $(g,h) \in G \times \rpf{\Lambda}{\Gamma}$ be arbitrary.  Write $h = \tau(\lambda^{\prime}) k^{\prime}$ for some $\lambda^{\prime} \in \Lambda$ and $k^{\prime} \in K$.  Write $(\lambda^{\prime})^{-1}g = \gamma f$ for some $\gamma \in \Gamma$ and $f \in F$.  Set $\lambda = \lambda^{\prime}\gamma$.  Then $\tau(\lambda)\tau(\gamma^{-1})k^{\prime} = \tau(\lambda^{\prime})k^{\prime} = h$ and $k = \tau(\gamma^{-1})k^{\prime} \in K$.  
Also $g = \lambda^{\prime}\gamma f = \lambda f$.  Therefore $(g,h) = (\lambda, \tau(\lambda)) (f,k) \in \Lambda_{0} \cdot (F \times K)$.

Therefore $F \times K$ is a fundamental domain for $\Lambda_{0}$ hence $\Lambda_{0}$ is a lattice as claimed.
\end{proof}

We remark that if both $G$ and $\rpf{\Lambda}{\Gamma}$ are semisimple with finite center then $\Lambda$ sits as an irreducible lattice if and only if $\Gamma$ is irreducible and $\Lambda$ is dense.

A consequence of this reverse construction is a special case of the Normal Subgroup Theorem for Commensurators \cite{CS14} following immediately from the Bader-Shalom Normal Subgroup Theorem for lattices in products:
\begin{corollary}
Let $\Gamma$ be an irreducible integrable lattice in a just noncompact locally compact second countable group $G$ and let $\Lambda$ be a dense subgroup of $G$ such that $\Gamma <_{c} \Lambda$.  Assume that $\rpf{\Lambda}{\Gamma}$ is just noncompact.  Then $\Lambda$ is just infinite.
\end{corollary}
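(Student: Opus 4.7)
The plan is to apply the Bader-Shalom Normal Subgroup Theorem to $\Lambda$ viewed, via the embedding theorem proved immediately before this corollary, as a diagonal lattice $\Lambda_{0} = \{(\lambda, \tau(\lambda)) : \lambda \in \Lambda\}$ inside the product $G \times \rpf{\Lambda}{\Gamma}$. Once this is done, normal subgroups of $\Lambda$ correspond to normal subgroups of $\Lambda_{0}$, and the Bader-Shalom conclusion translates back to $\Lambda$.

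The main task is to verify the Bader-Shalom hypotheses for $\Lambda_{0}$ in $G \times \rpf{\Lambda}{\Gamma}$. For \emph{irreducibility}, note that $\mathrm{proj}_{G}(\Lambda_{0}) = \Lambda$ is dense in $G$ by hypothesis and $\mathrm{proj}_{\rpf{\Lambda}{\Gamma}}(\Lambda_{0}) = \tau(\Lambda)$ is dense in $\rpf{\Lambda}{\Gamma}$ by construction. Since both factors are just noncompact, every proper closed normal subgroup of either is compact, so any noncocompact closed normal subgroup of the product is contained (up to a compact perturbation) in one of the two coordinate factors; density of each projection then gives dense projection modulo any noncocompact normal subgroup. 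For \emph{integrability}, I would take the fundamental domain $F \times K$ produced in the preceding embedding theorem, where $F$ is an integrable fundamental domain for $\Gamma < G$ and $K = \overline{\tau(\Gamma)}$ is the canonical compact open subgroup: the cocycle splits, its $G$-component is integrable by the integrability of $\Gamma$, and its $\rpf{\Lambda}{\Gamma}$-component is bounded because it lives inside the compact set $K$. The remaining Bader-Shalom hypotheses are essentially immediate: both factors are locally compact second countable and just noncompact by assumption; $\rpf{\Lambda}{\Gamma}$ is not isomorphic to $\mathbb{R}$ since it is totally disconnected; and compact generation of each factor follows from just noncompactness together with the presence of a cocompact lattice on the $G$-side and a compact open subgroup on the $\rpf{\Lambda}{\Gamma}$-side. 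Nondiscreteness of $\rpf{\Lambda}{\Gamma}$ holds because, by Proposition \ref{P:discreterpf}, discreteness would force a finite-index subgroup of $\Gamma$ to be normal in $\Lambda$ and hence (by density) in $G$, contradicting just noncompactness of $G$ together with $\Gamma$ being infinite.

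With these hypotheses in place, Bader-Shalom yields: every infinite normal subgroup of $\Lambda_{0} \cong \Lambda$ has finite index in $\Lambda$. To upgrade this to $\Lambda$ being just infinite, it remains to rule out nontrivial finite normal subgroups. This is the step I expect to be the real obstacle. The natural approach is to argue that if $N \normal \Lambda$ is finite, then the set of elements of $G$ normalizing $N$ is closed and contains the dense subgroup $\Lambda$, so $N \normal G$; since $\mathrm{Aut}(N)$ is finite, the kernel of the conjugation map $G \to \mathrm{Aut}(N)$ is open, and by connectedness/just noncompactness arguments together with the density hypothesis and the analogous observation inside $\rpf{\Lambda}{\Gamma}$ (where the image of $N$ is centralized by an open subgroup, hence by all of $\rpf{\Lambda}{\Gamma}$), one forces $N$ to be trivial. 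The delicate point is pushing both coordinate constraints simultaneously; if this step cannot be carried out in full generality, the statement should be read as ``every infinite normal subgroup of $\Lambda$ has finite index'' --- the qualitative content that Bader-Shalom directly delivers.
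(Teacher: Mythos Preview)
Your approach is the same as the paper's: embed $\Lambda$ diagonally as a lattice in $G \times \rpf{\Lambda}{\Gamma}$ via the preceding theorem and invoke Bader--Shalom. The paper's proof is three sentences---it simply asserts irreducibility and integrability, notes that $\rpf{\Lambda}{\Gamma}$ is totally disconnected (hence not $\mathbb{R}$), and concludes. You have filled in much more detail on the hypotheses, which the paper does not spell out; in particular, your integrability argument via the fundamental domain $F \times K$ is the natural one and is essentially what the paper has in mind when it says ``$\Lambda$ will be integrable since $\Gamma$ is.''

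Your final paragraph about ruling out finite nontrivial normal subgroups is a genuine subtlety that the paper glosses over: the Bader--Shalom theorem as stated yields only that every \emph{infinite} normal subgroup has finite index, and the paper simply writes the conclusion as ``no nontrivial normal subgroups of infinite index'' without further comment. So your caution there is well placed, and your suggested reading of the conclusion matches what the paper actually proves. One small correction: your compact generation argument is shaky (just noncompactness does not imply compact generation, and $\Gamma$ need not be cocompact), but the paper does not verify this hypothesis either, so you are not missing anything relative to the paper's own treatment.
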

\begin{proof}
Write $\Lambda$ as an irreducible lattice in the product $G \times \rpf{\Lambda}{\Gamma}$.  Observe that $\Lambda$ will be integrable (as a lattice) since $\Gamma$ is.  As the relative profinite completion is totally disconnected, it is not isomorphic to $\mathbb{R}$ and therefore the Bader-Shalom Normal Subgroup Theorem implies that $\Lambda$ has no nontrivial normal subgroups of infinite index.
\end{proof}

\section{Commensurators and Lattices in Lie Groups}\label{S:liegroups}

The primary example of a class of groups our results apply to is commensurators of lattices and lattices in higher-rank Lie groups.

\subsection{Actions of Commensurators in Semisimple Higher-Rank Lie Groups}

\begin{theorem}\label{T:semisimplecomm}
Let $G$ be a semisimple Lie group (real or $p$-adic or both) with finite center where each simple factor has rank at least two.  Let $\Gamma < G$ be an irreducible lattice.  Let $\Lambda < G$ be a countable dense subgroup such that $\Gamma <_{c} \Lambda$ and that $\Lambda$ has finite intersection with every proper subfactor of $G$.

Then any ergodic measure-preserving action of $\Lambda$ either has finite stabilizers or the restriction of the action to $\Gamma$ has finite orbits.

Moreover, the commensurability classes of infinite ergodic invariant random subgroups of $\Lambda$ are in one-one, onto correspondence with the commensurability classes of open ergodic invariant random subgroups of $\rpf{\Lambda}{\Gamma}$.
\end{theorem}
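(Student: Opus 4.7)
The plan is to verify the two hypotheses of Corollary \ref{T:useful} and Theorem \ref{T:oneone} for the pair $\Gamma <_{c} \Lambda$ and the relative profinite completion $\rpf{\Lambda}{\Gamma}$, since both parts of the theorem then follow directly from those results. The hypotheses are, first, that every ergodic measure-preserving action of $G$ either has finite $\Lambda$-stabilizers or finite $\Gamma$-orbits, and second, that every ergodic measure-preserving action of $\rpf{\Lambda}{\Gamma}$ with open stabilizer subgroups is on the trivial space. Throughout, I would use that each simple factor of $G$ has both Howe-Moore and property $(T)$ (being a higher-rank simple Lie group, by the results recalled in Section \ref{S:prelims}), so $G$ itself has property $(T)$, its compact normal subgroups are finite, and the subfactor hypothesis on $\Lambda$ implies finite intersection with every compact normal subgroup, matching the standing hypothesis of Corollary \ref{C:maintheorem1}.

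For the first hypothesis, I would proceed by case analysis on the factor structure. When $G$ has a single simple factor, $G$ is itself Howe-Moore and Theorem \ref{T:howemooreplan} applies directly, giving finite $\Lambda$-stabilizers by combining Corollary \ref{C:howemoore} with the commensurator Normal Subgroup Theorem. For a product $G$, the kernel of an ergodic action is a closed normal subgroup, hence a subproduct, which meets $\Lambda$ finitely by hypothesis, so one may pass to the faithful quotient action on the complementary subproduct. In the purely real case, Theorem \ref{T:productGconnected} then yields essentially free restriction to $\Lambda$. When at least one simple factor is totally disconnected, I would borrow the compact-open-subgroup decomposition from the proof of Theorem \ref{T:BSbetter}: choosing a compact open subgroup $K$ in a $p$-adic factor $H$ and setting $L = \Gamma \cap (G_0 \times K)$, with $G_0$ the complementary product of factors, one realizes $\Lambda$ as a commensurator of a lattice in a smaller group whose structure feeds into Theorem \ref{T:howemooreplan} or \ref{T:productGconnected}.

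For the second hypothesis, I would use Margulis arithmeticity (Section \ref{S:prelims}) to identify $\rpf{\Lambda}{\Gamma}$ with a closed subgroup of a restricted product of simple nondiscrete totally disconnected Lie groups $\mathbf{G}(K_v)$, one for each place $v$ not already represented by a factor of $G$. Each such factor is a simple nondiscrete totally disconnected Howe-Moore group, so Theorem \ref{T:howemooreplan2} together with Propositions \ref{P:usefulproduct} and \ref{P:usefulproduct2} immediately give the required triviality. With both hypotheses in hand, Corollary \ref{T:useful} delivers the main dichotomy, and Theorem \ref{T:oneone} delivers the one-one correspondence between commensurability classes.

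The main obstacle is the first hypothesis in the mixed or purely $p$-adic product case: the ambient $G$ itself lacks the Howe-Moore property when it has multiple simple factors, and the Stuck-Zimmer Theorem requires an irreducible action, so one must carefully combine the kernel reduction (relying on the finite-intersection hypothesis on $\Lambda$) with the compact-open decomposition so that the resulting smaller ambient group is covered by the Howe-Moore machinery of Section \ref{S:howemoorecomm}, while keeping track that the projections of $\Gamma$ onto subproducts are dense by irreducibility and that no nontrivial closed normal subgroup of $G$ absorbs an infinite part of $\Lambda$.
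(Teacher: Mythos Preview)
Your proposal contains a genuine gap and a misreading of what needs to be proved. The theorem claims only the \emph{weak} dichotomy (finite $\Lambda$-stabilizers or finite $\Gamma$-orbits) together with the one-one correspondence; both of these follow from Theorem~\ref{T:oneone} and Corollary~\ref{C:main}, which require \emph{only} the first hypothesis (every ergodic $G$-action has finite $\Lambda$-stabilizers or finite $\Gamma$-orbits). You do not need Corollary~\ref{T:useful} or its second hypothesis about $\rpf{\Lambda}{\Gamma}$ at all.

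This matters because your verification of the second hypothesis fails: Margulis arithmeticity describes $\Gamma$, not the arbitrary dense commensurator $\Lambda$, so there is no reason $\rpf{\Lambda}{\Gamma}$ should embed in a restricted product of groups $\mathbf{G}(K_v)$ or be a product of simple Howe-Moore groups. The hypothesis on $\Lambda$ is only that it is countable, dense, commensurates $\Gamma$, and meets proper subfactors finitely; its relative profinite completion over $\Gamma$ can be quite wild. Your argument therefore breaks down precisely where you invoke arithmeticity on $\Lambda$.

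The paper's proof of the first hypothesis in the mixed case is also different from, and cleaner than, what you sketch. Rather than projecting $\Lambda$ into a smaller ambient group, the paper applies Theorem~\ref{T:BSbetter} \emph{to $\Gamma$ itself}: any ergodic $\Gamma$-action has finite stabilizers or finite orbits. Given an ergodic $G$-action whose restriction to $\Gamma$ does not have finite orbits, on a positive-measure set the $\Gamma$-stabilizers are finite; Proposition~\ref{P:upgrade} then says that if the $\Lambda$-stabilizers were infinite there would exist an infinite $N \normal \Lambda$ with $[\Gamma : \Gamma \cap N] = \infty$, contradicting the Normal Subgroup Theorem for Commensurators (Theorem~\ref{T:CSnormalsubgroups}). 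This bootstrapping via Proposition~\ref{P:upgrade} is the missing idea in your mixed-case argument.
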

\begin{proof}
First note that if we show that the commensurability classes are in one-one, onto correspondence then any ergodic measure-preserving action of $\Lambda$ that does not have finite stabilizers must have finite $\Gamma$-orbits since any infinite ergodic invariant random subgroup of $\Lambda$ then contains $\Gamma$ up to finite index.  So we need only prove the one-one correspondence.

The case when $G$ is a real Lie group follows from Corollary \ref{C:maintheorem2} combined with Theorem \ref{T:productGconnected} and the case when $G$ is simple follows from Corollary \ref{C:maintheorem2} directly since every factor in $G$ has Howe-Moore and property $(T)$.

So we may assume that $G$ has at least two factors, at least one of which is totally disconnected.  By Theorem \ref{T:BSbetter}, any measure-preserving ergodic action of $\Gamma$ either has finite stabilizers or has finite orbits.  In particular, if $G \actson (X, \nu)$ is a measure-preserving ergodic action such that the restriction to $\Gamma$ does not have finite orbits then there must exist a positive measure $\Gamma$-invariant subset where the $\Gamma$-stabilizers are finite.  Since $\Lambda \actson (X, \nu)$ is ergodic we may then apply Proposition \ref{P:upgrade} and the Normal Subgroup Theorem for Commensurators to conclude that the restriction of the action to $\Lambda$ has finite stabilizers.  So $\Gamma <_{c} \Lambda < G$ satisfy the hypotheses of Theorem \ref{T:oneone} and the result follows.
\end{proof}

\subsection{Relative Profinite Completions of Arithmetic Lattices}

\begin{theorem}\label{T:rpfadelic}
Let $K$ be a global field, let $\mathcal{O}$ be the ring of integers, let $V$ be the set of places (inequivalent valuations) on $K$, let $V_{\infty}$ be the infinite places (archimedean valuations in the case of a number field) and let $K_{v}$ be the completion of $K$ over $v \in V$.

Let $V_{\infty} \subseteq S \subseteq V$ be any collection of valuations and let $\mathcal{O}_{S}$ be the ring of $S$-integers: $\mathcal{O}_{S} = \{ k \in K : v(k) \geq 0 \text{ for all $v \notin S$} \}$.   Let $\mathcal{O}_{v} = \mathcal{O}_{V_{\infty} \cup \{ v \}}$ be the ring of $v$-integers and let $\overline{\mathcal{O}_{v}}$ be the closure of the $v$-integers in $K_{v}$.

Let $\mathbf{G}$ be a simple algebraic group defined over $K$.  Then for $V_{\infty} \subseteq S^{\prime} \subseteq S$, the relative profinite completion $\rpf{\mathbf{G}(\mathcal{O}_{S})}{\mathbf{G}(\mathcal{O_{S^{\prime}}})}$ is isomorphic to the restricted product
\[
\prod_{v \in S \setminus S^{\prime}}^{\prime} \mathbf{G}(K_{v}) = \{ (g_{v})_{v \in S \setminus S^{\prime}} : g_{v} \notin \mathbf{G}(\overline{\mathcal{O}_{v}}) \text{ for only finitely many $v \in S \setminus S^{\prime}$} \}.
\]
\end{theorem}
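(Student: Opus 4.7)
The plan is to apply Proposition \ref{P:alpha} directly. Set $A = \mathbf{G}(\mathcal{O}_S)$, $B = \mathbf{G}(\mathcal{O}_{S^{\prime}})$, let $H = \prod_{v \in S \setminus S^{\prime}}^{\prime} \mathbf{G}(K_v)$ be the restricted direct product with respect to the compact open subgroups $\mathbf{G}(\overline{\mathcal{O}_v})$, and set $K = \prod_{v \in S \setminus S^{\prime}} \mathbf{G}(\overline{\mathcal{O}_v})$. Since $V_\infty \subseteq S^{\prime}$, every $v \in S \setminus S^{\prime}$ is nonarchimedean and each $\mathbf{G}(K_v)$ is totally disconnected locally compact, so $H$ is totally disconnected locally compact with $K$ a compact open subgroup. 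Define $\varphi : A \to H$ as the diagonal embedding induced by the inclusions $\mathcal{O}_S \hookrightarrow K_v$.

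First I would verify the formal hypotheses of Proposition \ref{P:alpha}. That $\varphi$ lands in the restricted product is immediate: the matrix entries of $g$ and $g^{-1}$ have poles at only finitely many places, so $\varphi(g)_v \in \mathbf{G}(\overline{\mathcal{O}_v})$ for all but finitely many $v \in S \setminus S^{\prime}$. The identity $\varphi^{-1}(K) = B$ follows because $\varphi(g) \in K$ amounts to $g \in \mathbf{G}(\overline{\mathcal{O}_v})$ for every $v \in S \setminus S^{\prime}$; combined with the automatic condition $g \in \mathbf{G}(\overline{\mathcal{O}_v})$ for $v \notin S$ (from $g \in \mathbf{G}(\mathcal{O}_S)$), the standard identity $\mathcal{O}_{S^{\prime}} = K \cap \bigcap_{v \notin S^{\prime}} \overline{\mathcal{O}_v}$ yields $g \in \mathbf{G}(\mathcal{O}_{S^{\prime}})$.

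The one substantive input is density of $\varphi(A)$ in $H$: this is strong approximation for $\mathbf{G}$ away from $S^{\prime}$. Since $S^{\prime}$ contains an infinite place at which $\mathbf{G}$ is isotropic (in the situations where this theorem will be applied), the Kneser--Platonov strong approximation theorem yields density of the diagonal $\mathcal{O}_S$-points in $\prod_{v \in S \setminus S^{\prime}}^{\prime} \mathbf{G}(K_v)$. This is the hard step and requires $\mathbf{G}$ to be simply connected in the classical formulation; the general case can be handled by passing to the simply connected cover and tracking the finite central kernel.

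With the hypotheses of Proposition \ref{P:alpha} verified, we conclude $\rpf{A}{B} \simeq \rpf{H}{K}$, and Proposition \ref{P:idrpf} identifies $\rpf{H}{K}$ with $H/N$, where $N$ is the largest normal subgroup of $H$ contained in $K$. Each factor $\mathbf{G}(K_v)$ has the Howe-Moore property, so by Proposition \ref{P:HMcompact} any proper closed normal subgroup of $\mathbf{G}(K_v)$ is compact; simplicity of $\mathbf{G}$ then forces any such subgroup to lie in the (finite) center, and thus $N$ is central in $H$. In the adjoint case $N$ is trivial, yielding $\rpf{\mathbf{G}(\mathcal{O}_S)}{\mathbf{G}(\mathcal{O}_{S^{\prime}})} \simeq \prod_{v \in S \setminus S^{\prime}}^{\prime} \mathbf{G}(K_v)$ as stated.
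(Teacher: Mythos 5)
Your proposal is correct and follows essentially the same route as the paper: the diagonal embedding into the restricted product, Proposition \ref{P:alpha} to identify $\rpf{\mathbf{G}(\mathcal{O}_{S})}{\mathbf{G}(\mathcal{O}_{S^{\prime}})}$ with $\rpf{H}{K}$, and Proposition \ref{P:idrpf} to reduce everything to showing that the largest normal subgroup of $H$ contained in $K$ is trivial. You are in fact more explicit than the paper at the two delicate points --- naming strong approximation (and the simply connected hypothesis it requires) as the source of density, and observing that the kernel is a priori only central and vanishes in the adjoint case --- whereas the paper asserts density simply from $V_{\infty} \subseteq S^{\prime}$ and claims that every normal subgroup of $H$ is a subproduct.
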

\begin{proof}
That $\mathbf{G}(\mathcal{O_{S^{\prime}}})$ is commensurated by $\mathbf{G}(\mathcal{O}_{S})$ follows from the fact that any fixed element in $\mathcal{O}_{S}$ has a negative valuation on only finitely many valuations in $S \setminus S^{\prime}$.  Let $\varphi : \mathbf{G}(\mathcal{O}_{S}) \to \prod_{v \in S \setminus S^{\prime}}^{\prime} \mathbf{G}(K_{v})$ be the diagonal embedding.  Then $\varphi(\mathbf{G}(\mathcal{O}_{S}))$ is dense (since $S^{\prime}$ contains $V_{\infty}$) and $\varphi^{-1}(\prod_{v \in S \setminus S^{\prime}}^{\prime} \mathbf{G}(\overline{\mathcal{O}_{v}})) = \mathbf{G}(\mathcal{O_{S^{\prime}}})$.  By Proposition \ref{P:alpha}, $\rpf{\mathbf{G}(\mathcal{O}_{S})}{\mathbf{G}(\mathcal{O_{S^{\prime}}})}$ is isomorphic to $\rpf{\prod_{v \in S \setminus S^{\prime}}^{\prime} \mathbf{G}(K_{v})}{\prod_{v \in S \setminus S^{\prime}}^{\prime} \mathbf{G}(\overline{\mathcal{O}_{v}})}$. By Proposition \ref{P:idrpf}, it is isomorphic to $\prod_{v \in S \setminus S^{\prime}}^{\prime} \mathbf{G}(K_{v}) / M$ where $M$ is the largest closed normal subgroup contained in $\prod_{v \in S \setminus S^{\prime}}^{\prime} \mathbf{G}(\overline{\mathcal{O}_{v}})$.  Since $\mathbf{G}$ is simple, the only normal subgroups are of the form $\prod_{v \in S^{\prime\prime}}^{\prime} \mathbf{G}(K_{v})$ for $S^{\prime\prime} \subsetneq S \setminus S^{\prime}$.  But $M$ is contained in the $v$-integers for each $v \in S \setminus S^{\prime}$ so $M$ must be trivial.
\end{proof}

\begin{corollary}
Let $\mathbf{G}$ be a simple algebraic group over $\mathbb{Q}$ and let $S^{\prime} \subseteq S$ be sets of primes containing $\infty$.  Then the relative profinite completion $\rpf{\mathbf{G}(\mathbb{Z}_{S})}{\mathbf{G}(\mathbb{Z_{S^{\prime}}})}$ is isomorphic to the restricted product $\prod_{p \in S \setminus S^{\prime}}^{\prime} \mathbf{G}(\mathbb{Q}_{p})$.
In particular $\rpf{\mathbf{G}(\mathbb{Q})}{\mathbf{G}(\mathbb{Z})}$ is isomorphic to $\prod_{p \in \mathbb{P}}^{\prime} \mathbf{G}(\mathbb{Q}_{p})$ where $\mathbb{P}$ is the set of all primes.
\end{corollary}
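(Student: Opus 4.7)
The plan is to derive this corollary as an immediate specialization of Theorem \ref{T:rpfadelic} to the global field $K = \mathbb{Q}$. First I would set up the dictionary between the notations: for $\mathbb{Q}$ the set of places is $V = \mathbb{P} \cup \{\infty\}$ with $V_\infty = \{\infty\}$, the ring of integers is $\mathcal{O} = \mathbb{Z}$, the completion at a prime $p$ is $K_p = \mathbb{Q}_p$, and the role of the compact open subring $\overline{\mathcal{O}_p} \subset \mathbb{Q}_p$ is played by $\mathbb{Z}_p$. Under these identifications, the ring of $S$-integers
\[
\mathcal{O}_S = \{ k \in \mathbb{Q} : v_q(k) \geq 0 \text{ for all primes } q \notin S \}
\]
coincides with $\mathbb{Z}_S = \mathbb{Z}[\tfrac{1}{p} : p \in S \setminus \{\infty\}]$ as defined in the preliminaries on $S$-arithmetic lattices.

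Next I would simply invoke Theorem \ref{T:rpfadelic} for $K = \mathbb{Q}$ with the given $V_\infty \subseteq S^{\prime} \subseteq S$; its conclusion is exactly the first assertion, the restricted product being taken with respect to the compact open subgroups $\mathbf{G}(\mathbb{Z}_p)$ for $p \in S \setminus S^{\prime}$.

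For the \emph{in particular} statement, I would take $S = \mathbb{P} \cup \{\infty\}$ and $S^{\prime} = \{\infty\} = V_\infty$. Then $\mathbb{Z}_{S^{\prime}} = \mathbb{Z}$, and $\mathbb{Z}_S = \mathbb{Z}[\tfrac{1}{p} : p \in \mathbb{P}] = \mathbb{Q}$ since every nonzero rational has a denominator divisible by only finitely many primes. Substituting into the first part gives $\rpf{\mathbf{G}(\mathbb{Q})}{\mathbf{G}(\mathbb{Z})} \simeq \prod^{\prime}_{p \in \mathbb{P}} \mathbf{G}(\mathbb{Q}_p)$ as claimed.

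Since the corollary is a direct specialization of the already-established Theorem \ref{T:rpfadelic}, there is no real obstacle to overcome; the only mild bookkeeping is in matching the two nearly identical but nominally distinct notations $\mathcal{O}_S$ and $\mathbb{Z}_S$ for the ring of $S$-integers, and in observing that $\mathbb{Z}_{\mathbb{P} \cup \{\infty\}}$ recovers all of $\mathbb{Q}$.
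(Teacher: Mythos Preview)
Your proposal is correct and matches the paper's approach exactly: the paper states this corollary without proof, treating it as an immediate specialization of Theorem~\ref{T:rpfadelic} to $K=\mathbb{Q}$, which is precisely what you do.
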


\subsection{Actions of Lattices in Semisimple Higher-Rank Lie Groups}

\begin{corollary}\label{C:latticeLie}
Let $G$ be a semisimple Lie group (real or $p$-adic or both) with no compact factors, finite center, at least one factor with rank at least two and such that each real simple factor has rank at least two.  Let $\Gamma < G$ be an irreducible lattice.  Then any ergodic measure-preserving action of $\Gamma$ either has finite orbits or has finite stabilizers.
\end{corollary}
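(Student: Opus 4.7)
The plan is to split into cases according to the simple-factor structure of $G$. If $G$ has no totally disconnected simple factor, then $G$ is a connected semisimple real Lie group and by hypothesis every simple factor has $\mathbb{R}$-rank at least two, so the conclusion follows immediately from the Stuck--Zimmer Theorem~\ref{T:SZmain}. If instead $G$ has at least one totally disconnected simple factor and decomposes as a product of at least two simple factors, then I would verify the hypotheses of Theorem~\ref{T:BSbetter}: every simple Lie factor (real or $p$-adic) has the Howe--Moore property; by hypothesis at least one factor has rank $\ge 2$ and hence property $(T)$; at least one factor is totally disconnected; and every connected simple factor has rank $\ge 2$ and hence property $(T)$. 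Theorem~\ref{T:BSbetter} then yields the dichotomy.

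The only remaining case is when $G$ is a single simple $p$-adic factor of rank at least two. Here Margulis arithmeticity realizes $\Gamma$ as an $S$-arithmetic lattice, and Margulis's commensurator criterion implies that $\Lambda = \mathrm{Comm}_G(\Gamma)$ is dense in $G$. By Theorem~\ref{T:rpfadelic}, $\rpf{\Lambda}{\Gamma}$ is a restricted product of simple Howe--Moore groups $\mathbf{G}(K_v)$ over the remaining places, so (using the converse construction noted after Theorem~\ref{T:BSbetter}) $\Lambda$ sits diagonally as an irreducible lattice in the product $G \times \rpf{\Lambda}{\Gamma}$, which has at least two simple Howe--Moore factors, one totally disconnected, and at least one (namely $G$) with property $(T)$. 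The previous case, applied to this larger ambient product, yields the dichotomy for $\Lambda$-actions. Translating this back to $\Gamma$-actions is the key step: given a $\Gamma$-action with infinite stabilizers on a positive measure set but no finite orbits, the invariant random subgroup of stabilizers can be used, together with Proposition~\ref{P:upgrade} and the Normal Subgroup Theorem for Commensurators (Theorem~\ref{T:CSnormalsubgroups}), to produce an infinite normal subgroup of $\Lambda$ having infinite index intersection with $\Gamma$, contradicting the density of $\Lambda$ in the simple group $G$.

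The main obstacle is this last case: the first two cases are essentially immediate applications of the invoked theorems, but the single simple $p$-adic factor case requires combining the commensurator machinery of Sections~\ref{S:weakamenlatt}--\ref{S:oneone} with Margulis's arithmeticity and commensurator density theorems to bridge from the $\Lambda$-action dichotomy to the $\Gamma$-action dichotomy in the absence of a product structure in which $\Gamma$ itself is already an irreducible lattice.
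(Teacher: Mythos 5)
Your first two cases are exactly the paper's: all factors real reduces to Stuck--Zimmer (Theorem \ref{T:SZmain}), and a genuine product with a totally disconnected factor reduces to Theorem \ref{T:BSbetter}. The problem is your third case, a single simple $p$-adic factor of rank at least two, where your argument has a genuine gap. The commensurator machinery of Sections \ref{S:weakamenlatt}--\ref{S:oneone} only produces dichotomies for ergodic measure-preserving actions \emph{of the commensurator} $\Lambda = \mathrm{Comm}_{G}(\Gamma)$ (restricted to $\Gamma$), or for lattices in products that happen to be isomorphic as abstract groups to such a $\Lambda$ -- which is precisely the trick in the proof of Theorem \ref{T:BSbetter}, where $\Gamma \simeq \Lambda_{0} = \mathrm{proj}_{G_{0}}\,\Gamma$. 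Here $[\Lambda : \Gamma] = \infty$ and an arbitrary ergodic measure-preserving action of $\Gamma$ does not extend to an action of $\Lambda$, so realizing $\Lambda$ as an irreducible lattice in $G \times \rpf{\Lambda}{\Gamma}$ and applying the product case tells you nothing about a general $\Gamma$-action. In particular Proposition \ref{P:upgrade}, which you invoke for the translation step, takes as input a $\Lambda$-action with $\mathrm{stab}_{\Lambda}(x)$ infinite almost surely; from a bare $\Gamma$-action you have no such object, and the invariant random subgroup of $\Gamma$-stabilizers is an IRS of $\Gamma$, not of $\Lambda$. (As a side remark, even if you could produce an infinite normal subgroup $N \normal \Lambda$ with $[\Gamma : \Gamma \cap N] = \infty$, the contradiction would be with Theorem \ref{T:CSnormalsubgroups} itself, not with density of $\Lambda$ in $G$.)

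The paper closes this case by an entirely different route: for a single simple $p$-adic factor one runs the original Stuck--Zimmer argument -- inducing the $\Gamma$-action to $G$ and applying an Intermediate Factor Theorem -- using the Nevo--Zimmer Intermediate Factor Theorem for local fields of characteristic zero. This works precisely because with only one factor the induced $G$-action raises no irreducibility issue (the obstruction that forces the commensurator approach when there are two or more factors). So to complete your proof you should either cite that external result for the single $p$-adic factor case, or supply a genuinely new argument that transfers the $\Lambda$-dichotomy down to all $\Gamma$-actions; the latter is not available from the tools in this paper.
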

\begin{proof}
By Margulis' $S$-Arithmeticity Theorem \cite{Ma91}, and using that finite kernels and commensuration do not affect the conclusion, we may assume $\Gamma = \mathbf{G}(\mathbb{Z}_{S})$ and $G = \prod_{p \in S \cup \{ \infty \}} \mathbf{G}(\mathbb{Q}_{p})$ where $\mathbf{G}$ is a semisimple algebraic group over $\mathbb{Q}$ and $S$ is a finite set of primes containing $\infty$.

First observe that since $\Gamma$ is irreducible, the intersection of $\Gamma$ with any proper subfactor of $G$ is finite (that is, $\mathbf{G}(\mathbb{Z}_{S}) \cap \prod_{p \in S \setminus Q} \mathbf{G}(\mathbb{Q}_{p})$ is finite for any nonempty $Q \subseteq S$ since the lattice is embedded diagonally).  Also, $\Gamma$ has finite intersection with any compact factor because the original choice of $G$ has no compact factors.

The case when every simple factor of $G$ is real reduces to the Stuck-Zimmer Theorem \cite{SZ94}.  So instead we may assume there is some $p$-adic factor in $G$.  When $G$ is a simple $p$-adic group, the methods of Stuck-Zimmer combined with the more general Nevo-Zimmer Intermediate Factor Theorem \cite{nevozimmerpreprint} (for local fields of characteristic zero) give the conclusion 
since in this case there is no issue with action of $G$ being nonirreducible (however, in the case when there are two factors, such an issue does arise and their work does not apply).

Therefore we may assume that there are at least two noncompact simple factors, one of which is totally disconnected, so combined with the fact that each noncompact simple factor of $\mathbf{G}(\mathbb{Q}_{p})$ is Howe-Moore, Theorem \ref{T:BSbetter} then implies the conclusion.
\end{proof}

\begin{corollary}\label{C:latticeLie2}
Let $G$ be a semisimple Lie group (real or $p$-adic or both) with no compact factors, trivial center, at least one factor with rank at least two and such that each real simple factor has rank at least two.  Let $\Gamma < G$ be an irreducible lattice.  Then any ergodic measure-preserving action of $\Gamma$ on a nonatomic probability space is essentially free.
\end{corollary}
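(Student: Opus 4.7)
The plan is to reduce Corollary \ref{C:latticeLie2} to Corollary \ref{C:latticeLie} and then upgrade the ``finite stabilizers'' conclusion to ``essentially free'' using the trivial center hypothesis. Since trivial center is stronger than finite center, Corollary \ref{C:latticeLie} applies verbatim and gives the dichotomy: either the action has finite orbits, or it has finite stabilizers.

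First I would rule out finite orbits. If almost every $\Gamma$-orbit is finite, then by ergodicity the orbit size is almost surely some constant $n$, and the ergodic decomposition over the orbit space $X/\Gamma$ forces the pushforward of $\nu$ to $X/\Gamma$ to be a point mass (as the $\sigma$-algebra of invariants is trivial). Hence $X$ consists of a single finite orbit, contradicting the nonatomicity of $\nu$. So we may assume the action has finite stabilizers.

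The main remaining task is to show that a nontrivial finite ergodic invariant random subgroup of $\Gamma$ cannot exist under these hypotheses. The map $x \mapsto \mathrm{stab}_{\Gamma}(x)$ pushes $\nu$ forward to an ergodic invariant random subgroup $\eta$ of $\Gamma$ supported on finite subgroups. By ergodicity the cardinality $|F|$ is $\eta$-almost surely a constant $n$. For each $\gamma \in \Gamma$ set $p(\gamma) = \eta\{F : \gamma \in F\}$; conjugation invariance of $\eta$ makes $p$ constant on $\Gamma$-conjugacy classes. The key integral identity
\[
\sum_{\gamma' \in C(\gamma)} p(\gamma') = \int \#(F \cap C(\gamma))\,d\eta(F) \leq n
\]
forces $p(\gamma) = 0$ whenever the conjugacy class $C(\gamma)$ is infinite.

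The hard part is then verifying that $\Gamma$ is ICC: every nontrivial element has infinite conjugacy class. This is where trivial center enters. If $\gamma \in \Gamma$ has a finite conjugacy class, then $C_{\Gamma}(\gamma)$ has finite index in $\Gamma$, so it is itself an irreducible lattice in $G$. By the Borel density theorem, $C_{\Gamma}(\gamma)$ is Zariski dense in $G$, forcing $\gamma$ to lie in the center of $G$, which is trivial by hypothesis; hence $\gamma = e$. Combined with the previous step, this yields $\eta(\{F : \gamma \in F\}) = 0$ for every nontrivial $\gamma$, and since $\Gamma$ is countable, $\eta$ is supported on $\{e\}$. Therefore the stabilizers are almost surely trivial, i.e.\ the action is essentially free.
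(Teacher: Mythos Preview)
Your argument is correct, but the final step takes a different route from the paper's own proof. The paper argues as follows: since there are only countably many finite subgroups of $\Gamma$, some set $E_{F} = \{ x : \mathrm{stab}_{\Gamma}(x) = F \}$ has positive measure; measure-preservation gives $\nu(E_{gFg^{-1}}) = \nu(E_{F})$, so only finitely many conjugates of $F$ can appear, hence some finite-index $\Gamma_{0} < \Gamma$ normalizes $F$. Then $F \normal \Gamma_{0}$ and Margulis' Normal Subgroup Theorem forces $F$ into the (trivial) center.

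Your approach instead passes through the invariant random subgroup $\eta$, uses the counting identity $\sum_{\gamma' \in C(\gamma)} p(\gamma') \le n$ to kill $p(\gamma)$ on infinite conjugacy classes, and then invokes Borel density to show $\Gamma$ is ICC. This is the standard ``ICC groups have no nontrivial finite IRS'' argument, and it has the mild advantage of relying on Borel density rather than the full strength of Margulis' Normal Subgroup Theorem. The paper's argument, on the other hand, avoids the IRS formalism and the counting trick entirely, working directly with the finitely many stabilizer types. Both are short and legitimate; the main conceptual content is the same, namely that a nontrivial finite subgroup cannot be (virtually) normal in an irreducible higher-rank lattice with trivial center.
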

\begin{proof}
Let $\Gamma \actson (X,\nu)$ be an ergodic measure-preserving action on a nonatomic probability space.
Corollary \ref{C:latticeLie} implies that the action either has finite stabilizers or has finite orbits.  The case of finite orbits is ruled out by the space being nonatomic so the action has finite stabilizers.  For a finite subgroup $F < \Gamma$ let $E_{F} = \{ x \in X : \mathrm{stab}(x) = F \}$.  Since there are only countably many finite groups there is some $F$ with $\nu(E_{F}) > 0$.  Since $gE_{F} = E_{gFg^{-1}}$, $\nu(E_{gFg^{-1}}) = \nu(E_{F})$ for all $g$ so there are at most a finite number of finite groups appearing as stabilizers.  Therefore there is a subgroup $\Gamma_{0} < \Gamma$ of finite index such that $\Gamma_{0}$ normalizes the finite subgroup $F$.  Then $F \normal \Gamma_{0}$ and $\Gamma_{0}$ is a lattice in $G$ hence by Margulis' Normal Subgroup Theorem \cite{Ma91}, $F$ is contained in the center of $G$.  Therefore $F$ is trivial so all the stabilizer groups are trivial.
\end{proof}

\subsection{Actions of Rational Groups in Simple Higher-Rank Lie Groups}

\begin{corollary}\label{C:rationalLie}
Let $\mathbf{G}$ be a simple algebraic group defined over $\mathbb{Q}$ with $p$-rank at least two for some prime $p$, possibly $\infty$, such that $\mathbf{G}(\mathbb{R})$ is either compact or has rank at least two.  Let $S$ be any (finite or infinite) set of primes containing $\infty$ and $p$.
Then every ergodic measure-preserving action of $\mathbf{G}(\mathbb{Z}_{S})$ either has finite orbits or has finite stabilizers.
\end{corollary}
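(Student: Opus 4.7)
The plan is to realize $\Lambda = \mathbf{G}(\mathbb{Z}_S)$ as a dense commensurator of a suitable arithmetic lattice $\Gamma$ inside an ambient simple group $G$ with the Howe--Moore property and property $(T)$, and then invoke Corollary~\ref{C:actualmain}. The choice of $G$ and $\Gamma$ depends on which completion of $\mathbf{G}$ carries the higher-rank condition, so I split into two cases according to the hypothesis on $\mathbf{G}(\mathbb{R})$.

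\textbf{Case A: $\mathbf{G}(\mathbb{R})$ has rank at least two.} Take $G = \mathbf{G}(\mathbb{R})$ and $\Gamma = \mathbf{G}(\mathbb{Z})$. Then $G$ is a connected simple higher-rank Lie group, hence has both Howe--Moore and property $(T)$, and $\Gamma$ is a lattice in $G$ by Borel--Harish-Chandra. The group $\Lambda$ is countable, dense in $G$ (as $\mathbb{Q}$ is dense in $\mathbb{R}$), and it contains and commensurates $\Gamma$ by standard $S$-arithmetic considerations; its intersection with any compact normal subgroup of $G$ lies in the finite center and hence is finite. By Theorem~\ref{T:rpfadelic}, $\rpf{\Lambda}{\Gamma}$ is the restricted product $\prod'_{q \in S \setminus \{\infty\}} \mathbf{G}(\mathbb{Q}_q)$, each factor of which is a simple nondiscrete Howe--Moore group. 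All hypotheses of Corollary~\ref{C:actualmain} hold, yielding the dichotomy.

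\textbf{Case B: $\mathbf{G}(\mathbb{R})$ is compact and some finite prime $p \in S$ has $\mathbf{G}(\mathbb{Q}_p)$-rank at least two.} Take $G = \mathbf{G}(\mathbb{Q}_p)$, a simple nondiscrete totally disconnected LCSC group with Howe--Moore and, by the $p$-rank assumption, property $(T)$; set $\Gamma = \mathbf{G}(\mathbb{Z}[1/p])$. The key geometric input is that, because $\mathbf{G}(\mathbb{R})$ is compact, the diagonal copy of $\Gamma$ in $\mathbf{G}(\mathbb{R}) \times G$ is a cocompact lattice whose projection to $G$ is injective and (using compactness of the $\mathbb{R}$-factor) discrete with finite covolume; so $\Gamma < G$ is a genuine cocompact arithmetic lattice. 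Provided $S \supsetneq \{\infty, p\}$, the appropriate strong approximation theorem shows $\Lambda$ is dense in $G$, and the remaining hypotheses of Corollary~\ref{C:actualmain} are verified exactly as in Case~A, with $\rpf{\Lambda}{\Gamma} \simeq \prod'_{q \in S \setminus \{\infty, p\}} \mathbf{G}(\mathbb{Q}_q)$ by Theorem~\ref{T:rpfadelic}. The degenerate subcase $S = \{\infty, p\}$, where $\Lambda = \Gamma$ is itself a lattice in $G$, follows directly from the Stuck--Zimmer/Nevo--Zimmer intermediate factor methods for simple $p$-adic groups of higher rank, exactly as invoked in the proof of Corollary~\ref{C:latticeLie}.

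The main obstacle is Case~B. Specifically, one must verify that $\mathbf{G}(\mathbb{Z}[1/p])$ really is a cocompact lattice in $\mathbf{G}(\mathbb{Q}_p)$---this crucially exploits the compactness of $\mathbf{G}(\mathbb{R})$ (which both gives the cocompactness and, by forcing $\mathbf{G}(\mathbb{Z}) \cap \ker$ to be finite, gives the discreteness after projection) and fails outright when $\mathbf{G}(\mathbb{R})$ is noncompact. One must then verify that $\Lambda$ is dense in $\mathbf{G}(\mathbb{Q}_p)$ whenever $S$ contains a prime beyond $\{\infty, p\}$, which rests on the correct strong approximation statement for the chosen $\mathbb{Q}$-form of $\mathbf{G}$.
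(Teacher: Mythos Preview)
Your strategy is essentially the paper's: realize $\mathbf{G}(\mathbb{Z}_S)$ as a dense commensurator of an arithmetic lattice in a simple Howe--Moore group with property $(T)$, identify the relative profinite completion via Theorem~\ref{T:rpfadelic}, and invoke Corollary~\ref{C:actualmain}. The paper does not split on whether $\mathbf{G}(\mathbb{R})$ is compact; instead it uniformly takes $S' = \{\infty, p\}$, ambient group $\prod_{v\in S'}\mathbf{G}(\mathbb{Q}_v)$ and $\Gamma = \mathbf{G}(\mathbb{Z}_{S'})$, after first disposing (via Corollary~\ref{C:latticeLie}) of the degenerate case where $S$ contains only one place with noncompact completion. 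Your single-factor choices of $G$ are a harmless variant of this and arguably sit more cleanly inside the hypotheses of Corollary~\ref{C:actualmain}.

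There is, however, a genuine gap in your Case~A. The assertion that $\Lambda = \mathbf{G}(\mathbb{Z}_S)$ is dense in $\mathbf{G}(\mathbb{R})$ ``as $\mathbb{Q}$ is dense in $\mathbb{R}$'' is both the wrong justification and not always true. Density of a subring of $\mathbb{R}$ does not by itself give density of the group of points; what you need is strong approximation, exactly as you correctly invoke in Case~B. More importantly, $\mathbb{Z}_S$ need not equal $\mathbb{Q}$: if $S$ contains no finite prime $q$ with $\mathbf{G}(\mathbb{Q}_q)$ noncompact (for instance $S = \{\infty\}$ when $p = \infty$, or $S\setminus\{\infty\}$ consists only of anisotropic places), then $\mathbf{G}(\mathbb{Z}_S)$ is commensurable with $\mathbf{G}(\mathbb{Z})$ and hence is itself a lattice in $\mathbf{G}(\mathbb{R})$, not dense. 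You handled the parallel degenerate subcase in Case~B but omitted it here; the paper handles it up front by appealing to Corollary~\ref{C:latticeLie} whenever $S$ has only one noncompact place. Once you add that case distinction (and replace the density justification by strong approximation), your argument goes through.
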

\begin{proof}
The case when $S$ contains only one prime $q$, possibly $\infty$, such that $\mathbf{G}(\mathbb{Q}_{q})$ is noncompact is a consequence of Corollary \ref{C:latticeLie}.  So assume $S$ contains more than one such prime.
Let $S^{\prime} = \{ p, \infty \}$.
By Theorem \ref{T:rpfadelic}, the relative profinite completion $\rpf{\mathbf{G}(\mathbb{Z}_{S})}{\mathbf{G}(\mathbb{Z_{S^{\prime}}})}$ is isomorphic to $\prod_{p \in S \setminus S^{\prime}}^{\prime} \mathbf{G}(\mathbb{Q}_{p})$.  The above facts about Lie groups imply that each factor of the relative profinite completion has Howe-Moore.  Therefore Corollary \ref{C:actualmain} applied to $\mathbf{G}(\mathbb{Z_{S^{\prime}}}) <_{c} \mathbf{G}(\mathbb{Z}_{S}) < \prod_{p \in S^{\prime}} \mathbf{G}(\mathbb{Q}_{p})$ (recall $\mathbb{Q}_{\infty} = \mathbb{R}$) implies the result.
\end{proof}

\begin{corollary}\label{C:algQ}
Let $\mathbf{G}$ be a simple algebraic group defined over $\mathbb{Q}$ with $p$-rank at least two for some prime $p$, possibly $\infty$, such that $\mathbf{G}(\mathbb{R})$ is either compact or has rank at least two.
Then every nontrivial ergodic measure-preserving action of $\mathbf{G}(\mathbb{Q})$ is essentially free.
\end{corollary}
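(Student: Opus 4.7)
The plan is to apply Corollary~\ref{C:rationalLie} with $S$ equal to the set of \emph{all} primes (together with the infinite place). Since $\mathbf{G}(\mathbb{Z}_S)=\mathbf{G}(\mathbb{Q})$ in this case, and the hypotheses of Corollary~\ref{C:rationalLie} coincide with those of the present statement, we obtain immediately: every ergodic measure-preserving action $\mathbf{G}(\mathbb{Q})\actson(X,\nu)$ either has finite orbits or has finite stabilizers.

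Interpreting the nontriviality assumption as the underlying probability space being nonatomic, the finite-orbits alternative is ruled out at once: ergodicity of the countable group $\mathbf{G}(\mathbb{Q})$ together with finite orbits forces $X$ to be essentially a single finite transitive orbit, which is atomic, contradicting nontriviality.

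It remains to upgrade finite stabilizers to trivial stabilizers. The stabilizer map produces an ergodic invariant random subgroup of $\mathbf{G}(\mathbb{Q})$ concentrated on finite subgroups. Since $\mathbf{G}(\mathbb{Q})$ is countable and has only countably many finite subgroups, ergodicity concentrates this IRS on a single conjugacy class $[H]$; for the associated probability measure to exist the class must itself be finite, so the normalizer $N_{\mathbf{G}(\mathbb{Q})}(H)$ has finite index in $\mathbf{G}(\mathbb{Q})$, and its core is a finite-index normal subgroup. Density of $\mathbf{G}(\mathbb{Q})$ in the simple noncompact Howe--Moore group $\mathbf{G}(\mathbb{Q}_p)$ combined with Propositions~\ref{P:HMcompact} and \ref{P:HMopen} rules out proper closed subgroups of finite index in $\mathbf{G}(\mathbb{Q}_p)$, and a standard divisibility argument (each $\mathbb{Q}$-unipotent element lies in every finite-index subgroup since it is an arbitrary power) then forces $\mathbf{G}(\mathbb{Q})$ itself to have no proper finite-index normal subgroup. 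Hence $H$ is normal in $\mathbf{G}(\mathbb{Q})$, and Tits's simplicity theorem places $H$ in the finite center; after passing to the adjoint quotient (where the center is trivial) we conclude $H=\{e\}$, so the action is essentially free.

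The principal obstacle is the last step, showing that $\mathbf{G}(\mathbb{Q})$ admits no proper finite-index normal subgroup: it requires coupling the density of $\mathbf{G}(\mathbb{Q})$ in the simple Howe--Moore $p$-adic Lie group $\mathbf{G}(\mathbb{Q}_p)$ (which excludes proper closed finite-index subgroups locally) with a global divisibility property ensuring that dense but proper finite-index subgroups of $\mathbf{G}(\mathbb{Q})$ do not arise either.
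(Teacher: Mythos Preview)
Your first two steps coincide with the paper's proof: apply Corollary~\ref{C:rationalLie} with $S$ the full set of primes, then rule out finite orbits by nonatomicity. For the final step the paper gives a single sentence: since $\mathbf{G}$ is simple as an algebraic group over $\mathbb{Q}$, the group $\mathbf{G}(\mathbb{Q})$ has no nontrivial finite normal subgroups, and the conclusion follows (implicitly via the same finite-conjugacy-class reduction used in the proof of Corollary~\ref{C:latticeLie2}). You instead attempt to derive the relevant structural facts about $\mathbf{G}(\mathbb{Q})$ by hand.

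Two remarks on your route. The Howe--Moore detour through $\mathbf{G}(\mathbb{Q}_p)$ is idle: it establishes only that a finite-index subgroup $N \leq \mathbf{G}(\mathbb{Q})$ remains dense in $\mathbf{G}(\mathbb{Q}_p)$, which does not preclude $N \subsetneq \mathbf{G}(\mathbb{Q})$, as you yourself concede. The unipotent-divisibility argument is what actually does the work, but as written it is incomplete: placing every unipotent inside a given finite-index subgroup $N$ only forces $N = \mathbf{G}(\mathbb{Q})$ if the latter is \emph{generated} by its unipotents---this is the Kneser--Tits property, which holds for isotropic absolutely simple groups over number fields but must be invoked explicitly. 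Once Kneser--Tits and Tits's simplicity theorem are granted you have recovered precisely the single fact the paper cites directly. Finally, drop the ``pass to the adjoint quotient'': if $\mathbf{G}$ had nontrivial $\mathbb{Q}$-rational center the corollary would simply be false (pull back any faithful action of the adjoint quotient), so the hypothesis must be read as center-free and that step is vacuous.
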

\begin{proof}
Since $\mathbf{G}$ is simple as an algebraic group over $\mathbb{Q}$ the group $\mathbf{G}(\mathbb{Q})$ has no finite normal subgroups and therefore the previous corollary implies the conclusion.
\end{proof}

\subsection{Actions of Rational Groups in Simple Higher-Rank Groups}

\begin{theorem}
Let $K$ be a global field, let $\mathcal{O}$ be the ring of integers, let $V$ be the set of places (inequivalent valuations) on $K$, let $V_{\infty}$ be the infinite places (archimedean valuations in the case of a number field), let $K_{v}$ be the completion of $K$ over $v \in V$ and let $\mathcal{O}_{v}$ be the ring of $v$-integers.  Let $V_{\infty} \subseteq S \subseteq V$ and let $\mathcal{O}_{S}$ be the ring of $S$-integers.

Let $\mathbf{G}$ be a simple algebraic group defined over $K$ such that $\mathbf{G}$ has $v_{0}$-rank at least two for some $v_{0} \in S$ (possibly in $V_{\infty}$), $\mathbf{G}(K_{v})$ is noncompact for some $v \in S$, $v \ne v_{0}$, and $\mathbf{G}(K_{v_{\infty}})$ is compact or of higher-rank for all $v_{\infty} \in V_{\infty}$.  Then every ergodic measure-preserving action of $\mathbf{G}(\mathcal{O}_{S})$ either has finite orbits or has finite stabilizers.
\end{theorem}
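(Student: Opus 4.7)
The plan is to replay the argument of Corollary~\ref{C:rationalLie} over the global field $K$, using Theorem~\ref{T:rpfadelic} to identify the relevant relative profinite completion. Let $S_{0} := V_{\infty} \cup \{v_{0}, v\}$, a finite subset of $S$ containing $V_{\infty}$. By Borel and Borel--Harish-Chandra, $\mathbf{G}(\mathcal{O}_{S_{0}})$ is an irreducible lattice in $G_{0} := \prod_{w \in S_{0}} \mathbf{G}(K_{w})$ modulo the finite product of compact archimedean factors, which I discard. By Margulis each $\mathbf{G}(K_{w})$ is a simple noncompact Howe-Moore group, $\mathbf{G}(K_{v_{0}})$ has property $(T)$ since its rank is at least two, every connected (noncompact archimedean) simple factor of $G_{0}$ has property $(T)$ by the hypothesis on $V_{\infty}$, and $v_{0} \neq v$ forces at least two noncompact simple factors in $G_{0}$.

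For the base case $S = S_{0}$: if $v_{0}, v \in V_{\infty}$ then $G_{0}$ is a semisimple real Lie group handled directly by Corollary~\ref{C:latticeLie}; otherwise at least one of $v_{0}, v$ is non-archimedean, $G_{0}$ has a totally disconnected simple factor, and Theorem~\ref{T:BSbetter} applies to the irreducible lattice $\mathbf{G}(\mathcal{O}_{S_{0}}) < G_{0}$. Either way, every ergodic measure-preserving action of $\mathbf{G}(\mathcal{O}_{S_{0}})$ has finite orbits or finite stabilizers. For general $S \supsetneq S_{0}$, I would invoke the commensurator framework $\mathbf{G}(\mathcal{O}_{S_{0}}) <_{c} \mathbf{G}(\mathcal{O}_{S}) < G_{0}$: density of $\mathbf{G}(\mathcal{O}_{S})$ in $G_{0}$ follows from strong approximation, the hypothesis of finite intersection with compact normal subgroups of $G_{0}$ holds by $K$-simplicity of $\mathbf{G}$ (every compact normal subgroup of $G_{0}$ lies in its finite center), and Theorem~\ref{T:rpfadelic} identifies $\rpf{\mathbf{G}(\mathcal{O}_{S})}{\mathbf{G}(\mathcal{O}_{S_{0}})}$ with $\prod^{\prime}_{w \in S \setminus S_{0}} \mathbf{G}(K_{w})$, a restricted product of simple nondiscrete Howe-Moore groups.

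The main obstacle is that $G_{0}$ is itself not Howe-Moore whenever it has more than one noncompact simple factor, so Corollary~\ref{C:actualmain} cannot be invoked verbatim. I would overcome this by adapting the splitting trick from the proof of Theorem~\ref{T:BSbetter}: write $G_{0} = G_{0}^{\mathrm{c}} \times G_{0}^{\mathrm{td}}$ as the product of its connected (archimedean) and totally disconnected (non-archimedean) parts, pick a compact open subgroup $K < G_{0}^{\mathrm{td}}$, intersect $\mathbf{G}(\mathcal{O}_{S_{0}})$ with $G_{0}^{\mathrm{c}} \times K$, and project both this intersection and $\mathbf{G}(\mathcal{O}_{S})$ down to $G_{0}^{\mathrm{c}}$. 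This yields a commensurator setup $\Gamma_{0} <_{c} \Lambda_{0} < G_{0}^{\mathrm{c}}$ in a product of connected Howe-Moore $(T)$ groups, with $\Gamma_{0}$ a lattice, $\Lambda_{0}$ dense, and $\rpf{\Lambda_{0}}{\Gamma_{0}}$ absorbing both $G_{0}^{\mathrm{td}}$ and $\prod^{\prime}_{w \in S \setminus S_{0}} \mathbf{G}(K_{w})$ into a single restricted product of simple Howe-Moore groups. Corollary~\ref{C:actualmain2} then yields the dichotomy for $\Lambda_{0}$, which transfers back to $\mathbf{G}(\mathcal{O}_{S})$ because $\mathbf{G}(K) \hookrightarrow \mathbf{G}(K_{w})$ makes the projection injective on $\mathbf{G}(\mathcal{O}_{S})$ for any archimedean $w$ appearing in $G_{0}^{\mathrm{c}}$. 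In the degenerate case when $G_{0}^{\mathrm{c}}$ is trivial (i.e. every $\mathbf{G}(K_{v_{\infty}})$ is compact), I would instead absorb all but one totally disconnected simple factor---namely $\mathbf{G}(K_{v_{0}})$, which is Howe-Moore with property $(T)$---into the relative profinite completion and apply Corollary~\ref{C:actualmain} to the resulting single-Howe-Moore ambient group.
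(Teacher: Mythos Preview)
Your argument is correct, and the concern you flag—that the ambient product $G_{0}$ is not Howe-Moore once it has more than one noncompact simple factor, so Corollary~\ref{C:actualmain} does not literally apply—is legitimate. The paper's route is nonetheless shorter and differs from yours in two ways. First, it never puts the auxiliary place $v$ into the ambient group and runs no separate base case: it takes $S' = V_{\infty} \cup \{v_{0}\}$, $\Gamma = \mathbf{G}(\mathcal{O}_{S'})$, $\Lambda = \mathbf{G}(\mathcal{O}_{S})$, $G = \prod_{w \in S'} \mathbf{G}(K_{w})$, uses the extra noncompact place $v \in S \setminus S'$ only to force $\Lambda$ dense in $G$, computes $\rpf{\Lambda}{\Gamma} \simeq \prod'_{w \in S \setminus S'} \mathbf{G}(K_{w})$ via Theorem~\ref{T:rpfadelic}, and then invokes Corollary~\ref{C:actualmain} directly. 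Second, the paper does not run the splitting trick from Theorem~\ref{T:BSbetter}; it simply asserts that each simple factor of $G$ has Howe-Moore and property $(T)$ and appeals to the corollary.

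What your more elaborate detour buys is a cleaner treatment of exactly the issue you identified: when $G$ has several noncompact simple factors, the paper's citation of Corollary~\ref{C:actualmain} should really be read as Corollary~\ref{C:actualmain2} when the noncompact part of $G$ is a product of connected groups, and one should shrink $S'$ down to $V_{\infty}$ (using $v_{0} \in S \setminus V_{\infty}$ for density) in the mixed case where $v_{0}$ is non-archimedean but some archimedean factor is noncompact. Your projection-to-$G_{0}^{\mathrm{c}}$ manoeuvre, absorbing $G_{0}^{\mathrm{td}}$ into the profinite completion, accomplishes the same reduction uniformly across all cases (including function fields), at the cost of extra bookkeeping that the paper's bare choice of $S'$ mostly sidesteps.
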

\begin{proof}
Let $S^{\prime} = V_{\infty} \cup \{ v_{0} \}$.
Let $\Gamma = \mathbf{G}(\mathcal{O_{S^{\prime}}})$, let $\Lambda = \mathbf{G}(\mathcal{O}_{S})$ and let $G = \prod_{v \in S^{\prime}}\mathbf{G}(K_{v})$.  Then $\Gamma <_{c} \Lambda$ and $\Lambda$ is dense in $G$ (since $S$ contains some valuation $v \ne v_{0}$ where $\mathbf{G}(K_{v})$ is noncompact).  $\Gamma$ is an irreducible lattice in $G$ and each simple factor of $G$ has property $(T)$ and the Howe-Moore property.

By Theorem \ref{T:rpfadelic}, $\rpf{\Lambda}{\Gamma}$ is isomorphic to $\prod_{v \in S \setminus S^{\prime}}^{\prime} \mathbf{G}(K_{v})$ which is a product of simple locally compact groups with the Howe-Moore property.
Corollary \ref{C:actualmain} applied to $\Gamma <_{c} \Lambda < G$ then implies the result.
\end{proof}

\begin{corollary}\label{C:algK}
Let $\mathbf{G}$ be a simple algebraic group defined over a global field $K$ with $v$-rank at least two for some place $v$ such that the $v_{\infty}$-rank is at least two for every infinite place $v_{\infty}$.  Then every nontrivial ergodic measure-preserving action of $\mathbf{G}(K)$ is essentially free.
\end{corollary}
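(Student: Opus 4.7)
The plan is to specialize the preceding theorem to $S = V$, the set of all places of $K$. Since the defining condition on $\mathcal{O}_S$ is vacuous when no place is excluded, one has $\mathcal{O}_V = K$ and hence $\mathbf{G}(\mathcal{O}_V) = \mathbf{G}(K)$. The hypothesis of the corollary furnishes a place $v_0$ of $v_0$-rank at least two and forces higher rank at every infinite place. It remains only to exhibit some $v \in V$ with $v \neq v_0$ and $\mathbf{G}(K_v)$ noncompact; this holds because a nontrivial simple $K$-algebraic group is isotropic, hence $\mathbf{G}(K_v)$ noncompact, at all but finitely many places (since $\mathbf{G}$ has good reduction at cofinitely many finite places). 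The preceding theorem then yields that every ergodic measure-preserving action of $\mathbf{G}(K)$ either has finite orbits or has finite stabilizers.

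Let $\mathbf{G}(K) \actson (X,\nu)$ be a nontrivial ergodic measure-preserving action. The key additional ingredient is the abstract simplicity of $\mathbf{G}(K)$: the theorem of Tits arising from the Kneser--Tits problem gives that $\mathbf{G}(K)/Z$ is simple as an abstract group (one uses $K_{v_0}$-isotropy to access the simply connected isotropic case via central isogeny), and the convention used in the parallel Corollary \ref{C:algQ} that ``$\mathbf{G}$ simple as an algebraic group over $K$'' forces the center to be trivial, so $\mathbf{G}(K)$ itself has no nontrivial finite normal subgroups and no proper finite-index subgroups (since any such would have finite-index core, contradicting abstract simplicity). If the action has finite orbits, then by ergodicity it is essentially transitive on a single finite orbit, whose point stabilizer would be a proper finite-index subgroup of $\mathbf{G}(K)$, contradicting abstract simplicity unless the orbit is a point. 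If instead the action has finite stabilizers, then following the invariance argument of Corollary \ref{C:latticeLie2} the sets $E_F = \{ x \in X : \mathrm{stab}(x) = F \}$ are permuted by conjugation and have equal positive measure, so only finitely many finite subgroups $F$ appear as stabilizers; the centralizer $C_{\mathbf{G}(K)}(F)$ is then finite-index (its quotient embeds into the finite group $\mathrm{Aut}(F)$), whence abstract simplicity forces $F$ to be central, and hence trivial. In both cases the action is essentially free.

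The main obstacle is securing the abstract simplicity of $\mathbf{G}(K)$ in this generality---global field, possibly of positive characteristic, without assuming simple connectedness. This is a classical but delicate result: for a simply connected $K$-isotropic simple group, Tits showed that $\mathbf{G}(K)^+/Z$ is abstractly simple, and $\mathbf{G}(K)^+ = \mathbf{G}(K)$ follows from the Kneser--Tits conjecture in the cases relevant here. Reducing the general case to the simply connected one proceeds via a central isogeny, which affects only the (finite) center and does not alter the conclusion under the convention adopted for ``simple algebraic group.''
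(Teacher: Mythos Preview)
Your proposal is correct and follows the approach implicit in the paper: the paper leaves Corollary~\ref{C:algK} without an explicit proof, but the parallel Corollary~\ref{C:algQ} is proved in one line by invoking the preceding dichotomy result together with the observation that $\mathbf{G}(\mathbb{Q})$ has no finite normal subgroups. You specialize the preceding theorem to $S = V$ and then carry out the finite-normal-subgroup step with more care than the paper does, correctly identifying abstract simplicity of $\mathbf{G}(K)$ (via Kneser--Tits) as the needed input and verifying the auxiliary hypothesis that $\mathbf{G}(K_v)$ is noncompact at a second place.
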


\dbibliography{references}


\newcommand{\etalchar}[1]{$^{#1}$}
\providecommand{\bysame}{\leavevmode\hbox to3em{\hrulefill}\thinspace}
\providecommand{\MR}{\relax\ifhmode\unskip\space\fi MR }
\providecommand{\MRhref}[2]{%
  \href{http://www.ams.org/mathscinet-getitem?mr=#1}{#2}
}
\providecommand{\href}[2]{#2}
\begin{thebibliography}{ABB{\etalchar{+}}11}

\bibitem[ABB{\etalchar{+}}11]{seven}
Mikl\'{o}s Abert, Nicolas Bergeron, Ian Biringer, Tsachik Gelander, Nikolay
  Nikolov, Jean Raimbault, and Iddo Samet, \emph{On the growth of {B}etti
  numbers of locally symmetric spaces}, Comptes Rendus Math\'{e}matique.
  Acad\'{e}mie des Sciences. Paris \textbf{349} (2011), no.~15--16, 831--835.

\bibitem[AEG94]{AEG}
S.~Adams, G.~Elliott, and T.~Giordano, \emph{Amenable actions of groups},
  Transactions of the American Mathematical Society \textbf{344} (1994), no.~2,
  803--822.

\bibitem[AGV14]{AGV12}
Mikl\'{o}s Abert, Yair Glasner, and B\'{a}lint Vir\'{a}g, \emph{Kesten's
  theorem for invariant random subgroups}, Duke Math J. \textbf{163} (2014),
  no.~3, 465--488.

\bibitem[AM66]{moore66}
L.~Auslander and C.~C. Moore, \emph{Unitary representations of solvable {L}ie
  groups}, Memoirs of the American Mathematical Society (1966), 66--77.

\bibitem[AS93]{adamsstuck}
S.~Adams and G.~Stuck, \emph{Splitting of nonnegatively curved leaves in
  minimal sets of foliations}, Duke Mathematical Journal \textbf{71} (1993),
  no.~1, 71--92.

\bibitem[Bek07]{bekka}
Bachir Bekka, \emph{Operator-algebraic supperrigidity for
  $\mathrm{SL}_{n}(\mathbb{Z})$, $n \geq 3$}, Inventiones Mathematica
  \textbf{169} (2007), no.~2, 401--425.

\bibitem[BG04]{gaboriau}
N.~Bergeron and D.~Gaboriau, \emph{Asymptotique des nombres de {B}etti,
  invariants $\ell^{2}$ et laminations [{A}symptotics of {B}etti numbers and
  $\ell^{2}$-invariants and laminations]}, Commentarii Mathematici Helvetici
  \textbf{79} (2004), no.~2, 362--395.

\bibitem[Bow12]{bowen12}
Lewis Bowen, \emph{Invariant random subgroups of the free group}, Preprint
  (2012), \arxiv{1204.5939}.

\bibitem[BS06]{BS04}
Uri Bader and Yehuda Shalom, \emph{Factor and normal subgroup theorems for
  lattices in products of groups}, Inventiones Mathematicae \textbf{163}
  (2006), no.~2, 415--454.

\bibitem[CFW81]{CFW}
A.~Connes, J.~Feldman, and B.~Weiss, \emph{An amenable equivalence relation is
  generated by a single transformation}, Ergodic Theory and Dynamical Systems
  \textbf{1} (1981), no.~4, 431--450.

\bibitem[CM09]{capracemonod}
Pierre-Emmanuel Caprace and Nicolas Monod, \emph{Isometry groups of
  non-positively curved spaces: Discrete subgroups}, Journal of Topology
  \textbf{2} (2009), no.~4, 701--746.

\bibitem[Cre11]{CrD11}
Darren Creutz, \emph{Commensurated subgroups and the dynamics of group actions
  on quasi-invariant measure spaces}, Ph.D. thesis, University of California:
  Los Angeles, 2011.

\bibitem[CS14]{CS14}
Darren Creutz and Yehuda Shalom, \emph{A normal subgroup theorem for
  commensurators of lattices}, Groups, Geometry and Dynamics \textbf{8} (2014),
  1--22.

\bibitem[CW85]{conneswood}
A.~Connes and E.~J. Woods, \emph{Approximately transitive flows and {IPTFI}
  factors}, Ergodic Theory and Dynamical Systems \textbf{5} (1985), 203--236.

\bibitem[Dye59]{dye1}
H.~A. Dye, \emph{On groups of measure-preserving transformations {I}}, American
  Journal of Mathematics \textbf{81} (1959), 119--159.

\bibitem[FG10]{FG10}
Hillel Furstenberg and Eli Glasner, \emph{Stationary dynamical systems},
  Dynamical Numbers---Interplay Between Dynamical Systems and Number Theory,
  Contemporary Mathematics, vol. 532, American Mathematical Society, 2010,
  pp.~1--28.

\bibitem[Fol94]{folland}
G.~Folland, \emph{A course in abstract harmonic analysis}, Studies in Advanced
  Mathematics, vol.~18, CRC Press, 1994.

\bibitem[Gla03]{glasner}
Eli Glasner, \emph{Ergodic theory via joinings}, Mathematical Surveys and
  Monographs, vol. 101, American Mathematical Society, 2003.

\bibitem[Gri11]{grig2}
R.I. Grigorchuk, \emph{Some topics in the dynamics of group actions on rooted
  trees}, Proceedings of the Steklov Institute of Mathematics \textbf{273}
  (2011), 64--175.

\bibitem[GS12]{grig1}
Rostislav Grigorchuk and Dmytro Savchuk, \emph{Self-similar groups acting
  essentially freely on the boundary of the binary rooted tree}, Preprint
  (2012), \arxiv{1212.0605}.

\bibitem[HK64]{hallkulatilaka}
P.~Hall and C.~R. Kulatilaka, \emph{A property of locally finite groups},
  Journal of the London Mathematical Society \textbf{39} (1964), 235--239.

\bibitem[HM79]{HM79}
R.E. Howe and C.C. Moore, \emph{Asymptotic properties of unitary
  representations}, Journal of Functional Analysis \textbf{32} (1979), 72--96.

\bibitem[Ioz94]{iozzi}
Alessandra Iozzi, \emph{Equivariant maps and purely atomic spectrum}, Journal
  of Functional Analysis \textbf{124} (1994), 211--227.

\bibitem[Jaw94]{Ja94}
Wojciech Jaworski, \emph{Strongly approximately transitive group actions, the
  {C}hoquet-{D}eny theorem, and polynomial growth}, Pacific Journal of
  Mathematics \textbf{165} (1994), no.~1, 115--129.

\bibitem[Jaw95]{Ja95}
\bysame, \emph{Strong approximate transitivity, polynomial growth, and spread
  out random walks on locally compact groups}, Pacific Journal of Mathematics
  \textbf{170} (1995), no.~2, 517--533.

\bibitem[JM13]{monod}
Kate Juschenko and Nicolas Monod, \emph{Cantor systems, piecewise translations
  and simple amenable groups}, Annals of Mathematics \textbf{178} (2013),
  775--787.

\bibitem[Jol05]{MR2122917}
Paul Jolissaint, \emph{The {H}aagerup property for measure-preserving standard
  equivalence relations}, Ergodic Theory and Dynamical Systems \textbf{25}
  (2005), no.~1, 161--174.

\bibitem[Kai02]{kaimanovichSAT}
Vadim Kaimanovich, \emph{{SAT} actions and ergodic properties of the horosphere
  foliation}, pp.~261--282, Springer, 2002.

\bibitem[LM92]{LM92}
Alexander Lubotzky and Shahar Mozes, \emph{Asymptotic properties of unitary
  representations of tree automorphisms}, Harmonic Analysis and Discrete
  Potential Theory, Plenum Press, 1992.

\bibitem[Mac62]{Ma62}
George Mackey, \emph{Point realizations of transformation groups}, Illinois
  Journal of Math \textbf{6} (1962), 327--335.

\bibitem[Mac66]{mackey}
\bysame, \emph{Ergodic theory and virtual groups}, Annals of Mathematics
  \textbf{166} (1966), 187--207.

\bibitem[Mar79]{Ma79}
Gregory Margulis, \emph{Finiteness of quotient groups of discrete subgroups},
  Funktsional\'nyi Analiz i Ego Prilozheniya \textbf{13} (1979), 28--39.

\bibitem[Mar91]{Ma91}
\bysame, \emph{Discrete subgroups of semisimple {L}ie groups}, Springer-Verlag,
  1991.

\bibitem[Mat06]{matsui}
Hiroki Matsui, \emph{Some remarks on topological full groups of {C}antor
  minimal systems}, International Journal of Mathematics \textbf{17} (2006),
  no.~2, 231--251.

\bibitem[NZ99a]{nevozimmerpreprint}
Amos Nevo and Robert Zimmer, \emph{A generalization of the intermediate factor
  theorem}, Preprint (1999).

\bibitem[NZ99b]{nevozimmer}
\bysame, \emph{Homogenous projective factors for actions of semi-simple lie
  groups}, Inventiones Mathematicae (1999), 229--252.

\bibitem[Pop06]{MR2215135}
Sorin Popa, \emph{On a class of type {$\mathrm{II}_1$} factors with {B}etti
  numbers invariants}, Annals of Mathematics. Second Series \textbf{163}
  (2006), no.~3, 809--899.

\bibitem[Ram71]{ramsay}
A.~Ramsay, \emph{Virtual groups and group actions}, Advances in Mathematics
  \textbf{6} (1971), 253--322.

\bibitem[Rob93]{robertson}
A.~Guyan Robertson, \emph{Property {$(T)$} for {$\mathrm{II}_1$} factors and
  unitary representations of {K}azhdan groups}, Mathematische Annalen
  \textbf{256} (1993), 547--555.

\bibitem[Rot80]{rothman}
Sheldon Rothman, \emph{The von {N}eumann kernel and minimally almost periodic
  groups}, Transactions of the American Mathematical Society (1980), 401--421.

\bibitem[Sch80]{Sch80}
G.~Schlichting, \emph{Operationen mit periodischen {S}tabilisatoren}, Archiv
  der Math. \textbf{34} (1980), 97--99.

\bibitem[Sch84]{schmidtmixing}
Klaus Schmidt, \emph{Asymptotic properties of unitary representations and
  mixing}, Proc. London Math. Soc. (3) \textbf{48} (1984), no.~3, 445--460.

\bibitem[Sch96]{schmidtcohomology}
\bysame, \emph{From infinitely divisible representations to cohomological
  rigidity, analysis, geometry and probability}, vol.~10, Hindustan Book
  Agency, Delhi, 1996.

\bibitem[SW13]{SW09}
Yehuda Shalom and George Willis, \emph{Commensurated subgroups of arithmetic
  groups, totally disconnected groups and adelic rigidity}, Geometry and
  Functional Analysis \textbf{23} (2013), 1631--1683.

\bibitem[SZ94]{SZ94}
Garrett Stuck and Robert Zimmer, \emph{Stabilizers for ergodic actions of
  higher rank semisimple groups}, The Annals of Mathematics \textbf{139}
  (1994), no.~3, 723--747.

\bibitem[TD12]{tuckerdrob}
Robin Tucker-Drob, \emph{Mixing actions of countable groups are almost free},
  Preprint (2012).

\bibitem[Tza00]{Tz00}
Kroum Tzanev, \emph{{$C^{*}$}-alg\'{e}bres de {H}ecke at {$K$}-theorie}, Ph.D.
  thesis, Universit\'{e} Paris 7 -- Denis Diderot, 2000.

\bibitem[Tza03]{Tz03}
\bysame, \emph{{H}ecke {$C^{*}$}-alg\'{e}bres and amenability}, Journal of
  Operator Theory \textbf{50} (2003), 169--178.

\bibitem[Var63]{vara}
V.S. Varadarajan, \emph{Groups of automorphisms of {B}orel spaces},
  Transactions of the American Mathematical Society \textbf{109} (1963), no.~2,
  191--220.

\bibitem[Ver11]{vershik11}
A.~M. Vershik, \emph{Nonfree actions of countable groups and their characters},
  Journal of Mathematical Science \textbf{174} (2011), no.~1, 1--6.

\bibitem[Ver12]{vershiktotallynonfree}
\bysame, \emph{Totally nonfree actions and the infinite symmetric group},
  Moscow Mathematical Journal \textbf{12} (2012), 193--212.

\bibitem[Zim77]{Zi77}
Robert Zimmer, \emph{Hyperfinite factors and amenable group actions},
  Inventiones Mathematicae \textbf{41} (1977), no.~1, 23--31.

\bibitem[Zim78]{Zi78}
\bysame, \emph{Amenable ergodic group actions and an application to {P}oisson
  boundaries of random walks}, J.~Functional Analysis \textbf{27} (1978),
  350--372.

\bibitem[Zim82]{zimmer2}
\bysame, \emph{Ergodic theory, semi-simple lie groups, and foliations by
  manifolds of negative curvature}, Publications Math\'{e}matique de l'IH\'{E}S
  (1982), 37--62.

\bibitem[Zim84]{Zi84}
\bysame, \emph{Ergodic theory and semisimple groups}, Birkhauser, 1984.

\bibitem[Zim87]{zimmer3}
\bysame, \emph{Split rank and semisimple automorphism group of a
  {G}-structure}, Journal of Differential Geometry \textbf{26} (1987),
  169--173.

\end{thebibliography}
\end{document}